\documentclass[11pt,reqno,a4paper]{amsart}

\usepackage[T1]{fontenc}
\usepackage[utf8]{inputenc}

\usepackage{libertinus}
\usepackage{microtype}

\usepackage{geometry}

\usepackage{xcolor}

\usepackage{amsmath,amssymb,amsthm,amsfonts}
\usepackage{mathtools}
\usepackage{mathrsfs}

\usepackage{graphicx}

\usepackage[
colorlinks=true,
linkcolor=blue,
citecolor=blue,
urlcolor=blue
]{hyperref}

\usepackage[nameinlink,noabbrev]{cleveref}

\usepackage{titlesec}

\titleformat{\section}
  {\normalfont\large\bfseries\centering}
  {\thesection.}
  {0.75em}
  {}

\titlespacing*{\section}
  {0pt}
  {3.2ex plus 1ex minus .2ex}
  {1.6ex plus .3ex}

\titleformat{\subsection}
  {\normalfont\normalsize\bfseries}
  {\thesubsection.}
  {0.75em}
  {}

\titlespacing*{\subsection}
  {0pt}
  {2.4ex plus .8ex minus .2ex}
  {1.0ex plus .2ex}

\titleformat{\subsubsection}
  {\normalfont\normalsize\bfseries\itshape}
  {\thesubsubsection.}
  {0.75em}
  {}

\titlespacing*{\subsubsection}
  {0pt}
  {2.0ex plus .6ex minus .2ex}
  {0.7ex plus .2ex}

\usepackage{etoolbox}

\makeatletter
\AtBeginEnvironment{thebibliography}{%
  \def\@mklab#1{#1\hfil}%
}
\makeatother
  
\numberwithin{equation}{section}

\usepackage{aliascnt}

\newtheoremstyle{compact}
  {6pt}   
  {1 pt}   
  {\itshape}
  {}
  {\bfseries}
  {.}
  {0.5em}
  {}

\theoremstyle{compact}

\newtheorem{theorem}{Theorem}[section]

\newaliascnt{proposition}{theorem}
\newtheorem{proposition}[proposition]{Proposition}
\aliascntresetthe{proposition}

\newaliascnt{lemma}{theorem}
\newtheorem{lemma}[lemma]{Lemma}
\aliascntresetthe{lemma}

\newaliascnt{corollary}{theorem}
\newtheorem{corollary}[corollary]{Corollary}
\aliascntresetthe{corollary}

\theoremstyle{definition}
\newaliascnt{definition}{theorem}
\newtheorem{definition}[definition]{Definition}
\aliascntresetthe{definition}

\newtheorem*{question}{Question}

\newtheorem*{example}{Example}

\theoremstyle{remark}
\newaliascnt{remark}{theorem}

\theoremstyle{definition}
\newtheorem{remark}[theorem]{Remark}

\aliascntresetthe{remark}

\usepackage{etoolbox}

\AtBeginEnvironment{proof}{\vspace{6pt}}

\usepackage{enumitem}

\setlist{
itemsep=2pt,
topsep=4pt
}

\usepackage[toc,page]{appendix}

\newcommand\subsetsim{\mathrel{%
\ooalign{\raise0.2ex\hbox{$\subset$}\cr\hidewidth\raise-0.8ex\hbox{\scalebox{0.9}{$\sim$}}\hidewidth\cr}}}

\usepackage{comment}

\makeatletter
\renewcommand{\l@section}[2]{%
  \par\addpenalty\@secpenalty
  \addvspace{1.0em plus 1pt}%
  \@tempdima 1.5em
  \begingroup
    \parindent \z@
    \rightskip \@pnumwidth
    \parfillskip -\@pnumwidth
    \leavevmode
    \bfseries
    \advance\leftskip\@tempdima
    \hskip -\leftskip
    #1\nobreak\hfil \nobreak\hb@xt@\@pnumwidth{\hss #2}\par
  \endgroup}
\makeatother

\usepackage{bbm}
\usepackage{amscd}
\usepackage[all,cmtip]{xy}
\usepackage{changepage}
\usepackage{calc}
\usepackage{scalerel}
\usepackage{extarrows}
\usepackage{aliascnt}

\DeclareMathSymbol{\shortminus}{\mathbin}{AMSa}{"39}

\newcommand{\ignore}[1]{}

\begin{document}

\title{Stealthy point processes and lattice induction}

\author[M. Bj\"orklund]{Michael Bj\"orklund}
\address{Department of Mathematics, Chalmers University of Technology and University of Gothenburg, Gothenburg, Sweden}
\email{micbjo@chalmers.se}

\subjclass[2020]{Primary 60G55, 37A15;
Secondary 37A30, 43A25, 60G57}

\keywords{Stealthy point processes, hyperuniformity, Bartlett
spectrum, lattice induction}

\begin{abstract}
We prove a realization theorem for stealthy point processes
based on lattice induction, together with a converse in
dimension one. Every probability-preserving
$\mathbb R^d$-action induced from a full-rank lattice admits a
generating Delone cross-section whose Bartlett spectrum
vanishes on a neighborhood of the origin. The construction can
be chosen so that first-order linear statistics detect a
nonzero part of the inducing spectral type. Bernoulli bases
yield a nonzero absolutely continuous component, while weakly
mixing bases of singular maximal spectral type yield a nonzero
singular-continuous component. To our knowledge, these are the
first rigorously constructed translation-invariant stealthy
point processes on $\mathbb R^d$ with non-pure-point Bartlett
spectrum.

Conversely, let $\eta$ be an ergodic translation-invariant
point process on $\mathbb R$ with positive intensity and local
second moments. If
\[
    \int_{0<|\xi|<1}\frac{1}{\xi^2}\,d\sigma_\eta(\xi)<\infty,
\]
then its translation action has a nonzero eigenvalue and is
lattice-induced. Consequently, an ergodic
probability-preserving Borel $\mathbb R$-space is
lattice-induced if and only if it admits a generating stealthy
Delone cross-section.

This should be compared with a theorem of Borichev, Sodin and
Weiss stating that a translation-invariant point process on
$\mathbb Z$ with proper spectral support is periodic.
\end{abstract}

\maketitle

\section{Introduction}

Rigorous examples of stealthy point processes have so far been
largely crystalline. We show that this is not a general feature.
Every probability-preserving $\mathbb R^d$-action induced from a
full-rank lattice admits a realization by a stealthy Delone point
process. When the inducing lattice action is nontrivial and weakly
mixing, the Bartlett spectrum can have a nonzero continuous
component. Thus stealthy Delone point processes are abundant among
lattice-induced actions and need not have pure-point Bartlett
spectrum.

\subsection{Stealthiness and hyperuniformity}
\label{subsec:introduction-stealthiness}

Let $\eta$ be a translation-invariant simple point process on
$\mathbb R^d$ with positive intensity and local second moments.
For $f\in\mathcal S(\mathbb R^d)$, set
\[
    N_f(\omega)
    :=
    \int_{\mathbb R^d}f\,d\omega,
    \qquad
    \widehat f(\xi)
    :=
    \int_{\mathbb R^d}
    f(x)e^{-2\pi i\langle x,\xi\rangle}\,d\lambda_d(x).
\]
Here $\lambda_d$ denotes Lebesgue measure on $\mathbb R^d$. 
There is a unique positive translation-bounded Radon measure
$\sigma_\eta$ on $\mathbb R^d$ such that
\[
    \operatorname{Var}_\eta(N_f)
    =
    \int_{\mathbb R^d}
    |\widehat f(\xi)|^2\,d\sigma_\eta(\xi),
    \qquad
    f\in\mathcal S(\mathbb R^d).
\]
We call $\sigma_\eta$ the \emph{Bartlett spectrum} of $\eta$. The
process is \emph{stealthy} if $\sigma_\eta$ vanishes on an open
neighborhood of the origin.

A translation-invariant point process is \emph{hyperuniform} if
\[
    \operatorname{Var}_\eta
    \bigl(
        \omega(B_R(0))
    \bigr)
    =
    o(R^d)
    \qquad
    (R\longrightarrow\infty).
\]
Equivalently,
\[
    \sigma_\eta(B_\varepsilon(0))
    =
    o(\varepsilon^d)
    \qquad
    (\varepsilon\longrightarrow0);
\]
see \cite{TS03, Tor18, BB25}. Hence every stealthy point process is
hyperuniform.

Stealthiness also has strong consequences which are not shared by
general hyperuniform processes. Ghosh and Lebowitz proved that a
stealthy point process is maximally rigid: the configuration
outside a bounded region determines the configuration inside it.
They also proved that its realizations have uniformly bounded
holes \cite{GL18}. Further uniqueness and periodicity results have
recently been obtained by Lachi\`eze-Rey \cite{LR26}.

\subsection{Rigorous constructions and the spectral question}
\label{subsec:introduction-known-constructions}

A substantial numerical literature constructs finite stealthy
configurations by the collective-coordinate method. One places
finitely many particles in a periodic box and forces the
collective density variables to vanish at reciprocal vectors below
a prescribed cutoff. These computations produce large
configurations with little visible crystalline order
\cite{UST04,UTS06,TZS15,ZST15a,ZST15b,MST24}.

They do not by themselves produce a translation-invariant point
process on $\mathbb R^d$. Such a process would require an
infinite-volume limit, together with proofs that the spectral gap
survives and that the limiting process is not supported on
periodic configurations. The existence of disordered stealthy
point processes in dimension $d\geq2$ was explicitly posed as an
open problem in \cite[Question~5.2]{LR25}. 

Recent transport constructions give isotropic noncrystalline
point processes with arbitrarily high finite-order suppression
of the Bartlett spectrum near the origin \cite{LK26}. This
remains distinct from an exact open spectral gap.

The examples constructed below need not be periodic and may have
continuous Bartlett components, but their translation actions are
lattice-induced and therefore not weakly mixing. Thus, for
$d\geq2$, the existence of a stealthy point process with weakly
mixing translation action remains open. In dimension
one, Corollary~\ref{cor:stealthy-point-process-lattice-induced}
rules this out.

The rigorous Euclidean constructions with exact open spectral gaps
known before the present work had a different character. Randomly
translated lattices are the basic examples. Kurasov and Sarnak
constructed nonperiodic unit-mass crystalline measures in
dimension one \cite{KS20}, and Alon, Kummer, Kurasov and Vinzant
constructed higher-dimensional unit-mass Fourier quasicrystals
\cite{AKKV25}. Their natural translation hulls give nonperiodic
stealthy point processes, but their Bartlett spectra are pure
point.

A contrasting rigidity statement holds on $\mathbb Z$. The
theorem of Borichev, Sodin and Weiss implies that every
translation-invariant simple point process on $\mathbb Z$ whose
Bartlett spectrum has a nonempty open gap is periodic
\cite{BSW18}.

Our results show that the pure-point character of the known
examples is not forced over $\mathbb R^d$. For every
probability-preserving action of a full-rank lattice, we construct
a stealthy Delone point process whose translation action is the
corresponding induced $\mathbb R^d$-action. For weakly mixing
bases, the Bartlett spectrum can have a nonzero continuous
component; it may be absolutely continuous or
singular-continuous. To our knowledge, these are the first
rigorous stealthy point processes with non-pure-point Bartlett
spectrum.


\subsection{Cross-sections as point-process coordinates}
\label{subsec:introduction-cross-sections}

By a \emph{probability-preserving Borel $\mathbb R^d$-space}
$(X,\mu)$ we mean a standard Borel space equipped with a jointly
Borel, measure-preserving action
\[
    \mathbb R^d\times X\longrightarrow X,
    \qquad
    (t,x)\longmapsto t.x.
\]

Let $\mathcal N_s(\mathbb R^d)$ denote the space of locally finite
simple counting measures
\[
    \delta_P
    :=
    \sum_{p\in P}\delta_p,
    \qquad
    P\subset\mathbb R^d\ \text{locally finite},
\]
equipped with the Borel structure induced by the vague topology on
the space of positive Radon measures. We use the translation
action
\[
    t.\delta_P:=\delta_{P-t}.
\]
A \emph{point process on $\mathbb R^d$} is a Borel probability
measure on $\mathcal N_s(\mathbb R^d)$, and it is
translation-invariant if it is invariant under this action.

A Borel set $Y\subset X$ is a \emph{cross-section} if every orbit
meets $Y$ and the return-time set
\[
    Y_x
    :=
    \{t\in\mathbb R^d:t.x\in Y\}
\]
is locally finite for every $x\in X$. Since
\[
    Y_{t.x}=Y_x-t,
\]
the return-time map
\[
    \kappa_Y:X\longrightarrow\mathcal N_s(\mathbb R^d),
    \qquad
    \kappa_Y(x):=\delta_{Y_x},
\]
is equivariant:
\[
    \kappa_Y(t.x)=t.\kappa_Y(x).
\]
It is also Borel. Indeed, the incidence set
\[
    \{(x,t)\in X\times\mathbb R^d:t.x\in Y\}
\]
is Borel and has countable sections, so the claim follows from the
Lusin--Novikov theorem. Consequently,
\[
    \eta_{Y,\mu}:=(\kappa_Y)_*\mu
\]
is a translation-invariant point process, called the
\emph{return-time process} of $(X,\mu,Y)$.

Conversely, every translation-invariant simple point process
supported on nonempty configurations is the return-time process
of the cross-section consisting of configurations which contain
the origin. 

We call a cross-section \emph{generating} if there is an invariant
conull Borel set $X_0\subset X$ on which $\kappa_Y$ is injective.
By the Lusin--Souslin theorem, $\kappa_Y(X_0)$ is Borel and the
inverse of
\[
    \kappa_Y:X_0\longrightarrow\kappa_Y(X_0)
\]
is Borel. Thus $\kappa_Y$ is a measurable isomorphism between
$(X,\mu)$ and the translation action of its return-time process.

There is a useful analogy with the almost one-to-one models of
Furstenberg and Weiss \cite{FW89}. They showed that an ergodic
system with a prescribed compact rotation factor may be realized
by an invariant measure on a minimal almost one-to-one extension
of that factor. Similarly, the presence of the compact factor
$\mathbb R^d/\Gamma$ places little restriction on the
measure-theoretic complexity of a lattice-induced action: the
dynamics above this factor can be encoded by a generating
stealthy Delone cross-section. The analogy is only conceptual,
since we construct a measurable point-process model rather than
an almost one-to-one topological extension.

What does depend on the chosen cross-section is the subspace
generated by first-order linear statistics. Under the return-time
isomorphism, the centered linear statistics become
\[
    \bigl(N_f\circ\kappa_Y\bigr)^\circ,
    \qquad
    f\in\mathcal S(\mathbb R^d),
\]
where
\[
    F^\circ
    :=
    F-\int_XF\,d\mu.
\]
Changing the generating cross-section may therefore change the
Bartlett spectrum, although the full Koopman representation is
unchanged.

Osada's isomorphism theorem illustrates this distinction
\cite{Osa21}. Translation-invariant determinantal point processes
with translation-invariant kernels are measurably isomorphic, as
probability-preserving $\mathbb R^d$-actions, to homogeneous
Poisson point processes. Their canonical configuration-space
models nevertheless have different first-order linear statistics.

Starting from a lattice-induced action, we construct a generating
cross-section whose return-time configurations are uniformly
Delone and stealthy. It can moreover be chosen so that
first-order linear statistics detect a nonzero part of the
spectral type of the inducing action.


\subsection{Correlation realizability}

A classical problem asks which prescribed correlation measures
arise from a point process; see
\cite{Len73,Len75a,Len75b,CL04,KLS07,KLS11}. A stealthy point
process realizes second-order data whose Fourier transform
vanishes near the origin. Rather than prescribe the full pair
correlation, we realize this low-frequency condition by
uniformly Delone processes whose translation actions are
prescribed lattice-induced actions, and whose Bartlett spectra
may additionally detect part of the inducing spectral type.


\subsection{Lattice induction and the main results}
\label{subsec:introduction-main-results}

Let $\Gamma<\mathbb R^d$ be a full-rank lattice and let $(Z,\nu)$
be a probability-preserving Borel $\Gamma$-space. The \emph{induced
$\mathbb R^d$-space} is
\[
    \operatorname{Ind}_{\Gamma}^{\mathbb R^d}(Z)
    :=
    (\mathbb R^d\times Z)/\Gamma,
\]
where
\[
    \gamma.(t,z)
    :=
    (t+\gamma,(-\gamma).z).
\]
We write $[t,z]$ for the class of $(t,z)$. The induced action is
\[
    u.[t,z]=[t+u,z],
    \qquad
    u\in\mathbb R^d,
\]
and the space carries its standard induced probability measure. A
probability-preserving Borel $\mathbb R^d$-space is
\emph{lattice-induced} if it is measurably isomorphic to an
induced space of this form.

The map
\[
    [t,z]\longmapsto t+\Gamma
\]
gives every lattice-induced action a torus factor
$\mathbb R^d/\Gamma$. In dimension one, an ergodic
probability-preserving $\mathbb R$-action is lattice-induced if and
only if it has a nonzero eigenvalue.

The following theorem is one of the main results of the paper.

\begin{theorem}[Stealthy cross-sections and lattice induction]
\label{thm:introduction-lattice-induction}
The following assertions hold.

\begin{enumerate}
\item
For every $d\geq 1$, each lattice-induced
probability-preserving Borel $\mathbb R^d$-space admits a
generating stealthy Delone cross-section.

\item
Let $\eta$ be an ergodic translation-invariant point process on
$\mathbb R$ with positive intensity and local second moments. If
\[
    \int_{0<|\xi|<1}
    \frac{1}{\xi^2}\,d\sigma_\eta(\xi)
    <
    \infty,
\]
then the translation action on its configuration space has a
nonzero eigenvalue and is lattice-induced.
\end{enumerate}

Consequently, an ergodic probability-preserving Borel
$\mathbb R$-space is lattice-induced if and only if it admits a
generating stealthy Delone cross-section.
\end{theorem}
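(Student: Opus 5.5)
We prove the two parts separately and then combine them for the final equivalence. Part (2) is the softer half. Part (1) is a construction, and one step in it is the real difficulty.

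\textbf{Part (2).} Let $\lambda>0$ be the intensity and consider the additive cocycle
\[
    c(t,\omega):=\omega\bigl((0,t]\bigr)-\lambda t ,
\]
extended to $t<0$ by antisymmetry. The first step is a uniform $L^2$ bound on $c$. Applying the variance formula to (smooth approximations of) $1_{(0,t]}$ gives
\[
    \operatorname{Var}_\eta\bigl(\omega((0,t])\bigr)=\int_{\mathbb R}\frac{\sin^2(\pi t\xi)}{\pi^2\xi^2}\,d\sigma_\eta(\xi).
\]
We bound the three regions of frequency separately:
\begin{itemize}
\item on $0<|\xi|<1$, the integrand is at most $\pi^{-2}\xi^{-2}$, so the hypothesis bounds this part uniformly in $t$;
\item on $|\xi|\ge 1$, translation-boundedness of $\sigma_\eta$ gives the bound $\sum_{n\ge1}\sigma_\eta([n,n+1])\,n^{-2}<\infty$;
\item an atom of $\sigma_\eta$ at $0$ would make $\operatorname{Var}(N_f)\ge\sigma_\eta(\{0\})\,|\widehat f(0)|^2$, which contradicts ergodicity via the mean ergodic theorem, so there is none.
\end{itemize}
Hence $\sup_t\|c(t,\cdot)\|_{L^2}<\infty$.

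The second step is the standard fact that a uniformly $L^2$-bounded cocycle of a measure-preserving $\mathbb R$-action is an $L^2$-coboundary. Concretely, a weak limit point $h$ of the averages $-\frac1T\int_0^T c(s,\cdot)\,ds$ satisfies $c(t,\omega)=h(t.\omega)-h(\omega)$ for a.e.\ $\omega$, for each $t$. We then upgrade this to a jointly measurable identity valid for all $t$ on a conull invariant set, using Fubini and the cocycle identity.

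Since $\omega((0,t])\in\mathbb Z$, we have $e^{2\pi i c(t,\omega)}=e^{-2\pi i\lambda t}$. Therefore $F:=e^{2\pi i h}$ satisfies $F(t.\omega)=e^{-2\pi i\lambda t}F(\omega)$, so $F$ is an eigenfunction with nonzero eigenvalue $-\lambda$. By the one-dimensional fact recalled above, the ergodic action is lattice-induced.

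\textbf{The final equivalence.} A generating cross-section identifies $(X,\mu)$ with the translation action of its return-time process $\eta$. If the cross-section is Delone, then $\eta$ has positive intensity and local second moments. If it is stealthy, the integral in (2) is over a region where $\sigma_\eta$ vanishes, so it is trivially finite. Part (2) then shows $X$ is lattice-induced. The converse direction is part (1) with $d=1$.

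\textbf{Part (1).} Work in $X=\operatorname{Ind}_\Gamma^{\mathbb R^d}(Z)$, let $\mathcal D$ be a fundamental domain, and look for cross-sections of the form
\[
    Y=\{[p,z]: p\in A_z\},
\]
where $z\mapsto A_z$ is a Borel choice of a finite set in the interior of $\mathcal D$, with a uniformly bounded number of points and a uniform separation. The return-time configuration of $[t,z]$ is $\bigcup_{\gamma}(\gamma-t+A_{(-\gamma).z})$, which is automatically uniformly Delone provided each $A_z$ contains a fixed point.

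The key computation is the Bartlett spectrum. For such decorated-lattice processes one has
\[
    \sigma_\eta(\xi)\approx\rho^{A}_{\xi}\bigl(\xi \bmod \Gamma^*\bigr),
\]
where $\rho^A_\xi$ is the spectral measure, for the $\Gamma$-action on $Z$, of the centered function $z\mapsto \widehat{\delta_{A_z}}(\xi)-\mathbb E\,\widehat{\delta_{A_z}}(\xi)$. Expanding $\widehat{\delta_{A_z}}(\xi)$ in power moments $m_k(z)=\sum_{p\in A_z}p^{\otimes k}$ shows that stealthiness follows if every centered moment function $m_k^\circ$ has $\Gamma$-spectral measure vanishing on a fixed neighbourhood $U$ of the trivial character. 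Generation follows if $z\mapsto A_z$ is injective on a conull set.

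\textbf{The main obstacle.} The hard step is realizing these spectral constraints with atomic data that also encodes $z$. My first attempt would combine two choices:
\begin{enumerate}
\item take the bulk of $A_z$ to be a fixed finite pattern, so that the low moments are constant and drop out;
\item encode $z$ via a Rokhlin-tower/Borel coding argument, using small displacements whose moment functions are obtained by applying the spectral projection $E(\widehat\Gamma\setminus U)$ to a generating family.
\end{enumerate}
I would then correct the error iteratively, with summable displacement sizes, so that the limiting $A_z$ is still separated and the moment functions remain spectrally supported off $U$. If this cannot be made exact, the fallback is to pass to a finer lattice $\frac1N\Gamma$ and make the randomness only permute occupied sites in ways that are invisible at frequencies below $N^{-1}\Gamma^*$-scale.

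Finally, for the spectral-detection refinement, I would check that the first moment $m_1^\circ$ can be chosen nonzero, so that $\sigma_\eta$ sees a nonzero part of the inducing spectral type.
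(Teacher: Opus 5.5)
Your part~(2) is correct and somewhat more direct than the paper's. You bound $\operatorname{Var}\,\omega((0,t])=\int\sin^2(\pi t\xi)(\pi\xi)^{-2}\,d\sigma_\eta$ uniformly in $t$. Weak limits of Ces\`aro averages then make the flow cocycle $c(t,\omega)$ an $L^2$-coboundary for the whole $\mathbb R$-action. This gives at once an eigenfunction $e^{2\pi ih}$ whose eigenvalue is, up to sign, the intensity itself. The paper instead uses a single time-$a$ map with $\rho a\notin\mathbb Z$ and the spectral coboundary criterion $\int|z-1|^{-2}\,d\nu_a<\infty$, and only then extracts a nonzero atom of $\varsigma_{E_a}$ on $-\rho+a^{-1}\mathbb Z$. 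Your reverse implication in the final equivalence is also fine. Stealthiness only gives vanishing on some $(-\epsilon,\epsilon)$, but the rest of $\{0<|\xi|<1\}$ is harmless because $\sigma_\eta$ is Radon.

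The genuine gap is part~(1): no cross-section is actually constructed. The decorated-lattice ansatz is not the problem; the paper's sets are essentially of that form, with four points per cell near $0,\frac13,\frac12,\frac23$. But your proposal stops where the work begins, and the tactics you list do not work as stated.
\begin{itemize}
\item Projecting onto $E(\widehat\Gamma\setminus U)$ is linear. The moment functions $m_k^\circ$ with $k\geq2$ are degree-$k$ polynomials in the displacements, and their spectral measures behave like convolution powers that generically charge $U$.
\item You give no reason why an iterative correction would converge, and be exact at every order simultaneously. It must also keep the displacements bounded, since a sharp spectral projection of a bounded function need not be bounded and separation can fail. And it must preserve injectivity.
\item The fallback of rearranging occupied sites of $\frac1N\Gamma$ has the same defect. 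Unless the cell patterns coincide, some cell-wise moment function depends on $z$, and the lowest such one governs $\sigma_\eta$ near the origin at order $|\xi|^{2k}$. That yields hyperuniformity, as in the paper's independent-displacement example, but not stealthiness unless that moment function already has a spectral gap, which is the original problem.
\item Generation needs more than injectivity of $z\mapsto A_z$: you must exclude coincidences $P_{z'}=P_z-u$ with $u\notin\Gamma$.
\end{itemize}

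The missing idea is a nonlinear mechanism that forces an exact Fourier identity for \emph{every} realization. The paper takes $P_z$ to be the real zero set of $F_z=S+\varepsilon H_z$. Here $S(w)=\sin(4\pi w)+\sin(2\pi w)$, and $H_z(w)=\sum_n b(T^nz)\psi(w-n)$ has Fourier support in $[-\tau,\tau]$ with $\tau<1$.
\begin{itemize}
\item Rouch\'e's theorem keeps the zeros real, simple and uniformly close to $\frac13\mathbb Z\cup(\frac12+\mathbb Z)$, which gives the Delone property.
\item The residue theorem writes $\sum_{p\in P_z}f(p)$ through $F_z'/F_z$ on $\mathbb R\pm iY$. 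There $F_z'/F_z\pm4\pi i$ has Fourier support in $[1,\infty)$, resp.\ $(-\infty,-1]$: the upper function is the derivative of $\log(1+G_z)$ with $G_z$ carried by frequencies $\geq1$, and the lower one is its conjugate. This yields $\widehat{\delta}_{P_z}=4\delta_0$ on $(-1,1)$ exactly, to all orders in $\varepsilon$ at once.
\item Hadamard factorization together with the marker frequency $\sin(2\pi w)$ forces $u\in\mathbb Z$ and then $H_z(\cdot+u)=H_w$, which gives orbit separation.
\item Stacking these sets on the layers $\mathbb R\times\{m\}$, $m\in\mathbb Z^{d-1}$, with Poisson summation transversally, and applying a linear change of variables for general $\Gamma$, completes the construction.
\end{itemize}
Without such a device, part~(1), and with it the forward implication of the final equivalence, remains unproved.
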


In the converse implication, neither generation nor the Delone
property is needed: the existence of a stealthy cross-section
system already implies lattice induction. The inverse-square
condition in part~{\rm(2)} is strictly weaker than stealthiness.

Part~{\rm(1)} is
Theorem~\ref{thm:lattice-induced-realization}. The
one-dimensional family is constructed in
Section~\ref{sec:one-dimensional-construction}; the
higher-dimensional construction and the passage to arbitrary
lattices are given in
Section~\ref{sec:layered-construction}. Part~{\rm(2)} is
Theorem~\ref{thm:spectral-lattice-induction}, and the
characterization is
Corollary~\ref{cor:one-dimensional-characterization}.

The analytic core of part~{\rm(1)} is one-dimensional. Given a
Borel automorphism $T$ of a standard Borel space $Z$, we choose a
Borel injection
\[
    b:Z\longrightarrow[-1,1]
\]
and a band-limited entire interpolation function $\psi$ satisfying
\[
    \psi(n)=
    \begin{cases}
        1,&n=0,\\
        0,&n\in\mathbb Z\setminus\{0\}.
    \end{cases}
\]
The orbit of $z$ is then encoded in
\[
    H_z(w)
    :=
    \sum_{n\in\mathbb Z}
    b(T^nz)\psi(w-n).
\]
We perturb the $1$-periodic entire function
\[
    S(w):=\sin(4\pi w)+\sin(2\pi w)
\]
and define
\[
    P_z
    :=
    \bigl\{
        x\in\mathbb R:
        S(x)+\varepsilon H_z(x)=0
    \bigr\}.
\]
The two frequencies in $S$ have different roles. The higher
frequency determines the density and growth of the zero set,
whereas the lower frequency serves as a marker which excludes
nonintegral translations. A single sine would leave its zero set
invariant under a smaller nonintegral period. The marker is
therefore what allows the encoded orbit to be recovered up to an
integral translation, and hence what ultimately gives generation.

Uniform control of the zero set is proved in
Subsection~\ref{subsec:real-zero-sets}, the Fourier
identity in Subsection~\ref{subsec:Fourier-identity}, and orbit
recovery in Subsection~\ref{subsec:orbit-recovery}. The
higher-dimensional layering argument is carried out in
Section~\ref{sec:layered-construction}.

For sufficiently small $\varepsilon$, the sets $P_z$ are uniformly
Delone. Band limitation gives the Fourier identity near the
origin, while comparison of the two frequencies gives the
orbit-separation property. In higher dimensions, these
one-dimensional sets are placed on parallel lattice layers and
then transported by a linear map to an arbitrary full-rank
lattice.

If $q:\mathbb R^d\times Z\to
\smash{\operatorname{Ind}_{\Gamma}^{\mathbb R^d}(Z)}$ denotes the
quotient map, then $q(\{0\}\times Z)$ is a cross-section.
Its return-time process is a randomly translated copy of
$\Gamma$, and its Bartlett spectrum is pure point. Its return-time
sets do not depend on $z$, so this cross-section is generating only
when $(Z,\nu)$ is essentially a one-point probability space.

The cross-section in
Theorem~\ref{thm:introduction-lattice-induction} can be chosen so
that the Bartlett spectrum also detects part of the inducing
spectral type.

A generating cross-section retains the base dynamics in the full
configuration-space action, but this alone does not imply that the
base spectrum is visible to first-order linear statistics. The
next theorem shows that the cross-section can be chosen so that it
is.

Let
\[
    \Gamma^*
    :=
    \bigl\{
        \xi\in\mathbb R^d:
        \langle\xi,\gamma\rangle\in\mathbb Z
        \text{ for every }\gamma\in\Gamma
    \bigr\}.
\]
Let $\smash{\mathfrak m_{Z,\nu}^0}$ denote the maximal spectral type of the
$\Gamma$-action on $\smash{L^2_0(Z,\nu)}$, and let
$\widetilde{\mathfrak m}_{Z,\nu}^0$ be its
$\Gamma^*$-periodic lift to $\mathbb R^d$. We write
$\mu_{\Gamma,Z}$ for the induced probability measure on
$\smash{\operatorname{Ind}_{\Gamma}^{\mathbb R^d}(Z)}$.

\begin{theorem}[Continuous Bartlett components]
\label{thm:introduction-continuous-spectrum}
Let $\Gamma\leq\mathbb R^d$ be a full-rank lattice and let
$(Z,\nu)$ be a nontrivial weakly mixing
probability-preserving Borel $\Gamma$-space. Then $\smash{\operatorname{Ind}_{\Gamma}^{\mathbb R^d}(Z)}$
admits a generating stealthy Delone cross-section $Y$. Writing
\[
    \sigma_Y
    :=
    \sigma_{Y,\mu_{\Gamma,Z}}
    =
    \sigma_{(\kappa_Y)_*\mu_{\Gamma,Z}},
\]
one has a decomposition
\[
    \sigma_Y
    =
    \sigma_Y^{\mathrm{tor}}
    +
    \sigma_Y^Z,
\]
where
\[
    \operatorname{supp}\sigma_Y^{\mathrm{tor}}
    \subset
    \Gamma^*\setminus\{0\},
    \qquad
    0\neq\sigma_Y^Z
    \ll
    \widetilde{\mathfrak m}_{Z,\nu}^0.
\]
In particular, $\sigma_Y$ has a nonzero continuous component.
\end{theorem}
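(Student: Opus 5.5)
We prove the theorem by building the cross-section from the construction behind Theorem~\ref{thm:introduction-lattice-induction}(1), with the encoding function $b$ chosen so that the $Z$-dependent part of the Bartlett spectrum is nonzero. First we reduce to $\Gamma=\mathbb Z^d$ by a linear change of variables $A$ with $A\mathbb Z^d=\Gamma$. Such a map transports Bartlett spectra by the adjoint inverse $A^{-*}$, sends $\mathbb Z^d$ to $\Gamma^*$, and transports the periodic lift of the maximal spectral type accordingly. It also preserves stealthiness, the Delone property and generation. So we may take $\Gamma=\mathbb Z^d$.

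\textbf{The point process and the splitting.} For $d=1$ the cross-section is the zero set $P_z=\{S+\varepsilon H_z=0\}$, and we realize the layered higher-dimensional version in the same way. A point of the induced space is $[t,z]$, with $t$ in a fundamental domain and $z$ distributed by $\nu$. Its return-time configuration is $P_z-t$, and $N_f([t,z])=\sum_{p\in P_z}f(p-t)$. We split $L^2(X)$ orthogonally into the torus part $L^2(\mathbb R^d/\mathbb Z^d)$, pulled back along the factor map, and its orthogonal complement. The complement is the induced representation of $L^2_0(Z,\nu)$, and its maximal spectral type is exactly the periodic lift $\widetilde{\mathfrak m}^0_{Z,\nu}$. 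Projecting the centered statistics $(N_f\circ\kappa_Y)^\circ$ onto the two summands gives $\sigma_Y=\sigma_Y^{\mathrm{tor}}+\sigma_Y^Z$.

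The torus summand has pure-point spectrum on $\mathbb Z^d$. Centering removes the frequency $0$, which gives $\operatorname{supp}\sigma_Y^{\mathrm{tor}}\subset\mathbb Z^d\setminus\{0\}$. The second summand is a spectral measure of a vector in the complement, so it is absolutely continuous with respect to $\widetilde{\mathfrak m}^0_{Z,\nu}$. Stealthiness and the Delone property come directly from the construction, that is, from the Fourier identity and the uniform control of zero sets. Weak mixing makes $\mathfrak m^0_{Z,\nu}$ continuous, hence its lift is continuous. Therefore the final claim follows once we show $\sigma_Y^Z\neq0$.

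\textbf{Nonvanishing of $\sigma_Y^Z$.} This is the main obstacle. It amounts to showing that some linear statistic $N_f$ is not measurable with respect to the torus factor, i.e.\ that $\mathbb E[N_f\mid t]$ differs from $N_f$. Equivalently, the function $z\mapsto \sum_{p\in P_z}f(p-t)$ must be nonconstant in $z$ for a set of $t$ of positive measure.

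We argue perturbatively in $\varepsilon$. The zeros of $S+\varepsilon H_z$ near a simple zero $x_0$ of $S$ move by
\[
    -\varepsilon H_z(x_0)/S'(x_0)+O(\varepsilon^2),
\]
uniformly, by the implicit function theorem together with the uniform estimates of Subsection~\ref{subsec:real-zero-sets}. Take $f$ a bump localized near one zero $x_0$ of $S$. Then $N_f$ at $[t,z]$ equals $f(x_0-t)$ plus a first-order term
\[
    -\varepsilon f'(x_0-t)\,\frac{H_z(x_0)}{S'(x_0)},
\]
plus errors of order $\varepsilon^2$. Here $H_z(x_0)=\sum_n b(T^nz)\psi(x_0-n)$ is a nontrivial linear combination of the translates $b\circ T^n$. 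We choose $x_0\notin\mathbb Z$ to be a zero of $S$ at which this combination is nonzero; the zeros of $\sin(4\pi w)+\sin(2\pi w)$ include nonintegral points such as $w=1/3$. Since $b$ is injective and $\nu$ is nonatomic, $b$ is nonconstant. The first-order term is therefore a nonzero element of $L^2_0$, because the $T^n$-translates of $b^\circ$ do not cancel: their spectral measure is continuous and $\psi(x_0-\cdot)$ has a nonvanishing Fourier series. The quadratic error is $O(\varepsilon^2)$ in $L^2$.

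Hence, for $\varepsilon$ small, the projection onto the complement is nonzero, and so $\sigma_Y^Z\neq0$. If a nonintegral choice of $x_0$ causes trouble, we instead replace $b$ by $c+\delta b$ with generic small $\delta$; the same perturbative argument then applies.
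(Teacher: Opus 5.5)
\textbf{Overall structure.} Your architecture is the paper's. You reduce to $\Gamma=\mathbb Z^d$ and split $L^2_0$ into the torus and base subspaces; their spectral types are mutually singular because weak mixing makes $\widetilde{\mathfrak m}^0_{Z,\nu}$ atomless, and this is what makes $\sigma_Y=\sigma_Y|_{\Gamma^*\setminus\{0\}}+\sigma_Y^Z$ a canonical splitting. You then show that the base projection of a localized linear statistic is nonzero by differentiating the zero locations, $\partial_s p_{\lambda,s}|_{s=0}=-H_z(\lambda)/S'(\lambda)$. This uses three facts: the unperturbed statistic is torus-measurable, the remainder is uniformly $o(s)$, and Delone, stealthiness and generation hold for every small parameter $s\neq0$, as in Lemma~\ref{lem:parameter-family}.

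\textbf{The gap: nondegeneracy at a nonintegral zero.} The step that does not hold up as written is the nondegeneracy of the first-order term at your nonintegral zero $x_0$. You need
\[
\bigl\|\textstyle\sum_n\psi(x_0-n)\,b^\circ\circ T^n\bigr\|_{L^2(\nu)}^2=\int|\Psi_{x_0}|^2\,d\varsigma_{b^\circ}>0,
\qquad
\Psi_{x_0}(\alpha):=\textstyle\sum_n\psi(x_0-n)e^{2\pi i n\alpha}.
\]
Your stated reason, a continuous spectral measure plus a ``nonvanishing Fourier series'', does not give this. $\Psi_{x_0}$ is merely smooth, and a continuous measure can charge the zero set of a smooth function that is not identically zero. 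Continuity helps only if you know that zero set is countable. If you instead mean that $\Psi_{x_0}$ is nowhere zero, continuity is irrelevant, but then that claim needs proof.

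The claim is not automatic. By Poisson summation,
\[
\Psi_{x_0}(\alpha)=\textstyle\sum_k e^{2\pi i x_0(\alpha-k)}\widehat\psi(\alpha-k).
\]
At $x_0=1/2\in\Lambda$ this vanishes wherever $\widehat\psi(\alpha)=\widehat\psi(\alpha-1)$, for instance at $\alpha=1/2$ by evenness. For $x_0=1/3$ it is true, but only because of the specific $\psi$. For $\alpha\in[0,1)$ only $k=0,1$ contribute, $\widehat\psi\geq0$, and $\widehat\psi(\alpha)+\widehat\psi(\alpha-1)=1$. Hence, with $a:=\widehat\psi(\alpha)\in[0,1]$,
\[
|\Psi_{1/3}(\alpha)|=|a+e^{-2\pi i/3}(1-a)|>0.
\]
Your fallback of replacing $b$ by $c+\delta b$ does not help: it only multiplies $b^\circ$ by $\delta$ and leaves the cancellation question unchanged.

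\textbf{The repair.} The simple fix, which is what the paper does in Lemma~\ref{lem:nontorus-variation}, is to localize at the integer zero $\lambda=0\in\Lambda$ on the layer $m=0$. There the interpolation property gives $H_z(0)=b(z)$. Take $t$ in a small set $E$ on which only this zero meets the support of $\varphi$. The derivative of the statistic is then $-\partial_1\varphi(-t)\,b(z)/S'(0)$. Its base component has squared norm at least
\[
\operatorname{Var}_\nu(b)\,|S'(0)|^{-2}\int_E|\partial_1\varphi(-t)|^2\,dt>0.
\]
This uses only that $b$ is injective and $\nu$ is not a point mass; no spectral input is needed at this step. With that change your argument is complete.
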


This is
Theorem~\ref{thm:weakly-mixing-Bartlett-component}, proved in
Section~\ref{sec:continuous-spectral-components}. Bernoulli bases
give an absolutely continuous component, while weakly mixing bases
of singular maximal spectral type give a singular-continuous
component.

\begin{corollary}[Stealthy point processes with mixed spectra]
\label{cor:introduction-non-pure-point}
For every $d\geq1$, there exist ergodic translation-invariant
stealthy point processes on $\mathbb R^d$, supported on uniformly
Delone families of configurations, whose Bartlett spectra have,
respectively,
\begin{enumerate}
\item
a nonzero absolutely continuous component;

\item
a nonzero singular-continuous component.
\end{enumerate}
\end{corollary}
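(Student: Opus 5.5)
The plan is to apply Theorem~\ref{thm:introduction-continuous-spectrum} to two concrete weakly mixing bases for the lattice $\Gamma=\mathbb Z^d$, and then check that the resulting return-time processes have all the properties listed in the corollary. Throughout, let $Y$ be the generating stealthy Delone cross-section given by that theorem and let $\eta:=(\kappa_Y)_*\mu_{\Gamma,Z}$.

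The routine properties are checked first.
\begin{enumerate}
\item Stealthiness is part of the conclusion of the theorem.
\item Uniform Delone support: the Delone property of $Y$ means the return-time configurations are uniformly Delone, so $\eta$ is concentrated on a uniformly Delone family.
\item Ergodicity: $\kappa_Y$ is a measurable isomorphism, since $Y$ is generating. So $\eta$ is ergodic exactly when the induced action is. The induced action of an ergodic $\Gamma$-space is ergodic, because an invariant function on $(\mathbb R^d\times Z)/\Gamma$ is constant in $t$ and hence $\Gamma$-invariant on $Z$. Weak mixing implies ergodicity, so this applies.
\end{enumerate}

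For part~(1), take $Z=\{0,1\}^{\mathbb Z^d}$ with the Bernoulli$(1/2,1/2)$ shift action. This action is mixing, hence weakly mixing, and nontrivial. The Koopman representation on $L^2_0$ has countable Lebesgue spectrum, so $\mathfrak m^0_{Z,\nu}$ is Haar measure on $\mathbb R^d/\Gamma^*$, and its periodic lift $\widetilde{\mathfrak m}^0_{Z,\nu}$ is equivalent to $\lambda_d$.

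The theorem gives $0\neq\sigma_Y^Z\ll\lambda_d$. The complementary term $\sigma_Y^{\mathrm{tor}}$ is a positive translation-bounded measure supported on the countable set $\Gamma^*\setminus\{0\}$, so it is purely atomic. By uniqueness of the Lebesgue decomposition, the absolutely continuous part of $\sigma_Y$ is therefore exactly $\sigma_Y^Z$, which is nonzero.

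For part~(2), we need a nontrivial weakly mixing $\mathbb Z^d$-action whose maximal spectral type on $L^2_0$ is singular with respect to Haar measure. For $d=1$, take a weakly mixing rank-one transformation such as the Chacón map, which is known to have singular continuous spectrum. For general $d$, take the product action $T_1\times\cdots\times T_d$ of $d$ commuting copies, one in each coordinate, with $T_1=\cdots=T_d=T$ the Chacón map.

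The maximal spectral type of this product is generated by the convolution-type products $\rho_1\otimes\cdots$ of the coordinate types and of $\delta_0$. In more detail, $L^2_0$ of the product decomposes into tensor products in which at least one factor lies in $L^2_0$. The corresponding spectral types have the form $\bigotimes_i\tau_i$ on $\mathbb T^d$, where each $\tau_i$ is either $\delta_0$ or $\rho$, and $\rho$ is not identically $\delta_0$ in every factor. Since $\rho$ is continuous, these product measures have no atoms. Since $\rho\perp$ Lebesgue, each such product is singular to Haar measure on $\mathbb T^d$.

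The action is weakly mixing because it has no eigenfunctions: atoms of the maximal type would be needed, and there are none. Then $\sigma_Y^Z$ is nonzero and absolutely continuous with respect to a continuous singular measure, so it is itself continuous and singular. Adding the atomic term $\sigma_Y^{\mathrm{tor}}$ does not alter the singular-continuous part, which is therefore nonzero.

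The main point requiring care is this spectral verification for the $d$-dimensional singular example. If citing rank-one spectral results is delicate, an alternative is a Gaussian $\mathbb Z^d$-action whose spectral measure is a continuous singular measure on $\mathbb T^d$, for example a product of Riesz products. This is weakly mixing, and its maximal spectral type is the sum of convolution powers of that measure. One must then check that these convolution powers remain singular; this is what choosing a suitable dissociated Riesz product, or a measure supported on a Kronecker-type set, is meant to guarantee. I would fall back on that version if the product argument above needs more justification than is available.
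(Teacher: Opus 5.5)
Your proof is correct. Part~(1) and the routine verifications agree with the paper: ergodicity through the induced action, uniform Delone support, and separating the purely atomic $\sigma_Y^{\mathrm{tor}}$ from the continuous part. The paper treats the Bernoulli case in Corollary~\ref{cor:Bernoulli-Bartlett-component} by writing $L^2_0(Z,\nu)$ as a sum of regular representations.

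Part~(2) follows a different route. In the Gaussian-bases example after Corollary~\ref{cor:singular-continuous-Bartlett-component}, the paper takes a Gaussian $\Gamma$-system whose first-chaos measure $\theta$ is a symmetrized continuous measure carried by $\iota(E)$. Here $E$ is a Cantor set built at scales $2^{-k!}$ with upper box dimension zero. Subadditivity of box dimension under Minkowski sums makes every $\theta^{*n}$ Haar-null, so $\sum_n\theta^{*n}/n!$ is singular and atomless.

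You instead let $d$ Chacón maps act in separate coordinates. For this outer tensor product the relevant spectral measures are product measures $\tau_1\otimes\cdots\otimes\tau_d$, not convolutions, so your phrase ``convolution-type'' is a misnomer. Singularity and atomlessness are then immediate. A $\delta_0$ factor confines the measure to a coordinate hyperplane. A $\rho$ factor in coordinate $i$ confines it to $\{x:x_i\in N\}$ with $N$ Lebesgue-null, and makes it atomless.

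Your route therefore avoids any control of convolution powers. The price is importing two nontrivial facts about the Chacón map that must be cited: weak mixing, and singularity of its maximal spectral type. The latter follows, for instance, from a weak limit of $U^{h_n}$ along tower heights that is a nonzero polynomial in $U$, combined with the Riemann--Lebesgue lemma. The paper's example is self-contained apart from standard Gaussian spectral theory.

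Your Gaussian fallback is essentially the paper's construction. The box-dimension estimate is one concrete way to guarantee what you correctly identify as the crux there: singularity of all convolution powers.
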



\subsection{Fourier quasicrystals and cross-sections}
\label{subsec:introduction-functional-tilings}

The Fourier quasicrystals of Kurasov and Sarnak \cite{KS20}
and Alon, Kummer, Kurasov and Vinzant \cite{AKKV25} give
stealthy point processes with pure-point Bartlett spectrum.
They also arise as return-time sets for lattice-induced torus
rotations, although from cross-sections different from those
constructed here.

For the higher-dimensional construction, let
$X\subset(\mathbb P^1)^n$ be a Lee--Yang variety, let
$L\in\mathbb R^{n\times d}$, and set
\[
    a(t):=\exp(2\pi iLt),
    \qquad
    H:=a(\mathbb R^d)\subset\mathbb T^n.
\]
We regard $\mathbb T^n$ as the standard real torus inside
$(\mathbb C^\times)^n\subset(\mathbb P^1)^n$. Translation by
$a(t)$ defines an $\mathbb R^d$-action on $H$:
\[
    t.z:=a(t)z.
\]
Equipped with Haar measure, this action is ergodic. Define
\[
    Y_X:=X\cap H.
\]
Then $Y_X$ is a cross-section for the action on $H$. Its
return-time set at $z\in H$ is
\[
\begin{aligned}
    (Y_X)_z
    &=
    \{t\in\mathbb R^d:t.z\in Y_X\}\\
    &=
    \{t\in\mathbb R^d:a(t)z\in X\}.
\end{aligned}
\]
In particular, at the identity element $1\in H$,
\[
    (Y_X)_1
    =
    \{t\in\mathbb R^d:a(t)\in X\}
    =
    \Lambda(X,L).
\]
The discreteness of the Fourier transform, with the atom at the
origin isolated, gives stealthiness, while the pure-point spectrum
of the torus rotation gives pure-point Bartlett spectrum.

Choose $d$ rows $I$ such that $L_I$ is invertible, and set
\[
    \Gamma:=L_I^{-1}\mathbb Z^d.
\]
Projection onto the coordinates in $I$, followed by the
identification
\[
    \mathbb T^d\cong\mathbb R^d/\Gamma,
    \qquad
    \exp(2\pi iL_It)\longleftrightarrow t+\Gamma,
\]
gives an equivariant factor map
\[
    H\longrightarrow\mathbb R^d/\Gamma.
\]
The rotation on $H$ is therefore induced from the
$\Gamma$-action on the fiber over $0+\Gamma$. In these examples,
the cross-section is imposed by the Lee--Yang variety. Our
construction instead chooses a coding cross-section designed to
generate the system and to make part of the inducing spectral type
visible in the Bartlett spectrum.

A separate question is whether a given nonperiodic functional
tiling can be incorporated into any translation-invariant point
process. Kolountzakis and Lev \cite{KL16}, based on an earlier
construction of Kargaev \cite{Kar83}, obtained one with translation
set
\[
    \Lambda=\{n+\alpha(n):n\in\mathbb Z\},
    \qquad
    \alpha(n)\longrightarrow0
    \quad(n\longrightarrow\pm\infty).
\]
Since $\Lambda-n\to\mathbb Z$ locally as $n\to\pm\infty$,
$\Lambda$ is not recurrent under integer translation. Poincar\'e
recurrence therefore rules out such a translation-invariant
realization. Thus the stealthy point processes furnished by
Theorem~\ref{thm:introduction-lattice-induction} have recurrent
almost-everywhere realizations, drastically unlike the
asymptotically lattice-like, nonrecurrent translation sets of
Kolountzakis and Lev.


\subsection{Density and positive random measures}
\label{subsec:introduction-further-results}

The constructions above naturally lead to a quantitative
question: how large can a stealthy gap be relative to the
intensity? Here and below, $\lambda_d$ denotes Lebesgue measure
on $\mathbb R^d$. For an open neighborhood $\Omega$ of the
origin, set
\[
    \mathcal D(\Omega)
    :=
    \sup\left\{
        \lambda_d(K):
        K\subset\mathbb R^d\ \text{compact},\
        K-K\subset\Omega
    \right\}.
\]
If $\eta$ is a translation-invariant point process, its 
\emph{intensity} $\rho_\eta$ is determined by
\[
    \int_{\mathcal N_s(\mathbb R^d)}
    \omega(f)\,d\eta(\omega)
    =
    \rho_\eta\int_{\mathbb R^d}f\,d\lambda_d,
    \qquad
    f\in C_c(\mathbb R^d).
\]
\begin{theorem}[Density bound]
\label{thm:introduction-density-bound}
Let $\eta$ be a translation-invariant point process of intensity
$\rho$, supported on uniformly separated configurations. If
\[
    \sigma_\eta(\Omega)=0,
\]
then
\[
    \rho\geq\mathcal D(\Omega).
\]
In particular, if $C\subset\Omega$ is symmetric and convex, then
\[
    \rho\geq2^{-d}\lambda_d(C).
\]
\end{theorem}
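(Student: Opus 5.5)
The plan is to test the variance identity against a single well-chosen function: one built from $K$ whose Fourier transform is nonnegative and vanishes outside $\Omega$, so that its variance is zero. Fix a compact $K$ with $K-K\subset\Omega$, and first enlarge the gap slightly. By compactness of $K-K$ there is a small ball $B_\delta(0)$ with $K-K+B_\delta(0)\subset\Omega$. We regularise by setting
\[
\phi:=\mathbf 1_{K+B_{\delta/2}(0)}*\chi ,
\]
where $\chi\geq0$ is a smooth bump supported in $B_{\delta/4}(0)$ with $\int\chi=1$. Then put $g:=\phi*\widetilde\phi$, with $\widetilde\phi(x)=\overline{\phi(-x)}$. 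This $g$ is smooth, has compact support inside $\Omega$, is positive definite, and satisfies
\[
g(0)=\|\phi\|_2^2,\qquad \int g\,d\lambda_d=\Big(\int\phi\Big)^2 .
\]
Since $\phi\leq 1$ and $\phi\geq0$, we have $\|\phi\|_2^2\leq\int\phi$. Moreover $\int\phi\geq\lambda_d(K)$, because $\phi=1$ on $K$ once $\chi$ has small support.

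We now swap the roles of space and frequency. Put $f:=\widehat{\phi}$-type data so that $|\widehat f|^2=g$. Concretely, take $f$ with $\widehat f=\check\phi$ (up to reflection), so that $|\widehat f|^2$ is a translate-free version of $g$ supported in $\Omega$. Then $f\in\mathcal S$, and $\sigma_\eta(\Omega)=0$ gives
\[
\operatorname{Var}_\eta(N_f)=\int|\widehat f|^2\,d\sigma_\eta=0 .
\]
This alone is not enough, because $f$ carries no information about the density. The better route is to work with the reduced second moment directly. Let $\Sigma$ be the reduced covariance measure, whose Fourier transform in the distributional sense is $\sigma_\eta$. Uniform separation gives
\[
\Sigma=\rho\,\delta_0+\rho^{(2)}\text{-part}-\rho^2\lambda_d ,
\]
where the Palm part, which carries no atom at $0$, is a positive measure. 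Pairing with $\widehat g$, which is nonnegative because $g$ is positive definite with $\widehat g=|\widehat\phi|^2$, and applying Parseval, we get
\[
0=\int g\,d\sigma_\eta=\rho\,\widehat g(0)\cdot(\text{atom})+\int_{\neq0}\widehat g\,d(\text{Palm})-\rho^2 g(0).
\]
Here the Fourier pairing sends the Lebesgue part to $g(0)$ and the atom to $\int g=\widehat g(0)$. So we arrange the roles as follows: take $g$ supported in $\Omega$ on the frequency side, and let $h=\check g\geq0$ be the spatial test function. Then
\[
0=\rho\,h(0)+\int_{x\neq0}h(x)\,d\rho^{(2)}_{\mathrm{red}}-\rho^2\int h .
\]
The middle term is $\geq0$ since $h\geq0$, so $\rho\,h(0)\leq\rho^2\int h$. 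That is,
\[
\rho\ \geq\ \frac{h(0)}{\int h\,d\lambda_d}=\frac{\int g}{g(0)}=\frac{(\int\phi)^2}{\|\phi\|_2^2}\ \geq\ \int\phi\ \geq\ \lambda_d(K).
\]
Taking the supremum over $K$ yields $\rho\geq\mathcal D(\Omega)$.

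The main technical step is justifying this pairing for $h$, which is Schwartz but not compactly supported. Uniform separation gives $\rho^{(2)}_{\mathrm{red}}$ translation-bounded, so the integrals converge absolutely. The decomposition of $\operatorname{Var}_\eta(N_f)$ into the diagonal, Palm and $\rho^2$ terms is the standard second-moment formula, which is valid under local second moments. We must also check that $\int h\,d\sigma_\eta$ is given by the defining identity. This follows by writing $g=|\widehat f|^2$ with $\widehat f=\phi$ placed on the frequency side, i.e.\ $f=\check\phi\in\mathcal S$, and noting that $|f|^2$-type convolutions reproduce $h$.

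For the convex case, take $K=\tfrac12 C'$ with $C'\subset C$ a compact convex symmetric set. Then $K-K=C'\subset\Omega$ and $\lambda_d(K)=2^{-d}\lambda_d(C')$. Letting $C'\uparrow C$ gives $\rho\geq2^{-d}\lambda_d(C)$.
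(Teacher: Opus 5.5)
Your argument is correct in substance, but it is a genuinely different proof from the paper's. The paper turns the gap into an exact sampling identity $\sum_{p\in P}|f(p)|^2=\rho\|f\|_2^2$ on $PW_K$ for almost every realization and then applies Landau's necessary density theorem. That route requires a reduction to ergodic components, Wiener's ergodic theorem to get $D^-(P)\leq\rho$, the Plancherel--P\'olya bound (where separation is used essentially), and a coarea argument to arrange $\lambda_d(\partial K)=0$.

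You instead run a Delsarte/Tur\'an-type positivity argument purely at the level of second moments. You pair the nonnegative function $|\widehat\phi|^2$, whose Fourier transform $\phi*\widetilde\phi$ is supported in $\Omega$, against the reduced covariance measure $\rho\delta_0+\rho\mathcal P-\rho^2\lambda_d$, where $\mathcal P$ is the reduced Palm measure. Discarding the nonnegative Palm term gives $\rho\|\phi\|_2^2\geq|\int\phi|^2$, hence $\rho\geq\int\phi\geq\lambda_d(K)$.

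This needs no ergodic decomposition, no Landau theorem, and no boundary regularity of $K$, since the smoothing handles arbitrary compact $K$. Separation enters only through local second moments and translation-boundedness of $\mathcal P$. Your argument even gives the stronger bound $\rho\geq\sup\{\int G\,d\lambda_d: G\in C_c^\infty(\Omega)\text{ positive definite},\ G(0)=1\}$, which dominates $\mathcal D(\Omega)$. What the paper's route buys is realization-level information: almost every configuration yields a tight exponential frame for $L^2(K)$ and has lower Beurling density at least $\lambda_d(K)$.

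Several points need repair when you write this up.
\begin{enumerate}
\item Delete the false start (``we now swap the roles\dots'' and the first garbled display).
\item Your radii do not close up. The function $\mathbf 1_{K+B_{\delta/2}(0)}*\chi$ with $\operatorname{supp}\chi\subset B_{\delta/4}(0)$ is supported in $K+\overline{B_{3\delta/4}(0)}$, so $\phi*\widetilde\phi$ is only supported in $K-K+\overline{B_{3\delta/2}(0)}$. Choose $\delta$ with $K-K+B_{2\delta}(0)\subset\Omega$ instead.
\item The justification of the pairing is wrong as written, since $g=\phi*\widetilde\phi$ is not $|\phi|^2$. Either quote that $\sigma_\eta$ is the distributional Fourier transform of the reduced covariance measure, or polarize. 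For the latter, take $f,f'\in\mathcal S(\mathbb R^d)$ with $\widehat f=g$ and $\widehat{f'}=\chi_1$, where $\chi_1\in C_c^\infty(\Omega)$ is real and equal to $1$ on $\operatorname{supp}g$. Then $\operatorname{Cov}_\eta(N_f,N_{f'})=\int g\,d\sigma_\eta=0$. The second-moment formula writes the same covariance as $\int c\,d(\rho\delta_0+\rho\mathcal P-\rho^2\lambda_d)$ with $c(u)=\int f(x)\overline{f'(x+u)}\,dx=\widehat g(u)=|\widehat\phi(u)|^2\geq0$.
\item Dividing by $\rho$ uses $\rho>0$. This is implicit in the paper as well (otherwise the empty process is a counterexample), so it is not a defect relative to the paper's proof.
\end{enumerate}
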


This is Theorem~\ref{thm:density-bound}, proved in
Section~\ref{sec:density-bounds}. The proof turns the spectral gap
into an exact sampling identity and applies Landau's necessary
density theorem. The bound is sharp for lattice processes.

The situation changes completely for positive random measures.
Let $\mathcal M_+(\mathbb R^d)$ denote the space of locally finite
positive Radon measures on $\mathbb R^d$.

\begin{theorem}[Positive random measures]
\label{thm:introduction-positive-measures}
Every essentially free probability-preserving Borel
$\mathbb R^d$-space admits a stealthy factor supported on measures
of the form
\[
    g\,\lambda_d,
\]
where $g$ is smooth, bounded, and strictly positive. The Bartlett
spectrum of the factor may be made to vanish on $B_R(0)$ for any
prescribed $R>0$.
\end{theorem}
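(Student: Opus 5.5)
Given an essentially free probability-preserving Borel $\mathbb R^d$-space $(X,\mu)$, the plan is to build a factor map $\Phi:X\to\mathcal M_+(\mathbb R^d)$ of the form
\[
    \Phi(x):=g_x\,\lambda_d,
    \qquad
    g_x(t):=1+\varepsilon\,\theta\bigl(h(t.x)\bigr)\ast\!\text{-smoothed},
\]
more precisely $g_x:=1+\varepsilon\,(\chi\ast F_x)$, where $F_x(t):=F(t.x)$ for a bounded Borel function $F:X\to[-1,1]$ and $\chi\in\mathcal S(\mathbb R^d)$ is real with $\widehat\chi$ compactly supported in the annulus $\{R<|\xi|<2R'\}$ (any $R'>R$). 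Equivariance $g_{t.x}=g_x(\cdot+t)$ is immediate, so $\Phi$ is a factor map and $\Phi_*\mu$ is a translation-invariant random measure. Smoothness holds because $\chi\ast F_x$ is a convolution of a bounded function with a Schwartz kernel. The function is bounded by $1+\varepsilon\|\chi\|_{L^1}$, and strict positivity holds with $g_x\geq1-\varepsilon\|\chi\|_{L^1}>0$ for small $\varepsilon$.

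For the spectral computation, write $N_f(\Phi(x))=\int f+\varepsilon\int f\,(\chi\ast F_x)\,d\lambda_d$. The centered statistic is therefore $\varepsilon\int (\tilde\chi\ast f)(t)\,F^\circ(t.x)\,dt$, with $\tilde\chi(t)=\chi(-t)$. By the spectral theorem for the Koopman representation of $\mathbb R^d$ on $L^2_0(X,\mu)$, with spectral measure $\sigma_F$ of $F^\circ$, this gives
\[
    \operatorname{Var}(N_f)=\varepsilon^2\int|\widehat f(\xi)|^2|\widehat\chi(\xi)|^2\,d\sigma_F(\xi),
\]
up to the sign convention $\xi\mapsto-\xi$. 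Hence $\sigma_{\Phi_*\mu}=\varepsilon^2|\widehat\chi|^2\sigma_F$, which vanishes on $B_R(0)$ because $\widehat\chi$ vanishes there. This already yields a stealthy factor, and the remaining work is to make it nondegenerate.

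The main point, and the place where essential freeness enters, is ensuring $\sigma_F$ charges the annulus, so that the factor is genuinely random rather than deterministic (Lebesgue measure itself is trivially a stealthy factor). I would argue as follows. The Koopman representation of a free action contains, via a Rokhlin-type lemma for free $\mathbb R^d$-actions (Ornstein--Weiss, or a cross-section with a large tower), functions $F$ whose restriction to the tower $\{t.y:y\in Y',\ t\in[0,L)^d\}$ equals $\phi(t)$ for a chosen bump $\phi$. For such $F$, $\sigma_F$ dominates a positive multiple of $|\widehat\phi|^2\,d\xi$ minus errors, so $\sigma_F$ is nonzero on any open set where $\widehat\phi\neq0$. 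If a precise estimate is awkward, I would instead use the simpler fact that for a free action the maximal spectral type on $L^2_0$ is equivalent to Lebesgue measure. For mixing this needs care; for free actions the regular representation embeds via towers, and I would prove only that the maximal type gives positive mass to every nonempty open set, which suffices. In that case some $F$ in a countable generating family has $\sigma_F(\operatorname{supp}$-annulus$)>0$. Choosing $F$ accordingly makes $\sigma_{\Phi_*\mu}\neq0$, while the spectrum still vanishes on $B_R(0)$ for the prescribed $R$.
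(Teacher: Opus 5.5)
Your construction is correct and differs from the paper's. One aside in the nondegeneracy step is false and should be dropped.

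\textbf{The false aside.} The maximal spectral type of a free action on $L^2_0$ need not be Lebesgue, and the regular representation need not embed. The irrational flow $t.(x,y)=(x+t,y+\alpha t)$ on $\mathbb T^2$ is free and has pure point spectrum $\{m+n\alpha\}$.

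\textbf{What you actually need.} Your fallback claim is true and suffices: for an essentially free $\mathbb R^d$-action, the maximal spectral type on $L^2_0$ charges every nonempty open set. It follows from the Rokhlin lemma for free $\mathbb R^d$-actions. Tower functions $e^{2\pi i\langle t,\xi_0\rangle}\phi(t/L)$ are approximate eigenvectors at every $\xi_0$. Your phrase ``dominates $|\widehat\phi|^2\,d\xi$ minus errors'' should read as weak approximation of normalized spectral measures, not domination.

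\textbf{A route without Rokhlin.} Take a Kechris lacunary section $Y$. Its Bartlett spectrum $\sigma_Y$ is dominated by the maximal type. By the paper's Paley--Wiener--Schwartz argument, $\sigma_Y$ cannot be carried by $\overline{B_R(0)}$. So the maximal type charges a bounded annulus beyond $R$, and you can choose $\widehat\chi$ nonvanishing on a set it charges. An $L^2$-approximation then gives a bounded real $F$ with $\varsigma_{F^\circ}(\{\widehat\chi\neq0\})>0$, as you intended. The rest of your argument checks out: positivity, smoothness, equivariance, Borelness, and $\sigma=\varepsilon^2|\widehat\chi|^2\varsigma_{F^\circ}$.

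\textbf{Comparison with the paper.} The paper takes a lacunary, hence separated, cross-section $Y$ and sets $g_x=A_R+\sum_{p\in Y_x}\kappa_R(\cdot-p)$, where $\widehat{\kappa_R}=m_R$ vanishes exactly on $\overline{B_R(0)}$. This gives $\sigma_{\zeta_R}=|m_R|^2\sigma_Y$. The Paley--Wiener--Schwartz theorem then shows that $\Phi_R(x)=\Phi_R(x')$ if and only if $\kappa_Y(x)=\kappa_Y(x')$. So the paper's factor is an isomorphic copy of the return-time process, and of $(X,\mu)$ itself when $Y$ is generating; nontriviality is automatic.

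Your band-pass filtering of a bounded observable needs no cross-section, and stealthiness becomes a one-line spectral computation; you even get compactly supported Bartlett spectrum. The cost is that the factor remembers only the annular frequency content of $t\mapsto F(t.x)$. You obtain some nontrivial stealthy factor, with no isomorphism statement. Nondegeneracy also needs the separate input from freeness described above. As you observed, the literal statement is already met by the deterministic factor $\lambda_d$, so that input carries the real content of your version.
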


This follows from
Theorem~\ref{thm:positive-random-measure-realization}. Indeed,
every essentially free Borel $\mathbb R^d$-action admits a
complete lacunary, and hence separated, Borel cross-section
\cite{Kec92}. Applying the theorem to such a cross-section gives
stealthy positive random measure factors with arbitrarily large
prescribed gaps. When a generating separated cross-section is
available, the construction gives an isomorphic realization.


\subsection{Stealthy point processes over other locally compact
abelian groups}
\label{subsec:introduction-other-groups}

Stealthiness extends naturally to locally compact abelian groups.
Let $G$ be a second-countable locally compact abelian group and
let $\widehat G$ be its dual. For a translation-invariant point
process $\eta$ on $G$, the covariance of its linear statistics is
represented by a positive measure $\sigma_\eta$ on $\widehat G$:
\[
    \operatorname{Var}_\eta(N_f)
    =
    \int_{\widehat G}
    |\widehat f(\chi)|^2\,d\sigma_\eta(\chi).
\]
The process is stealthy if $\sigma_\eta$ vanishes on a
neighborhood of the trivial character.

For $G=\mathbb Z$, Borichev, Sodin and Weiss
\cite[Theorem~3]{BSW18} imply the following.

\begin{theorem}
\label{thm:introduction-BSW}
Every translation-invariant stealthy simple point process on
$\mathbb Z$ is periodic: there is an integer $N\geq1$ such that
\[
    \omega(n+N)=\omega(n)
    \qquad(n\in\mathbb Z)
\]
for $\eta$-almost every configuration $\omega$.
\end{theorem}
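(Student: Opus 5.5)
We deduce this from \cite[Theorem~3]{BSW18}, which we take in the following form: if $\mu$ is a positive finite measure on $\mathbb T=\mathbb R/\mathbb Z$ whose Fourier coefficients $\widehat\mu(n)$ take only the values $0$ and $1$ (an idempotent correlation), and whose support is not all of $\mathbb T$, then $\widehat\mu$ is eventually periodic, and in fact periodic. The plan is to show that the covariance data of a stealthy process on $\mathbb Z$ is exactly such an object.

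\emph{Step 1: reduction to a single stationary sequence.} Set $\xi_n(\omega):=\omega(\{n\})\in\{0,1\}$. This is a stationary $\{0,1\}$-valued sequence under $\eta$, and the Bartlett spectrum $\sigma_\eta$ on $\widehat{\mathbb Z}=\mathbb T$ is the spectral measure of the centered sequence $\xi_n-p$, where $p=\eta(\xi_0=1)$. If $p\in\{0,1\}$, the process is a.s.\ empty or a.s.\ all of $\mathbb Z$, hence trivially periodic. We therefore assume $0<p<1$.

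\emph{Step 2: the measure to which BSW applies.} The point is to pass to the uncentered correlation measure $\mu$ on $\mathbb T$, defined by
\[
    \widehat\mu(n)=\mathbb E_\eta[\xi_0\xi_n]
    =\int_{\mathbb T} e^{2\pi i n\theta}\,d\mu(\theta).
\]
This $\mu$ is positive, since $n\mapsto\mathbb E[\xi_0\xi_n]$ is positive definite, and
\[
    \mu=\sigma_\eta+p^2\delta_0 .
\]
Stealthiness says that $\sigma_\eta$ vanishes on a neighborhood $U$ of $0$ in $\mathbb T$, so $\mu|_U=p^2\delta_0$. Hence $\operatorname{supp}\mu\subset\{0\}\cup(\mathbb T\setminus U)$, and this is a proper subset of $\mathbb T$.

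The remaining task is the idempotent condition, which $\widehat\mu$ does not satisfy directly since $\mathbb E[\xi_0\xi_n]$ is a probability, not a $\{0,1\}$-value. The fix is to work pathwise. By the ergodic decomposition (stealthiness is not obviously inherited by ergodic components, so this needs care; see below) or directly by Wiener's lemma, one considers for a.e.\ $\omega$ the Wiener correlation
\[
    \gamma_\omega(n)=\lim_N \frac{1}{2N+1}\sum_{|k|\le N}\xi_k\xi_{k+n}.
\]
Its spectral measure $\mu_\omega$ satisfies $\mathbb E\,\mu_\omega=\mu$. Positivity of every $\mu_\omega$ then forces
\[
    \mu_\omega|_U \ll \delta_0
    \qquad\text{for a.e. }\omega .
\]
Consequently the pathwise statement needed for \cite[Theorem~3]{BSW18} holds: each sequence $\xi(\omega)\in\{0,1\}^{\mathbb Z}$ has its spectral measure vanishing on the punctured neighborhood $U\setminus\{0\}$. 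In the form actually stated there, BSW asserts that such a $\{0,1\}$ sequence with spectral gap is periodic.

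\emph{Step 3: uniform period.} BSW yields, for a.e.\ $\omega$, a period $N(\omega)$. We need a single $N$. The quantitative part of BSW bounds the period in terms of the gap: a sequence whose spectrum avoids $(0,\delta)$ has its spectrum contained in finitely many rationals with denominator at most about $1/\delta$. So we may take
\[
    N=\operatorname{lcm}\{1,\dots,\lceil 1/\delta\rceil\},
\]
and then $\xi_{n+N}=\xi_n$ a.s.

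\emph{Main obstacle.} The hardest step is Step 2: transferring the spectral gap from the averaged measure $\sigma_\eta$ to individual realizations. Positivity handles this, because a sum (or integral) of positive measures vanishing on $U\setminus\{0\}$ forces each summand to vanish there. What must be checked carefully is that the averaged atom $p^2\delta_0$ does not hide pathwise atoms of $\mu_\omega$ at $0$ that differ between realizations. This is harmless for the conclusion, since atoms at $0$ do not affect the gap on the punctured neighborhood. A second point requiring care is that the form of BSW Theorem~3 matches the pathwise hypothesis used here.
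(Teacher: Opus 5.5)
You have the right idea in reducing to \cite[Theorem~3]{BSW18}; the paper gives no further argument and simply reads the statement off that theorem. However, Step~2 does not produce a hypothesis from which periodicity follows.

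The first form of the theorem you quote is not the cited result. A positive measure on $\mathbb T$ with $\{0,1\}$-valued Fourier coefficients is just Haar measure on a closed subgroup, and in any case you never verify that hypothesis. You then switch to a pathwise statement about the Wiener diffraction $\mu_\omega$ of a single realization, and as a statement about individual sequences this is false. Take a periodic $\{0,1\}$-sequence whose diffraction has a gap, and flip it at one site $n_0$ (or along any set of density zero). The Wiener autocorrelation, hence $\mu_\omega$, is unchanged, but the new sequence is not periodic. Its distributional Fourier transform acquires the nowhere-vanishing summand $\pm e^{-2\pi i n_0\theta}$, so its spectrum is all of $\mathbb T$. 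Passing to $\mu_\omega$ discards precisely the information that forces periodicity.

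Inside a stationary process, the diffraction of a generic realization only records its ergodic component. So at best your Step~2 is an ergodic decomposition, after which the theorem must still be applied to processes, not to individual sequences. (Stealthiness does pass to a.e.\ component, by positivity in $\sigma_\eta=\int\sigma_\alpha\,d\pi(\alpha)+\operatorname{Var}_\pi(\rho_\alpha)\delta_0$.)

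The fix is to drop the pathwise detour. The measure $\mu=\sigma_\eta+p^2\delta_0$ you computed is the spectral measure of the stationary $\{0,1\}$-valued process $\xi_n=\omega(\{n\})$, and it vanishes on the nonempty open set $U\setminus\{0\}$. That process-level gap is the hypothesis under which the paper invokes \cite[Theorem~3]{BSW18}.

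If you want a genuinely pathwise proof, use the distributional Fourier transform of each realization rather than its diffraction. For $\phi\in C^\infty(\mathbb T)$ supported in $U\setminus\{0\}$, set $a_n:=\int_{\mathbb T}\phi(\theta)e^{2\pi in\theta}\,d\theta$. The statistic $\sum_na_n\xi_{k+n}$ has mean $p\,\phi(0)=0$ and variance $\int|\phi|^2\,d\sigma_\eta=0$. Running over a countable dense family of $\phi$ shows that $\widehat{\xi(\omega)}$ vanishes on $U\setminus\{0\}$ for a.e.\ $\omega$. Szeg\H{o}'s theorem on power series with finitely many distinct coefficients then makes both half-line generating functions rational. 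Since they continue each other across the arc, they coincide, and boundedness forces $\xi(\omega)$ to be a finite sum of characters at roots of unity.

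Your Step~3 conclusion is correct, but not because of a ``quantitative part'' of BSW. For a $\{0,1\}$-sequence of period $N$, Galois conjugation in $\mathbb Q(e^{2\pi i/N})$ shows that vanishing of the $k$-th discrete Fourier coefficient depends only on $\gcd(k,N)$. This class contains the frequency $\gcd(k,N)/N$. Hence every frequency in the spectrum has denominator at most $1/\delta$, and $\operatorname{lcm}\{1,\dots,\lfloor1/\delta\rfloor\}$ is a period for almost every realization.
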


A general existence result is available for groups with compact
open subgroups.

\begin{theorem}[Compact open subgroups]
\label{thm:introduction-compact-open}
Let $G$ be a second-countable locally compact abelian group and let
$(X,\mu)$ be an essentially free probability-preserving Borel
$G$-space.
\begin{enumerate}
\item If $G$ has a compact open subgroup, then $(X,\mu)$ admits a
stealthy cross-section.
\item If $G=\mathbb Q_p$, then $(X,\mu)$ admits a generating
stealthy cross-section.
\end{enumerate}
\end{theorem}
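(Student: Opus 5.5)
The plan is to exploit the fact that a compact open subgroup $K\leq G$ has an \emph{open} annihilator
\[
    K^\perp:=\{\chi\in\widehat G:\chi|_K\equiv1\}.
\]
This set is a neighborhood of the trivial character. Moreover, a Bruhat--Schwartz function $f$ has $\widehat f$ supported in $K^\perp$ exactly when $f$ is $K$-periodic. Stealthiness will therefore follow once we build a cross-section $Y$ for which every return-time set $Y_x$ meets each coset $g+K$ in exactly one point.

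For part (1) I would first pass to an invariant conull Borel set on which the action is free. The restricted action of the compact group $K$ has closed orbits, and the orbit equivalence relation of a Borel action of a compact group is smooth. Hence there is a Borel transversal $T_0\subset X$ meeting each $K$-orbit exactly once, and I take $Y:=T_0$.

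For each $x$ and each coset $g+K$, the $K$-orbit of $g.x$ meets $T_0$ in a single point, and freeness makes the corresponding $k\in K$ unique. Thus $Y_x$ has exactly one point in each coset of $K$. Since $K$ is open, $Y_x$ is uniformly discrete, in particular locally finite, and every orbit meets $Y$.

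For $f=\mathbf 1_K$ and any $x$,
\[
    N_f(\kappa_Y(x))=1,
\]
so $\operatorname{Var}(N_{\mathbf 1_K})=0$. On the other hand $\widehat{\mathbf 1_K}=m(K)\mathbf 1_{K^\perp}$, where $m$ is Haar measure, and the variance formula (which I take to hold for Bruhat--Schwartz functions) gives
\[
    0=m(K)^2\,\sigma_\eta(K^\perp).
\]
Hence $\sigma_\eta$ vanishes on the open neighborhood $K^\perp$ of the trivial character, which is stealthiness. A preliminary step is to check that the process has local second moments; this is immediate because there is at most one point per coset.

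For part (2), with $G=\mathbb Q_p$ and $K=\mathbb Z_p$, the transversal is still free to choose, and I would use that freedom to encode $x$. The quotient $X/\mathbb Z_p$ is standard Borel and carries an action of the countable group $\mathbb Q_p/\mathbb Z_p$. Choose a Borel injection $\Phi:X/\mathbb Z_p\to\mathbb Z_p$. Set
\[
    T:=\{\Phi(y)\cdot\tau(y):y\in X/\mathbb Z_p\},
\]
where $\tau$ is a fixed Borel selector of the $\mathbb Z_p$-orbits. By part (1), $T$ is a stealthy cross-section whatever $\Phi$ is. It remains to prove generation: from the configuration $Y_x$, which consists of one point $s_g\in g+\mathbb Z_p$ for each $g\in\mathbb Q_p/\mathbb Z_p$, we must recover $x$.

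The point $s_0$ determines the $\mathbb Z_p$-displacement of $x$ from the $T$-point of its orbit. Each other $s_g$ mixes $\Phi(y_g)$, where $y_g$ is the orbit of $g.x$, with the unknown displacement of $g.x$ from $\tau(y_g)$. This entanglement is the main obstacle. I would first try a hierarchical version of the construction along the filtration
\[
    \mathbb Z_p\subset p^{-1}\mathbb Z_p\subset p^{-2}\mathbb Z_p\subset\cdots,
\]
choosing $\tau$ compatibly at each level. The aim is that the relative offsets $s_g-s_{g'}$ within a single ball $p^{-n}\mathbb Z_p$ depend only on $\Phi$-values, through $p$-adic digits reserved for level $n$. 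Then $x$ is recovered as the limit of its $p^{-n}\mathbb Z_p$-orbit classes, and this limit determines $x$ by freeness.
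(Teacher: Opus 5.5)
Part (1) is correct and is essentially the paper's argument. A Borel set meeting every $K$-orbit exactly once makes each $Y_x$ meet every coset of $K$ in exactly one point, and this forces spectral vanishing on $K^\perp$. The paper gets the section from Kechris's lacunary-section theorem. It then proves $\varsigma_{(N_f^Y)^\circ}(K^\perp)=0$ for all $f\in C_c(G)$ by averaging $U_k(N_f^Y)^\circ$ over $K$, which avoids presupposing a Bartlett spectrum on $\widehat G$. Your variance computation for $\mathbf 1_K$ is equivalent once that identity is granted. (Smoothness alone does not give a Borel transversal; what you actually need is a Borel selector for the compact $K$-orbits.)

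Part (2) has a genuine gap. For your set $\{\Phi(y).\tau(y)\}$, let $y_0,y_g$ be the $\mathbb Z_p$-orbits of $x$ and $g.x$, and let $A_g$ be the displacement cocycle of the selector, $g.\tau(y_0)=A_g(y_0).\tau(y_g)$. A direct computation gives $s_g-s_0-g=\Phi(y_g)-\Phi(y_0)-A_g(y_0)$. So the configuration sees $\Phi$ only through differences contaminated by the uncontrolled cocycle $A_g$. Injectivity of $\Phi$ therefore gives no reason why two distinct $\mathbb Z_p$-orbits cannot produce the same return-time set. The hierarchical plan supplies no mechanism for removing $A_g$; \emph{choosing $\tau$ compatibly} is exactly the missing step. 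Its last sentence is also off: the $p^{-n}\mathbb Z_p$-orbits of $x$ increase to the full $\mathbb Q_p$-orbit, so their limit cannot determine $x$. What you must recover is the $\mathbb Z_p$-orbit of $x$, and then $s_0$ gives the displacement.

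The missing idea is to spend the freedom in the transversal on solving a cohomological equation, and one level suffices. Take $a=p^{-1}$ and a Borel transversal $Z$ for the $\mathbb Z_p$-orbits, and define $T:Z\to Z$ and $A:Z\to\mathbb Z_p$ by $a.z=A(z).Tz$. Then $T^p=\mathrm{id}$, and $\sum_{j<p}A(T^jz)=1$. Every $T$-orbit has exactly $p$ points, by freeness, since $j/p\notin\mathbb Z_p$ for $0<j<p$. Any Borel $\beta:Z\to\mathbb Z_p$ with the same orbit sums satisfies $\beta=A+\phi-\phi\circ T$, where $\phi$ is Borel and obtained by summing $A-\beta$ along each finite orbit from a Borel transversal of $T$. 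The modified transversal $y(z):=\phi(z).z$ then satisfies $a.y(z)=\beta(z).y(Tz)$. Choose $\beta$ so that $B(z):=(\beta(T^jz))_{0\le j<p}$ is a Borel injection into $V=\{v\in\mathbb Z_p^p:\sum_jv_j=1\}$ intertwining $T$ with the cyclic shift. This is possible because the shift acts freely on $V$ for $p$ prime and has an uncountable Borel transversal. Let $r_j$ be the point of $Y_x$ in $ja+\mathbb Z_p$ and set $u_j:=ja-r_j$. Then $u_{j+1}-u_j=\beta(T^jz)$ for $0\le j<p-1$ and $u_0+1-u_{p-1}=\beta(T^{p-1}z)$. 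The unknown displacement cancels, so $B(z)$, and hence $z$, is read off from $Y_x$. Finally $u_0=-r_0$ recovers $x$.
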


This is Theorem~\ref{thm:compact-open-cross-sections}, proved in
Section~\ref{sec:compact-open-groups}. The first assertion does
not require the cross-section to be generating and, for a discrete
group, includes the trivial choice $Y=X$. The second assertion is
therefore much stronger: every essentially free
probability-preserving $\mathbb Q_p$-action admits a generating
stealthy point-process presentation. Particular $p$-adic
cut-and-project examples were constructed earlier by the author
and Hartnick \cite{BH24}.

Since the cross-section in part~(2) is generating, its return-time
process is measurably isomorphic to the original
$\mathbb Q_p$-action. Consequently, every essentially free mixing
probability-preserving $\mathbb Q_p$-action admits a mixing
stealthy point-process realization. For example, starting from the 
translation action of a
homogeneous Poisson point process on $\mathbb Q_p$ produces a
mixing stealthy point process measurably isomorphic to that
action.

The abelian hypothesis is essential to the formulation used here,
in which the Bartlett spectrum is a measure on an ordinary dual
group. A related theory is available for homogeneous spaces
associated with Gelfand pairs, where the spherical transform
replaces the Fourier transform; see \cite{BB25,BB26} for
commutative spaces and real hyperbolic spaces. We do not pursue
that nonabelian extension in the present paper.

The known conclusions depend strongly on the acting group. Over
$\mathbb Z$, a neighborhood gap forces periodicity. Over
$\mathbb R$, the condition
\[
    \int_{0<|\xi|<1}
    \frac{1}{\xi^2}\,d\sigma_\eta(\xi)<\infty
\]
forces lattice induction, while every lattice-induced action
admits a generating stealthy Delone cross-section. Over
$\mathbb Q_p$, every essentially free action admits a generating
stealthy cross-section. For $\mathbb Z^d$, $d\geq2$, the
classification for a neighborhood gap remains open; see
\cite{LR26} for partial results under stronger spectral
assumptions.


\subsection{Proof ideas and organization}
\label{subsec:introduction-proof-ideas}

The construction begins in dimension one. For a Borel
automorphism $T$ of a standard Borel space $Z$, the orbit
\[
    (T^nz)_{n\in\mathbb Z}
\]
is encoded in the values at the integers of a band-limited entire
function. A small perturbation of a fixed periodic entire function
then has a uniformly Delone real zero set. Rouch\'e's theorem
controls the zeros, while Hadamard factorization recovers the
encoded function, and hence the orbit, from the zero set. A contour
calculation involving the logarithmic derivative gives the exact
Fourier identity near the origin. Higher dimensions are obtained
by placing the one-dimensional configurations on parallel lattice
layers and applying Poisson summation in the transverse variables.

To detect continuous Bartlett spectrum, the size of the
perturbation is retained as a parameter. Differentiating the zero
locations at the unperturbed configuration produces a linear
statistic with a nonzero projection onto the spectral subspace of
the inducing action. The one-dimensional converse begins by
showing that a centered interval count is an $L^2$-coboundary for
a suitable time map. Since the count is integer-valued,
exponentiating the transfer function produces an eigenfunction,
from which lattice induction follows. The density bound uses an
exact sampling identity and Landau's theorem, the positive-random-
measure construction uses high-pass convolution, and the
$\mathbb Q_p$ construction uses finite cyclic coding inside
compact-open orbits.

Sections~\ref{sec:cross-sections} and
\ref{sec:lattice-induced-actions} develop the cross-section,
spectral and lattice-induction formalism.
Section~\ref{sec:spectral-lattice-induction} proves the
one-dimensional converse.
Sections~\ref{sec:one-dimensional-construction} and
\ref{sec:layered-construction} give the one- and
higher-dimensional constructions, and
Section~\ref{sec:continuous-spectral-components} detects the
inducing spectrum in the Bartlett spectrum.
Sections~\ref{sec:density-bounds},
\ref{sec:positive-random-measures} and
\ref{sec:compact-open-groups} prove the remaining results.


\subsection{Acknowledgments}

The author was supported by the Swedish Research Council under
grant VR~11253322. He is deeply grateful to Mattias Byl\'ehn,
Alexander Fish, Tobias Hartnick, Pavel Kurasov, G\"unter Last, Luca Lotz, and Michael Klatt for
valuable discussions related to this work.

\subsection*{Use of AI tools}

During the preparation of this manuscript, the author used
ChatGPT (OpenAI, GPT-5.6 Sol) for exploratory mathematical
discussion, organization, and drafting assistance, and Aristotle
(Harmonic) as an additional informal check on selected arguments.
All AI-assisted material was independently verified and edited by
the author, who takes full responsibility for the mathematical
content and the final text.



\section{Cross-section systems and spectra}
\label{sec:cross-sections}

Throughout, $\langle\cdot,\cdot\rangle$ and $\|\cdot\|$ denote the
Euclidean inner product and norm on $\mathbb R^d$, and
\[
    B_r(v):=\{u\in\mathbb R^d:\|u-v\|<r\}.
\]
For a Borel action of $\mathbb R^d$ on a Borel space $X$, we write
$t.x$ for the image of $x$ under $t\in\mathbb R^d$.

\subsection{Point configurations and return times}
\label{subsec:return-times}

Let $\mathcal M_+(\mathbb R^d)$ denote the space of positive Radon
measures on $\mathbb R^d$, equipped with the vague topology
generated by the maps
\[
    \omega\longmapsto\omega(f)
    :=
    \int_{\mathbb R^d}f\,d\omega,
    \qquad
    f\in C_c(\mathbb R^d).
\]

We identify a locally finite set $P\subset\mathbb R^d$ with its
counting measure
\[
    \delta_P:=\sum_{p\in P}\delta_p,
\]
and write
\[
    \mathcal N_s(\mathbb R^d)
    :=
    \left\{
        \delta_P:
        P\cap K\text{ is finite for every compact }
        K\subset\mathbb R^d
    \right\}.
\]
This space carries the Borel structure induced from
$\mathcal M_+(\mathbb R^d)$. A point process on $\mathbb R^d$ is
a Borel probability measure on $\mathcal N_s(\mathbb R^d)$.

The translation action is
\[
    (t.\omega)(f)
    :=
    \omega\bigl(f(\,\cdot-t)\bigr),
\]
so that $t.\delta_P=\delta_{P-t}$. A point process $\eta$ is
\emph{translation-invariant} if $t_*\eta=\eta$ for every
$t\in\mathbb R^d$, and \emph{ergodic} if every invariant Borel set
has measure zero or one. It has \emph{local second moments} if
\[
    \int_{\mathcal N_s(\mathbb R^d)}
    \omega(K)^2\,d\eta(\omega)<\infty
    \qquad
    \text{for every compact }K\subset\mathbb R^d.
\]

A set $P\subset\mathbb R^d$ is \emph{$r$-separated} if
\[
    \inf_{\substack{p,q\in P\\p\neq q}}\|p-q\|\geq r,
\]
and \emph{$R$-relatively dense} if
\[
    \sup_{x\in\mathbb R^d}\operatorname{dist}(x,P)\leq R.
\]
It is \emph{Delone} if both conditions hold for some $r,R>0$.
A family is \emph{uniformly Delone} if the same constants work
for every member.

Let $X$ be a standard Borel space equipped with a Borel
$\mathbb R^d$-action. For a Borel set $Y\subset X$, define
\[
    Y_x:=\{t\in\mathbb R^d:t.x\in Y\}.
\]

\begin{definition}
\label{def:cross-section}
A Borel set $Y\subset X$ is a \emph{cross-section} if every orbit
meets $Y$ and every return-time set $Y_x$ is locally finite.
\end{definition}

Since
\[
    Y_{t.x}=Y_x-t,
\]
the return-time map
\[
    \kappa_Y:X\longrightarrow\mathcal N_s(\mathbb R^d),
    \qquad
    \kappa_Y(x):=\delta_{Y_x},
\]
is equivariant. It is Borel by the Lusin--Novikov theorem, applied
to the incidence set
\[
    \{(x,t)\in X\times\mathbb R^d:t.x\in Y\}.
\]

A cross-section is \emph{separated} if, for some $r>0$,
\[
    Y_y\cap B_r(0)=\{0\}
    \qquad
    (y\in Y),
\]
or equivalently if all return-time sets are uniformly
$r$-separated. It is \emph{cocompact} if $K.Y=X$ for some compact
$K\subset\mathbb R^d$, equivalently if its return-time sets are
uniformly relatively dense. A cross-section which is both
separated and cocompact is called a \emph{Delone cross-section}.
For the general theory of separated cross-sections and transverse
measures, see \cite[Sections~3 and~4]{BHK25}.

Let $\mu$ be a translation-invariant Borel probability measure on
$X$. The associated \emph{return-time process} is
\[
    \eta_{Y,\mu}:=(\kappa_Y)_*\mu.
\]
It is translation-invariant, and is ergodic whenever $\mu$ is
ergodic.

\begin{definition}
\label{def:cross-section-system}
A \emph{cross-section system} is a triple $(X,\mu,Y)$ for which
the return-time process has local second moments, equivalently
\[
    \int_X\#(Y_x\cap K)^2\,d\mu(x)<\infty
    \qquad
    \text{for every compact }K\subset\mathbb R^d.
\]
\end{definition}

When the system is fixed, we usually write $\eta_Y$ in place of
$\eta_{Y,\mu}$.

All equivariant maps and isomorphisms below are understood after
restriction to invariant conull Borel sets. A measurable
$\mathbb R^d$-isomorphism is an equivariant, measure-preserving
Borel bijection between such sets. The same convention applies to
cross-sections and to pointwise representatives of objects
initially defined only almost everywhere.

\begin{definition}
\label{def:generating-cross-section}
A cross-section system $(X,\mu,Y)$ is \emph{generating} if
$\kappa_Y$ is injective on an invariant conull Borel subset of
$X$.
\end{definition}

By the Lusin--Souslin theorem, a generating return-time map is a
measurable $\mathbb R^d$-isomorphism onto its return-time
point-process system.

\subsection{Linear statistics and the Bartlett spectrum}
\label{subsec:spectral-measures}

For $f\in L^1(\lambda_d)$, we use the Fourier transform
\[
    \widehat f(\xi)
    :=
    \int_{\mathbb R^d}
    f(x)e^{-2\pi i\langle x,\xi\rangle}\,d\lambda_d(x),
    \qquad
    \xi\in\mathbb R^d.
\]

Let $\eta$ be a translation-invariant point process with local
second moments. Its first moment measure is translation-invariant,
and hence there is a unique $\rho_\eta\geq0$ such that
\[
    \int_{\mathcal N_s(\mathbb R^d)}
    \omega(f)\,d\eta(\omega)
    =
    \rho_\eta\int_{\mathbb R^d}f\,d\lambda_d
    \qquad
    (f\in C_c(\mathbb R^d)).
\]
The number $\rho_\eta$ is called the \emph{intensity} of $\eta$.

For $f\in\mathcal S(\mathbb R^d)$, write
\[
    N_f(\omega)
    :=
    \omega(f)
    =
    \sum_{p\in\operatorname{supp}\omega}f(p).
\]
Under the local second-moment assumption, this series converges
absolutely almost surely, defines an element of $L^2(\eta)$, and
satisfies
\[
    \int_{\mathcal N_s(\mathbb R^d)}
    N_f\,d\eta
    =
    \rho_\eta\int_{\mathbb R^d}f\,d\lambda_d.
\]
Set
\[
    N_f^\circ
    :=
    N_f-\rho_\eta\int_{\mathbb R^d}f\,d\lambda_d.
\]

Proposition~8.2.I and Definition~8.2.II of \cite{DVJ03},
after rescaling the spectral variable to match our Fourier
convention, give a unique positive Radon measure $\sigma_\eta$
on $\mathbb R^d$ such that
\[
    \int_{\mathcal N_s(\mathbb R^d)}
    N_f^\circ\overline{N_g^\circ}\,d\eta
    =
    \int_{\mathbb R^d}
    \widehat f(\xi)\overline{\widehat g(\xi)}
    \,d\sigma_\eta(\xi)
\]
for all $f,g\in\mathcal S(\mathbb R^d)$. The cited result states
the corresponding variance identity; the displayed identity
follows by polarization.

\begin{definition}
\label{def:Bartlett-spectrum}
The measure $\sigma_\eta$ is called the \emph{Bartlett spectral
measure}, or the \emph{Bartlett spectrum}, of $\eta$.
\end{definition}

The measure $\sigma_\eta$ is translation bounded
\cite[Proposition~8.2.II(iv)]{DVJ03}; consequently, when $d=1$,
\[
    \int_{|\xi|\geq1}
    \frac{1}{\xi^2}\,d\sigma_\eta(\xi)<\infty.
\]

\begin{definition}
\label{def:stealthy-process}
Let $\Omega\subset\mathbb R^d$ be open. A translation-invariant
point process $\eta$ is \emph{stealthy on $\Omega$} if
\[
    \sigma_\eta(\Omega)=0.
\]
It is \emph{stealthy} if it is stealthy on some open neighborhood
of the origin.
\end{definition}

For a cross-section system $(X,\mu,Y)$, we write
\[
    \rho_{Y,\mu}:=\rho_{\eta_{Y,\mu}},
    \qquad
    \sigma_{Y,\mu}:=\sigma_{\eta_{Y,\mu}},
\]
and abbreviate these to $\rho_Y$ and $\sigma_Y$ when the system is
fixed. We call the cross-section system stealthy if its
return-time process is stealthy.

\subsection{Koopman spectral measures}
\label{subsec:koopman-spectra}

Let $(X,\mu)$ be a probability-preserving Borel
$\mathbb R^d$-space. Its Koopman representation on
$L^2(X,\mu)$ is
\[
    U_tF(x):=F((-t).x),
    \qquad
    t\in\mathbb R^d.
\]
For every $F\in L^2(X,\mu)$, there is a unique finite positive
measure $\varsigma_F$ on $\mathbb R^d$ such that
\[
    \langle U_tF,F\rangle_{L^2(X,\mu)}
    =
    \int_{\mathbb R^d}
    e^{2\pi i\langle t,\xi\rangle}\,d\varsigma_F(\xi)
    \qquad
    (t\in\mathbb R^d).
\]
We call $\varsigma_F$ the \emph{Koopman spectral measure} of
$F$. The \emph{maximal spectral type} of $(X,\mu)$ is the least
measure class dominating $\varsigma_F$ for every
$F\in L^2(X,\mu)$; we denote it by $\mathfrak m_{X,\mu}$.

A nonzero function $F\in L^2(X,\mu)$ is an
\emph{eigenfunction} with eigenvalue $\xi\in\mathbb R^d$ if
\[
    U_tF=e^{2\pi i\langle t,\xi\rangle}F
    \qquad
    (t\in\mathbb R^d),
\]
equivalently if
\[
    \varsigma_F=\|F\|_2^2\delta_\xi.
\]
By \cite[Proposition~5.7]{ABC25}, an $L^2$-eigenfunction has a
Borel representative on an invariant conull Borel set
$X_0\subset X$ satisfying
\[
    F(t.x)
    =
    e^{-2\pi i\langle t,\xi\rangle}F(x)
    \qquad
    (t\in\mathbb R^d,\ x\in X_0).
\]
We always choose eigenfunctions in this form.

For a translation-invariant point process $\eta$, the Bartlett
and Koopman spectral measures are related by
\[
    d\varsigma_{N_f^\circ}(\xi)
    =
    |\widehat f(\xi)|^2\,d\sigma_\eta(\xi),
    \qquad
    f\in\mathcal S(\mathbb R^d).
\]
For a cross-section system $(X,\mu,Y)$, set
\[
    N_f^Y:=N_f\circ\kappa_Y,
    \qquad
    (N_f^Y)^\circ
    :=
    N_f^Y-\rho_Y\int_{\mathbb R^d}f\,d\lambda_d.
\]
The equivariance of $\kappa_Y$ gives
\[
    d\varsigma_{(N_f^Y)^\circ}(\xi)
    =
    |\widehat f(\xi)|^2\,d\sigma_Y(\xi).
\]

Let $\mathcal H_\eta\subset L^2(\eta)$ be the closed
$\mathbb R^d$-invariant subspace generated by the centered
linear statistics. Since one may choose
$f\in\mathcal S(\mathbb R^d)$ with $\widehat f$ nowhere zero,
the preceding identity shows that $\sigma_\eta$ represents the
maximal spectral type on $\mathcal H_\eta$. In general,
$\mathcal H_\eta$ may be strictly smaller than
$L^2_0(\eta)$, so the Bartlett spectrum need not represent the
maximal spectral type of the full point-process action.

In the terminology of diffraction theory, $\mathcal H_\eta$ is
the \emph{diffraction subspace}, or \emph{Dworkin subspace}, of
the point-process action. For an ergodic point process, if
$\gamma_\eta$ denotes the autocorrelation in the usual
normalization, then
\[
    \widehat{\gamma_\eta}
    =
    \rho_\eta^2\delta_0+\sigma_\eta.
\]
The map $f\mapsto N_f^\circ$ is the corresponding Dworkin
embedding. In general this subspace may be proper in
$L^2_0(\eta)$, so diffraction need not detect the full
dynamical spectrum; see
\cite{Gou03,BL04,DM08,BLvE15}.


\section{Lattice-induced actions}
\label{sec:lattice-induced-actions}

This section recalls lattice induction and describes cross-sections
on induced spaces in terms of equivariant families of point sets.
Fix a full-rank lattice $\Gamma\leq\mathbb R^d$. Its dual lattice
is
\[
    \Gamma^*
    :=
    \{\xi\in\mathbb R^d:
      \langle\gamma,\xi\rangle\in\mathbb Z
      \text{ for every }\gamma\in\Gamma\}.
\]

\subsection{Induced spaces}
\label{subsec:induced-spaces}

Let $(Z,\nu)$ be a standard probability space equipped with a
measure-preserving Borel action of $\Gamma$. Consider the diagonal
$\Gamma$-action on $\mathbb R^d\times Z$ given by
\[
    \gamma.(t,z):=(t+\gamma,(-\gamma).z).
\]
The induced space is
\[
    \operatorname{Ind}_{\Gamma}^{\mathbb R^d}(Z)
    :=
    (\mathbb R^d\times Z)/\Gamma.
\]
We write $[t,z]$ for the class of $(t,z)$. Translation in the
first coordinate gives a Borel $\mathbb R^d$-action
\[
    s.[t,z]:=[s+t,z].
\]

Let $D\subset\mathbb R^d$ be a Borel fundamental domain for
$\Gamma$, and set
\[
    \operatorname{covol}(\Gamma):=\lambda_d(D).
\]
If
\[
    q:\mathbb R^d\times Z
    \longrightarrow
    \operatorname{Ind}_{\Gamma}^{\mathbb R^d}(Z)
\]
is the quotient map, the induced probability measure is
\[
    \mu_{\Gamma,Z}
    :=
    q_*\left(
        \frac{\lambda_d|_D}{\operatorname{covol}(\Gamma)}
        \otimes\nu
    \right).
\]
It is independent of the choice of $D$ and invariant under the
induced $\mathbb R^d$-action. The induced action is ergodic if and
only if the original $\Gamma$-action is ergodic.

A probability-preserving Borel $\mathbb R^d$-space is
\emph{$\Gamma$-induced} if it is measurably isomorphic to
\[
    \left(
        \operatorname{Ind}_{\Gamma}^{\mathbb R^d}(Z),
        \mu_{\Gamma,Z}
    \right)
\]
for some probability-preserving Borel $\Gamma$-space $(Z,\nu)$.
It is \emph{lattice-induced} if it is $\Gamma$-induced for some
full-rank lattice $\Gamma\leq\mathbb R^d$.

\begin{lemma}
\label{lem:independent-eigenvalues-lattice-induction}
Let $(X,\mu)$ be an ergodic probability-preserving Borel
$\mathbb R^d$-space. If its Koopman representation has $d$
linearly independent eigenvalues
\[
    \xi_1,\ldots,\xi_d\in\mathbb R^d,
\]
then $(X,\mu)$ is lattice-induced. In particular, when $d=1$, a single nonzero Koopman eigenvalue implies that the action is lattice-induced.
\end{lemma}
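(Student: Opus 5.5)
Given $d$ linearly independent eigenvalues, I will build an equivariant Borel factor map $\pi\colon X\to\mathbb R^d/\Gamma$ for a suitable lattice $\Gamma$. I will then identify $X$ with the space induced from the fiber $Z:=\pi^{-1}(0+\Gamma)$.

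\emph{Step 1: the lattice and the torus factor.} Let $F_1,\dots,F_d$ be eigenfunctions with eigenvalues $\xi_1,\dots,\xi_d$, chosen in the strict Borel form of \cite[Proposition~5.7]{ABC25} on an invariant conull set $X_0$. By ergodicity each $|F_j|$ is an invariant function, hence a.e.\ constant, and after normalizing and shrinking $X_0$ (still invariant) we may assume $|F_j|\equiv1$ there. Let $\Gamma:=\{\gamma\in\mathbb R^d:\langle\gamma,\xi_j\rangle\in\mathbb Z\ \forall j\}$. Since the $\xi_j$ are linearly independent, $\Gamma$ is a full-rank lattice, and $\Gamma^*$ is spanned by the $\xi_j$. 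The map
\[
    \Phi(t):=\bigl(e^{-2\pi i\langle t,\xi_j\rangle}\bigr)_{j}
\]
induces a topological group isomorphism $\mathbb R^d/\Gamma\cong\mathbb T^d$. Define $\pi(x):=\Phi^{-1}\bigl(F_1(x),\dots,F_d(x)\bigr)$. The transformation rule gives $\pi(t.x)=t+\pi(x)$, so $\pi$ is Borel and equivariant.

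\emph{Step 2: the fiber and the induced model.} Set $Z:=\pi^{-1}(0+\Gamma)\cap X_0$. Equivariance shows that $Z$ is $\Gamma$-invariant, so the restricted $\Gamma$-action makes it a Borel $\Gamma$-space. Define
\[
    \Psi\colon\operatorname{Ind}_\Gamma^{\mathbb R^d}(Z)\to X_0,\qquad \Psi[t,z]:=t.z.
\]
This is well defined because $[t+\gamma,(-\gamma).z]\mapsto t.z$, and it is clearly equivariant. It is surjective: for $x\in X_0$, choose $t$ with $\pi(x)=t+\Gamma$; then $(-t).x\in Z$. It is injective: if $t.z=t'.z'$, then $t-t'\in\Gamma$ by applying $\pi$, and then $z'=(t-t').z$, which is the equivalence relation. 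Bijectivity is then clear, and Lusin--Souslin makes $\Psi$ a Borel isomorphism.

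\emph{Step 3: the measure. This is the main obstacle}, because $Z$ is $\mu$-null and a measure $\nu$ on it has to be produced. Fix a Borel fundamental domain $D$ for $\Gamma$. Restricted to $D\times Z$, the map $\Psi$ is a Borel bijection onto $X_0$, and $\pi\circ\Psi(t,z)=t+\Gamma$. Pull $\mu$ back along this bijection to a measure $m$ on $D\times Z$. Its first marginal is $(\pi_*\mu)$ transported to $D$. Since $\pi$ is equivariant and $\mu$ is invariant, $\pi_*\mu$ is a translation-invariant probability measure on $\mathbb R^d/\Gamma$, hence Haar measure.

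Disintegrate $m=\int_D m_t\,dt/\operatorname{covol}(\Gamma)$ over $t$, and transport each fiber measure to $Z$ through $z\mapsto(t,z)$. Translation invariance of $\mu$ under small $s$ moves $(t,z)$ to $(t+s,z)$ while $t+s$ stays in $D$. Uniqueness of disintegration then shows that $m_t$ is a.e.\ independent of $t$; call this common measure $\nu$. For general $s$ one has to account for wrapping through $\Gamma$, which is where $\Gamma$-invariance of $\nu$ comes from. To keep this clean I would instead apply invariance under $\mathbb R^d$ to product test sets $A\times B$ and argue with Fubini. The conclusion is $m=\bigl(\lambda_d|_D/\operatorname{covol}(\Gamma)\bigr)\otimes\nu$.

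Comparing the $\mu$-invariance under $\gamma\in\Gamma$ with the two descriptions of points near $\partial D$ then shows that $\nu$ is $\Gamma$-invariant. Hence $\Psi_*\mu_{\Gamma,Z}=\mu$, so $(X,\mu)$ is $\Gamma$-induced and therefore lattice-induced.

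For $d=1$, a single nonzero eigenvalue is by itself a set of $d=1$ linearly independent eigenvalues, so the special case follows directly.
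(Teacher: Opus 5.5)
Your proposal is correct and follows the paper's proof essentially step by step: the phases of the eigenfunctions give an equivariant map to $\mathbb R^d/\Gamma$, the fiber $Z$ over $0$ yields the Borel bijection $[t,z]\mapsto t.z$, and the measure is identified by disintegrating over a fundamental domain and using invariance under translations that stay inside it. Two small refinements make Step 3 tighter. First, take $D$ to be a parallelepiped, as the paper does; for an arbitrary Borel fundamental domain, invariance under translations staying in $D$ need not force $t\mapsto m_t$ to be a.e.\ constant. Second, $\Gamma$-invariance of $\nu$ needs no boundary bookkeeping: since $\gamma.(t.z)=t.(\gamma.z)$, translation by $\gamma\in\Gamma$ acts on $D\times Z$ simply as $(t,z)\mapsto(t,\gamma.z)$.
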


\begin{proof}
Let $F_1,\ldots,F_d$ be corresponding eigenfunctions. Their
moduli are invariant and hence constant almost everywhere.
After normalization, complex conjugation and modification on an
invariant null set, regard them as Borel maps
\[
    E_j:X\longrightarrow\mathbb T:=\mathbb R/\mathbb Z
\]
satisfying
\[
    E_j(t.x)
    =
    E_j(x)+\langle t,\xi_j\rangle
    \qquad
    (t\in\mathbb R^d,\ x\in X).
\]

Define
\[
    B:\mathbb R^d\longrightarrow\mathbb R^d,
    \qquad
    Bt
    :=
    \bigl(
        \langle t,\xi_1\rangle,\ldots,
        \langle t,\xi_d\rangle
    \bigr),
\]
and set
\[
    \Gamma:=B^{-1}\mathbb Z^d.
\]
The linear independence of the $\xi_j$ makes $B$ invertible, so
$\Gamma$ is a full-rank lattice.

The map
\[
    E:=(E_1,\ldots,E_d):X\longrightarrow\mathbb T^d
\]
satisfies
\[
    E(t.x)=E(x)+Bt.
\]
The action $t.u=u+Bt$ on $\mathbb T^d$ is transitive. Hence
$E_*\mu$ is Haar measure and $E$ is surjective.

Set
\[
    Z:=E^{-1}(0).
\]
Then $Z$ is Borel and $\Gamma$-invariant. The map
\[
    \Phi:
    \operatorname{Ind}_{\Gamma}^{\mathbb R^d}(Z)
    \longrightarrow X,
    \qquad
    \Phi([t,z]):=t.z,
\]
is well defined and equivariant. If $t.z=t'.z'$, then
\[
    B(t-t')\in\mathbb Z^d,
\]
so $t-t'\in\Gamma$ and
\[
    z'=(t-t').z.
\]
Thus $\Phi$ is injective, while surjectivity follows from that of
$E$. Hence $\Phi$ is a Borel isomorphism.

It remains to identify the measure. Put
\[
    D:=B^{-1}[0,1)^d
\]
and identify the induced space with $D\times Z$. For
\[
    \widetilde\mu:=\Phi^{-1}_*\mu,
\]
the marginal on $D$ is normalized Lebesgue measure, so
\[
    \widetilde\mu
    =
    \frac{1}{\operatorname{covol}(\Gamma)}
    \int_D\delta_t\otimes\nu_t\,d\lambda_d(t)
\]
for a measurable family of probability measures $\nu_t$ on $Z$.

Let $\mathcal A$ be a countable algebra generating the Borel
$\sigma$-algebra of $Z$. Invariance under translations which
remain inside $D$ gives, for $A\in\mathcal A$,
\[
    \int_C\nu_t(A)\,d\lambda_d(t)
    =
    \int_{C+u}\nu_t(A)\,d\lambda_d(t)
\]
whenever $C,C+u\subset D$. It follows that
$t\mapsto\nu_t(A)$ is constant almost everywhere. Applying this
simultaneously to $A\in\mathcal A$, we obtain a probability
measure $\nu$ on $Z$ such that
\[
    \nu_t=\nu
    \qquad
    \text{for almost every }t\in D.
\]

For $\gamma\in\Gamma$, translation by $\gamma$ acts in these
coordinates by
\[
    (t,z)\longmapsto(t,\gamma.z).
\]
The invariance of $\widetilde\mu$ therefore implies that $\nu$ is
$\Gamma$-invariant. Consequently,
\[
    \widetilde\mu=\mu_{\Gamma,Z},
\]
and $(X,\mu)$ is lattice-induced.
\end{proof}


\subsection{Equivariant families of point sets}
\label{subsec:equivariant-families}

Let $(Z,\nu)$ be a probability-preserving Borel $\Gamma$-space.
A family of locally finite sets
\[
    z\longmapsto P_z\subset\mathbb R^d
\]
is \emph{Borel} if the map $z\mapsto\delta_{P_z}$ is Borel, and
\emph{$\Gamma$-equivariant} if
\[
    P_{\gamma.z}=P_z-\gamma
    \qquad
    (\gamma\in\Gamma,\ z\in Z).
\]

Suppose that every $P_z$ is nonempty, and define
\[
    Y_P
    :=
    \bigl\{
        [t,z]\in
        \operatorname{Ind}_{\Gamma}^{\mathbb R^d}(Z):
        t\in P_z
    \bigr\}.
\]
Equivariance makes this definition independent of the chosen
representative.

\begin{proposition}
\label{prop:equivariant-families}
The set $Y_P$ is a Borel cross-section, and
\[
    (Y_P)_{[t,z]}=P_z-t,
    \qquad
    \kappa_{Y_P}([t,z])=\delta_{P_z-t}.
\]
If the family $(P_z)_{z\in Z}$ is uniformly Delone, then $Y_P$ is
a Delone cross-section and
\[
    \bigl(
        \operatorname{Ind}_{\Gamma}^{\mathbb R^d}(Z),
        \mu_{\Gamma,Z},
        Y_P
    \bigr)
\]
is a cross-section system.

Suppose, moreover, that there is a $\Gamma$-invariant conull
Borel set $Z_0\subset Z$ such that
\[
    P_{z'}=P_z-u
    \quad\Longrightarrow\quad
    u\in\Gamma
    \ \text{ and }\
    z'=u.z
\]
for all $z,z'\in Z_0$ and $u\in\mathbb R^d$. Then $Y_P$ is
generating.
\end{proposition}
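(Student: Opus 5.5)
I will verify the three assertions directly from the definitions of the induced space and of $Y_P$. In each step I work with representatives $(t,z)\in\mathbb R^d\times Z$ and use $\Gamma$-equivariance of the family to check independence of the chosen representative.

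\emph{Return times and Borel measurability.} For $x=[t,z]$ and $s\in\mathbb R^d$, the point $s.x=[s+t,z]$ lies in $Y_P$ exactly when $s+t\in P_z$. This condition does not depend on the representative. Indeed, replacing $(t,z)$ by $(t+\gamma,(-\gamma).z)$ changes it to $s+t+\gamma\in P_{(-\gamma).z}=P_z+\gamma$, which is the same condition. Hence $(Y_P)_{[t,z]}=P_z-t$, which is locally finite and nonempty, so every orbit meets $Y_P$. For Borelness, note that $q^{-1}(Y_P)=\{(t,z):t\in P_z\}$. This set equals $\{(t,z): \delta_{P_z}(\{t\})\ge 1\}$, and it is Borel because $z\mapsto\delta_{P_z}$ is Borel and $(\omega,t)\mapsto\omega(\{t\})$ is Borel on $\mathcal N_s\times\mathbb R^d$; the latter follows since $\omega(\{t\})$ is a decreasing limit of $\omega(f_n(\cdot-t))$ for continuous bumps $f_n\downarrow 1_{\{t\}}$. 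The set $q^{-1}(Y_P)$ is $\Gamma$-invariant. Restricting $q$ to $D\times Z$, with $D$ a Borel fundamental domain, gives a Borel bijection onto the induced space, whose inverse is Borel by Lusin--Souslin. Therefore $Y_P$ is Borel. The formula for $\kappa_{Y_P}$ then follows immediately.

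\emph{Delone and second moments.} If every $P_z$ is $r$-separated and $R$-relatively dense, then so is each translate $P_z-t$, so $Y_P$ is separated and cocompact. For a compact set $K$, $r$-separation bounds $\#((P_z-t)\cap K)$ by a constant $C_K$ depending only on $r$ and $K$, via a volume-packing argument with disjoint balls of radius $r/2$. Consequently the second moment integral is at most $C_K^2<\infty$, and the triple is a cross-section system.

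\emph{Generation.} Let $X_0:=q(\mathbb R^d\times Z_0)$. This set is invariant, and it is conull because $\nu(Z_0)=1$ and $\mu_{\Gamma,Z}$ is the image of $\lambda_d|_D\otimes\nu$. It is Borel, being the image of $D\times Z_0$ under the Borel bijection above; here I use that $Z_0$ is $\Gamma$-invariant, so that $q^{-1}(X_0)=\mathbb R^d\times Z_0$. Now suppose $\kappa_{Y_P}([t,z])=\kappa_{Y_P}([t',z'])$ with $z,z'\in Z_0$. Then $P_z-t=P_{z'}-t'$, that is, $P_{z'}=P_z-u$ with $u:=t-t'$. The hypothesis gives $u\in\Gamma$ and $z'=u.z$. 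Therefore $(t',z')=(t-u,u.z)=(-u).(t,z)$ under the diagonal action, and hence $[t',z']=[t,z]$. So $\kappa_{Y_P}$ is injective on $X_0$.

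The only step I expect to need care is the Borel measurability of $Y_P$ and of $X_0$ in the quotient. I will handle it by the fundamental-domain identification combined with Lusin--Souslin; everything else is a direct verification.
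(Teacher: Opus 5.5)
Your proof is correct and follows the paper's argument essentially step for step: the return-time formula via $s.[t,z]\in Y_P\iff s+t\in P_z$, the packing bound from uniform separation for local second moments, and injectivity on $X_0=q(\mathbb R^d\times Z_0)$ via the orbit-separation hypothesis. The only difference is cosmetic, in proving that $Y_P$ is Borel. The paper uses that the quotient map has countable fibers, whereas you pass through the fundamental-domain identification $D\times Z$; there Lusin--Souslin is not even needed, since $q^{-1}(q(A))=\bigcup_{\gamma\in\Gamma}\gamma.A$ is Borel for every Borel $A\subset D\times Z$.
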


\begin{proof}
The incidence set
\[
    \{(t,z)\in\mathbb R^d\times Z:t\in P_z\}
\]
is Borel and invariant under the diagonal $\Gamma$-action. Since
the quotient map has countable fibers, its image $Y_P$ is Borel.

For $s\in\mathbb R^d$,
\[
    s.[t,z]\in Y_P
    \quad\Longleftrightarrow\quad
    s+t\in P_z,
\]
which gives the formulas for the return-time set and return-time
map. Nonemptiness and local finiteness of the sets $P_z$ show that
$Y_P$ is a cross-section. If the family is uniformly Delone, then
the same formulas show that $Y_P$ is separated and cocompact.
Uniform separation also bounds
\[
    \#\bigl((P_z-t)\cap K\bigr)
\]
uniformly in $z$ and $t$, for every compact $K$, so the return-time
process has local second moments.

Finally, let
\[
    X_0:=q(\mathbb R^d\times Z_0).
\]
This is an invariant conull Borel set. If
\[
    \kappa_{Y_P}([t,z])
    =
    \kappa_{Y_P}([t',z'])
\]
for points of $X_0$, then
\[
    P_{z'}=P_z-(t-t').
\]
The orbit-separation assumption gives
\[
    t-t'\in\Gamma,
    \qquad
    z'=(t-t').z,
\]
and therefore $[t,z]=[t',z']$.
\end{proof}

\begin{proposition}[Fourier criterion for stealthiness]
\label{prop:stealthiness-criterion}
Let $(P_z)_{z\in Z}$ be a Borel $\Gamma$-equivariant uniformly
Delone family, and let $Y_P$ be the associated cross-section.
Suppose that there are $\rho>0$ and an open neighborhood
$\Omega$ of the origin such that
\[
    \widehat{\delta}_{P_z}
    =
    \rho\delta_0
    \qquad\text{on }\Omega
\]
for $\nu$-almost every $z$, in the sense of tempered
distributions. Then the associated cross-section system has intensity $\rho$.
Moreover, for every $f\in\mathcal S(\mathbb R^d)$ satisfying
\[
    \operatorname{supp}\widehat f\subset\Omega,
\]
one has
\[
    N_f^{Y_P}(x)
    =
    \rho\int_{\mathbb R^d}f\,d\lambda_d
\]
for $\mu_{\Gamma,Z}$-almost every $x$. Consequently,
\[
    \sigma_{Y_P,\mu_{\Gamma,Z}}(\Omega)=0.
\]
In particular, the cross-section system is stealthy.
\end{proposition}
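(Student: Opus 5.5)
Given $f\in\mathcal S(\mathbb R^d)$ with $\operatorname{supp}\widehat f\subset\Omega$, I will evaluate the linear statistic pointwise. By Proposition~\ref{prop:equivariant-families}, for $x=[t,z]$ we have
\[
    N_f^{Y_P}([t,z])=\sum_{p\in P_z}f(p-t)=\delta_{P_z}\bigl(f(\cdot-t)\bigr).
\]
Since $P_z$ is uniformly separated, $\delta_{P_z}$ is a translation-bounded measure and hence tempered. The Schwartz function $g_t:=f(\cdot-t)$ may therefore be paired with it, and Parseval for tempered distributions gives
\[
    \delta_{P_z}(g_t)=\widehat{\delta}_{P_z}\bigl(\check g_t\bigr),
\]
where $\check g_t(\xi)=\widehat f(-\xi)e^{-2\pi i\langle t,\xi\rangle}$ up to the sign convention. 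Its support lies in $-\Omega$. I will first replace $\Omega$ by the symmetric neighborhood $\Omega\cap(-\Omega)$, or equivalently note that $\widehat{\delta}_{P_z}$ is the Fourier transform of a real measure, so the hypothesis transfers to $-\Omega$. With this, the hypothesis gives
\[
    \delta_{P_z}(g_t)=\rho\,\widehat f(0)=\rho\int_{\mathbb R^d} f\,d\lambda_d.
\]
This holds for all $t$ and for $\nu$-a.e.\ $z$. The set of $[t,z]$ with $z$ in that conull set is conull for $\mu_{\Gamma,Z}$, using equivariance, or simply the product structure on $D\times Z$. This is the almost-everywhere identity.

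The one step needing care is the symmetry of $\Omega$. The cleanest route is conjugation: $\delta_{P_z}$ is real and even-conjugate, so $\widehat{\delta}_{P_z}(-\xi)=\overline{\widehat{\delta}_{P_z}(\xi)}$ as distributions. Hence equality to $\rho\delta_0$ on $\Omega$ implies equality on $-\Omega$, and thus on the open set $\Omega\cup(-\Omega)$. This also shows the conclusion holds for $\operatorname{supp}\widehat f\subset\Omega$ as stated.

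Next I will get the intensity. Choose $f$ with $\widehat f$ supported in $\Omega$, $\widehat f(0)\neq0$ and $f\ge0$, for example $f=|h|^2$ with $\widehat h$ supported in a small ball $B$ satisfying $B-B\subset\Omega$. Integrating the identity against $\mu_{\Gamma,Z}$ and comparing with $\int N_f^{Y_P}\,d\mu=\rho_{Y_P}\int f$ gives $\rho_{Y_P}=\rho$. Local second moments come from Proposition~\ref{prop:equivariant-families}, so $N_f$ is integrable.

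Finally I will prove the spectral vanishing. For $f$ as above, $(N_f^{Y_P})^\circ=0$ a.e., so
\[
    0=\operatorname{Var}(N_f^{Y_P})=\int|\widehat f|^2\,d\sigma_{Y_P}.
\]
Given a compact $K\subset\Omega$, pick $f\in\mathcal S$ with $\widehat f\in C_c^\infty(\Omega)$ and $\widehat f=1$ on $K$. Then $\sigma_{Y_P}(K)=0$. Exhausting $\Omega$ by compacts gives $\sigma_{Y_P}(\Omega)=0$, which is stealthiness. I expect no serious obstacle; the only subtle points are the tempered pairing and the sign and symmetry bookkeeping above.
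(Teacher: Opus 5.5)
Your proposal is correct and follows the paper's proof. Both evaluate $N_f^{Y_P}([t,z])=\delta_{P_z}(f(\cdot-t))$ pointwise via the distributional identity, read off the intensity from a single $f$ with nonzero integral, and then use zero variance and the Bartlett identity with $\widehat f$ ranging over $C_c^\infty(\Omega)$. Your conjugation-symmetry step, which transfers the identity from $\Omega$ to $-\Omega$, makes explicit a sign point that the paper folds into ``Fourier inversion.'' The positivity choice $f=|h|^2$ in the intensity step is harmless but unnecessary.
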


\begin{proof}
Represent the induced measure on a fundamental domain. For
$\mu_{\Gamma,Z}$-almost every $x=[t,z]$,
\[
    \kappa_{Y_P}(x)=\delta_{P_z-t},
\]
and translation preserves the asserted distributional identity
on $\Omega$. Hence Fourier inversion gives
\[
    N_f\bigl(\kappa_{Y_P}(x)\bigr)
    =
    \rho\int_{\mathbb R^d}f\,d\lambda_d
\]
whenever $f\in\mathcal S(\mathbb R^d)$ and
\[
    \operatorname{supp}\widehat f\subset\Omega.
\]

Choosing such an $f$ with nonzero integral and taking expectations
shows that the intensity is $\rho$. Thus
$(N_f^{Y_P})^\circ=0$ almost everywhere, and the Bartlett identity
gives
\[
    \int_{\mathbb R^d}
    |\widehat f(\xi)|^2\,
    d\sigma_{Y_P,\mu_{\Gamma,Z}}(\xi)=0.
\]
Since $\widehat f$ may be chosen arbitrarily in
$C_c^\infty(\Omega)$, it follows that
\[
    \sigma_{Y_P,\mu_{\Gamma,Z}}(\Omega)=0.
\]
\end{proof}

Accordingly, the realization theorem reduces to constructing
$\Gamma$-equivariant uniformly Delone families satisfying the
Fourier identity and the orbit-separation condition above.



\section{A spectral criterion for lattice induction in dimension one}
\label{sec:spectral-lattice-induction}

We prove that a weighted integrability condition on the Bartlett
spectrum near the origin forces an ergodic point process on
$\mathbb R$ to be lattice-induced. We call a point process
lattice-induced if its translation action on configuration space
is lattice-induced in the sense of
Section~\ref{sec:lattice-induced-actions}.

\subsection{The point-process criterion}
\label{subsec:point-process-lattice-criterion}

We shall use the standard $L^2$-coboundary criterion. If $T$ is an
invertible probability-preserving transformation, $U$ is its
Koopman operator, and $\nu_F$ is the spectral measure of
$F\in L^2$, then
\[
    F\in\operatorname{Ran}(U-I)
    \quad\Longleftrightarrow\quad
    \int_{\mathbb T}
    \frac{1}{|z-1|^2}\,d\nu_F(z)<\infty.
\]
See, for instance, \cite[Proposition~3.1(i)]{CL21}.

\begin{theorem}[Spectral criterion for lattice induction]
\label{thm:spectral-lattice-induction}
Let $\eta$ be an ergodic translation-invariant point process on
$\mathbb R$ with positive intensity $\rho$ and local second
moments. If
\[
    \int_{0<|\xi|<1}
    \frac{1}{\xi^2}\,d\sigma_\eta(\xi)<\infty,
\]
then the translation action of $\eta$ has a nonzero eigenvalue.
Consequently, $\eta$ is lattice-induced.
\end{theorem}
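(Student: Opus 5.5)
Following the outline in Subsection~\ref{subsec:introduction-proof-ideas}, we realize a centered interval count as an $L^2$-coboundary for a time-$a$ map and then exponentiate the transfer function. Fix $a>0$ and the time-$a$ translation $T:=(-a)$ acting on $\mathcal N_s(\mathbb R)$; its Koopman operator is $U_a$. Consider the centered count
\[
    F:=N_{\mathbf 1_{[0,a)}}-\rho a,
\]
which lies in $L^2(\eta)$ by local second moments. We extend the Bartlett identity $d\varsigma_{N_f^\circ}=|\widehat f|^2\,d\sigma_\eta$ from $\mathcal S(\mathbb R)$ to $f=\mathbf 1_{[0,a)}$ by approximation in a norm dominating both sides; translation-boundedness of $\sigma_\eta$ makes $\int|\widehat f|^2\,d\sigma_\eta$ continuous for $f\in L^2$ with compact support. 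This gives
\[
    d\varsigma_F(\xi)=\frac{|e^{-2\pi i a\xi}-1|^2}{4\pi^2\xi^2}\,d\sigma_\eta(\xi).
\]
The spectral measure of $F$ for the single operator $U_a$ is the pushforward of $\varsigma_F$ under $\xi\mapsto e^{2\pi i a\xi}$. The coboundary criterion \cite[Proposition~3.1(i)]{CL21} then requires
\[
    \int\frac{d\varsigma_F(\xi)}{|e^{2\pi i a\xi}-1|^2}
    =\frac{1}{4\pi^2}\int\frac{d\sigma_\eta(\xi)}{\xi^2}<\infty.
\]
The factor $|e^{2\pi i a\xi}-1|^2$ cancels exactly. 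Here one must check that $\sigma_\eta$ has no atom at $0$, or handle it: an atom at $0$ would produce a component of $F$ that is $\mathbb R$-invariant, hence constant by ergodicity and zero after centering. The remaining integral is finite because the region $0<|\xi|<1$ is covered by the hypothesis and $|\xi|\geq1$ by translation-boundedness. Hence $F=U_aG-G$ for some $G\in L^2(\eta)$.

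To exponentiate, note that $F+\rho a$ is integer-valued, so $e^{2\pi i F}=e^{-2\pi i\rho a}$ is constant. Set $H:=e^{2\pi i G}$. Then $U_aH=H\cdot e^{2\pi iF}=e^{-2\pi i\rho a}H$ almost everywhere, so $H$ is an eigenfunction of $U_a$ of modulus one with eigenvalue $e^{-2\pi i\rho a}$. Since $\rho>0$, we choose $a$ so that $\rho a\notin\mathbb Z$, for instance $a=1/(2\rho)$, making the eigenvalue different from $1$.

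The main obstacle is to pass from an eigenvalue of the single map $U_a$ to a nonzero eigenvalue of the $\mathbb R$-flow. The plan is to do this spectrally. The spectral measure of the flow restricted to the cyclic subspace of $H$ pushes forward under $\xi\mapsto e^{2\pi i a\xi}$ to a point mass at $e^{-2\pi i\rho a}$. Hence $\varsigma_H$ is supported on the discrete set $\{\xi: a\xi\in-\rho a+\mathbb Z\}$, and in particular it avoids $0$. Decomposing $H$ along these countably many frequencies yields a nonzero component $H_\xi=E(\{\xi\})H$, which is a flow eigenfunction with eigenvalue $\xi\neq0$. Lemma~\ref{lem:independent-eigenvalues-lattice-induction} with $d=1$ then gives lattice induction.

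A secondary technical point is the approximation step extending the Bartlett identity to indicators. If it proves awkward, we would instead use a smooth compactly supported $f$ with $\sum_n f(x-na)=1$ together with $\mathbf 1_{[0,a)}$-type telescoping. We expect the direct approximation to suffice.
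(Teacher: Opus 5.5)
Your proposal is correct and follows essentially the same route as the paper. In both, the centered count $\omega([0,a))-\rho a$ becomes a coboundary for the time-$a$ map through the exact cancellation of $|e^{2\pi ia\xi}-1|^2$, is exponentiated using integrality with $\rho a\notin\mathbb Z$, and a nonzero flow eigenvalue is extracted from a spectral measure supported on $-\rho+a^{-1}\mathbb Z$.

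Two small points. First, replace $G$ by $\operatorname{Re}G$ before claiming $|e^{2\pi iG}|=1$; this is legitimate since $F$ is real, and the paper does it too. Second, your continuity argument via translation-boundedness for the indicator is valid, but you still need to identify the $L^2(\eta)$-limit of $N_{f_n}^\circ$ with $F$, for instance via $L^1$-convergence from the first-moment formula. The paper instead mollifies and uses that almost surely there are no points at $0$ or $a$.
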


\begin{proof}
The argument has three steps. We first show that, for a suitable
time-$a$ map, the centered number of points in $[0,a)$ is an
$L^2$-coboundary. Exponentiating the transfer function then gives
an eigenfunction of the time-$a$ transformation. Finally, its
spectral measure for the full flow is supported on a discrete
coset which avoids the origin, and therefore has a nonzero atom.

Since $\eta$ is ergodic, the spectral measure of every centered
linear statistic has no atom at the origin. Choosing
$f\in\mathcal S(\mathbb R)$ with $\widehat f(0)\neq0$ and using
\[
    d\varsigma_{N_f^\circ}(\xi)
    =
    |\widehat f(\xi)|^2\,d\sigma_\eta(\xi)
\]
gives
\[
    \sigma_\eta(\{0\})=0.
\]
Together with the hypothesis and the one-dimensional consequence
of translation boundedness recorded in
Subsection~\ref{subsec:spectral-measures}, this yields
\[
    \int_{\mathbb R\setminus\{0\}}
    \frac{1}{\xi^2}\,d\sigma_\eta(\xi)<\infty.
\]

Choose $a>0$ such that
\[
    \rho a\notin\mathbb Z,
\]
and let
\[
    T_a\omega:=(-a).\omega.
\]
Consider the centered interval count
\[
    A_a(\omega)
    :=
    \omega([0,a))-\rho a.
\]
By local second moments, $A_a\in L^2(\eta)$.

Let $(\chi_n)$ be a nonnegative smooth approximate identity with
\[
    \int_{\mathbb R}\chi_n\,d\lambda_1=1,
    \qquad
    \operatorname{supp}\chi_n\subset(-1/n,1/n),
\]
and set
\[
    f_n:=\mathbf 1_{[0,a)}*\chi_n.
\]
The first moment measure gives
\[
    \int\omega(\{0,a\})\,d\eta(\omega)=0.
\]
Thus, almost every configuration has no point at either endpoint.
Since $0\leq f_n\leq1$ and the relevant supports lie in a fixed
compact interval, dominated convergence and local second moments
give
\[
    N_{f_n}^\circ\longrightarrow A_a
    \qquad\text{in }L^2(\eta).
\]

Moreover,
\[
    \widehat f_n
    =
    \widehat{\mathbf 1}_{[0,a)}\,\widehat\chi_n,
    \qquad
    |\widehat\chi_n|\leq1,
    \qquad
    \widehat\chi_n(\xi)\longrightarrow1.
\]
Since
\[
    \left|
        \widehat{\mathbf 1}_{[0,a)}(\xi)
    \right|
    \leq
    \min\left\{a,\frac{1}{\pi|\xi|}\right\},
\]
the preceding weighted integrability allows us to pass to the
limit in the Bartlett identity. Hence the spectral measure of
$A_a$ for the full $\mathbb R$-action is
\[
    d\varsigma_{A_a}(\xi)
    =
    \left|
        \widehat{\mathbf 1}_{[0,a)}(\xi)
    \right|^2
    d\sigma_\eta(\xi).
\]

Let $\nu_a$ be the spectral measure of $A_a$ for the transformation
$T_a$, and define
\[
    q_a:\mathbb R\longrightarrow\mathbb T,
    \qquad
    q_a(\xi):=e^{2\pi ia\xi}.
\]
Comparison of Fourier coefficients gives
\[
    \nu_a
    =
    (q_a)_*
    \left(
        \left|
            \widehat{\mathbf 1}_{[0,a)}
        \right|^2\sigma_\eta
    \right).
\]
Since
\[
    q_a^{-1}(\{1\})=a^{-1}\mathbb Z,
\]
the Fourier transform of $\mathbf 1_{[0,a)}$ vanishes at every
nonzero point of this set, while $\sigma_\eta(\{0\})=0$.
Consequently,
\[
    \nu_a(\{1\})=0.
\]

For $\xi\notin a^{-1}\mathbb Z$,
\[
    \widehat{\mathbf 1}_{[0,a)}(\xi)
    =
    \frac{1-e^{-2\pi ia\xi}}{2\pi i\xi},
\]
and therefore
\[
    \frac{
        \left|
            \widehat{\mathbf 1}_{[0,a)}(\xi)
        \right|^2
    }{
        |e^{2\pi ia\xi}-1|^2
    }
    =
    \frac{1}{4\pi^2\xi^2}.
\]
The pushforward identity now gives
\[
\begin{aligned}
    \int_{\mathbb T}
    \frac{1}{|z-1|^2}\,d\nu_a(z)
    &=
    \frac{1}{4\pi^2}
    \int_{\mathbb R\setminus a^{-1}\mathbb Z}
    \frac{1}{\xi^2}\,d\sigma_\eta(\xi)\\
    &<\infty.
\end{aligned}
\]
By the $L^2$-coboundary criterion, there is $Q_a\in L^2(\eta)$
such that
\[
    A_a=Q_a\circ T_a-Q_a.
\]
Replacing $Q_a$ by its real part, we may assume that it is
real-valued.

Set
\[
    E_a:=e^{2\pi iQ_a}.
\]
Since $\omega([0,a))$ is integer-valued,
\[
\begin{aligned}
    E_a\circ T_a
    &=
    e^{2\pi i(Q_a+A_a)}\\
    &=
    e^{2\pi i(\omega([0,a))-\rho a)}E_a\\
    &=
    e^{-2\pi i\rho a}E_a.
\end{aligned}
\]
Thus $E_a$ is an eigenfunction of the time-$a$ transformation.

Let $\varsigma_{E_a}$ be its spectral measure for the full
$\mathbb R$-action. The preceding identity implies
\[
    \operatorname{supp}\varsigma_{E_a}
    \subset
    -\rho+\frac1a\mathbb Z.
\]
Since $\rho a\notin\mathbb Z$, this set does not contain the
origin. The nonzero finite measure $\varsigma_{E_a}$ is supported
on a countable set and therefore has an atom at some
$\lambda\neq0$. The corresponding spectral projection is a
nonzero eigenfunction of the full action. Applying
Lemma~\ref{lem:independent-eigenvalues-lattice-induction} in
dimension one shows that $\eta$ is lattice-induced.
\end{proof}

\begin{remark}
The passage from an integer-valued coboundary to an eigenfunction
used above goes back to Halász \cite{Hal76}.
\end{remark}

\begin{corollary}
\label{cor:spectral-decay-lattice-induction}
Let $\eta$ be an ergodic translation-invariant point process on
$\mathbb R$ with positive intensity and local second moments.
Suppose that there are $C>0$ and $\alpha>0$ such that
\[
    \sigma_\eta([-\varepsilon,\varepsilon])
    \leq
    C\varepsilon^{2+\alpha}
\]
for all sufficiently small $\varepsilon>0$. Then $\eta$ is
lattice-induced.
\end{corollary}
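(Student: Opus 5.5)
The plan is to deduce the corollary from Theorem~\ref{thm:spectral-lattice-induction} by showing that the decay hypothesis implies
\[
    \int_{0<|\xi|<1}\frac{1}{\xi^2}\,d\sigma_\eta(\xi)<\infty .
\]
Everything else in the statement (ergodicity, positive intensity, local second moments) is exactly what the theorem requires. So the only work is a dyadic summation estimate.

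Fix $\varepsilon_0\in(0,1]$ such that $\sigma_\eta([-\varepsilon,\varepsilon])\leq C\varepsilon^{2+\alpha}$ for all $0<\varepsilon\leq\varepsilon_0$. Split the punctured interval $\{0<|\xi|<1\}$ into two parts: $\{\varepsilon_0\leq|\xi|<1\}$ and the dyadic shells
\[
    S_k:=\{2^{-k-1}\varepsilon_0<|\xi|\leq2^{-k}\varepsilon_0\},\qquad k\geq0 .
\]

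On the first part the weight $\xi^{-2}$ is bounded by $\varepsilon_0^{-2}$. The measure $\sigma_\eta$ is Radon, so this part contributes a finite amount.

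On $S_k$ the weight satisfies $\xi^{-2}\leq4^{k+1}\varepsilon_0^{-2}$. We also have $\sigma_\eta(S_k)\leq\sigma_\eta([-2^{-k}\varepsilon_0,2^{-k}\varepsilon_0])\leq C(2^{-k}\varepsilon_0)^{2+\alpha}$. Hence
\[
    \int_{S_k}\frac{d\sigma_\eta(\xi)}{\xi^2}
    \leq 4C\varepsilon_0^{\alpha}\,2^{-k\alpha}.
\]
These bounds form a convergent geometric series because $\alpha>0$. The shells cover $\{0<|\xi|<\varepsilon_0\}$, so the integral near the origin is finite.

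Applying Theorem~\ref{thm:spectral-lattice-induction} then gives a nonzero eigenvalue, and hence lattice induction via Lemma~\ref{lem:independent-eigenvalues-lattice-induction}. There is no real obstacle. The one point to watch is that the shells exclude the origin, which is harmless since the integral is over $0<|\xi|$. The hypothesis also forces $\sigma_\eta(\{0\})=0$ anyway, although the theorem obtains this from ergodicity. The hypothesis with exponent exactly $2$, i.e.\ $\alpha=0$, would not suffice for this argument, which is why $\alpha>0$ is assumed.
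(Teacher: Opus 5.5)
Your proposal is correct and follows essentially the same route as the paper: a dyadic decomposition near the origin to obtain a convergent geometric series, finiteness of the remaining part $\{\varepsilon_0\le|\xi|<1\}$ because $\sigma_\eta$ is Radon, and then an application of Theorem~\ref{thm:spectral-lattice-induction}. The paper indexes its shells by $2^{-n}$ for $n\geq n_0$ instead of rescaling by $\varepsilon_0$, but the estimate is the same.
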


\begin{proof}
Choose $n_0$ so that the assumed estimate holds for
$\varepsilon\leq2^{-n_0}$. Decomposing into dyadic annuli gives
\[
\begin{aligned}
    \int_{0<|\xi|<2^{-n_0}}
    \frac{1}{\xi^2}\,d\sigma_\eta(\xi)
    &\leq
    \sum_{n\geq n_0}
    2^{2n+2}
    \sigma_\eta([-2^{-n},2^{-n}])\\
    &\leq
    4C\sum_{n\geq n_0}2^{-\alpha n}
    <\infty.
\end{aligned}
\]
The integral over
$2^{-n_0}\leq|\xi|<1$ is finite because $\sigma_\eta$ is Radon,
so Theorem~\ref{thm:spectral-lattice-induction} applies.
\end{proof}

\begin{remark}[Positive $\alpha$ is necessary]
\label{rem:sine-process-threshold}
The exponent $2$ is insufficient in
Corollary~\ref{cor:spectral-decay-lattice-induction}. For the
unit-intensity sine-kernel determinantal point process,
\[
    K(x)=\frac{\sin(\pi x)}{\pi x}.
\]
The determinantal covariance formula gives
\[
    d\sigma_\eta(\xi)
    =
    \left(
        1-\widehat{|K|^2}(\xi)
    \right)d\xi
    =
    \min\{|\xi|,1\}\,d\xi.
\]
Hence
\[
    \sigma_\eta([-\varepsilon,\varepsilon])
    =
    \varepsilon^2
    \qquad
    (0<\varepsilon<1).
\]
Its translation action is mixing \cite{Osa21}, and therefore it is
not lattice-induced.
\end{remark}

\begin{corollary}
\label{cor:stealthy-point-process-lattice-induced}
Every ergodic stealthy translation-invariant point process on
$\mathbb R$ with positive intensity and local second moments is
lattice-induced.
\end{corollary}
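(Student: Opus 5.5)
This follows directly from Theorem~\ref{thm:spectral-lattice-induction}. The plan is to check that stealthiness implies the inverse-square integrability condition near the origin, and then invoke the theorem.

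Let $\eta$ be ergodic, stealthy, of positive intensity $\rho$ and with local second moments. By Definition~\ref{def:stealthy-process}, there is an open neighborhood $\Omega$ of the origin with $\sigma_\eta(\Omega)=0$. Choose $\delta\in(0,1)$ with $(-\delta,\delta)\subset\Omega$. Then
\[
    \int_{0<|\xi|<1}\frac{1}{\xi^2}\,d\sigma_\eta(\xi)
    =
    \int_{\delta\leq|\xi|<1}\frac{1}{\xi^2}\,d\sigma_\eta(\xi)
    \leq
    \delta^{-2}\,\sigma_\eta\bigl([-1,1]\bigr)
    <\infty,
\]
since $\sigma_\eta$ is a positive Radon measure and hence finite on compact sets.

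All hypotheses of Theorem~\ref{thm:spectral-lattice-induction} are then satisfied. That theorem gives a nonzero eigenvalue of the translation action, and hence lattice induction via Lemma~\ref{lem:independent-eigenvalues-lattice-induction}.

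There is no real obstacle. The only point to note is that the excised neighborhood must actually contain a punctured interval around $0$, which is exactly what an open gap provides; the finiteness away from the origin is automatic from the Radon property.
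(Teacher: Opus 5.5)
Your proposal is correct and is exactly the paper's route. The paper states this corollary without a separate proof, as an immediate consequence of Theorem~\ref{thm:spectral-lattice-induction}: a gap $(-\delta,\delta)$ of $\sigma_\eta$ reduces the inverse-square integral to $\delta\le|\xi|<1$, where it is finite by the Radon property, just as you argue.
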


In particular, the translation action of such a point process is
not weakly mixing, and hence is not mixing.


\subsection{Cross-sections}
\label{subsec:stealthy-cross-section-lattice-induction}

\begin{corollary}
\label{cor:spectral-cross-section-lattice-induction}
Let $(X,\mu,Y)$ be an ergodic $\mathbb R$-cross-section system.
If
\[
    \int_{0<|\xi|<1}
    \frac{1}{\xi^2}\,d\sigma_{Y,\mu}(\xi)<\infty,
\]
then $(X,\mu)$ is lattice-induced.
\end{corollary}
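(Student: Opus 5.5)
The plan is to reduce the statement to Theorem~\ref{thm:spectral-lattice-induction} applied to the return-time process $\eta:=\eta_{Y,\mu}=(\kappa_Y)_*\mu$, and then to pull the resulting eigenfunction back to $X$ along $\kappa_Y$. Generation is not available, so we cannot expect an isomorphism. However, Lemma~\ref{lem:independent-eigenvalues-lattice-induction} only needs one nonzero eigenvalue of $(X,\mu)$, and eigenfunctions pull back along equivariant factor maps.

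First, check the hypotheses of Theorem~\ref{thm:spectral-lattice-induction} for $\eta$.
\begin{itemize}
\item Translation invariance follows from equivariance of $\kappa_Y$.
\item Ergodicity holds because $\eta$ is a factor of the ergodic system $(X,\mu)$: an invariant Borel set $A\subset\mathcal N_s(\mathbb R)$ pulls back to the invariant set $\kappa_Y^{-1}(A)$.
\item Local second moments is precisely the definition of a cross-section system.
\item The spectral hypothesis is the assumption, since $\sigma_{Y,\mu}=\sigma_\eta$ by definition.
\end{itemize}

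The one point needing care is positive intensity. Since $Y$ meets every orbit, $Y_x\neq\emptyset$ for every $x$. Hence $\kappa_Y(x)$ is a nonzero configuration, so $\eta$ is not concentrated on the empty configuration. By invariance and $\sigma$-additivity over the unit intervals $[n,n+1)$, the expectation $\int\omega([0,1))\,d\eta=\rho_\eta$ must be positive: if it were $0$, then $\omega([n,n+1))=0$ almost surely for every $n$, so almost every configuration would be empty.

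Theorem~\ref{thm:spectral-lattice-induction} then gives a nonzero $\lambda\in\mathbb R$ and an eigenfunction $F\in L^2(\eta)$ with $U_tF=e^{2\pi it\lambda}F$. Set $\widetilde F:=F\circ\kappa_Y$. This function lies in $L^2(X,\mu)$ with the same norm, because $\mu$ pushes forward to $\eta$, so it is nonzero. Equivariance gives $\widetilde F((-t).x)=F((-t).\kappa_Y(x))$, so $\widetilde F$ is an eigenfunction of the Koopman representation of $(X,\mu)$ with the same nonzero eigenvalue $\lambda$. Lemma~\ref{lem:independent-eigenvalues-lattice-induction} in dimension one then shows that $(X,\mu)$ is lattice-induced.

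I expect no serious obstacle. The only step requiring an argument beyond bookkeeping is the positivity of the intensity, which uses the fact that $Y$ meets every orbit, and it is settled as above.
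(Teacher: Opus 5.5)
Your proposal is correct and follows the paper's proof exactly. You apply Theorem~\ref{thm:spectral-lattice-induction} to the return-time process $\eta_{Y,\mu}$, pull the resulting eigenfunction back along the equivariant map $\kappa_Y$, and conclude with Lemma~\ref{lem:independent-eigenvalues-lattice-induction} in dimension one; your explicit checks of ergodicity and of positive intensity (using that $Y$ meets every orbit, so $\kappa_Y(x)\neq0$) spell out the two facts the paper simply asserts.
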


\begin{proof}
The return-time process $\eta_{Y,\mu}$ is ergodic and has positive
intensity. By
Theorem~\ref{thm:spectral-lattice-induction}, its translation
action has a nonzero eigenfunction. Pulling this eigenfunction back
along the equivariant map
\[
    \kappa_Y:X\longrightarrow\mathcal N_s(\mathbb R)
\]
gives a nonzero eigenfunction of the action on $(X,\mu)$.
Lemma~\ref{lem:independent-eigenvalues-lattice-induction}, in
dimension one, now gives the conclusion.
\end{proof}

\begin{corollary}
\label{cor:stealthy-cross-section-lattice-induced}
Every ergodic stealthy $\mathbb R$-cross-section system is
lattice-induced.
\end{corollary}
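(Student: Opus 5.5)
The corollary should follow at once from Corollary~\ref{cor:spectral-cross-section-lattice-induction}. The plan is to check that stealthiness implies the inverse-square integrability hypothesis near the origin.

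Let $(X,\mu,Y)$ be an ergodic stealthy cross-section system. By definition, there is an open neighborhood $\Omega$ of $0$ with $\sigma_{Y,\mu}(\Omega)=0$. Choose $\delta\in(0,1)$ with $(-\delta,\delta)\subset\Omega$. Then
\[
    \int_{0<|\xi|<1}\frac{1}{\xi^2}\,d\sigma_{Y,\mu}(\xi)
    =
    \int_{\delta\leq|\xi|<1}\frac{1}{\xi^2}\,d\sigma_{Y,\mu}(\xi)
    \leq
    \delta^{-2}\,\sigma_{Y,\mu}\bigl([-1,1]\bigr),
\]
and the right-hand side is finite because $\sigma_{Y,\mu}$ is a positive Radon measure. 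Corollary~\ref{cor:spectral-cross-section-lattice-induction} then applies and shows that $(X,\mu)$ is lattice-induced.

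The remaining point is that Corollary~\ref{cor:spectral-cross-section-lattice-induction}, through Theorem~\ref{thm:spectral-lattice-induction}, needs the return-time process to be ergodic, to have local second moments, and to have positive intensity.
\begin{itemize}
\item Ergodicity passes from $\mu$ to $\eta_{Y,\mu}=(\kappa_Y)_*\mu$, as recorded in Subsection~\ref{subsec:return-times}.
\item Local second moments are part of the definition of a cross-section system (Definition~\ref{def:cross-section-system}).
\item Positive intensity holds because every orbit meets $Y$, so every return-time set $Y_x$ is nonempty. Hence $\eta_{Y,\mu}$ is not concentrated on the empty configuration. An invariant process with intensity $0$ would have $\omega([-n,n])=0$ almost surely for every $n$, so it would be concentrated on the empty configuration. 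Therefore $\rho_{Y,\mu}>0$.
\end{itemize}
This last point is the only step that needs any care, and it is elementary.
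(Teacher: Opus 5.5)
Your proof is correct and follows the paper's route. The paper gives no separate argument: it places this corollary directly after Corollary~\ref{cor:spectral-cross-section-lattice-induction}, since a gap $\sigma_{Y,\mu}(\Omega)=0$ immediately gives the inverse-square integrability near the origin. Your checks of ergodicity, local second moments and positive intensity of the return-time process are exactly what that corollary's proof relies on.
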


Neither generation nor the Delone property is required.

\begin{question}[Higher-dimensional converse]
\label{que:higher-dimensional-converse}
Let $d\geq2$, and let $\eta$ be an ergodic stealthy point process
on $\mathbb R^d$. Must its translation action have a nonzero
eigenvalue? In particular, can the translation action of a
stealthy point process be weakly mixing? Under what additional
spectral hypotheses does lattice induction follow?
\end{question}

The one-dimensional proof produces a nonzero eigenvalue from an
interval-count cocycle. In dimension one this already implies
lattice induction, whereas in higher dimensions lattice induction
requires $d$ linearly independent eigenvalues.



\section{The one-dimensional construction}
\label{sec:one-dimensional-construction}

Let $T$ be a Borel automorphism of a standard Borel space $Z$.
We encode the orbit of $z\in Z$ into a band-limited entire
function $H_z$ and perturb a fixed periodic entire function by a
small multiple of $H_z$. The resulting real zero sets form a
uniformly Delone equivariant family. Two frequencies in the
periodic term ensure orbit separation, while a contour calculation
for the logarithmic derivative gives the Fourier identity near the
origin.

The analytic construction was influenced by Benedicks' work on
spectral gaps \cite{Ben84}. Meyer's discussion of mean-periodic
convolution equations and mild measures \cite{Mey21} also
motivated the perturbation of the periodic entire function $S$
while preserving control of its real zero set.

\begin{theorem}[One-dimensional coding theorem]
\label{thm:one-dimensional-coding}
Let $T:Z\to Z$ be a Borel automorphism of a standard Borel space.
There is a Borel family of sets
\[
    z\longmapsto P_z\subset\mathbb R
\]
with the following properties:
\begin{enumerate}
\item
the sets $P_z$ are uniformly Delone;

\item
\[
    P_{Tz}=P_z-1
    \qquad
    (z\in Z);
\]

\item
if $z,w\in Z$ and $u\in\mathbb R$ satisfy
\[
    P_w=P_z-u,
\]
then
\[
    u\in\mathbb Z
    \qquad\text{and}\qquad
    w=T^u z;
\]

\item
for every $z\in Z$,
\[
    \widehat{\delta}_{P_z}
    =
    4\delta_0
    \qquad\text{on }(-1,1)
\]
in the sense of tempered distributions.
\end{enumerate}
\end{theorem}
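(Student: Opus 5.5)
We follow the construction sketched in the introduction. Fix a Borel injection $b:Z\to[-1,1]$ and an entire function $\psi$ of exponential type $\tau<2\pi$ (say $\tau=\pi+\delta$ with small $\delta>0$), real on $\mathbb R$, with $\psi(0)=1$, $\psi(n)=0$ for $n\neq0$, and with rapid decay on horizontal strips. An example is $\psi(w)=\frac{\sin\pi w}{\pi w}\,\phi(w)$, where $\phi$ is a real entire function of small type, $\phi(0)=1$, decaying faster than any power on strips. Set
\[
    H_z(w)=\sum_{n\in\mathbb Z}b(T^nz)\psi(w-n),
    \qquad
    F_z:=S+\varepsilon H_z,
    \qquad
    S(w)=\sin(4\pi w)+\sin(2\pi w),
\]
and let $P_z$ be the real zero set of $F_z$. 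The sum converges locally uniformly on $\mathbb C$ and satisfies $|H_z(w)|\leq Ce^{\tau|\operatorname{Im}w|}$ uniformly in $z$. The function $z\mapsto H_z$ is Borel, and $H_{Tz}(w)=H_z(w+1)$. Since $S$ is $1$-periodic, this gives (2). Borel measurability of $z\mapsto\delta_{P_z}$ follows because zeros are found by locating sign changes on rational intervals.

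\textbf{Step 1: uniform Delone property (1).} The function $S$ has period $1$, and its real zeros are simple. Writing $S=\sin(2\pi w)(2\cos(2\pi w)+1)$, the zeros in $[0,1)$ are $0,1/2,1/3,2/3$. Each is simple: the two factors have no common zeros, and each factor has only simple zeros. So $S$ has exactly $4$ zeros per period. Take disjoint small discs $D_j$ around all these zeros. Off $\bigcup D_j$, within a strip $|\operatorname{Im}w|\leq h$, periodicity gives $|S|\geq c>0$. For $\varepsilon<c/C'$, Rouché's theorem in each $D_j$, using $|\varepsilon H_z|\leq\varepsilon C'$, shows that $F_z$ has exactly one zero in $D_j$. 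That zero is real, since $F_z$ is real on $\mathbb R$ and non-real zeros come in conjugate pairs. Outside $\bigcup D_j$ within the strip, $F_z$ has no zeros. This gives separation and relative density with constants independent of $z$, together with exactly four zeros per unit period window.

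\textbf{Step 2: Fourier identity (4).} Let $g\in\mathcal S(\mathbb R)$ with $\operatorname{supp}\widehat g\subset(-1,1)$. Then $g$ extends to an entire function of exponential type $<2\pi$. We need $\sum_{p\in P_z}g(p)=4\int g$. Write
\[
    \sum_{p}g(p)=\frac1{2\pi i}\oint g\,\frac{F_z'}{F_z}
\]
on the boundary of the strip $|\operatorname{Im}w|\leq h$. Outside the strip there are no zeros, so $h$ may be increased freely. For large $|\operatorname{Im}w|$, the term $\sin 4\pi w$ dominates, with size about $e^{4\pi|\operatorname{Im}w|}$. Expanding,
\[
    \frac{F_z'}{F_z}=\mp 4\pi i+(\text{corrections of size }e^{-2\pi|\operatorname{Im}w|}\cdot e^{\max(2\pi,\tau)|\operatorname{Im} w|}\ \text{relative}),
\]
where the corrections form a convergent series in $e^{\pm2\pi i w}$ times functions of type at most $\tau$ with exponent strictly below $4\pi$. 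The constant term integrates against $g$ to give $4\int g$. The remaining terms are products of $g$, which has type $<2\pi$, with terms decaying like $e^{-\kappa|\operatorname{Im}w|}$ for some $\kappa>2\pi$ in the appropriate sense. Shifting the contour to $\pm i\infty$ kills them, by a Paley–Wiener style estimate.

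This is the main obstacle. One must make the expansion of $\log F_z$ rigorous, with $H_z$ only of type $\tau$ rather than small. The key requirement is $\tau+1<4\pi-2\pi$ type bookkeeping, which we adjust by choosing $\tau$ and the band $(-1,1)$ compatibly. As a fallback, replace the top frequency by a multiple if needed. Vertical sides of the contour are handled by choosing them inside gaps and letting them go to $\pm\infty$, using the decay of $g$.

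\textbf{Step 3: orbit recovery (3).} Suppose $P_w=P_z-u$. The function $F_z(\cdot+u)$ has the same real zeros as $F_w$, and all zeros of both are real and simple. Both have the same exponential type and comparable growth. By Hadamard factorization, or a Lindelöf-type argument for sine-type functions, $F_w=e^{\alpha w+\beta}F_z(\cdot+u)$. Real-valuedness and boundedness on $\mathbb R$ force $F_w=cF_z(\cdot+u)$ for a real constant $c$.

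Now compare the highest and lowest frequency components, the $\pm4\pi$ and $\pm2\pi$ Fourier parts, using that $\widehat{H}$ is supported in $[-\tau,\tau]$ with $\tau<2\pi$. The $4\pi$ component gives $c\,e^{4\pi iu}=1$. The $2\pi$ component, which is the marker, gives $c\,e^{2\pi iu}=1$. Together these force $c=\pm1$ and $u\in\mathbb Z$, and then $c=1$. Hence $H_w=H_z(\cdot+u)=H_{T^uz}$. Evaluating at integers gives $b(T^nw)=b(T^{n+u}z)$, and injectivity of $b$ yields $w=T^uz$.
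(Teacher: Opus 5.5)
Your proposal follows the paper's construction closely. It uses the same $S$ and the same sampling-series code $H_z$; your $\psi(w)=\frac{\sin\pi w}{\pi w}\phi(w)$ is a legitimate substitute for the paper's partition-of-unity interpolant, once the polynomial factors in $|\operatorname{Im}w|$ are absorbed into a slightly larger type. It then runs Rouch\'e on discs around $\Lambda$, the residue formula on a tall rectangle, and Hadamard factorization followed by the two-frequency marker.

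Two sub-steps differ from the paper, and both variants work. For the Fourier identity, the paper fixes one height $Y$ and writes $F_z=-\frac{e^{-4\pi iw}}{2i}(1+G_z)$ with $G_z=e^{2\pi iw}-e^{6\pi iw}-e^{8\pi iw}-2i\varepsilon e^{4\pi iw}H_z$. It expands $\log(1+G_z)$ to show that $\frac{F_z'}{F_z}(\cdot+iY)+4\pi i$ has Fourier support in $[1,\infty)$, which gives exact vanishing of the pairing with $g(\cdot+iY)$; you instead let the height tend to infinity. For orbit recovery, the paper removes $e^{\alpha w}$ by asymptotics on the imaginary axis, whereas you use the real axis.

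Your Step 2 is mis-justified, and the obstacle you name is not the real one. The correction does not decay like $e^{-\kappa Y}$ with $\kappa>2\pi$: the marker contributes $e^{2\pi iw}$ to $G_z$, and this has modulus exactly $e^{-2\pi Y}$. The strict inequality must come from $g$ instead. Since $\operatorname{supp}\widehat g\subset[-a,a]$ for some $a<1$, two integrations by parts give $\int_{\mathbb R}|g(x+iY)|\,dx\lesssim(1+Y)^2e^{2\pi aY}$. On the other side, once $|G_z|<\frac12$ on $\operatorname{Im}w\geq Y_0$, you get $|\frac{F_z'}{F_z}(x+iY)+4\pi i|=|\frac{G_z'}{1+G_z}|\leq2|G_z'|\lesssim e^{-2\pi Y}+e^{-(4\pi-\tau)Y}\lesssim e^{-2\pi Y}$, uniformly in $x$ and $z$. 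This uses your type bound $\tau<2\pi$ and a Cauchy estimate for $H_z'$. The factorization also shows $F_z\neq0$ there. Cauchy's theorem then makes $\int_{\mathbb R}g(x+iY)\bigl(\frac{F_z'}{F_z}(x+iY)+4\pi i\bigr)\,dx$ independent of $Y\geq Y_0$, and the two bounds force it to vanish; the lower line follows by conjugation. No logarithmic expansion is needed. Nor is any fallback change of $S$, which would in any case alter the constant $4$ in property (4). Compared with the paper, this route avoids the distributional Fourier analysis of $\log(1+G_z)$, while the paper's argument obtains vanishing at a single fixed height from spectral disjointness.

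Two smaller repairs are needed. First, your Step 1 excludes zeros only in a strip $|\operatorname{Im}w|\leq h$, yet Steps 2 and 3 need every zero of $F_z$ to be real: Hadamard requires the full zero divisors of $F_w$ and $F_z(\cdot+u)$ to agree. Fix the order of choices. Take $Y_0$ so that $|G_z|<\frac12$ for $\operatorname{Im}w\geq Y_0$, for all $\varepsilon\leq1$ (and the conjugate statement below), and then run Rouch\'e in the strip with $h=Y_0$. The paper achieves the same with the global bound $|S(x+iy)|\geq ce^{4\pi|y|}$ off the discs. Second, in Step 3, boundedness on $\mathbb R$ alone does not force $\operatorname{Re}\alpha=0$. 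You also need $|F_z|\geq c/2$ at real points off the discs in every period, which your Rouch\'e estimate supplies.
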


\subsection{Band-limited orbit coding}
\label{subsec:band-limited-orbit-coding}

We first construct the band-limited entire functions carrying the
orbit information.

\begin{lemma}
\label{lem:band-limited-orbit-coding}
Fix
\[
    \frac12<\sigma<1.
\]
There exist $\tau\in(\frac12,\sigma)$ and a real entire function
$\psi$ such that
\[
    \psi(n)
    =
    \begin{cases}
        1,&n=0,\\
        0,&n\in\mathbb Z\setminus\{0\},
    \end{cases}
\]
and, for every $j,N\geq0$,
\[
    \left|\psi^{(j)}(x+iy)\right|
    \leq
    C_{j,N}
    e^{2\pi\sigma|y|}
    (1+|x|)^{-N}.
\]
Moreover,
\[
    \operatorname{supp}\widehat\psi
    \subset[-\tau,\tau].
\]

Let $b:Z\to[-1,1]$ be a Borel injection and define
\[
    H_z(w)
    :=
    \sum_{n\in\mathbb Z}
    b(T^nz)\psi(w-n).
\]
The series and all its derivatives converge locally uniformly on
$\mathbb C$. The map
\[
    z\longmapsto H_z
\]
is Borel for the topology of locally uniform convergence, and
\[
    H_z(n)=b(T^nz),
    \qquad
    H_{Tz}(w)=H_z(w+1).
\]
For every $j\geq0$,
\[
    \left|H_z^{(j)}(x+iy)\right|
    \leq
    C_j e^{2\pi\sigma|y|}
\]
uniformly in $z\in Z$ and $x,y\in\mathbb R$. Finally,
\[
    \operatorname{supp}\widehat H_z
    \subset[-\tau,\tau]
    \qquad
    (z\in Z).
\]
\end{lemma}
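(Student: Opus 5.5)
The plan is to build $\psi$ as a sinc function multiplied by a fast-decaying band-limited factor, and then to obtain every property of $H_z$ from termwise estimates on $\psi$.

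\emph{Construction of $\psi$.} Fix $\tau\in(\frac12,\sigma)$ and set $\delta:=\tau-\frac12>0$. Choose a real, even $\phi\in C_c^\infty(\mathbb R)$ with
\[
    \operatorname{supp}\phi\subset[-\delta,\delta],
    \qquad
    \int\phi=1,
\]
and let $g:=\widehat\phi$, normalized so that $g(0)=1$. By Paley--Wiener--Schwartz, $g$ is a real entire function satisfying
\[
    |g^{(j)}(x+iy)|\leq C_{j,N}e^{2\pi\delta|y|}(1+|x|)^{-N}
\]
for all $j,N$. Define
\[
    \psi(w):=\frac{\sin(\pi w)}{\pi w}\,g(w).
\]
Then $\psi(0)=1$ and $\psi(n)=0$ for $n\neq0$. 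Moreover $\widehat\psi=\mathbf 1_{[-1/2,1/2]}*\widehat g$, whose support lies in $[-\frac12-\delta,\frac12+\delta]=[-\tau,\tau]$, and $\psi$ is real on $\mathbb R$.

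\emph{Estimates for $\psi$.} The sinc factor satisfies $|\mathrm{sinc}(x+iy)|\leq Ce^{\pi|y|}$, and the same bound holds for each of its derivatives, since it is entire of exponential type $\pi$ and bounded on $\mathbb R$; alternatively, use Cauchy estimates on unit discs. By Leibniz's rule,
\[
    |\psi^{(j)}(x+iy)|\leq C_{j,N}e^{(\pi+2\pi\delta)|y|}(1+|x|)^{-N}=C_{j,N}e^{2\pi\tau|y|}(1+|x|)^{-N},
\]
which is at most $C_{j,N}e^{2\pi\sigma|y|}(1+|x|)^{-N}$ because $\tau<\sigma$. The decay $(1+|x|)^{-N}$ holds uniformly in $y$ precisely because the bound for $g$ is uniform in $x$ with the factor $e^{2\pi\delta|y|}$ pulled out. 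This is the only slightly delicate point: one has to check that the Paley--Wiener bound gives polynomial decay in $x$ uniformly on horizontal lines. That follows by integrating by parts $N$ times in $g(w)=\int\phi(s)e^{-2\pi i sw}\,ds$.

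\emph{Properties of $H_z$.} Since $|b|\leq1$,
\[
    \sum_n|\psi^{(j)}(w-n)|\leq C_{j,2}\,e^{2\pi\sigma|y|}\sum_n(1+|x-n|)^{-2}\leq C_j e^{2\pi\sigma|y|},
\]
uniformly in $x$ and $z$. This gives locally uniform convergence of the series and all its derivatives, so $H_z$ is entire, together with the stated uniform bound. Evaluating at integers gives $H_z(n)=b(T^nz)$. Reindexing $n\mapsto n+1$ yields
\[
    H_{Tz}(w)=\sum_n b(T^{n+1}z)\psi(w-n)=H_z(w+1).
\]

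\emph{Borel measurability.} Each partial sum $z\mapsto\sum_{|n|\leq M}b(T^nz)\psi(\cdot-n)$ is a Borel map into $C(\mathbb C)$ with the compact-open topology: it is a continuous function of the finitely many Borel coordinates $b(T^nz)$. The partial sums converge uniformly on compacts, uniformly in $z$, so the limit map $z\mapsto H_z$ is Borel.

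\emph{Fourier support.} On $\mathbb R$ the series converges in $\mathcal S'$, since it converges boundedly pointwise and the coefficients are bounded. Its Fourier transform is
\[
    \widehat H_z=\widehat\psi\cdot\sum_n b(T^nz)e^{-2\pi i n\xi},
\]
that is, $\widehat\psi$ times a distribution. More carefully, for any test function $\varphi$ with support outside $[-\tau,\tau]$, each term satisfies
\[
    \langle\psi(\cdot-n),\widehat\varphi\rangle=\langle\widehat\psi\, e^{-2\pi i n\cdot},\varphi\rangle=0,
\]
and dominated convergence applies. Hence $\operatorname{supp}\widehat H_z\subset[-\tau,\tau]$.
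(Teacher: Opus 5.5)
Your proof is correct, and it differs from the paper only in how $\psi$ is built. The paper sets $\widehat\psi=\theta:=\chi/D$, where $\chi\in C_c^\infty((-\tau,\tau))$ is a bump positive on $[-\frac12,\frac12]$ and $D$ is its $\mathbb Z$-periodization. Then $\sum_k\theta(\xi+k)=1$, and the interpolation property follows from $\psi(n)=\int_0^1e^{2\pi in\xi}\,d\xi$.

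You take $\psi=\operatorname{sinc}\cdot\widehat\phi$ instead. Interpolation then comes from the zeros of $\sin(\pi w)$, and band-limitation from $\widehat\psi=\mathbf 1_{[-1/2,1/2]}*\phi$. Both are instances of the same principle: your $\widehat\psi$ is also a $C_c^\infty$ function whose $\mathbb Z$-periodization is identically $1$. Your version makes the interpolation identity immediate. The paper's version makes $\widehat\psi\in C_c^\infty$ true by definition, which it uses when forming the product $\widehat\psi\,\widehat\mu_z$.

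Your growth bound is also slightly sharper. You integrate by parts against the full exponential $e^{-2\pi isw}$, which amounts to dividing by $w$, so you get exponential type exactly $2\pi\tau$ with no polynomial loss in $|y|$. The paper integrates by parts in $\xi$ against the real oscillation, picks up polynomial factors in $|y|$, and uses the slack $\sigma>\tau$ to absorb them.

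Everything about $H_z$ matches the paper: summation with $N=2$, reindexing for $H_{Tz}(w)=H_z(w+1)$, and Borel measurability as a limit of Borel partial sums. For the Fourier support, your termwise testing against $\varphi$ after $\mathcal S'$-convergence is equivalent to the paper's identity $\widehat H_z=\widehat\psi\,\widehat\mu_z$.
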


\begin{proof}
Choose an even nonnegative function
\[
    \chi\in C_c^\infty((-\tau,\tau))
\]
which is positive on $[-1/2,1/2]$, and set
\[
    D(\xi)
    :=
    \sum_{k\in\mathbb Z}\chi(\xi+k),
    \qquad
    \theta(\xi)
    :=
    \frac{\chi(\xi)}{D(\xi)}.
\]
The function $D$ is smooth, positive and $1$-periodic. Hence
$\theta\in C_c^\infty((-\tau,\tau))$ and
\[
    \sum_{k\in\mathbb Z}\theta(\xi+k)=1
    \qquad
    (\xi\in\mathbb R).
\]

Define
\[
    \psi(w)
    :=
    \int_{\mathbb R}
    \theta(\xi)e^{2\pi iw\xi}\,d\xi.
\]
The function $\psi$ is entire and real on $\mathbb R$. For
$n\in\mathbb Z$,
\[
\begin{aligned}
    \psi(n)
    &=
    \int_0^1
    e^{2\pi in\xi}
    \sum_{k\in\mathbb Z}\theta(\xi+k)\,d\xi\\
    &=
    \int_0^1e^{2\pi in\xi}\,d\xi,
\end{aligned}
\]
which proves the first claim of the lemma.

Differentiating under the integral and integrating by parts in
$\xi$ gives, for every $j,N\geq0$,
\[
    \left|\psi^{(j)}(x+iy)\right|
    \leq
    C_{j,N}
    e^{2\pi\sigma|y|}
    (1+|x|)^{-N}.
\]
Here the passage from $\tau$ to $\sigma$ absorbs the polynomial
factors in $|y|$ produced by integration by parts. Since
$\widehat\psi=\theta$,
\[
    \operatorname{supp}\widehat\psi
    \subset[-\tau,\tau].
\]

Taking $N>1$ and summing over $n\in\mathbb Z$ shows that the series
defining $H_z$, together with all its derivatives, converges
locally uniformly. Moreover,
\[
\begin{aligned}
    \left|H_z^{(j)}(x+iy)\right|
    &\leq
    C_{j,N}e^{2\pi\sigma|y|}
    \sum_{n\in\mathbb Z}(1+|x-n|)^{-N}\\
    &\leq
    C_j e^{2\pi\sigma|y|}.
\end{aligned}
\]
Its partial sums are Borel maps into the space of entire functions
with the topology of locally uniform convergence, and hence so is
their limit $z\mapsto H_z$.

The interpolation property gives
\[
    H_z(n)=b(T^nz),
\]
while reindexing the series gives
\[
\begin{aligned}
    H_{Tz}(w)
    &=
    \sum_{n\in\mathbb Z}
    b(T^{n+1}z)\psi(w-n)\\
    &=
    \sum_{m\in\mathbb Z}
    b(T^mz)\psi(w+1-m)
    =
    H_z(w+1).
\end{aligned}
\]

For each $z\in Z$, define the signed atomic measure
\[
    \mu_z
    :=
    \sum_{n\in\mathbb Z}b(T^nz)\delta_n.
\]
Since the coefficients $b(T^nz)$ are uniformly bounded, $\mu_z$
is translation bounded and hence defines a tempered distribution.
Moreover, $\psi\in\mathcal S(\mathbb R)$, and its convolution with
$\mu_z$ is given pointwise by
\[
\begin{aligned}
    (\psi*\mu_z)(x)
    &:=
    \int_{\mathbb R}\psi(x-y)\,d\mu_z(y)\\
    &=
    \sum_{n\in\mathbb Z}b(T^nz)\psi(x-n)
    =
    H_z(x).
\end{aligned}
\]
The series is absolutely and locally uniformly convergent by the
decay estimates for $\psi$. Thus, as tempered distributions on
$\mathbb R$,
\[
    H_z=\psi*\mu_z.
\]
Taking Fourier transforms gives
\[
    \widehat H_z
    =
    \widehat\psi\,\widehat\mu_z.
\]
Here $\widehat\mu_z$ is a tempered distribution, and the product
with the smooth compactly supported function $\widehat\psi$ is
well defined. Consequently,
\[
    \operatorname{supp}\widehat H_z
    \subset
    \operatorname{supp}\widehat\psi
    \subset[-\tau,\tau].
\]
\end{proof}

We fix $\sigma$, $\tau$, $\psi$, $b$, and the family
$(H_z)_{z\in Z}$ for the remainder of the section.


\subsection{Uniformly Delone zero sets}
\label{subsec:real-zero-sets}

We perturb the $1$-periodic entire function
\[
    S(w):=\sin(4\pi w)+\sin(2\pi w).
\]
The addition formula gives
\[
    S(w)=2\sin(3\pi w)\cos(\pi w).
\]
The zeros of $\sin(3\pi w)$ are the points of
$\frac13\mathbb Z$, while the zeros of $\cos(\pi w)$ are the
points of $\frac12+\mathbb Z$. These two sets are disjoint, and
therefore the zero set of $S$ in the whole complex plane is
\[
    \Lambda
    :=
    \frac13\mathbb Z
    \,\cup\,
    \left(\frac12+\mathbb Z\right).
\]
Each zero is simple: the vanishing factor has a simple zero
there, while the other factor is nonzero.

For $\varepsilon>0$, define
\[
    F_z(w):=S(w)+\varepsilon H_z(w).
\]

\begin{proposition}
\label{prop:real-zero-family}
For some $\varepsilon>0$, the following assertions hold.

For every $z\in Z$, all zeros of $F_z$ are real and simple, and
its zero set
\[
    P_z:=\{x\in\mathbb R:F_z(x)=0\}
\]
is uniformly Delone. More precisely, there is $\delta>0$ such
that
\[
    P_z=\{p_\lambda(z):\lambda\in\Lambda\},
    \qquad
    |p_\lambda(z)-\lambda|<\delta
\]
for every $\lambda\in\Lambda$ and $z\in Z$.

The map
\[
    z\longmapsto\delta_{P_z}
\]
is Borel, and
\[
    P_{Tz}=P_z-1
    \qquad
    (z\in Z).
\]
\end{proposition}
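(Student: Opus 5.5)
The plan is a Rouché argument on small discs around the zeros of $S$, combined with a growth comparison in a horizontal strip to rule out non-real zeros. Throughout, the constants come from Lemma~\ref{lem:band-limited-orbit-coding}: the bounds $|H_z(x+iy)|\le C_0e^{2\pi\sigma|y|}$ with $\sigma<1$ hold uniformly in $z$.

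\emph{Step 1 (lower bounds for $S$).} The zero set $\Lambda$ is $1$-periodic, with minimal gap $\min\{\tfrac13,\tfrac16\}=\tfrac16$. Fix $\delta<\tfrac1{12}$.
\begin{enumerate}
\item \emph{Near the real axis.} The function $S$ is $1$-periodic and its zeros are exactly $\Lambda$. By compactness of one period of the strip, there is $c_1>0$ with $|S(w)|\ge c_1$ whenever $|\operatorname{Im} w|\le Y$ and $\operatorname{dist}(w,\Lambda)\ge\delta$. Here $Y$ is to be chosen.
\item \emph{Far from the real axis.} For $|y|$ large, $|\sin(4\pi w)|\sim\tfrac12 e^{4\pi|y|}$ dominates $|\sin(2\pi w)|\lesssim e^{2\pi|y|}$. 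Hence $|S(x+iy)|\ge\tfrac14 e^{4\pi|y|}$ for $|y|\ge Y$, once $Y$ is large.
\end{enumerate}
Since $\sigma<1<2$, we have $\varepsilon C_0e^{2\pi\sigma|y|}<\tfrac14 e^{4\pi|y|}$ for all $|y|\ge Y$ and every $\varepsilon\le1$. Now choose $\varepsilon<c_1/(C_0e^{2\pi\sigma Y})$. Then $|\varepsilon H_z|<|S|$ on the whole set $\{w:\operatorname{dist}(w,\Lambda)\ge\delta\}$, uniformly in $z$.

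\emph{Step 2 (locating the zeros).}
\begin{enumerate}
\item Off the $\delta$-discs, $F_z$ has no zeros, since $|F_z|\ge|S|-\varepsilon|H_z|>0$ there.
\item On each disc $\overline{B_\delta(\lambda)}$, Rouché's theorem gives exactly one zero $p_\lambda(z)$, because $\lambda$ is a simple zero of $S$. That zero is then simple.
\item This zero is real. The function $F_z$ is real on $\mathbb R$ ($S$ is real there, and $H_z$ is real because $\psi$ is real and $b$ is real-valued). Its zeros are therefore symmetric under conjugation. The disc is conjugation-invariant, so a unique zero in it must be fixed by conjugation.
\end{enumerate}
Hence $P_z=\{p_\lambda(z)\}$ with $|p_\lambda(z)-\lambda|<\delta$. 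Uniform separation (at least $\tfrac16-2\delta$) and uniform relative density follow from the corresponding properties of $\Lambda$.

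\emph{Step 3 (equivariance and measurability).} Since $S(w+1)=S(w)$ and $H_{Tz}(w)=H_z(w+1)$, we get $F_{Tz}(w)=F_z(w+1)$. Hence $P_{Tz}=P_z-1$.

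For Borel measurability, fix $f\in C_c(\mathbb R)$. Then
\[
    \delta_{P_z}(f)=\sum_{\lambda\in\Lambda\cap K}f(p_\lambda(z))
\]
is a finite sum over a fixed compact $K$. It therefore suffices that each $z\mapsto p_\lambda(z)$ is Borel. This follows from the argument-principle formula
\[
    p_\lambda(z)=\frac1{2\pi i}\oint_{|w-\lambda|=\delta}\frac{wF_z'(w)}{F_z(w)}\,dw,
\]
which depends continuously on $F_z$ in the locally uniform topology, since $F_z$ is bounded away from $0$ on the circle. By Lemma~\ref{lem:band-limited-orbit-coding}, $z\mapsto F_z$ is Borel in that topology, so $z\mapsto p_\lambda(z)$ is Borel.

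The main obstacle is Step 1. It requires a single choice of $\varepsilon$ that works uniformly over all $z$ and over the unbounded strip. The two-regime estimate handles this: periodicity of $S$ with compactness near the axis, and the growth-rate gap $2\pi\sigma<4\pi$ away from it.
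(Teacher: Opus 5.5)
Your proposal is correct and follows essentially the same route as the paper: a uniform lower bound for $|S|$ off the $\delta$-discs (dominance of $\sin(4\pi w)$ for large $|\operatorname{Im} w|$, and periodicity plus compactness on a bounded strip), then one choice of $\varepsilon$ uniform in $z$ using $\sigma<1$. After that come Rouch\'e on each disc, conjugation symmetry for reality, the contour-integral formula for Borel measurability of $p_\lambda$, and $F_{Tz}(w)=F_z(w+1)$ for equivariance. Your two-regime split is just an unpacked form of the paper's bound $|S(x+iy)|\geq c\,e^{4\pi|y|}$; note that $Y$ must also be large enough that $4C_0<e^{2\pi(2-\sigma)Y}$, which your ``once $Y$ is large'' implicitly covers.
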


\begin{remark}
The argument is a concrete sine-type perturbation argument:
$S$ has simple uniformly separated real zeros and dominates the
perturbations $H_z$ in exponential type. See
\cite{Lev96,You01} for the surrounding theory of sine-type
functions and complete interpolating sequences.
\end{remark}

\begin{proof}
Set
\[
    d_\Lambda
    :=
    \inf\{
        |\lambda-\lambda'|:
        \lambda,\lambda'\in\Lambda,\ \lambda\neq\lambda'
    \}>0.
\]
Choose
\[
    0<\delta<\frac14d_\Lambda
\]
so small that each closed disc
\[
    \overline{D(\lambda,\delta)},
    \qquad
    \lambda\in\Lambda,
\]
contains no zero of $S$ other than $\lambda$. These discs are
pairwise disjoint.

There is a constant $c>0$ such that
\[
    |S(x+iy)|
    \geq
    ce^{4\pi|y|}
\]
whenever
\[
    x+iy
    \notin
    \bigcup_{\lambda\in\Lambda}D(\lambda,\delta).
\]
Indeed, for sufficiently large $|y|$, the term
$\sin(4\pi w)$ dominates $\sin(2\pi w)$ uniformly in
$\operatorname{Re}w$. On a bounded horizontal strip, the assertion
follows from periodicity and compactness after removing the discs
around the zeros.

By Lemma~\ref{lem:band-limited-orbit-coding},
\[
    |H_z(x+iy)|
    \leq
    Ce^{2\pi\sigma|y|}
\]
uniformly in $z$, where $\sigma<1$. We may therefore choose
$\varepsilon>0$ so small that
\[
    \varepsilon|H_z(w)|
    \leq
    \frac12|S(w)|
\]
for every $z\in Z$ and every
\[
    w\notin
    \bigcup_{\lambda\in\Lambda}D(\lambda,\delta).
\]
Thus $F_z$ has no zeros outside these discs.

Fix $\lambda\in\Lambda$. The function $S$ has exactly one zero
in $D(\lambda,\delta)$, namely $\lambda$, and this zero has
multiplicity one. On the boundary of the disc,
\[
    |\varepsilon H_z(w)|<|S(w)|.
\]
Rouché's theorem therefore implies that $F_z$ and $S$ have the
same number of zeros in $D(\lambda,\delta)$, counted with
multiplicity. Hence $F_z$ has precisely one zero in the disc,
and its total multiplicity is one. In particular, this zero is
simple.

Moreover,
\[
    F_z(\overline w)=\overline{F_z(w)},
\]
and the disc $D(\lambda,\delta)$ is invariant under complex
conjugation. If its unique zero were nonreal, its conjugate would
be a distinct second zero in the same disc. Thus the zero is real.
We denote it by $p_\lambda(z)$.

The estimate
\[
    |p_\lambda(z)-\lambda|<\delta
\]
implies uniform separation, since
\[
    |p_\lambda(z)-p_{\lambda'}(z)|
    \geq
    d_\Lambda-2\delta
    \qquad
    (\lambda\neq\lambda').
\]
Uniform relative density follows from the bounded gaps of
$\Lambda$. Hence the sets $P_z$ are uniformly Delone.

The map $z\mapsto F_z$ is Borel for the topology of locally
uniform convergence. Moreover,
\[
    p_\lambda(z)
    =
    \frac{1}{2\pi i}
    \int_{\partial D(\lambda,\delta)}
    w\frac{F_z'(w)}{F_z(w)}\,dw,
\]
so each map $z\mapsto p_\lambda(z)$ is Borel. For every $f\in C_c(\mathbb R)$, only 
finitely many $\lambda\in\Lambda$ satisfy
\[
    D(\lambda,\delta)\cap\operatorname{supp}f\neq\varnothing.
\]
Consequently,
\[
    \int_{\mathbb R}f\,d\delta_{P_z}
    =
    \sum_{\lambda\in\Lambda}f(p_\lambda(z))
\]
is a finite sum of Borel functions. Since the vague Borel
structure is generated by these evaluation maps,
\[
    z\longmapsto\delta_{P_z}
\]
is Borel.

Finally, the periodicity of $S$ and the identity
\[
    H_{Tz}(w)=H_z(w+1)
\]
give
\[
    F_{Tz}(w)=F_z(w+1),
\]
and therefore
\[
    P_{Tz}=P_z-1.
\]
\end{proof}

We fix such an $\varepsilon$ for the remainder of the section.


\subsection{The Fourier identity}
\label{subsec:Fourier-identity}

We prove that the counting measures of the zero sets satisfy the
same Fourier identity near the origin. By
Lemma~\ref{lem:band-limited-orbit-coding},
\[
    \operatorname{supp}\widehat H_z
    \subset[-\tau,\tau],
    \qquad
    \tau<1.
\]

For $\operatorname{Im}w>0$, write
\[
    F_z(w)
    =
    -\frac{e^{-4\pi i w}}{2i}
    \bigl(1+G_z(w)\bigr),
\]
where
\[
    G_z(w)
    :=
    e^{2\pi i w}-e^{6\pi i w}-e^{8\pi i w}
    -2i\varepsilon e^{4\pi i w}H_z(w).
\]
The bounds from
Lemma~\ref{lem:band-limited-orbit-coding} give
\[
    \sup_{z\in Z}\sup_{x\in\mathbb R}
    |G_z(x+iY)|
    \longrightarrow0
    \qquad
    (Y\longrightarrow+\infty).
\]
Indeed, the exponential terms tend uniformly to zero, while
\[
    \left|
        e^{4\pi i(x+iY)}H_z(x+iY)
    \right|
    \leq
    C e^{-2\pi(2-\sigma)Y}.
\]
Fix $Y>0$ so large that
\[
    \sup_{z\in Z}\sup_{x\in\mathbb R}
    |G_z(x+iY)|<\frac12.
\]

\begin{lemma}
\label{lem:logarithmic-derivative-support}
For every $z\in Z$, define
\[
    A_z^+(x)
    :=
    \frac{F_z'}{F_z}(x+iY)+4\pi i,
    \qquad
    A_z^-(x)
    :=
    \frac{F_z'}{F_z}(x-iY)-4\pi i.
\]
Then
\[
    \operatorname{supp}\widehat A_z^+
    \subset[1,\infty),
    \qquad
    \operatorname{supp}\widehat A_z^-
    \subset(-\infty,-1].
\]
\end{lemma}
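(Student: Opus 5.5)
Fix $z$ and treat $A_z^+$; the case of $A_z^-$ follows by the same argument in the lower half-plane, or by complex conjugation, since $F_z$ is real. From the factorization above, on the line $\operatorname{Im}w=Y$ we have
\[
    \frac{F_z'}{F_z}(w)=-4\pi i+\frac{G_z'(w)}{1+G_z(w)},
\]
so that
\[
    A_z^+(x)=\frac{G_z'}{1+G_z}(x+iY).
\]
Because $|G_z(x+iY)|<\frac12$, we expand
\[
    \frac{G_z'}{1+G_z}=\sum_{k\geq0}(-1)^kG_z^kG_z',
\]
and the series converges uniformly on $\mathbb R+iY$, hence also in $\mathcal S'(\mathbb R)$. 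It therefore suffices to show that every term $x\mapsto (G_z^kG_z')(x+iY)$ is a bounded smooth function whose Fourier transform is supported in $[1,\infty)$. The support condition is preserved under uniform (hence distributional) limits of bounded functions, since it can be tested against Schwartz functions whose Fourier transform has compact support in $(-\infty,1)$.

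The key step is to identify the spectrum of $x\mapsto G_z(x+iY)$. The pure exponential terms $e^{2\pi i m(x+iY)}$ with $m\in\{1,3,4\}$ have Fourier transforms that are multiples of $\delta_m$, so their spectrum lies in $[1,\infty)$. For the remaining term, $x\mapsto H_z(x+iY)$ has Fourier transform $e^{-2\pi Y\xi}\widehat H_z(\xi)$. This follows from $\operatorname{supp}\widehat H_z\subset[-\tau,\tau]$ together with the Paley--Wiener representation of $H_z$ as an entire function of exponential type; concretely, $H_z=\psi*\mu_z$, and shifting the variable in $\psi(w)=\int\theta(\xi)e^{2\pi i w\xi}\,d\xi$ gives the formula directly. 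Multiplying by $e^{4\pi i w}$ shifts the spectrum to $[2-\tau,2+\tau]\subset[1,\infty)$, using $\tau<1$. Differentiation in $x$ preserves spectra, and $G_z'(x+iY)=\frac{d}{dx}G_z(x+iY)$. Since $[1,\infty)+[1,\infty)\subset[2,\infty)\subset[1,\infty)$, products of these bounded smooth functions have spectrum contained in the Minkowski sum of the individual spectra. Hence each $G_z^kG_z'$ has spectrum in $[k+1,\infty)\subset[1,\infty)$.

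\textbf{Main obstacle.} The delicate point is justifying the product rule for supports when the factors are only bounded, not Schwartz. I would handle it by approximation. Replace $\widehat H_z$ by a cut-off of the coefficient sequence, using finitely many terms $b(T^nz)\psi(w-n)$, so that each factor becomes a finite sum of an exponential polynomial and a Schwartz function with compactly supported Fourier transform. For such functions, convolution of the Fourier transforms is legitimate, and the support of a convolution lies in the sum of the supports. The support condition then passes to the limit because the truncations converge boundedly and locally uniformly, together with all derivatives, which gives convergence in $\mathcal S'$.

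Alternatively, one can argue via analyticity. A bounded continuous function on $\mathbb R$ has spectrum in $[1,\infty)$ if and only if $e^{-2\pi i x}$ times it extends boundedly and holomorphically to the upper half-plane. This holds for $G_z(\cdot+iY)$: every term carries at least the factor $e^{2\pi i w}$, and $|e^{4\pi iw}H_z(w)|\le Ce^{-2\pi(2-\sigma)\operatorname{Im}w}$. The property is closed under products and bounded limits, which would give the same conclusion directly.
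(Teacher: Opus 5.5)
Your proposal is correct and follows essentially the paper's route. The paper expands $\log(1+G_z(\cdot+iY))$ as a power series, places its Fourier support in $[1,\infty)$ using $\operatorname{supp}\widehat{G_z(\cdot+iY)}\subset\{1,3,4\}\cup[2-\tau,2+\tau]$, and then differentiates. Term by term this is the same as your geometric expansion of $G_z'/(1+G_z)$, and $A_z^-=\overline{A_z^+}$ is handled by conjugation exactly as you do. Your truncation (or Hardy-space) argument also justifies the rule that Fourier supports add under products of bounded, non-Schwartz factors, a step the paper only asserts.
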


\begin{proof}
On the upper horizontal line, the factorization
\[
    F_z(w)
    =
    -\frac{e^{-4\pi iw}}{2i}\bigl(1+G_z(w)\bigr)
\]
gives
\[
    \frac{F_z'}{F_z}(w)
    =
    -4\pi i+\frac{G_z'(w)}{1+G_z(w)}.
\]
Consequently,
\[
    A_z^+(x)
    =
    \frac{G_z'(x+iY)}{1+G_z(x+iY)}.
\]

Since
\[
    \sup_{x\in\mathbb R}|G_z(x+iY)|<\frac12,
\]
the values of $1+G_z(x+iY)$ lie in the disc
$D(1,1/2)$, which does not meet the origin. We may therefore use
the branch of the logarithm defined there by the absolutely
convergent power series
\[
    \log(1+\zeta)
    =
    \sum_{n=1}^{\infty}
    \frac{(-1)^{n+1}}{n}\zeta^n,
    \qquad
    |\zeta|<1.
\]
Set
\[
    L_z(x)
    :=
    \log\bigl(1+G_z(x+iY)\bigr).
\]
Differentiating with respect to $x$ gives
\[
    L_z'(x)
    =
    \frac{G_z'(x+iY)}{1+G_z(x+iY)}
    =
    A_z^+(x).
\]
Thus \(A_z^+\) is the derivative of the logarithmic correction to
the dominant exponential factor of \(F_z\).

We now examine the Fourier support of \(L_z\). From the explicit
formula for \(G_z\),
\[
    \operatorname{supp}
    \widehat{G_z(\,\cdot+iY)}
    \subset
    \{1,3,4\}\cup[2-\tau,2+\tau]
    \subset[1,\infty),
\]
because \(\tau<1\). Products correspond to convolution on the
Fourier side, and hence
\[
    \operatorname{supp}
    \widehat{G_z(\,\cdot+iY)^n}
    \subset[n,\infty)
    \qquad
    (n\geq1).
\]
The logarithmic series therefore contains only positive
frequencies. More precisely, its partial sums
\[
    L_{z,N}
    :=
    \sum_{n=1}^{N}
    \frac{(-1)^{n+1}}{n}
    G_z(\,\cdot+iY)^n
\]
satisfy
\[
    \operatorname{supp}\widehat L_{z,N}
    \subset[1,\infty).
\]
The series defining \(L_z\), together with its derivatives,
converges uniformly on the line \(\mathbb R+iY\). In particular,
\(L_{z,N}\to L_z\) in \(\mathcal S'(\mathbb R)\), and therefore
\[
    \operatorname{supp}\widehat L_z
    \subset[1,\infty).
\]

Finally,
\[
    \widehat{L_z'}(\xi)
    =
    2\pi i\xi\,\widehat L_z(\xi).
\]
Differentiation does not enlarge Fourier support. Since
\[
    \operatorname{supp}\widehat L_z\subset[1,\infty)
\]
and $A_z^+=L_z'$, it follows that
\[
    \operatorname{supp}\widehat A_z^+
    \subset[1,\infty).
\]

Because \(F_z\) is real on the real axis,
\[
    F_z(\overline w)=\overline{F_z(w)},
    \qquad
    F_z'(\overline w)=\overline{F_z'(w)}.
\]
It follows that
\[
    A_z^-(x)=\overline{A_z^+(x)}.
\]
For any tempered distribution \(A\),
\[
    \widehat{\overline A}(\xi)
    =
    \overline{\widehat A(-\xi)},
\]
so complex conjugation reflects Fourier support through the
origin. Hence
\[
    \operatorname{supp}\widehat A_z^-
    \subset(-\infty,-1].
\]
\end{proof}

\begin{proposition}
\label{prop:Fourier-identity}
For every $z\in Z$,
\[
    \widehat{\delta}_{P_z}
    =
    4\delta_0
    \qquad\text{on }(-1,1)
\]
in the sense of tempered distributions.
\end{proposition}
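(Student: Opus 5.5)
Fix $z\in Z$ and a test function $g\in\mathcal S(\mathbb R)$ with $\operatorname{supp}\widehat g\subset(-1,1)$; by Fourier inversion it suffices to show
\[
    \sum_{p\in P_z}g(p)=4\int_{\mathbb R}g\,d\lambda_1 ,
\]
since the pairing of $\widehat{\delta}_{P_z}$ with $\widehat g$ is $\sum_p g(-p)$ (up to reflection, which preserves the support condition). Such a $g$ extends to an entire function of exponential type $2\pi\theta$ for some $\theta<1$, decaying rapidly along horizontal lines in any fixed strip $|\operatorname{Im}w|\leq Y$ (Paley--Wiener). By Proposition~\ref{prop:real-zero-family}, all zeros of $F_z$ are real and simple, and none lie on the lines $\operatorname{Im}w=\pm Y$. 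The argument principle on the rectangle with horizontal sides $\operatorname{Im}w=\pm Y$ and vertical sides $\operatorname{Re}w=\pm R$ gives
\[
    \sum_{p\in P_z,\,|p|<R}g(p)
    =
    \frac{1}{2\pi i}\oint g(w)\frac{F_z'}{F_z}(w)\,dw .
\]
Choose $R$ along a sequence avoiding the discs $D(\lambda,\delta)$. There $|F_z|\geq\frac12|S|\geq\frac c2$ and $|F_z'|$ is bounded on the strip by Lemma~\ref{lem:band-limited-orbit-coding}, so $F_z'/F_z$ is bounded on the vertical sides while $g$ decays. The vertical contributions therefore vanish as $R\to\infty$, and the left side converges because $P_z$ is Delone.

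The remaining terms are the two horizontal integrals:
\[
    \frac{1}{2\pi i}\int_{\mathbb R}g(x-iY)\frac{F_z'}{F_z}(x-iY)\,dx
    -\frac{1}{2\pi i}\int_{\mathbb R}g(x+iY)\frac{F_z'}{F_z}(x+iY)\,dx .
\]
Insert $\frac{F_z'}{F_z}(x\pm iY)=\pm4\pi i+A_z^{\pm}(x)$ from Lemma~\ref{lem:logarithmic-derivative-support}. For the constant parts, shift contours back to the real axis, which is allowed because $g$ is entire with decay in the strip; this gives
\[
    \frac{1}{2\pi i}\bigl(-4\pi i-4\pi i\bigr)\int g\,(-1)=4\int g .
\]
Checking the orientation signs carefully is routine.

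The main step is showing $\int g(x+iY)A_z^+(x)\,dx=0$, and likewise for $A_z^-$. Write $g_Y(x):=g(x+iY)$. Then $\widehat{g_Y}(\xi)=e^{-2\pi Y\xi}\widehat g(\xi)$, so $g_Y$ is Schwartz with $\operatorname{supp}\widehat{g_Y}\subset(-1,1)$. By Parseval, $\int g_Y A_z^+=\langle\widehat{A_z^+},\widehat{g_Y}(-\cdot)\rangle$, and $\widehat{g_Y}(-\cdot)$ is supported in $(-1,1)$, disjoint from $[1,\infty)\supset\operatorname{supp}\widehat{A_z^+}$. Because $\operatorname{supp}\widehat{g_Y}$ is a compact subset of $(-1,1)$, the supports are at positive distance, and the pairing of a tempered distribution with a Schwartz function supported away from its support is $0$.

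The care needed here is justifying the pairing: $A_z^+$ is bounded and smooth with bounded derivatives, hence tempered, and $g_Y$ is Schwartz, so the integral equals the distributional pairing. The case of $A_z^-$ is symmetric, with support in $(-\infty,-1]$.

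Combining the limit of the rectangle identity, the constant-term computation, and the vanishing of the $A_z^{\pm}$ terms yields $\widehat{\delta}_{P_z}=4\delta_0$ on $(-1,1)$. The value $4$ agrees with the density of $\Lambda=\tfrac13\mathbb Z\cup(\tfrac12+\mathbb Z)$, which is $3+1=4$.
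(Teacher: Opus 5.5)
Your proof follows the paper's argument essentially step for step. Both use a rectangle with horizontal sides on $\mathbb R\pm iY$ and vertical sides avoiding the discs $D(\lambda,\delta)$. Both kill the vertical contributions via the lower bound on $|F_z|$, split the horizontal logarithmic derivatives into a constant plus $A_z^{\pm}$, and make the $A_z^{\pm}$ terms vanish by Fourier-support disjointness.

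There is one sign slip to fix. The definitions in Lemma~\ref{lem:logarithmic-derivative-support} give $\frac{F_z'}{F_z}(x\pm iY)=\mp4\pi i+A_z^{\pm}(x)$, not $\pm4\pi i$. With the correct sign, the constant terms give $\frac{1}{2\pi i}(4\pi i+4\pi i)\int g=4\int g$ directly. The extra factor $(-1)$ in your computation should therefore be removed. It cannot be attributed to orientation, which is already encoded in the minus sign between the two horizontal integrals.
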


\begin{proof}
It suffices to prove that
\[
    \sum_{p\in P_z}f(p)
    =
    4\int_{\mathbb R}f(x)\,dx
\]
for every $f\in\mathcal S(\mathbb R)$ satisfying
\[
    \operatorname{supp}\widehat f\subset(-1,1).
\]
Such a function extends to an entire function and is rapidly
decreasing in every horizontal strip.

Choose $\beta\in[0,1)$ such that
\[
    \operatorname{dist}(\beta+\mathbb Z,\Lambda)>\delta.
\]
For $N\geq1$, let $R_N$ be the rectangle with vertical sides
\[
    \operatorname{Re}w=\beta-N,
    \qquad
    \operatorname{Re}w=\beta+N
\]
and horizontal sides contained in $\mathbb R\pm iY$. Apply the
residue theorem to
\[
    w\longmapsto
    f(w)\frac{F_z'(w)}{F_z(w)}
\]
on $R_N$.

The vertical sides lie outside all the discs
$D(\lambda,\delta)$. By the choice of $\varepsilon$ in
Proposition~\ref{prop:real-zero-family},
\[
    |F_z(w)|
    \geq
    |S(w)|-\varepsilon|H_z(w)|
    \geq
    \frac12|S(w)|
\]
there. Since $S$ is $1$-periodic and has no zeros on these
vertical segments, compactness gives
\[
    |F_z(w)|\geq c_Y>0
\]
uniformly in $z$ and $N$. The derivatives $F_z'$ are also
uniformly bounded on the strip $|\operatorname{Im}w|\leq Y$.
Hence
\[
    \left|\frac{F_z'(w)}{F_z(w)}\right|\leq C_Y
\]
on the vertical sides. The rapid decay of $f$ in this strip
therefore implies that the two vertical integrals tend to zero as
$N\to\infty$.

All zeros of $F_z$ are real and simple. At a zero $p\in P_z$,
\[
    \operatorname*{Res}_{w=p}
    \left(
        f(w)\frac{F_z'(w)}{F_z(w)}
    \right)
    =
    f(p).
\]
Moreover, the uniform separation of $P_z$ and the rapid decay of
$f$ imply that
\[
    \sum_{p\in P_z}|f(p)|<\infty.
\]
Letting $N\to\infty$ in the residue formula gives
\[
\begin{aligned}
    \sum_{p\in P_z}f(p)
    =
    \frac{1}{2\pi i}
    \bigg(
        &\int_{\mathbb R}
        f(x-iY)\frac{F_z'}{F_z}(x-iY)\,dx\\
        &-
        \int_{\mathbb R}
        f(x+iY)\frac{F_z'}{F_z}(x+iY)\,dx
    \bigg).
\end{aligned}
\]

Put
\[
    f_Y^\pm(x):=f(x\pm iY).
\]
Then
\[
    \widehat{f_Y^\pm}(\xi)
    =
    e^{\mp2\pi Y\xi}\widehat f(\xi),
\]
and hence
\[
    \operatorname{supp}\widehat{f_Y^\pm}
    \subset(-1,1).
\]
By Lemma~\ref{lem:logarithmic-derivative-support},
\[
    \operatorname{supp}\widehat A_z^+
    \subset[1,\infty),
    \qquad
    \operatorname{supp}\widehat A_z^-
    \subset(-\infty,-1].
\]
Since
\[
    \int_{\mathbb R}g(x)A(x)\,dx
    =
    \left\langle
        \widehat A,\widehat g(-\,\cdot)
    \right\rangle,
\]
the disjointness of the Fourier supports gives
\[
    \int_{\mathbb R}f(x+iY)A_z^+(x)\,dx=0,
    \qquad
    \int_{\mathbb R}f(x-iY)A_z^-(x)\,dx=0.
\]

Also,
\[
    \int_{\mathbb R}f(x\pm iY)\,dx
    =
    \widehat{f_Y^\pm}(0)
    =
    \widehat f(0)
    =
    \int_{\mathbb R}f(x)\,dx.
\]
Substituting
\[
    \frac{F_z'}{F_z}(x+iY)
    =
    -4\pi i+A_z^+(x),
    \qquad
    \frac{F_z'}{F_z}(x-iY)
    =
    4\pi i+A_z^-(x)
\]
into the residue formula yields
\[
    \sum_{p\in P_z}f(p)
    =
    4\int_{\mathbb R}f(x)\,dx.
\]
\end{proof}


\subsection{Orbit recovery}
\label{subsec:orbit-recovery}

We now show that a translate of one zero set can coincide with
another only when the translation comes from the underlying
$\mathbb Z$-action.

\begin{proposition}
\label{prop:recovery-coding}
Let $z,w\in Z$ and $u\in\mathbb R$. If
\[
    P_w=P_z-u,
\]
then
\[
    u\in\mathbb Z
    \qquad\text{and}\qquad
    w=T^u z.
\]
\end{proposition}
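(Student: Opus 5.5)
Since $P_w=P_z-u$ says that $F_w$ and $F_z(\,\cdot+u)$ have the same zero set, the plan is to recover the entire functions from their zeros by Hadamard factorization and then compare them. Both $F_w$ and $G:=F_z(\,\cdot+u)$ are entire of exponential type ($|F|\le Ce^{4\pi|y|}$ by the bound on $S$ and Lemma~\ref{lem:band-limited-orbit-coding}). By Proposition~\ref{prop:real-zero-family}, all their zeros are real and simple, and they coincide. Hence the quotient $E:=F_w/G$ is entire and zero-free, of order at most one. Hadamard therefore gives $F_w(\zeta)=e^{a\zeta+c}F_z(\zeta+u)$ for constants $a,c\in\mathbb C$.

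The first task is to pin down $a$ and $c$. Both functions are real on $\mathbb R$, so $e^{ax+c}$ is real for all real $x$, which forces $a\in\mathbb R$ and $e^{c}\in\mathbb R$. To show $a=0$, I compare growth along $\mathbb R+iY$ for large $Y$ using the factorization from Subsection~\ref{subsec:Fourier-identity}. There $|F_z(x+iY)|\asymp e^{4\pi Y}$ uniformly in $x$ and $z$, since $|1+G_z|\in(1/2,3/2)$, and the same holds for $F_w$. If $a\neq0$, the factor $|e^{a(x+iY)}|=e^{ax}$ is unbounded as $x\to\pm\infty$, a contradiction. Thus $F_w(\zeta)=\kappa F_z(\zeta+u)$ with $\kappa\in\mathbb R\setminus\{0\}$.

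Next I compare leading asymptotics as $\operatorname{Im}\zeta\to+\infty$, using the expansion $F=-\tfrac{e^{-4\pi i\zeta}}{2i}(1+G)$ with $G\to0$. This gives
\[
    -\tfrac{1}{2i}e^{-4\pi i\zeta}
    \sim
    \kappa\bigl(-\tfrac1{2i}\bigr)e^{-4\pi i(\zeta+u)},
\]
so $\kappa e^{-4\pi iu}=1$. Since $\kappa$ is real, $2u\in\tfrac12\mathbb Z$ and $\kappa=\pm1$. To exclude non-integral $u$, I use the marker frequency. Write $e^{-4\pi i\zeta}G_z$; its exponential part contains the term $e^{-2\pi i\zeta}$ with coefficient $-\tfrac1{2i}$, coming from $\sin(2\pi\zeta)$. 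The $H$-contribution $\varepsilon H_z$ has frequencies only in $[-\tau,\tau]$ with $\tau<1$, so it cannot interfere at frequency $\pm1$ or $\pm2$. To make the comparison rigorous, I take distributional Fourier transforms of the identity $F_w(x)=\kappa F_z(x+u)$ on the real line. Since $\widehat{H}$ is supported in $[-\tau,\tau]$, the atoms at $\xi=\pm2$ and $\xi=\pm1$ come only from $S$. Matching the atoms at $\pm2$ gives $\kappa e^{\pm4\pi iu}=1$, and matching those at $\pm1$ gives $\kappa e^{\pm2\pi iu}=1$. Together these force $e^{2\pi iu}=\kappa$ real with $\kappa^2=\kappa$, hence $\kappa=1$ and $u\in\mathbb Z$. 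Since $S$ is $1$-periodic, subtracting $S$ then leaves $H_w(x)=H_z(x+u)$.

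Finally, evaluating at integers $n$ gives $b(T^nw)=b(T^{n+u}z)$. Taking $n=0$ and using injectivity of $b$ yields $w=T^uz$. I expect the main obstacle to be the Hadamard step, namely justifying that the zero-free quotient has the form $e^{a\zeta+c}$ and controlling it well enough to exclude $a\neq0$. I would handle this with the order-one bound, together with the uniform two-sided estimate $|F(x+iY)|\asymp e^{4\pi Y}$ already established above.
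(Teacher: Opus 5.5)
Your proposal is correct and follows essentially the same route as the paper: Hadamard factorization of $F_w$ against $F_z(\cdot+u)$, elimination of the exponential factor, comparison of the isolated Fourier atoms at frequencies $1$ and $2$ (which avoid $[-\tau,\tau]$) to force $u\in\mathbb Z$ and a trivial constant, and finally evaluation of $H$ at $0$ together with injectivity of $b$. The only differences are minor. You kill $a$ using reality on $\mathbb R$ plus the uniform two-sided bound $|F(x+iY)|\asymp e^{4\pi Y}$, whereas the paper takes the limit of the ratio along the imaginary axis. Your leading-asymptotics step giving $\kappa e^{-4\pi iu}=1$ is redundant once the Fourier comparison is made.
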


\begin{proof}
The entire functions $F_w$ and $F_z(\,\cdot+u)$ have the same
zeros. By Proposition~\ref{prop:real-zero-family}, all these zeros
are simple, so the two functions have the same zero divisor.
Moreover, both are entire functions of order at most one.
Hadamard factorization \cite[Chapter~5, Theorem~5.1]{SS03} therefore gives constants
$a,b\in\mathbb C$ such that
\[
    F_z(v+u)=e^{av+b}F_w(v)
    \qquad
    (v\in\mathbb C).
\]

We first show that the exponential factor is constant. Recall that
\[
    \operatorname{supp}\widehat H_z\subset[-\tau,\tau],
    \qquad
    \tau<1.
\]
Thus
\[
    H_z(x+iy)=O(e^{2\pi\tau y})
    \qquad
    (y\longrightarrow+\infty),
\]
uniformly in $x$ and $z$. Since
\[
    F_z(v)=\sin(4\pi v)+\sin(2\pi v)+\varepsilon H_z(v),
\]
the term $\sin(4\pi v)$ dominates on the upper imaginary axis.
More precisely,
\[
    e^{-4\pi y}F_z(u+iy)
    \longrightarrow
    -\frac{e^{-4\pi iu}}{2i},
\]
whereas
\[
    e^{-4\pi y}F_w(iy)
    \longrightarrow
    -\frac1{2i}.
\]
Consequently,
\[
    \frac{F_z(u+iy)}{F_w(iy)}
    \longrightarrow
    e^{-4\pi iu}
    \qquad
    (y\longrightarrow+\infty).
\]

On the other hand, the factorization identity gives
\[
    \frac{F_z(u+iy)}{F_w(iy)}
    =
    e^{aiy+b}.
\]
Since this expression has a finite nonzero limit as
$y\to+\infty$, we must have $a=0$. Hence
\[
    F_z(v+u)=cF_w(v)
    \qquad
    (v\in\mathbb C)
\]
for some $c\in\mathbb C\setminus\{0\}$.

We now compare the isolated frequencies contributed by the
periodic term. On the real line,
\[
    \widehat S
    =
    \frac{1}{2i}
    \bigl(
        \delta_2-\delta_{-2}
        +\delta_1-\delta_{-1}
    \bigr),
\]
whereas $\widehat H_z$ and $\widehat H_w$ are supported in
$[-\tau,\tau]$. Since $\tau<1$, the perturbation terms contribute
nothing at the frequencies $1$ and $2$. Taking Fourier transforms
of
\[
    F_z(\,\cdot+u)=cF_w
\]
and comparing the coefficients at these two frequencies gives
\[
    e^{2\pi iu}=c,
    \qquad
    e^{4\pi iu}=c.
\]
Therefore
\[
    e^{2\pi iu}=e^{4\pi iu},
\]
and hence $ e^{2\pi iu}=1$. Thus $u\in\mathbb Z$ and $c=1$.

Since $S$ is $1$-periodic, the identity
\[
    F_z(v+u)=F_w(v)
\]
now reduces to
\[
    H_z(v+u)=H_w(v)
    \qquad
    (v\in\mathbb C).
\]
Evaluating at $v=0$ and using
Lemma~\ref{lem:band-limited-orbit-coding}, we obtain
\[
    b(T^u z)
    =
    H_z(u)
    =
    H_w(0)
    =
    b(w).
\]
The injectivity of $b$ gives
\[
    w=T^u z.
\]
\end{proof}

The family $(P_z)_{z\in Z}$ therefore satisfies the
orbit-separation condition of
Proposition~\ref{prop:equivariant-families}. Together with
Propositions~\ref{prop:real-zero-family} and
\ref{prop:Fourier-identity}, this proves
Theorem~\ref{thm:one-dimensional-coding}.


\subsection{Passage to induced spaces}
\label{subsec:one-dimensional-induced-space}

Let $a>0$, put $\Gamma:=a\mathbb Z$, and let $(Z,\nu)$ be a
probability-preserving Borel $\Gamma$-space. Apply
Theorem~\ref{thm:one-dimensional-coding} to the Borel
automorphism
\[
    Tz:=a.z,
\]
and define
\[
    Q_z:=aP_z.
\]

\begin{corollary}
\label{cor:one-dimensional-induced-space}
The family $(Q_z)_{z\in Z}$ is Borel, $\Gamma$-equivariant and
uniformly Delone. It satisfies
\[
    \widehat{\delta}_{Q_z}
    =
    \frac{4}{a}\delta_0
    \qquad\text{on }
    \left(-\frac1a,\frac1a\right),
\]
and
\[
    Q_w=Q_z-u
    \quad\Longrightarrow\quad
    u\in\Gamma
    \ \text{ and }\
    w=u.z.
\]
Consequently, $(\operatorname{Ind}_{\Gamma}^{\mathbb R}(Z),\mu_{\Gamma,Z})$
admits a generating stealthy Delone cross-section.
\end{corollary}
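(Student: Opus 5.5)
The plan is to transport everything from Theorem~\ref{thm:one-dimensional-coding} by the dilation $x\mapsto ax$. We then feed the resulting family into Propositions~\ref{prop:equivariant-families} and~\ref{prop:stealthiness-criterion}.

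\emph{Borel measurability, Delone property and equivariance.} Since $\delta_{Q_z}$ is the pushforward of $\delta_{P_z}$ under the homeomorphism $x\mapsto ax$, and pushforward by a fixed homeomorphism is vaguely continuous, $z\mapsto\delta_{Q_z}$ is Borel. Uniform Delone constants scale by $a$, so the family is uniformly Delone. For equivariance, $\Gamma=a\mathbb Z$ is generated by $a$, and $T=a.$ acts as that generator. Part~(2) of the theorem gives $Q_{a.z}=aP_{Tz}=a(P_z-1)=Q_z-a$. Iterating this, and using $T^{-1}$ for negative multiples, yields $Q_{\gamma.z}=Q_z-\gamma$ for all $\gamma\in\Gamma$.

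\emph{Fourier identity.} Take $f\in\mathcal S(\mathbb R)$ with $\operatorname{supp}\widehat f\subset(-1/a,1/a)$ and put $g(x):=f(ax)$. Then $\widehat g(\xi)=a^{-1}\widehat f(\xi/a)$, so $\widehat g$ is supported in $(-1,1)$. Part~(4) of the theorem then gives
\[
\sum_{q\in Q_z}f(q)=\sum_{p\in P_z}g(p)=4\int g\,d\lambda_1=\frac4a\int f\,d\lambda_1.
\]
This is the claimed identity $\widehat\delta_{Q_z}=\frac4a\delta_0$ on $(-1/a,1/a)$, valid for every $z$.

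\emph{Orbit separation.} Suppose $Q_w=Q_z-u$. Dividing by $a$ gives $P_w=P_z-u/a$. Part~(3) of the theorem then yields $u/a=:n\in\mathbb Z$ and $w=T^nz=(na).z=u.z$, with $u\in a\mathbb Z=\Gamma$.

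\emph{Conclusion.} Every $Q_z$ is nonempty, so Proposition~\ref{prop:equivariant-families} applies with $Z_0=Z$. It shows that $Y_Q$ is a Delone cross-section giving a cross-section system, and the orbit-separation property makes $Y_Q$ generating. Proposition~\ref{prop:stealthiness-criterion}, with $\rho=4/a$ and $\Omega=(-1/a,1/a)$, gives stealthiness.

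No step is a genuine obstacle. The only point requiring care is keeping track of which direction of the action the equivariance conventions use, that is, checking that $P_{\gamma.z}=P_z-\gamma$ matches the convention $\gamma.(t,z)=(t+\gamma,(-\gamma).z)$. Proposition~\ref{prop:equivariant-families} already handles this convention, so it suffices that our family satisfies its hypothesis exactly as stated.
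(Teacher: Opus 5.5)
Your proposal is correct and follows essentially the same route as the paper: you transport equivariance, the Delone bounds, orbit separation and the Fourier identity through the dilation $x\mapsto ax$, using $g(x)=f(ax)$ with $\widehat g(\xi)=a^{-1}\widehat f(\xi/a)$, and then invoke Propositions~\ref{prop:equivariant-families} and~\ref{prop:stealthiness-criterion}. Your remarks on the vague continuity of the dilation pushforward and on negative powers of $T$ spell out points the paper leaves implicit.
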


\begin{proof}
For $\gamma=ka\in\Gamma$, the equivariance relation
$P_{Tz}=P_z-1$ gives
\[
    Q_{\gamma.z}
    =
    aP_{T^kz}
    =
    a(P_z-k)
    =
    Q_z-\gamma.
\]
Borelness and uniform Delone bounds are preserved by the dilation
$x\mapsto ax$.

If $Q_w=Q_z-u$, then
\[
    P_w=P_z-\frac{u}{a}.
\]
Proposition~\ref{prop:recovery-coding} gives
\[
    \frac{u}{a}\in\mathbb Z
    \qquad\text{and}\qquad
    w=T^{u/a}z=u.z.
\]

To obtain the Fourier identity, let
$f\in\mathcal S(\mathbb R)$ satisfy
\[
    \operatorname{supp}\widehat f
    \subset
    \left(-\frac1a,\frac1a\right),
\]
and put $g(x):=f(ax)$. Then
\[
    \widehat g(\xi)
    =
    \frac1a\widehat f\left(\frac{\xi}{a}\right),
\]
so
\[
    \operatorname{supp}\widehat g\subset(-1,1).
\]
Proposition~\ref{prop:Fourier-identity} therefore gives
\[
\begin{aligned}
    \sum_{q\in Q_z}f(q)
    &=
    \sum_{p\in P_z}f(ap)
    =
    \sum_{p\in P_z}g(p)
    =
    4\int_{\mathbb R}g(x)\,dx
    =
    \frac4a\int_{\mathbb R}f(x)\,dx.
\end{aligned}
\]
The final assertion follows from
Propositions~\ref{prop:equivariant-families} and
\ref{prop:stealthiness-criterion}.
\end{proof}

\begin{corollary}[One-dimensional characterization]
\label{cor:one-dimensional-characterization}
Let $(X,\mu)$ be an ergodic probability-preserving Borel
$\mathbb R$-space. Then the following are equivalent:
\begin{enumerate}
\item
$(X,\mu)$ is lattice-induced;

\item
$(X,\mu)$ admits a generating stealthy Delone cross-section.
\end{enumerate}
\end{corollary}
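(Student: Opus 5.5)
The plan is to prove the two implications separately; each reduces to results already established above.

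For (1) $\Rightarrow$ (2), suppose $(X,\mu)$ is lattice-induced. In dimension one every full-rank lattice has the form $\Gamma=a\mathbb Z$ with $a>0$. Hence there are a probability-preserving Borel $\Gamma$-space $(Z,\nu)$ and a measurable $\mathbb R$-isomorphism
\[
    \Phi:
    \bigl(\operatorname{Ind}_{\Gamma}^{\mathbb R}(Z),\mu_{\Gamma,Z}\bigr)
    \longrightarrow(X,\mu),
\]
defined between invariant conull Borel sets. Corollary~\ref{cor:one-dimensional-induced-space} gives a generating stealthy Delone cross-section $Y'$ on the induced space. We then transport it by setting $Y:=\Phi(Y')$. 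Equivariance gives $Y_{\Phi(x)}=Y'_x$, so $\kappa_Y\circ\Phi=\kappa_{Y'}$. It follows that $Y$ has the same return-time process, hence the same Bartlett spectrum, and that it is injective on an invariant conull set. The only care needed is that $\Phi$ is defined only on invariant conull sets. By the standing convention of Section~\ref{sec:cross-sections}, cross-sections are understood after such restrictions. If an honest Borel cross-section of all of $X$ is wanted, we add a Delone cross-section on the invariant null complement, for instance any lacunary cocompact one. This changes neither the return-time process nor the generating property.

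For (2) $\Rightarrow$ (1), let $Y$ be a generating stealthy Delone cross-section. Because $Y$ is Delone, the return-time sets are uniformly separated, so $\#(Y_x\cap K)$ is uniformly bounded for each compact $K$. Thus $(X,\mu,Y)$ is a cross-section system, and it is ergodic because $\mu$ is. Stealthiness means $\sigma_Y$ vanishes on some interval $(-r,r)$, so
\[
    \int_{0<|\xi|<1}\xi^{-2}\,d\sigma_Y(\xi)
    \leq r^{-2}\sigma_Y([-1,1])<\infty,
\]
which is finite since $\sigma_Y$ is Radon. Corollary~\ref{cor:spectral-cross-section-lattice-induction} then shows that $(X,\mu)$ is lattice-induced. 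Equivalently, we can cite Corollary~\ref{cor:stealthy-cross-section-lattice-induced} directly; neither the generating nor the Delone hypothesis is needed beyond securing local second moments.

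The only step with any content is the transport of the cross-section along the isomorphism in the first implication, specifically the bookkeeping with conull sets. Every analytic input is already contained in the cited corollaries.
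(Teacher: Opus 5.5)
Your proposal is correct and matches the paper's proof, which simply cites Corollary~\ref{cor:one-dimensional-induced-space} for (1)$\Rightarrow$(2) and Corollary~\ref{cor:stealthy-cross-section-lattice-induced} for (2)$\Rightarrow$(1). Your transport of the cross-section along the isomorphism, and your checks of local second moments and the inverse-square bound, only make explicit what the paper leaves implicit.

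One small caveat concerns your optional extension of $Y$ to the invariant null complement. It is unnecessary under the paper's standing convention, and it is not always possible. A fixed point can never lie on a cross-section, since its return-time set would be all of $\mathbb R$. Periodic orbits of arbitrarily small period would also rule out uniform separation.
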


\begin{proof}
The forward implication follows from
Corollary~\ref{cor:one-dimensional-induced-space}, and the reverse
implication from
Corollary~\ref{cor:stealthy-cross-section-lattice-induced}.
\end{proof}


\section{The layered construction}
\label{sec:layered-construction}

We extend the one-dimensional construction by placing a
one-dimensional coding set on each line
\[
    \mathbb R\times\{m\},
    \qquad
    m\in\mathbb Z^{d-1}.
\]
The code on the layer indexed by $m$ is evaluated at
$(0,m).z$. With this choice, translation by $\mathbb Z^d$
simultaneously translates the one-dimensional sets and permutes
the layers. The Fourier identity in the first coordinate is then
combined with Poisson summation in the remaining coordinates.

We first work with the lattice $\mathbb Z^d$. An arbitrary lattice
will be treated at the end of the section by a linear change of
coordinates.

\begin{theorem}
\label{thm:higher-dimensional-coding}
Let $d\geq2$, and let $Z$ be a standard Borel space equipped with
a Borel action of $\mathbb Z^d$. There is a Borel family
$z\mapsto P_z\subset\mathbb R^d$ such that:
\begin{enumerate}
\item
the sets $P_z$ are uniformly Delone;

\item
\[
    P_{n.z}=P_z-n
    \qquad
    (n\in\mathbb Z^d,\ z\in Z);
\]

\item
if
\[
    P_w=P_z-u,
    \qquad
    z,w\in Z,\quad u\in\mathbb R^d,
\]
then
\[
    u\in\mathbb Z^d
    \qquad\text{and}\qquad
    w=u.z;
\]

\item
for every $z\in Z$,
\[
    \widehat{\delta}_{P_z}
    =
    4\delta_0
    \qquad\text{on }(-1,1)^d
\]
in the sense of tempered distributions.
\end{enumerate}
\end{theorem}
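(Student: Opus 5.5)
Following the layering described at the start of the section, write points of $\mathbb R^d$ as $(x,m')$ with $x\in\mathbb R$, $m'\in\mathbb R^{d-1}$, and let $T$ be the Borel automorphism of $Z$ given by $Tz:=e_1.z$. Apply Theorem~\ref{thm:one-dimensional-coding} to $T$ to obtain the one-dimensional family $z\mapsto P^{(1)}_z\subset\mathbb R$. Then define
\[
    P_z:=\bigcup_{m\in\mathbb Z^{d-1}}
    \bigl(P^{(1)}_{(0,m).z}\times\{m\}\bigr).
\]
Each layer is uniformly Delone in $\mathbb R$ with uniform constants, and the layers are unit-spaced parallel lines. Hence $P_z$ is uniformly Delone in $\mathbb R^d$, with separation $\min(r,1)$ and relative-density radius bounded by $R+\sqrt{d}$. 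Borelness follows because for $f\in C_c(\mathbb R^d)$ only finitely many layers meet $\operatorname{supp}f$, and each $z\mapsto\delta_{P^{(1)}_{(0,m).z}}$ is Borel.

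For equivariance, write $n=(k,n')$. Since $\mathbb Z^d$ is abelian,
\[
    (0,m).(n.z)=T^k\bigl((0,m+n').z\bigr),
\]
so the layer of $P_{n.z}$ at height $m$ is
\[
    P^{(1)}_{(0,m+n').z}-k.
\]
Reindexing $m+n'\mapsto m$ gives $P_{n.z}=P_z-n$.

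For the Fourier identity, take $f\in\mathcal S(\mathbb R^d)$ with $\operatorname{supp}\widehat f\subset(-1,1)^d$. For each $m$, the slice $f(\cdot,m)$ has Fourier support in $(-1,1)$, so Proposition~\ref{prop:Fourier-identity} gives
\[
    \sum_{p\in P^{(1)}_{(0,m).z}}f(p,m)=4\int_{\mathbb R}f(x,m)\,dx.
\]
This holds for every $z$, which is why the pointwise-in-$z$ form of Theorem~\ref{thm:one-dimensional-coding} is needed. Summing over $m$ and applying Poisson summation to $g(m'):=\int f(x,m')\,dx$, whose Fourier transform $\widehat g(\eta)=\widehat f(0,\eta)$ is supported in $(-1,1)^{d-1}$, gives $\sum_m g(m)=\widehat g(0)=\int f$. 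Absolute convergence is clear from the uniform Delone property and rapid decay. This yields $\widehat\delta_{P_z}=4\delta_0$ on $(-1,1)^d$.

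The main obstacle is orbit separation. Suppose $P_w=P_z-u$ with $u=(u_1,u')$. The layers of $P_w$ lie exactly on the lines $\mathbb R\times\{m\}$, $m\in\mathbb Z^{d-1}$, and every layer is nonempty. Since the layers of $P_z-u$ lie on $\mathbb R\times(\mathbb Z^{d-1}-u')$, we get $u'\in\mathbb Z^{d-1}$. Matching the layer at height $m$ then gives
\[
    P^{(1)}_{(0,m).w}=P^{(1)}_{(0,m+u').z}-u_1 .
\]
Applying Theorem~\ref{thm:one-dimensional-coding}(3) at $m=0$ gives $u_1\in\mathbb Z$ and
\[
    w=T^{u_1}\bigl((0,u').z\bigr)=u.z .
\]
So only the zeroth layer is used for recovery, and the remaining layers are automatically consistent. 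The subtle point is ensuring that no layer can be empty and that layers cannot be mixed under non-integral transverse shifts. Both follow because each $P^{(1)}$ is relatively dense and the layers sit on distinct horizontal lines. I would verify this carefully, but I expect no real difficulty there.
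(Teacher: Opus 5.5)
Your proposal is correct and follows essentially the same route as the paper: the same layered family $P_z=\bigcup_{m}P^{(1)}_{(0,m).z}\times\{m\}$ built from the one-dimensional coding for $Tz=e_1.z$, the same slice-plus-Poisson-summation argument for the Fourier identity, and the same orbit recovery (projection onto $\mathbb Z^{d-1}$ to get $u'\in\mathbb Z^{d-1}$, then one-dimensional recovery on the zeroth layer). Your relative-density radius $R+\sqrt d$ is cruder than the paper's $(R^2+(d-1)/4)^{1/2}$ but still valid.
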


\subsection{The layered family}
\label{subsec:layered-family}

Write
\[
    \mathbb R^d
    =
    \mathbb R\times\mathbb R^{d-1},
    \qquad
    \mathbb Z^d
    =
    \mathbb Z\times\mathbb Z^{d-1},
\]
and let
\[
    Tz:=e_1.z.
\]
Apply Theorem~\ref{thm:one-dimensional-coding} to the Borel
automorphism $T$, and denote the resulting family by
$(L_z)_{z\in Z}$. Thus the sets $L_z\subset\mathbb R$ are
uniformly Delone and
\[
    L_{T^kz}=L_z-k
    \qquad
    (k\in\mathbb Z,\ z\in Z).
\]

For $m\in\mathbb Z^{d-1}$, put
\[
    \widetilde m:=(0,m)\in\mathbb Z^d
\]
and define
\[
    P_z
    :=
    \bigcup_{m\in\mathbb Z^{d-1}}
    \left(
        L_{\widetilde m.z}\times\{m\}
    \right).
\]
Thus the layer of $P_z$ over $m$ is the one-dimensional code
associated with $\widetilde m.z$.

\begin{proposition}
\label{prop:layered-family}
The map
\[
    z\longmapsto\delta_{P_z}
\]
is Borel, the family $(P_z)_{z\in Z}$ is
$\mathbb Z^d$-equivariant, and the sets $P_z$ are uniformly
Delone.
\end{proposition}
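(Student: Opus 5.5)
We verify the three assertions of the proposition in turn: Borel measurability, equivariance, and the uniform Delone bounds. Each is a direct consequence of the corresponding property of the one-dimensional family $(L_z)_{z\in Z}$ from Theorem~\ref{thm:one-dimensional-coding}.

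\emph{Borel measurability.} For $f\in C_c(\mathbb R^d)$ we have
\[
    \int f\,d\delta_{P_z}
    =
    \sum_{m\in\mathbb Z^{d-1}}
    \int_{\mathbb R} f(\cdot,m)\,d\delta_{L_{\widetilde m.z}}.
\]
Only finitely many $m$ satisfy $\operatorname{supp}f\cap(\mathbb R\times\{m\})\neq\varnothing$, so this is a finite sum. Each term is the composition of the Borel map $z\mapsto\widetilde m.z$ with the Borel map $z\mapsto\delta_{L_z}$, evaluated against the function $f(\cdot,m)\in C_c(\mathbb R)$. Hence it is Borel in $z$. Since the vague Borel structure is generated by these evaluation maps, $z\mapsto\delta_{P_z}$ is Borel.

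\emph{Equivariance.} Write $n=(k,n')\in\mathbb Z\times\mathbb Z^{d-1}$. Because the action is by the abelian group $\mathbb Z^d$,
\[
    \widetilde m.(n.z)=T^k\bigl(\widetilde{m+n'}.z\bigr).
\]
The one-dimensional equivariance then gives
\[
    L_{\widetilde m.(n.z)}=L_{\widetilde{m+n'}.z}-k.
\]
So the layer of $P_{n.z}$ over $m$ equals $(L_{\widetilde{m+n'}.z}-k)\times\{m\}$. Reindexing with $m'=m+n'$, this is exactly the layer of $P_z$ over $m'$, translated by $-(k,n')$. Taking the union over $m$ yields $P_{n.z}=P_z-n$.

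\emph{Uniform Delone bounds.} Let $r,R$ be the uniform Delone constants of the family $(L_z)$.
\begin{itemize}
\item \emph{Separation.} Two points of $P_z$ on the same layer are at distance at least $r$. Points on different layers are at distance at least $1$. So $P_z$ is $\min\{r,1\}$-separated.
\item \emph{Relative density.} Given $x=(x_1,x')\in\mathbb R^d$, choose $m\in\mathbb Z^{d-1}$ with $\|x'-m\|\leq\sqrt{d-1}/2$. The layer over $m$ contains a point $p$ with $|p-x_1|\leq R$. Hence $\operatorname{dist}(x,P_z)\leq\sqrt{R^2+(d-1)/4}$.
\end{itemize}
Both constants are independent of $z$, so the family is uniformly Delone.

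The only step requiring any care is the index bookkeeping in the equivariance check. It relies on commutativity of the $\mathbb Z^d$-action to split $n.z$ into a $T^k$ part and a transverse shift.
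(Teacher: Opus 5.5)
Your proposal is correct and follows the paper's proof essentially step for step. Both use the finite-sum-over-layers argument for Borel measurability, the commutation identity $\widetilde m.(n.z)=T^k(\widetilde{m+n'}.z)$ with reindexing of layers for equivariance, and the same $\min\{r,1\}$ separation and $\sqrt{R^2+(d-1)/4}$ relative-density bounds.
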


\begin{proof}
Let $f\in C_c(\mathbb R^d)$. Only finitely many
$m\in\mathbb Z^{d-1}$ meet the projection of
$\operatorname{supp}f$ onto the last $d-1$ coordinates. Hence
\[
    \int_{\mathbb R^d}f\,d\delta_{P_z}
    =
    \sum_{m\in\mathbb Z^{d-1}}
    \int_{\mathbb R}
    f(t,m)\,d\delta_{L_{\widetilde m.z}}(t),
\]
where the sum is finite. Each summand is Borel in $z$, and
therefore $z\mapsto\delta_{P_z}$ is Borel.

Let
\[
    n=(k,\ell)
    \in
    \mathbb Z\times\mathbb Z^{d-1}.
\]
Since the action of $\mathbb Z^d$ is abelian,
\[
    \widetilde m.(n.z)
    =
    T^k\bigl(\widetilde{m+\ell}.z\bigr).
\]
The equivariance of the one-dimensional family gives
\[
    L_{\widetilde m.(n.z)}
    =
    L_{\widetilde{m+\ell}.z}-k.
\]
Consequently,
\[
\begin{aligned}
    P_{n.z}
    &=
    \bigcup_{m\in\mathbb Z^{d-1}}
    \left(
        (L_{\widetilde{m+\ell}.z}-k)
        \times\{m\}
    \right)\\
    &=
    P_z-(k,\ell)
    =
    P_z-n,
\end{aligned}
\]
where the second equality follows by reindexing the layers.

Choose $r,R>0$ such that every $L_z$ is $r$-separated and
$R$-relatively dense. Two points of $P_z$ on the same layer are
separated by at least $r$, while points on distinct layers are
separated by at least $1$. Hence every $P_z$ is
$\min\{r,1\}$-separated.

Finally, let
\[
    (x,y)\in\mathbb R\times\mathbb R^{d-1}.
\]
Choose $m\in\mathbb Z^{d-1}$ such that
\[
    \|y-m\|
    \leq
    \frac{\sqrt{d-1}}{2}.
\]
There is a point $p\in L_{\widetilde m.z}$ with
\[
    |x-p|\leq R.
\]
Therefore
\[
    \|(x,y)-(p,m)\|
    \leq
    \left(
        R^2+\frac{d-1}{4}
    \right)^{1/2}.
\]
This bound is independent of $z$, so the family is uniformly
relatively dense.
\end{proof}


\subsection{The Fourier identity}
\label{subsec:layered-Fourier-identity}

The one-dimensional Fourier identity on each layer combines with
Poisson summation in the transverse variables.

\begin{proposition}
\label{prop:layered-Fourier-identity}
For every $z\in Z$,
\[
    \widehat{\delta}_{P_z}
    =
    4\delta_0
    \qquad\text{on }(-1,1)^d
\]
in the sense of tempered distributions.
\end{proposition}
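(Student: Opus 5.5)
We test against functions $f\in\mathcal S(\mathbb R^d)$ with $\operatorname{supp}\widehat f\subset(-1,1)^d$ and aim to show
\[
    \sum_{p\in P_z}f(p)=4\int_{\mathbb R^d}f\,d\lambda_d .
\]
Fourier inversion then gives the distributional identity on $(-1,1)^d$, by the same reasoning as in Proposition~\ref{prop:stealthiness-criterion}. Write points as $(t,y)\in\mathbb R\times\mathbb R^{d-1}$. By the definition of the layered family,
\[
    \sum_{p\in P_z}f(p)
    =
    \sum_{m\in\mathbb Z^{d-1}}
    \sum_{t\in L_{\widetilde m.z}}f(t,m).
\]
Uniform separation of the $L$'s and rapid decay of $f$ make this double sum absolutely convergent, so reordering is harmless.

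The first step is to fix $m$ and apply the one-dimensional identity to the slice $g_m:=f(\cdot,m)$. Its Fourier transform is
\[
    \widehat{g_m}(\xi_1)=\int_{\mathbb R^{d-1}}\widehat f(\xi_1,\eta)\,e^{2\pi i\langle m,\eta\rangle}\,d\eta .
\]
This is supported in $(-1,1)$, because $\widehat f$ vanishes unless $\xi_1\in(-1,1)$. More precisely, the support of $\widehat f$ is compact, so $\operatorname{supp}\widehat{g_m}$ lies in a fixed compact subset of $(-1,1)$. Moreover $g_m\in\mathcal S(\mathbb R)$. Proposition~\ref{prop:Fourier-identity}, applied to the point $\widetilde m.z$, therefore gives
\[
    \sum_{t\in L_{\widetilde m.z}}g_m(t)=4\int_{\mathbb R}f(t,m)\,dt .
\]
The key point is that this value no longer depends on which orbit point codes the layer, since the one-dimensional identity holds for every $z\in Z$.

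The second step is to sum over $m$. Set $h(y):=\int_{\mathbb R}f(t,y)\,dt$. Then $h\in\mathcal S(\mathbb R^{d-1})$ and $\widehat h(\eta)=\widehat f(0,\eta)$, which is supported in $(-1,1)^{d-1}$. Poisson summation gives
\[
    \sum_{m\in\mathbb Z^{d-1}}h(m)=\sum_{k\in\mathbb Z^{d-1}}\widehat h(k).
\]
The only lattice point in $(-1,1)^{d-1}$ is $k=0$, so the right-hand side equals $\widehat h(0)=\int_{\mathbb R^d}f\,d\lambda_d$. Combining the two steps gives the claimed identity with constant $4$.

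The main obstacle is minor: one must check that the Schwartz decay of $g_m$ and $h$ is good enough for absolute convergence and for Poisson summation. This follows from the Schwartz estimates for $f$, and the uniform Delone bounds control the number of points of each $L_{\widetilde m.z}$ in any unit interval. I expect no further difficulty, since the closedness issue for the support of $\widehat{g_m}$ was already handled in the first step.
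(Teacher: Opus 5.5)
Your proof is correct and follows the paper's argument essentially step for step. You slice along the layers and apply the one-dimensional identity of Proposition~\ref{prop:Fourier-identity} on each layer, which holds for every point of $Z$, so the coding by $\widetilde m.z$ plays no role. You then remove the transverse sum by Poisson summation using $\widehat h(\eta)=\widehat f(0,\eta)$. Your observation that $\operatorname{supp}\widehat f$ is compact (closed and contained in the bounded cube), so that each slice has Fourier support in a compact subset of $(-1,1)$, is a harmless extra precision that the paper leaves implicit.
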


\begin{proof}
Let $f\in\mathcal S(\mathbb R^d)$ satisfy
\[
    \operatorname{supp}\widehat f\subset(-1,1)^d.
\]
Writing points of $\mathbb R^d$ as
$(x,y)\in\mathbb R\times\mathbb R^{d-1}$, we have
\[
    \sum_{p\in P_z}f(p)
    =
    \sum_{m\in\mathbb Z^{d-1}}
    \sum_{t\in L_{\widetilde m.z}}f(t,m).
\]
This sum is absolutely convergent because $P_z$ is uniformly
separated and $f$ is rapidly decreasing.

Fix $m\in\mathbb Z^{d-1}$ and set
\[
    f_m(x):=f(x,m).
\]
Its Fourier transform in the first variable is
\[
    \widehat{f_m}(\xi)
    =
    \int_{\mathbb R^{d-1}}
    \widehat f(\xi,\eta)e^{2\pi i m\cdot\eta}\,d\eta.
\]
It follows that
\[
    \operatorname{supp}\widehat{f_m}\subset(-1,1).
\]
The one-dimensional Fourier identity,
Proposition~\ref{prop:Fourier-identity}, therefore gives
\[
    \sum_{t\in L_{\widetilde m.z}}f(t,m)
    =
    4\int_{\mathbb R}f(x,m)\,dx.
\]
Hence
\[
    \sum_{p\in P_z}f(p)
    =
    4\sum_{m\in\mathbb Z^{d-1}}g(m),
    \qquad
    g(y):=\int_{\mathbb R}f(x,y)\,dx.
\]

The function $g$ belongs to $\mathcal S(\mathbb R^{d-1})$, and
integrating in the first variable amounts to restricting the
Fourier transform to the hyperplane $\xi=0$:
\[
    \widehat g(\eta)=\widehat f(0,\eta).
\]
Consequently,
\[
    \operatorname{supp}\widehat g
    \subset(-1,1)^{d-1}.
\]
Poisson summation now gives
\[
    \sum_{m\in\mathbb Z^{d-1}}g(m)
    =
    \sum_{k\in\mathbb Z^{d-1}}\widehat g(k).
\]
Every nonzero point of $\mathbb Z^{d-1}$ lies outside
$(-1,1)^{d-1}$, so only the term $k=0$ remains:
\[
    \sum_{m\in\mathbb Z^{d-1}}g(m)
    =
    \widehat g(0)
    =
    \int_{\mathbb R^{d-1}}g(y)\,dy.
\]
It follows that
\[
    \sum_{p\in P_z}f(p)
    =
    4\int_{\mathbb R^d}f(x,y)\,dx\,dy.
\]
This is precisely
\[
    \widehat{\delta}_{P_z}
    =
    4\delta_0
    \qquad\text{on }(-1,1)^d.
\]
\end{proof}

\subsection{Orbit recovery and arbitrary lattices}
\label{subsec:layered-generation}

We first verify the orbit-separation condition for the layered
family.

\begin{proposition}
\label{prop:layered-generation}
Let $z,w\in Z$ and $u\in\mathbb R^d$. If
\[
    P_w=P_z-u,
\]
then
\[
    u\in\mathbb Z^d
    \qquad\text{and}\qquad
    w=u.z.
\]
\end{proposition}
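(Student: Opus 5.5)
The plan is to reduce to the one-dimensional orbit recovery, Proposition~\ref{prop:recovery-coding}, applied layer by layer. Write $u=(u_1,u')\in\mathbb R\times\mathbb R^{d-1}$.

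\emph{Step 1: the transverse component is integral.} Every point of $P_z$ lies in $\mathbb R\times\mathbb Z^{d-1}$, and so does every point of $P_w$. Moreover, every layer of $P_z$ is nonempty, since each $L_{\widetilde m.z}$ is relatively dense. Hence $P_z-u=P_w\subset\mathbb R\times\mathbb Z^{d-1}$ forces $m-u'\in\mathbb Z^{d-1}$ for some $m\in\mathbb Z^{d-1}$, and therefore $u'=:\ell\in\mathbb Z^{d-1}$.

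\emph{Step 2: compare layers.} Comparing the layer over $m$ on both sides, the points of $P_z-u$ with last coordinates $m$ come from the layer $m+\ell$ of $P_z$, shifted by $u_1$. This gives
\[
    L_{\widetilde m.w}=L_{\widetilde{m+\ell}.z}-u_1
    \qquad(m\in\mathbb Z^{d-1}).
\]
Take $m=0$, so that $L_w=L_{\widetilde\ell.z}-u_1$. Proposition~\ref{prop:recovery-coding}, applied to the automorphism $T=e_1.$ (the property recorded in Theorem~\ref{thm:one-dimensional-coding}(3)), yields $u_1=:k\in\mathbb Z$ and
\[
    w=T^k(\widetilde\ell.z)=(k,0).(0,\ell).z=(k,\ell).z=u.z,
\]
using that the $\mathbb Z^d$-action is an action of an abelian group. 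This gives the claim. Only the single layer $m=0$ is needed; the other layers are then automatically consistent by the equivariance in Proposition~\ref{prop:layered-family}.

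\emph{Main obstacle.} The only slightly delicate point is Step 1, namely justifying that the layer structure cannot be shifted by a nonintegral transverse vector. This holds because every layer is nonempty, indeed uniformly relatively dense, so the set of transverse coordinates occurring in $P_z$ is exactly $\mathbb Z^{d-1}$, and a translate of $\mathbb Z^{d-1}$ contained in $\mathbb Z^{d-1}$ must be by an integral vector. Beyond this, the argument is bookkeeping of indices in the reindexing $m\mapsto m+\ell$.
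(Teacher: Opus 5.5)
Your proposal is correct and follows essentially the same route as the paper. The paper likewise projects onto the transverse coordinates to get $v\in\mathbb Z^{d-1}$ (using that every layer is nonempty), then compares the layers over $0$ to obtain $L_w=L_{\widetilde v.z}-s$, and concludes with Proposition~\ref{prop:recovery-coding} for $T=e_1.$ together with commutativity of the $\mathbb Z^d$-action.
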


\begin{proof}
Write
\[
    u=(s,v)\in\mathbb R\times\mathbb R^{d-1},
\]
and let
\[
    \pi:\mathbb R^d\longrightarrow\mathbb R^{d-1}
\]
be projection onto the last $d-1$ coordinates. Every layer of
$P_z$ is nonempty, so
\[
    \pi(P_z)=\mathbb Z^{d-1}
    \qquad
    (z\in Z).
\]
Applying $\pi$ to the identity $P_w=P_z-u$ gives
\[
    \mathbb Z^{d-1}
    =
    \mathbb Z^{d-1}-v.
\]
Hence $v\in\mathbb Z^{d-1}$.

The layer of $P_w$ over $0$ is $L_w$. The layer of $P_z-u$ over
$0$ comes from the layer of $P_z$ over $v$, and is therefore
\[
    L_{\widetilde v.z}-s.
\]
Thus
\[
    L_w=L_{\widetilde v.z}-s.
\]
Applying Proposition~\ref{prop:recovery-coding} to the
automorphism $Tz=e_1.z$, we obtain
\[
    s\in\mathbb Z
    \qquad\text{and}\qquad
    w=T^s(\widetilde v.z).
\]
Since the $\mathbb Z^d$-action is abelian,
\[
    T^s(\widetilde v.z)=(s,v).z=u.z.
\]
Thus $u\in\mathbb Z^d$ and $w=u.z$.
\end{proof}

Propositions~\ref{prop:layered-family},
\ref{prop:layered-Fourier-identity}, and
\ref{prop:layered-generation} prove
Theorem~\ref{thm:higher-dimensional-coding}.

We now transfer the construction from $\mathbb Z^d$ to an
arbitrary lattice. Let
\[
    \Gamma<\mathbb R^d
\]
be a full-rank lattice. Choose an ordered basis
$\gamma_1,\ldots,\gamma_d$ of $\Gamma$, and let
\[
    A:\mathbb R^d\longrightarrow\mathbb R^d
\]
be the linear isomorphism determined by
\[
    Ae_j=\gamma_j,
    \qquad
    1\leq j\leq d.
\]
Then
\[
    \Gamma=A\mathbb Z^d,
    \qquad
    |\det A|=\operatorname{covol}(\Gamma).
\]

\begin{theorem}
\label{thm:lattice-coding}
Let $Z$ be a standard Borel space equipped with a Borel action of
$\Gamma$. There is a Borel $\Gamma$-equivariant uniformly Delone
family $(P_z)_{z\in Z}$ such that
\[
    P_w=P_z-u
    \quad\Longrightarrow\quad
    u\in\Gamma
    \ \text{ and }\
    w=u.z.
\]
Moreover, if
\[
    \Omega_A
    :=
    \left\{
        \xi\in\mathbb R^d:
        A^{\mathsf T}\xi\in(-1,1)^d
    \right\},
\]
then
\[
    \widehat{\delta}_{P_z}
    =
    \frac{4}{\operatorname{covol}(\Gamma)}\delta_0
    \qquad\text{on }\Omega_A
\]
for every $z\in Z$.
\end{theorem}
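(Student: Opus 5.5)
The plan is to transport the $\mathbb Z^d$ construction through the linear map $A$. First, pull the $\Gamma$-action back to a $\mathbb Z^d$-action on $Z$ by setting
\[
    n\ast z := (An).z,
    \qquad n\in\mathbb Z^d .
\]
This is a Borel action because $A:\mathbb Z^d\to\Gamma$ is a group isomorphism. For $d\geq2$, apply Theorem~\ref{thm:higher-dimensional-coding} to this action. For $d=1$, apply Theorem~\ref{thm:one-dimensional-coding} with $Tz:=\gamma_1.z$; its conclusions are exactly the $d=1$ analogues of those of Theorem~\ref{thm:higher-dimensional-coding}. Call the resulting family $(P^0_z)_{z\in Z}$. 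Then define
\[
    P_z := A P^0_z .
\]

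Borelness is checked by testing against $f\in C_c(\mathbb R^d)$: we have $\int f\,d\delta_{P_z}=\int f\circ A\,d\delta_{P^0_z}$, and $f\circ A\in C_c$. Uniform Delone bounds are preserved because $A$ is bi-Lipschitz. The constants change to $r/\|A^{-1}\|$ and $\|A\|R$.

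For equivariance, take $\gamma=An$. Then
\[
    P_{\gamma.z}=AP^0_{n\ast z}=A(P^0_z-n)=P_z-\gamma .
\]
For orbit separation, suppose $P_w=P_z-u$. Then $P^0_w=P^0_z-A^{-1}u$. Property~(3) of Theorem~\ref{thm:higher-dimensional-coding} gives $A^{-1}u\in\mathbb Z^d$ and $w=(A^{-1}u)\ast z$. Hence $u\in\Gamma$ and $w=u.z$.

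The Fourier identity follows the dilation computation of Corollary~\ref{cor:one-dimensional-induced-space}. Let $f\in\mathcal S(\mathbb R^d)$ with $\operatorname{supp}\widehat f\subset\Omega_A$, and put $g:=f\circ A$. A change of variables gives
\[
    \widehat g(\xi)=|\det A|^{-1}\,\widehat f\bigl(A^{-\mathsf T}\xi\bigr).
\]
Therefore $\xi\in\operatorname{supp}\widehat g$ forces $A^{-\mathsf T}\xi\in\Omega_A$, that is, $\xi=A^{\mathsf T}(A^{-\mathsf T}\xi)\in(-1,1)^d$. So $\operatorname{supp}\widehat g\subset(-1,1)^d$. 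It remains to confirm that this support is a compact subset of the open cube, which holds because the support of $\widehat f$ is closed inside $\Omega_A$ in the sense used for distributions on open sets. By property~(4) of the $\mathbb Z^d$ theorem (or Proposition~\ref{prop:Fourier-identity} when $d=1$),
\[
    \sum_{p\in P_z}f(p)=\sum_{q\in P^0_z}g(q)=4\int g\,d\lambda_d=\frac{4}{|\det A|}\int f\,d\lambda_d .
\]
Since $|\det A|=\operatorname{covol}(\Gamma)$, this is the claimed identity.

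No step is a real obstacle. The only points requiring care are the transpose bookkeeping in the Fourier support computation, since the dual map is $A^{\mathsf T}$ and not $A^{-1}$, and keeping the $d=1$ case consistent with the $\mathbb Z^d$ statement.
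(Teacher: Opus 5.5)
Your proposal is correct and is essentially the paper's proof. You pull the $\Gamma$-action back to $\mathbb Z^d$ via $A$, apply Theorem~\ref{thm:higher-dimensional-coding}, set $P_z:=AQ_z$, and transfer the Fourier identity using $g=f\circ A$ and $\widehat g(\xi)=|\det A|^{-1}\widehat f(A^{-\mathsf T}\xi)$; your direct treatment of $d=1$ with $Tz=\gamma_1.z$ is just Corollary~\ref{cor:one-dimensional-induced-space} unpacked.

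The one wobble is the compactness remark. Since $\operatorname{supp}\widehat f$ is closed in $\mathbb R^d$ and contained in the bounded set $\Omega_A$, it is automatically compact, so no appeal to supports of distributions on open sets is needed.
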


\begin{proof}
For $d=1$, this is
Corollary~\ref{cor:one-dimensional-induced-space}. Suppose that
$d\geq2$.

Turn $Z$ into a $\mathbb Z^d$-space by defining
\[
    n.z:=(An).z,
    \qquad
    n\in\mathbb Z^d.
\]
Let $(Q_z)_{z\in Z}$ be the family supplied by
Theorem~\ref{thm:higher-dimensional-coding}, and set
\[
    P_z:=AQ_z.
\]

If $\gamma=An\in\Gamma$, then
\[
    P_{\gamma.z}
    =
    AQ_{n.z}
    =
    A(Q_z-n)
    =
    P_z-\gamma.
\]
Thus the family is $\Gamma$-equivariant. Since $A$ is an
invertible linear map, it also preserves uniform discreteness and
relative density, up to changing the constants. Hence the sets
$P_z$ are uniformly Delone.

If
\[
    P_w=P_z-u,
\]
then
\[
    Q_w=Q_z-A^{-1}u.
\]
Theorem~\ref{thm:higher-dimensional-coding} gives
\[
    A^{-1}u\in\mathbb Z^d
    \qquad\text{and}\qquad
    w=(A^{-1}u).z.
\]
It follows that
\[
    u\in A\mathbb Z^d=\Gamma
    \qquad\text{and}\qquad
    w=u.z.
\]

It remains to transform the Fourier identity. Let
$f\in\mathcal S(\mathbb R^d)$ satisfy
\[
    \operatorname{supp}\widehat f\subset\Omega_A,
\]
and define
\[
    g(x):=f(Ax).
\]
Then
\[
    \widehat g(\xi)
    =
    |\det A|^{-1}
    \widehat f(A^{-\mathsf T}\xi).
\]
The definition of $\Omega_A$ therefore implies
\[
    \operatorname{supp}\widehat g\subset(-1,1)^d.
\]
By Theorem~\ref{thm:higher-dimensional-coding},
\[
\begin{aligned}
    \sum_{p\in P_z}f(p)
    &=
    \sum_{q\in Q_z}f(Aq)
    =
    \sum_{q\in Q_z}g(q)
    =
    4\int_{\mathbb R^d}g(x)\,dx
    =
    \frac{4}{|\det A|}
    \int_{\mathbb R^d}f(x)\,dx.
\end{aligned}
\]
Since $|\det A|=\operatorname{covol}(\Gamma)$, this is equivalent
to
\[
    \widehat{\delta}_{P_z}
    =
    \frac{4}{\operatorname{covol}(\Gamma)}\delta_0
    \qquad\text{on }\Omega_A.
\]
\end{proof}

\begin{theorem}[Stealthy realization]
\label{thm:lattice-induced-realization}
Every lattice-induced probability-preserving Borel
$\mathbb R^d$-space admits a generating stealthy Delone
cross-section.
\end{theorem}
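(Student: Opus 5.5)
The plan is to reduce everything to the model case and then invoke the coding theorem for arbitrary lattices. Let $(X,\mu)$ be lattice-induced. By definition there are a full-rank lattice $\Gamma\leq\mathbb R^d$, a probability-preserving Borel $\Gamma$-space $(Z,\nu)$, and a measurable $\mathbb R^d$-isomorphism
\[
    \Phi:\bigl(\operatorname{Ind}_{\Gamma}^{\mathbb R^d}(Z),\mu_{\Gamma,Z}\bigr)\longrightarrow(X,\mu),
\]
defined between invariant conull Borel sets. The properties \emph{cross-section}, \emph{separated}, \emph{cocompact} and \emph{generating}, as well as the return-time process, are transported by such an isomorphism. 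Indeed, if $Y$ is a cross-section of the induced space, then $\Phi(Y)$, adjoined to a cross-section on the complementary invariant null set if needed, has return-time map $\kappa_{Y}\circ\Phi^{-1}$ almost everywhere. Hence it has the same return-time process, and thus the same Bartlett spectrum. So it suffices to treat $X=\operatorname{Ind}_{\Gamma}^{\mathbb R^d}(Z)$ itself, in keeping with the convention that all statements are made up to invariant conull sets.

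On the induced space, I apply Theorem~\ref{thm:lattice-coding} to the Borel $\Gamma$-action on $Z$. This gives a Borel, $\Gamma$-equivariant, uniformly Delone family $(P_z)_{z\in Z}$ with orbit separation holding for \emph{all} $z,w\in Z$, so one may take $Z_0=Z$. The family also satisfies
\[
    \widehat\delta_{P_z}=\frac{4}{\operatorname{covol}(\Gamma)}\delta_0\quad\text{on }\Omega_A .
\]
Each $P_z$ is nonempty, since it is relatively dense. Proposition~\ref{prop:equivariant-families} then shows that $Y_P$ is a Delone cross-section, that the triple is a cross-section system, and that $Y_P$ is generating. Since $\Omega_A=A^{-\mathsf T}(-1,1)^d$ is an open neighborhood of the origin, Proposition~\ref{prop:stealthiness-criterion}, with $\rho=4/\operatorname{covol}(\Gamma)$, gives $\sigma_{Y_P}(\Omega_A)=0$, which is stealthiness.

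The only point requiring care is the transport step: $\Phi$ is defined only on conull invariant sets, while a cross-section must meet every orbit and be Borel in all of $X$. I would handle this in one of two ways. The first is to use the paper's standing convention that cross-sections are understood after restriction to invariant conull sets. Alternatively, on the null invariant complement $X\setminus X_0$ one can add any Delone cross-section, for instance one supplied by \cite{Kec92} for the free part, together with a suitable treatment of the non-free part. This addition changes neither the return-time process nor generation, because both are defined modulo null sets. Apart from this bookkeeping, the proof is a direct assembly of the results already established.
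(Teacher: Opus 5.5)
Your proposal is correct and is essentially the paper's proof: represent the space as $\operatorname{Ind}_{\Gamma}^{\mathbb R^d}(Z)$, take the family from Theorem~\ref{thm:lattice-coding}, and conclude with Propositions~\ref{prop:equivariant-families} and~\ref{prop:stealthiness-criterion}. The paper absorbs the transport issue into its standing convention that cross-sections are understood modulo invariant null sets, so your first option is the right one; the second is unnecessary and cannot work in general, because orbits with non-discrete stabilizers (e.g.\ fixed points) admit no cross-section at all.
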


\begin{proof}
Represent the given space as
$\smash{\operatorname{Ind}_{\Gamma}^{\mathbb R^d}(Z)}$
for a probability-preserving Borel action of a full-rank lattice
$\Gamma$ on $(Z,\nu)$. Let $(P_z)_{z\in Z}$ be the family supplied
by Theorem~\ref{thm:lattice-coding}, and let $Y_P$ be the
associated cross-section.

The family is uniformly Delone and satisfies the orbit-separation
condition, so
Proposition~\ref{prop:equivariant-families} shows that $Y_P$ is
Delone and generating. Its Fourier identity holds on the open
neighborhood $\Omega_A$ of the origin, so
Proposition~\ref{prop:stealthiness-criterion} shows that $Y_P$ is
stealthy.
\end{proof}


\section{Continuous components of the Bartlett spectrum}
\label{sec:continuous-spectral-components}

A generating cross-section need not have Bartlett spectrum
equivalent to the maximal spectral type of the ambient action.
Indeed, generation concerns the full return-time map, whereas the
Bartlett spectrum only records the closed subspace generated by
centered linear statistics. We show, however, that for the
constructions above, a nonzero part of the spectral type coming
from a weakly mixing base can already occur in the Bartlett
spectrum.

\subsection{The induced spectral decomposition}
\label{subsec:induced-spectral-decomposition}

Let $\Gamma\leq\mathbb R^d$ be a lattice. We identify
$\widehat\Gamma$ with $\mathbb R^d/\Gamma^*$ through the
restriction map
\[
    \pi_\Gamma:\mathbb R^d\longrightarrow\widehat\Gamma,
    \qquad
    \pi_\Gamma(\xi)(\gamma)
    =
    e^{2\pi i\langle\gamma,\xi\rangle}.
\]

Choose a basis
\[
    \gamma_1^*,\ldots,\gamma_d^*
\]
of $\Gamma^*$ and let
\[
    Q
    :=
    \left\{
        \sum_{j=1}^d t_j\gamma_j^*:
        0\leq t_j<1
    \right\}.
\]
Every coset in $\mathbb R^d/\Gamma^*$ has a unique representative
in $Q$. The restriction
\[
    \pi_\Gamma|_Q:Q\longrightarrow\widehat\Gamma
\]
is therefore a Borel bijection. Its inverse is Borel by the
Lusin--Souslin theorem
\cite[Theorem~15.1]{Kec95}. We denote this inverse by 
$\smash{\mathfrak s_\Gamma:\widehat\Gamma\longrightarrow Q}$.

Let $\mathfrak m$ be a finite measure on $\widehat\Gamma$. Its
$\Gamma^*$-periodic lift is the locally finite measure
$\widetilde{\mathfrak m}$ on $\mathbb R^d$ defined by
\[
    \int_{\mathbb R^d}f(\xi)\,
    d\widetilde{\mathfrak m}(\xi)
    =
    \int_{\widehat\Gamma}
    \sum_{\gamma^*\in\Gamma^*}
    f\bigl(\mathfrak s_\Gamma(\omega)+\gamma^*\bigr)\,
    d\mathfrak m(\omega),
    \qquad
    f\in C_c(\mathbb R^d).
\]
The definition is independent of the choice of representatives:
replacing $\mathfrak s_\Gamma(\omega)$ by another representative of the same coset
only reindexes the sum over $\Gamma^*$.

Let $(Z,\nu)$ be a probability-preserving Borel $\Gamma$-space
and put
\[
    (X,\mu)
    :=
    \operatorname{Ind}_\Gamma^{\mathbb R^d}(Z).
\]
The functions that depend only on the coordinate in
$\mathbb R^d/\Gamma$ form the \emph{torus subspace}
\[
    \mathcal K_{\mathrm{tor}}
    :=
    \left\{
        F\in L^2(X,\mu):
        F([t,z])=g(t+\Gamma)
        \text{ for some }
        g\in L^2(\mathbb R^d/\Gamma)
    \right\}.
\]
This is a closed invariant subspace for the Koopman
representation. Set
\[
    \mathcal K_{\mathrm{tor}}^0
    :=
    \mathcal K_{\mathrm{tor}}\cap L^2_0(X,\mu).
\]
We call its orthogonal complement in $L^2_0(X,\mu)$ the
\emph{base subspace}:
\[
    \mathcal K_Z
    :=
    L^2_0(X,\mu)\ominus\mathcal K_{\mathrm{tor}}^0.
\]
Since the Koopman operators are unitary, $\mathcal K_Z$ is also
invariant. Thus
\[
    L^2_0(X,\mu)
    =
    \mathcal K_{\mathrm{tor}}^0
    \oplus
    \mathcal K_Z
\]
is an orthogonal decomposition into invariant subspaces.

\begin{lemma}
\label{lem:induced-spectral-decomposition}
Let $\smash{\mathfrak m_{Z,\nu}^0}$ be a representative of the maximal
spectral type of the $\Gamma$-action on $L^2_0(Z,\nu)$. The
maximal spectral type of the induced $\mathbb R^d$-representation
on $\smash{\mathcal K_{\mathrm{tor}}^0}$ is the counting-measure class on
the set $\smash{\Gamma^*\setminus\{0\}}$. Its maximal spectral type on $\smash{\mathcal K_Z}$ is the periodic lift
$\smash{\widetilde{\mathfrak m}_{Z,\nu}^0}$.

\end{lemma}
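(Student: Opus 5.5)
We identify the induced Koopman representation on $L^2(X,\mu)$ with an explicit model and then compute spectral measures directly. Using the fundamental domain $D$ and the identification $(X,\mu)\cong(D\times Z,\lambda_D\otimes\nu)$, where $\lambda_D$ is normalized Lebesgue measure, we realize $L^2(X,\mu)$ as the space of functions $F:\mathbb R^d\times Z\to\mathbb C$ satisfying $F(t+\gamma,(-\gamma).z)=F(t,z)$ and square integrable over $D\times Z$. Translation acts as $U_sF(t,z)=F(t-s,z)$.

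\textbf{The torus subspace.} On $\mathcal K_{\mathrm{tor}}^0$ the functions are $F(t,z)=g(t+\Gamma)$ with $\int g=0$. The characters $e_{\xi}(t)=e^{2\pi i\langle t,\xi\rangle}$, $\xi\in\Gamma^*\setminus\{0\}$, form an orthonormal basis of eigenfunctions, with eigenvalue $-\xi$ under our sign convention. Since $\Gamma^*\setminus\{0\}$ is symmetric, the maximal spectral type is the counting class on $\Gamma^*\setminus\{0\}$.

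\textbf{Reduction on the base subspace.} The torus subspace is exactly the set of functions independent of $z$. Hence $\mathcal K_Z$ is spanned by functions whose conditional expectation onto the torus factor vanishes, that is, $\int_Z F(t,z)\,d\nu(z)=0$ for a.e.\ $t$. To compute spectral measures, take $F(t,z)=\sum_{\gamma\in\Gamma}\phi(t+\gamma)\,h((\gamma).z)$ with $\phi\in C_c(\mathbb R^d)$ and $h\in L^2_0(Z,\nu)$. This is the standard induced vector, and sign conventions need to be checked so that the relation $F(t+\gamma,(-\gamma).z)=F(t,z)$ holds. Such vectors span a dense subspace of $\mathcal K_Z$: by averaging, their closed span contains all products $\mathbf 1_{D'}(t)h(z)$ on fundamental-domain coordinates.

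\textbf{The key computation.} Unfold the integral over $D\times Z$ to one over $\mathbb R^d\times Z$ in a single sum:
\[
\langle U_sF,F\rangle=\frac{1}{\operatorname{covol}(\Gamma)}\sum_{\gamma\in\Gamma}\int_{\mathbb R^d}\phi(t-s)\overline{\phi(t+\gamma)}\,dt\;\langle h,\gamma.h\rangle_{L^2(Z)}.
\]
Write $\langle \gamma.h,h\rangle=\int_{\widehat\Gamma}\omega(\gamma)\,d\varsigma^Z_h(\omega)$. Combine this with Plancherel for $\phi$ and Poisson summation over $\Gamma$, which gives $\sum_\gamma e^{2\pi i\langle\gamma,\xi-\mathfrak s_\Gamma(\omega)\rangle}=\operatorname{covol}(\Gamma)^{-1}\sum_{\gamma^*}\delta_{\mathfrak s_\Gamma(\omega)+\gamma^*}$ in the distributional sense. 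The result should be
\[
d\varsigma_F(\xi)=c_\Gamma\,|\widehat\phi(\xi)|^2\,d\widetilde{\varsigma^Z_h}(\xi),
\]
up to a reflection $\xi\mapsto-\xi$ absorbed by conventions. To make the Poisson step rigorous, pair against test functions and use Fubini over the finite measure $\varsigma^Z_h$. This formula is the main obstacle, chiefly in bookkeeping signs and normalizations.

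\textbf{Conclusion.} The formula gives $\varsigma_F\ll\widetilde{\mathfrak m}^0_{Z,\nu}$ for all $F$ in a dense subspace. Since absolute continuity with respect to a fixed type is preserved under closed spans and limits, this holds on all of $\mathcal K_Z$. Conversely, choose $\phi$ with $\widehat\phi$ nowhere zero (a Gaussian, after replacing $C_c$ by Schwartz functions via density) and $h$ of maximal spectral type. Then $\varsigma_F$ is equivalent to $\widetilde{\varsigma^Z_h}$, which lies in the class $\widetilde{\mathfrak m}^0_{Z,\nu}$. So the maximal type on $\mathcal K_Z$ is the periodic lift.
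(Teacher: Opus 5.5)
Your argument is correct, but it takes a different route from the paper.

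\textbf{How the paper argues.} The paper identifies $\mathcal K_Z$ with the representation induced from the $\Gamma$-representation on $L^2_0(Z,\nu)$. It disintegrates that representation into characters by the spectral theorem. It then notes that inducing a single character $\omega\in\widehat\Gamma$ gives the direct sum of the characters of $\mathbb R^d$ with frequencies in $\mathfrak s_\Gamma(\omega)+\Gamma^*$. Integrating against $\mathfrak m^0_{Z,\nu}$ then produces the periodic lift.

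\textbf{How your route differs.} You compute the spectral measures of explicit induced vectors by unfolding and Poisson summation, which gives a Mackey-type formula. You get the upper bound by density: the vectors whose spectral measure is absolutely continuous with respect to a fixed $\sigma$-finite measure form a closed invariant subspace, namely the range of a spectral projection. You get the lower bound from a Gaussian $\phi$ together with an $h$ of maximal spectral type.

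\textbf{What each buys.} The paper's version is shorter and more conceptual. However, it tacitly relies on induction commuting with direct integrals, and on how spectral types of direct integrals combine. Yours is longer but self-contained, and it shows explicitly which vectors carry which part of the type. Your choice of a $\phi$ with nowhere vanishing $\widehat\phi$ is genuinely needed for the lower bound. For $\phi\in C_c$, the real zero set of $\widehat\phi$ is Lebesgue-null, but when $d\geq2$ it can still carry mass of a singular lift $\widetilde{\varsigma_h}$.

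\textbf{Bookkeeping.} As written, $\sum_\gamma\phi(t+\gamma)h(\gamma.z)$ does not satisfy $F(t+\gamma,(-\gamma).z)=F(t,z)$. The correct induced vector is
\[
    F(t,z)=\sum_{\gamma\in\Gamma}\phi(t-\gamma)\,h(\gamma.z).
\]
Use $U_sF([t,z])=F([t-s,z])$, and the analogous conventions $U_\gamma h(z)=h((-\gamma).z)$ and $\langle U_\gamma h,h\rangle=\int_{\widehat\Gamma}\omega(\gamma)\,d\varsigma_h(\omega)$ on $Z$. Then your computation gives
\[
    d\varsigma_F(\xi)
    =
    \operatorname{covol}(\Gamma)^{-2}\,
    |\widehat\phi(-\xi)|^2\,
    d\widetilde{\varsigma_h}(\xi).
\]
So only $\widehat\phi$ is reflected, not the lift itself. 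In any case a reflection would be harmless, since complex conjugation shows that Koopman spectral types are symmetric.

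To identify $\varsigma_F$ by uniqueness of Fourier--Stieltjes transforms, the right-hand side must be finite. This follows from $\sup_{\xi}\sum_{\gamma^*\in\Gamma^*}|\widehat\phi(\xi+\gamma^*)|^2<\infty$, which holds for compactly supported or Schwartz $\phi$.
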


\begin{proof}
On $\mathcal K_{\mathrm{tor}}$, the Koopman representation is the
translation representation on $L^2(\mathbb R^d/\Gamma)$.

Its Fourier basis consists of the characters indexed by
$\Gamma^*$. The character indexed by $0$ is constant, so the
maximal spectral type on $\mathcal K_{\mathrm{tor}}^0$ is the
counting-measure class on the set
$\Gamma^*\setminus\{0\}$.

The representation on $\mathcal K_Z$ is naturally equivalent to
the representation induced from the $\Gamma$-representation on
$L^2_0(Z,\nu)$. By the spectral theorem for unitary
representations of abelian groups, it is enough to describe what
induction does to a character
$\omega\in\widehat\Gamma$.

The characters of $\mathbb R^d$ whose restrictions to $\Gamma$
equal $\omega$ are precisely those whose frequencies belong to
$\mathfrak s_\Gamma(\omega)+\Gamma^*$.

Accordingly, the representation induced from $\omega$ is the
direct sum of the characters indexed by this coset, and its
spectral type is counting measure on $\mathfrak s_\Gamma(\omega)+\Gamma^*$.

Applying this description to the spectral type
$\mathfrak m_{Z,\nu}^0$ shows that induction replaces it by the
measure obtained by placing a copy of counting measure on
$\mathfrak s_\Gamma(\omega)+\Gamma^*$ for
$\mathfrak m_{Z,\nu}^0$-almost every $\omega$. By definition, this
is the periodic lift $\widetilde{\mathfrak m}_{Z,\nu}^0$.
\end{proof}

Suppose from now on that the $\Gamma$-action on $(Z,\nu)$ is
weakly mixing. Then $\mathfrak m_{Z,\nu}^0$ is atomless, and the
same is true of its periodic lift $\widetilde{\mathfrak m}_{Z,\nu}^0$.
It is therefore mutually singular with the atomic spectral type
of the torus subspace.

\begin{proposition}
\label{prop:Bartlett-torus-base-decomposition}
Let $(Z,\nu)$ be weakly mixing, and let $Y\subset X$ be a Delone
cross-section. Its Bartlett spectrum has a unique decomposition
\[
    \sigma_Y
    =
    \sigma_Y^{\mathrm{tor}}
    +
    \sigma_Y^Z,
\]
where
\[
    \sigma_Y^{\mathrm{tor}}
    :=
    \sigma_Y|_{\Gamma^*\setminus\{0\}},
    \qquad
    \sigma_Y^Z
    :=
    \sigma_Y-\sigma_Y^{\mathrm{tor}},
\]
and
\[
    \sigma_Y^Z
    \ll
    \widetilde{\mathfrak m}_{Z,\nu}^0.
\]
In particular, $\sigma_Y^Z$ is atomless.
\end{proposition}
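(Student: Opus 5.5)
The plan is to read off the Bartlett spectrum as a Koopman spectral measure and split each centered linear statistic along the invariant decomposition $L^2_0(X,\mu)=\mathcal K_{\mathrm{tor}}^0\oplus\mathcal K_Z$. Since $Y$ is Delone, $(X,\mu,Y)$ is a cross-section system, so for every $f\in\mathcal S(\mathbb R^d)$ we have $(N_f^Y)^\circ\in L^2_0(X,\mu)$ and
\[
    |\widehat f|^2\,\sigma_Y=\varsigma_{(N_f^Y)^\circ}.
\]
Write $(N_f^Y)^\circ=F_{\mathrm{tor}}+F_Z$ with $F_{\mathrm{tor}}\in\mathcal K_{\mathrm{tor}}^0$ and $F_Z\in\mathcal K_Z$. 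Because the two subspaces are orthogonal and invariant, the cross terms in $\langle U_t F,F\rangle$ vanish, and so
\[
    \varsigma_{(N_f^Y)^\circ}=\varsigma_{F_{\mathrm{tor}}}+\varsigma_{F_Z}.
\]
By Lemma~\ref{lem:induced-spectral-decomposition}, $\varsigma_{F_{\mathrm{tor}}}$ is concentrated on the countable set $\Gamma^*\setminus\{0\}$, while $\varsigma_{F_Z}\ll\widetilde{\mathfrak m}_{Z,\nu}^0$.

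Next we use weak mixing. By the remark preceding the statement, $\widetilde{\mathfrak m}_{Z,\nu}^0$ is atomless, so it gives zero mass to the countable set $\Gamma^*$, and hence $\varsigma_{F_Z}(\Gamma^*)=0$. Restricting the identity above to $\Gamma^*\setminus\{0\}$ gives
\[
    |\widehat f|^2\sigma_Y|_{\Gamma^*\setminus\{0\}}=\varsigma_{F_{\mathrm{tor}}}.
\]
Restricting to the complement gives
\[
    |\widehat f|^2\bigl(\sigma_Y-\sigma_Y|_{\Gamma^*\setminus\{0\}}\bigr)=\varsigma_{F_Z}\ll\widetilde{\mathfrak m}_{Z,\nu}^0.
\]
We also need the atom at the origin to be harmless. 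The identity shows $|\widehat f|^2\sigma_Y(\{0\})=\varsigma_{(N_f^Y)^\circ}(\{0\})$, and both summands vanish at $0$. For the torus part, the character indexed by $0$ has been removed from $\mathcal K_{\mathrm{tor}}^0$. For the base part, $0\in\Gamma^*$ carries no $\widetilde{\mathfrak m}_{Z,\nu}^0$-mass. Thus $\sigma_Y(\{0\})=0$, and this also follows directly from invariance without ergodicity.

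To remove the factor $|\widehat f|^2$, choose $f\in\mathcal S(\mathbb R^d)$ with $\widehat f$ nowhere vanishing, for instance a Gaussian. Then $\sigma_Y^Z=|\widehat f|^{-2}\varsigma_{F_Z}$ is absolutely continuous with respect to $\widetilde{\mathfrak m}_{Z,\nu}^0$, locally as Radon measures. Since $\widetilde{\mathfrak m}_{Z,\nu}^0$ is atomless, $\sigma_Y^Z$ is atomless. Uniqueness is immediate, since the two parts are defined explicitly as a restriction of $\sigma_Y$ and its complement.

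The main obstacle is a technical point about locally finite versus finite measures. The lift $\widetilde{\mathfrak m}_{Z,\nu}^0$ and $\sigma_Y$ are only Radon, so absolute continuity has to be argued locally. The division by $|\widehat f|^2$ is legitimate because $|\widehat f|^2$ is bounded below on compacts. The second point to check is that ``maximal spectral type is $\widetilde{\mathfrak m}_{Z,\nu}^0$'' in Lemma~\ref{lem:induced-spectral-decomposition} means every spectral measure of a vector in $\mathcal K_Z$ is absolutely continuous with respect to it. That is exactly the content of maximality, so the argument goes through.
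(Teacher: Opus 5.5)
Your argument is correct and is essentially the paper's proof: you split $(N_f^Y)^\circ$, with $\widehat f$ nowhere vanishing, along $\mathcal K_{\mathrm{tor}}^0\oplus\mathcal K_Z$, use additivity of spectral measures over orthogonal invariant subspaces, Lemma~\ref{lem:induced-spectral-decomposition} and atomlessness of $\widetilde{\mathfrak m}_{Z,\nu}^0$ to identify the two pieces, and then divide by $|\widehat f|^2$. Drop the aside that $\sigma_Y(\{0\})=0$ ``follows directly from invariance without ergodicity'': for non-ergodic invariant processes the atom at the origin equals the variance of the ergodic-component intensities, and here it vanishes only because ergodicity of $(Z,\nu)$, which follows from weak mixing, leaves no invariant vectors in $\mathcal K_Z$, which is exactly what your main argument uses.
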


\begin{proof}
Choose $g\in\mathcal S(\mathbb R^d)$ such that
\[
    \widehat g(\xi)>0
    \qquad
    (\xi\in\mathbb R^d).
\]
The Delone property implies that the centered linear statistic
$N:=(N_g^Y)^\circ$
belongs to $L^2_0(X,\mu)$. Its spectral measure satisfies
\[
    d\varsigma_N
    =
    |\widehat g|^2\,d\sigma_Y.
\]
Since $\widehat g$ is nowhere zero, the measures $\varsigma_N$
and $\sigma_Y$ are equivalent.

Decompose $N$ according to
\[
    L^2_0(X,\mu)
    =
    \mathcal K_{\mathrm{tor}}^0
    \oplus
    \mathcal K_Z:
\]
\[
    N=N_{\mathrm{tor}}+N_Z.
\]
The two summands lie in orthogonal invariant subspaces. Hence
\[
    \langle U_tN_{\mathrm{tor}},N_Z\rangle
    =
    \langle U_tN_Z,N_{\mathrm{tor}}\rangle
    =
    0
    \qquad
    (t\in\mathbb R^d),
\]
and therefore
\[
    \varsigma_N
    =
    \varsigma_{N_{\mathrm{tor}}}
    +
    \varsigma_{N_Z}.
\]

By Lemma~\ref{lem:induced-spectral-decomposition},
\[
    \operatorname{supp}\varsigma_{N_{\mathrm{tor}}}
    \subset
    \Gamma^*\setminus\{0\},
    \qquad
    \varsigma_{N_Z}
    \ll
    \widetilde{\mathfrak m}_{Z,\nu}^0.
\]
The measure $\widetilde{\mathfrak m}_{Z,\nu}^0$ is atomless, so it
gives no mass to the countable set $\Gamma^*$. It follows that
\[
    \varsigma_{N_{\mathrm{tor}}}
    =
    \varsigma_N|_{\Gamma^*\setminus\{0\}},
\]
while
\[
    \varsigma_{N_Z}
    =
    \varsigma_N-
    \varsigma_N|_{\Gamma^*\setminus\{0\}}
    \ll
    \widetilde{\mathfrak m}_{Z,\nu}^0.
\]

Since $|\widehat g|^2$ is strictly positive, the same decomposition
holds for $\sigma_Y$. Thus
\[
    \sigma_Y^{\mathrm{tor}}
    =
    \sigma_Y|_{\Gamma^*\setminus\{0\}},
    \qquad
    \sigma_Y^Z
    \ll
    \widetilde{\mathfrak m}_{Z,\nu}^0.
\]
The two parts are supported on mutually singular spectral types,
which also proves uniqueness.
\end{proof}

\begin{remark}
The notation
\[
    \sigma_Y^{\mathrm{tor}}
    =
    \sigma_Y|_{\Gamma^*\setminus\{0\}}
\]
does not assert that every point of
$\Gamma^*\setminus\{0\}$ is an atom of $\sigma_Y$.
\end{remark}


\subsection{A nonzero continuous Bartlett component}
\label{subsec:nonzero-continuous-component}

We now show that the base component in
Proposition~\ref{prop:Bartlett-torus-base-decomposition} can be
nonzero.

\begin{theorem}
\label{thm:weakly-mixing-Bartlett-component}
Let $\Gamma\leq\mathbb R^d$ be a lattice and let $(Z,\nu)$ be a
nontrivial weakly mixing probability-preserving Borel
$\Gamma$-space. Then the induced $\mathbb R^d$-space
$\operatorname{Ind}_\Gamma^{\mathbb R^d}(Z)$ admits a generating
stealthy Delone cross-section $Y$ for which
\[
    \sigma_Y
    =
    \sigma_Y^{\mathrm{tor}}
    +
    \sigma_Y^Z,
    \qquad
    \operatorname{supp}\sigma_Y^{\mathrm{tor}}
    \subset
    \Gamma^*\setminus\{0\},
\]
and
\[
    0\neq\sigma_Y^Z
    \ll
    \widetilde{\mathfrak m}_{Z,\nu}^0.
\]
In particular, $\sigma_Y$ has a nonzero continuous component.
\end{theorem}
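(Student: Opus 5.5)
The plan is to use the construction of Theorem~\ref{thm:lattice-induced-realization} but keep the perturbation size $\varepsilon$ as a parameter. Proposition~\ref{prop:Bartlett-torus-base-decomposition} already gives the decomposition and $\sigma_Y^Z\ll\widetilde{\mathfrak m}_{Z,\nu}^0$ for every Delone cross-section. So it remains to find one admissible $\varepsilon$ for which the base component is nonzero. Equivalently, we need some $f\in\mathcal S(\mathbb R^d)$ such that $(N_f^{Y_\varepsilon})^\circ$ has a nonzero projection onto $\mathcal K_Z$.

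For $\varepsilon\in(0,\varepsilon_0]$, Propositions~\ref{prop:real-zero-family}, \ref{prop:Fourier-identity} and \ref{prop:recovery-coding} hold uniformly, as do their layered and linear transports in Section~\ref{sec:layered-construction}. Hence each $Y_\varepsilon$ is a generating stealthy Delone cross-section. By the argument principle representation, $p_\lambda(z)=p_\lambda(z,\varepsilon)$ is real-analytic in $\varepsilon$ uniformly in $z$ and $\lambda$. Implicit differentiation at $\varepsilon=0$ gives
\[
    \partial_\varepsilon p_\lambda(z)\big|_{\varepsilon=0}=-\frac{H_z(\lambda)}{S'(\lambda)}.
\]

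Fix $f\in C_c^\infty(\mathbb R^d)$ and the layered configuration written in the coordinates $A$. For $x=[t,z]$,
\[
    N_f^{Y_\varepsilon}(x)=N_f^{Y_0}(x)+\varepsilon\,D_f(x)+O(\varepsilon^2),
\]
with the error uniform in $x$ and therefore small in $L^2$. Here
\[
    D_f([t,z])=-\sum_{m}\sum_{\lambda\in\Lambda}f'\bigl(A(\lambda,m)-t\bigr)\frac{H_{\widetilde m.z}(\lambda)}{S'(\lambda)},
\]
where $f'$ denotes the directional derivative along $Ae_1$. The term $N_f^{Y_0}$ depends only on $t+\Gamma$, since the $\varepsilon=0$ configuration is periodic. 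It therefore lies in $\mathcal K_{\mathrm{tor}}$, and its $\mathcal K_Z$-projection vanishes. Consequently the $\mathcal K_Z$-projection of $(N_f^{Y_\varepsilon})^\circ$ equals $\varepsilon\,\mathrm{pr}_Z D_f+O(\varepsilon^2)$. It suffices to show that $\mathrm{pr}_Z D_f\neq0$ for some $f$: then small $\varepsilon>0$ gives a nonzero base projection, hence $\varsigma_{N_Z}\neq0$ and $\sigma_Y^Z\neq0$.

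The main obstacle is showing that $\mathrm{pr}_Z D_f\neq0$. The key point is that $H_{\widetilde m.z}(\lambda)$ is linear in the sequence $b(n.z)$. Indeed,
\[
    H_{\widetilde m.z}(\lambda)=\sum_k b\bigl((k,m).z\bigr)\,\psi(\lambda-k).
\]
Thus $D_f$ has the form $\sum_{n\in\mathbb Z^d}\Phi_f(t-An)\,b(n.z)$ for an explicit kernel $\Phi_f$. Write $b=b_0+\int b\,d\nu$, with $b_0\in L^2_0(Z)$ and $b_0\neq0$ because the action is nontrivial and $b$ is injective. The constant part contributes to $\mathcal K_{\mathrm{tor}}$. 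The part coming from $b_0$ is orthogonal to $\mathcal K_{\mathrm{tor}}$: integrating over $z$ kills it, since $b_0$ is centered.

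I would then compute its norm via the spectral measure $\varsigma_{b_0}$ on $\widehat\Gamma$. The $\Gamma^*$-periodization formula gives
\[
    \|\mathrm{pr}_Z D_f\|^2=c\int_{\widehat\Gamma}\sum_{\gamma^*}\bigl|\widehat{\Phi_f}(\mathfrak s_\Gamma(\omega)+\gamma^*)\bigr|^2\,d\varsigma_{b_0}(\omega).
\]
It suffices that $\widehat{\Phi_f}$ not vanish identically on the support of the lift. Since $\Phi_f$ is built from $f'$ composed with a periodic-weighted sum, $\widehat{\Phi_f}$ is $\widehat f$ multiplied by a nonzero trigonometric-type factor, with sums over $\lambda$ of $\psi(\lambda-k)/S'(\lambda)$. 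The product of such a factor with $\widehat f$ is nonzero on a set of positive $\widetilde\varsigma_{b_0}$-measure for a suitable choice of $f$. If this explicit factor proves awkward, I would instead argue by contradiction. If $\mathrm{pr}_Z D_f=0$ for all $f$, then testing against $f$ with $\widehat f$ localized near arbitrary frequencies would force $\varsigma_{b_0}=0$.
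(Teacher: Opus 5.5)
Your plan matches the paper's up to the final step: keep the perturbation size as a parameter, note that the unperturbed statistic lies in $\mathcal K_{\mathrm{tor}}$, and show that the first-order variation $D_f$ has a nonzero base projection. The reduction to $\mathrm{pr}_Z D_f=D_f^{(b_0)}\neq0$ is sound, and so is your periodized norm formula. The gap is at the step you yourself call the main obstacle.

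For $\Gamma=\mathbb Z^d$, your kernel is $\Phi_f(x)=-\sum_{\lambda\in\Lambda}\frac{\psi(\lambda)}{S'(\lambda)}\partial_1 f\bigl((\lambda,0)-x\bigr)$. Hence $\widehat{\Phi_f}(\xi)$ is a constant multiple of $\xi_1\widehat f(-\xi)M(\xi_1)$, with the $f$-independent multiplier
\[
    M(\xi_1)=\sum_{\lambda\in\Lambda}\frac{\psi(\lambda)}{S'(\lambda)}e^{-2\pi i\lambda\xi_1}.
\]
Saying that this factor is not identically zero, and that $f$ can be chosen suitably, does not give positivity of $\int\sum_{\gamma^*}|\widehat{\Phi_f}(\mathfrak s_\Gamma(\omega)+\gamma^*)|^2\,d\varsigma_{b_0}(\omega)$. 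The choice of $f$ controls only the zeros of $\widehat f$, not those of $\xi_1M(\xi_1)$. Meanwhile $\varsigma_{b_0}$ is an arbitrary nonzero atomless measure on $\widehat\Gamma$, possibly carried by a Cantor-type set as in the Gaussian example. A priori it could be concentrated on those $\omega$ whose entire coset $\mathfrak s_\Gamma(\omega)+\Gamma^*$ lies in the zero set of $\xi_1M(\xi_1)$. Your fallback of localizing $\widehat f$ near arbitrary frequencies fails for the same reason.

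What you need is that $\xi_1M(\xi_1)$ vanishes on no full coset $\theta+\mathbb Z$. This is true. Write $\lambda=r+n$ with $r\in\Lambda_0$ and $n\in\mathbb Z$, and use the $1$-periodicity of $S'$ together with $\psi(n)=\delta_{n0}$. Then $k\mapsto M(\theta+k)$ is a combination of the four distinct characters $k\mapsto e^{-2\pi irk}$ of $\mathbb Z$, with coefficient $1/S'(0)\neq0$ on the trivial one. It is therefore nonzero on a whole residue class mod $6$. Your proposal contains no such argument, and this is exactly where the interpolation property of $\psi$ has to enter.

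The paper avoids the Fourier side entirely by localizing in physical space (Lemma~\ref{lem:nontorus-variation}). It takes $\varphi$ supported in a small ball around $-t_0$. Then, for $t$ in a neighborhood $E$ of $t_0$, only the zero branch at $(\lambda,m)=(0,0)$ meets the support. The identity $H_z(0)=b(z)$ then gives
\[
    \left.\frac{d}{ds}\Phi_s([t,z])\right|_{s=0}=-\frac{\partial_1\varphi(-t)}{S'(0)}\,b(z)
    \qquad(t\in E).
\]
Since $P_{\mathrm{tor}}$ only averages over $z$, the base component of this derivative has squared norm at least $\operatorname{Var}_\nu(b)\int_E|\partial_1\varphi(-t)|^2\,dt/|S'(0)|^2>0$. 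This uses nothing about $\varsigma_{b_0}$ beyond $b$ being nonconstant. Inserting either this localization or the coset argument above completes your proof; the rest of your outline, including the passage to $\sigma_Y^Z\neq0$ and to general $\Gamma$ via $A$, then goes through as in the paper.
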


We first recall the one-dimensional objects used in the
construction. Set
\[
    S(w):=\sin(4\pi w)+\sin(2\pi w),
    \qquad
    \Lambda
    :=
    \frac13\mathbb Z
    \cup
    \left(\frac12+\mathbb Z\right).
\]
Thus $\Lambda$ is the zero set of $S$, and
\[
    \Lambda_0
    :=
    \Lambda\cap[0,1)
    =
    \left\{
        0,\frac13,\frac12,\frac23
    \right\}.
\]
Let $Tz:=e_1.z$, and let $H_z$ be the band-limited coding
function constructed in
Subsection~\ref{subsec:band-limited-orbit-coding}. For
$s\in\mathbb R$, define
\[
    F_{z,s}(w):=S(w)+sH_z(w).
\]
For sufficiently small $|s|$ and every $\lambda\in\Lambda$, we
write $p_{\lambda,s}(z)$ for the unique zero of $F_{z,s}$ near
$\lambda$, and set
\[
    L_{z,s}
    :=
    \{p_{\lambda,s}(z):\lambda\in\Lambda\}.
\]
At $s=0$, one has
\[
    p_{\lambda,0}(z)=\lambda,
    \qquad
    L_{z,0}=\Lambda.
\]

\begin{lemma}
\label{lem:parameter-family}
There is $s_0>0$ such that, for every $|s|<s_0$, the zeros of
$F_{z,s}$ admit an indexing
\[
    L_{z,s}
    =
    \{p_{\lambda,s}(z):\lambda\in\Lambda\}
\]
with the following properties:
\begin{enumerate}
\item
the sets $L_{z,s}$ are real and uniformly Delone, with constants
independent of $z$ and $s$;

\item for all $z \in Z$, 
\[
    L_{Tz,s}=L_{z,s}-1;
\]

\item for all $z \in Z$,
\[
    \widehat{\delta}_{L_{z,s}}
    =
    4\delta_0
    \qquad\text{on }(-1,1);
\]

\item
if $s\neq0$ and
\[
    L_{w,s}=L_{z,s}-u,
\]
then
\[
    u\in\mathbb Z
    \qquad\text{and}\qquad
    w=T^uz;
\]

\item
for every $\lambda\in\Lambda$ and $z\in Z$, the map
$s\mapsto p_{\lambda,s}(z)$ is real analytic and satisfies
\[
    p_{\lambda,0}(z)=\lambda,
    \qquad
    \left.
    \frac{d}{ds}p_{\lambda,s}(z)
    \right|_{s=0}
    =
    -\frac{H_z(\lambda)}{S'(\lambda)}.
\]
\end{enumerate}
\end{lemma}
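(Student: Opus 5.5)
The plan is to rerun the proofs of Section~\ref{sec:one-dimensional-construction} with $\varepsilon$ replaced by a real parameter $s$, checking that every constant there depends on $\varepsilon$ only through the requirement $\varepsilon|H_z|\leq\frac12|S|$ off the discs. So I would fix $\delta$, the lower bound $|S(x+iy)|\geq ce^{4\pi|y|}$ off $\bigcup_\lambda D(\lambda,\delta)$, and the bound $|H_z(x+iy)|\leq Ce^{2\pi\sigma|y|}$ from Lemma~\ref{lem:band-limited-orbit-coding}. Then I would choose $s_0>0$ such that
\[
    s_0\,C e^{2\pi\sigma|y|}\leq\tfrac12 c e^{4\pi|y|}
\]
for all $y$. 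This is possible since $\sigma<2$.

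For every $|s|<s_0$, the Rouché and conjugation-symmetry argument of Proposition~\ref{prop:real-zero-family} applies verbatim to $F_{z,s}$. Each disc $D(\lambda,\delta)$ contains exactly one zero, it is real and simple, and there are no other zeros. This yields the indexing $p_{\lambda,s}(z)$ and (1) with constants $d_\Lambda-2\delta$ and the gap bound of $\Lambda$, both independent of $z$ and $s$. Property (2) follows from the $1$-periodicity of $S$ together with $H_{Tz}=H_z(\cdot+1)$. The zero-counting uses no sign condition, so negative $s$ is handled in the same way.

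For (3), I would repeat Lemma~\ref{lem:logarithmic-derivative-support} and Proposition~\ref{prop:Fourier-identity} with $\varepsilon$ replaced by $s$. One point needs checking: the height $Y$ must be chosen uniformly in $|s|<s_0$. This holds because $|2is e^{4\pi iw}H_z(w)|\leq 2s_0Ce^{-2\pi(2-\sigma)Y}$. The lower bound $|F_{z,s}|\geq c_Y$ on the vertical sides also holds uniformly, since $|F_{z,s}|\geq\frac12|S|$ there. The identity $\widehat\delta_{L_{z,s}}=4\delta_0$ on $(-1,1)$ then holds for every $|s|<s_0$, including $s=0$, where it is just the statement for $\Lambda$. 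For (4), the argument of Proposition~\ref{prop:recovery-coding} uses only Hadamard factorization, domination by $\sin(4\pi w)$ on the imaginary axis, the frequency comparison at $1$ and $2$, and then $sH_z(v+u)=sH_w(v)$. Because $s\neq0$, we may divide by $s$ to obtain $H_z(u+\cdot)=H_w$, and injectivity of $b$ then gives $w=T^uz$. This is precisely where $s\neq0$ is needed.

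For (5), I would use the contour formula
\[
    p_{\lambda,s}(z)
    =
    \frac1{2\pi i}\int_{\partial D(\lambda,\delta)}
    w\,\frac{S'(w)+sH_z'(w)}{S(w)+sH_z(w)}\,dw.
\]
On $\partial D(\lambda,\delta)$ the denominator does not vanish for complex $|s|<s_0$, since the Rouché bound uses only $|s|$. So the integrand is holomorphic in $s$, uniformly in $w$ on the circle, and $s\mapsto p_{\lambda,s}(z)$ extends holomorphically to $|s|<s_0$. It is real on real $s$, hence real analytic. The derivative is most easily obtained from the implicit function theorem applied to $F_{z,s}(p)=0$, which is justified because $S'(\lambda)\neq0$ and the zero is simple. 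Differentiating at $s=0$ gives $S'(\lambda)\,\dot p+H_z(\lambda)=0$, which is the stated formula.

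The main obstacle is not conceptual. It is the bookkeeping needed to ensure that every choice made after ``fix $\varepsilon$'' in Section~\ref{sec:one-dimensional-construction}, especially $Y$ and $c_Y$, is uniform over the whole parameter range. Once that is in place, (4) is the only place where the hypothesis $s\neq0$ enters.
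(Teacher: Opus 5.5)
Your proposal is correct and follows the paper's proof essentially step by step. The paper also uses uniform Rouché estimates for the indexing and (1), periodicity of $S$ with $H_{Tz}=H_z(\cdot+1)$ for (2), a uniform rerun of the contour argument for (3), the recovery argument with division by $s\neq0$ for (4), and implicit differentiation at the simple zero $\lambda$ for (5). The only variation is that you get real analyticity from the argument-principle integral being holomorphic in complex $s$, since the Rouché bound depends only on $|s|$, whereas the paper invokes the real-analytic implicit function theorem directly; both work.
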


\begin{proof}
The Rouché estimates in
Subsection~\ref{subsec:real-zero-sets} are uniform when $s$ ranges
over a sufficiently small interval. They give the indexing of the
zeros and the first assertion. The identity
\[
    H_{Tz}(w)=H_z(w+1)
\]
gives the second assertion.

The proof of Proposition~\ref{prop:Fourier-identity} is uniform
over the same interval and gives the third assertion. For
$s\neq0$, the proof of Proposition~\ref{prop:recovery-coding}
also applies without change. Once equality of the corresponding
entire functions has been obtained, division by $s$ recovers the
coding functions $H_z$ and $H_w$. This proves the fourth
assertion.

Finally, every zero is simple. The real-analytic
implicit-function theorem
\cite[Section~2.3]{KP02}, applied to
\[
    S\bigl(p_{\lambda,s}(z)\bigr)
    +
    sH_z\bigl(p_{\lambda,s}(z)\bigr)
    =
    0,
\]
shows that $s\mapsto p_{\lambda,s}(z)$ is real analytic.
Differentiating this identity at $s=0$ gives
\[
    S'(\lambda)
    \left.
    \frac{d}{ds}p_{\lambda,s}(z)
    \right|_{s=0}
    +
    H_z(\lambda)
    =
    0,
\]
which proves the last assertion.
\end{proof}

For $m\in\mathbb Z^{d-1}$, write
$\widetilde m:=(0,m)$ and define
\[
    Q_{z,s}
    :=
    \bigcup_{m\in\mathbb Z^{d-1}}
    \left(
        L_{\widetilde m.z,s}\times\{m\}
    \right).
\]
Let $Y_s$ be the associated cross-section in
\[
    X_{\mathrm{ind}}
    :=
    \operatorname{Ind}_{\mathbb Z^d}^{\mathbb R^d}(Z).
\]
At $s=0$, the family is independent of $z$ and satisfies
\[
    Q_{z,0}
    =
    \Lambda\times\mathbb Z^{d-1}.
\]

Let $P_{\mathrm{tor}}$ be the orthogonal projection of
$L^2(X_{\mathrm{ind}})$ onto the torus subspace
$\mathcal K_{\mathrm{tor}}$. In fundamental-domain coordinates,
it is given by
\[
    (P_{\mathrm{tor}}F)(t)
    =
    \int_ZF([t,z])\,d\nu(z),
    \qquad
    t\in[0,1)^d.
\]

\begin{lemma}
\label{lem:nontorus-variation}
There is $\varphi\in C_c^\infty(\mathbb R^d)$ such that, for
$\Phi_s:=N_\varphi^{Y_s}$, the map
$s\mapsto\Phi_s$ is differentiable at $0$ as an
$L^2(X_{\mathrm{ind}})$-valued map and
\[
    (I-P_{\mathrm{tor}})
    \left.
    \frac{d}{ds}\Phi_s
    \right|_{s=0}
    \neq0.
\]
Consequently,
\[
    (I-P_{\mathrm{tor}})\Phi_s\neq0
\]
for every sufficiently small $s\neq0$.
\end{lemma}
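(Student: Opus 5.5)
The plan is to compute the $s$-derivative of $\Phi_s$ explicitly from Lemma~\ref{lem:parameter-family}(5), and to choose $\varphi$ so localized that only the zero near $\lambda=0$ on the layer $m=0$ contributes. The derivative then becomes a multiple of the coding value $b(z)$, which is not $\nu$-a.e.\ constant. Throughout we work in the fundamental-domain coordinates $[t,z]$ with $t\in[-\tfrac12,\tfrac12)^d$; this is an equally valid Borel fundamental domain, and $P_{\mathrm{tor}}$ is still averaging over $z$. By Proposition~\ref{prop:equivariant-families}, for $t=(t_1,t')$,
\[
    \Phi_s([t,z])
    =
    \sum_{m\in\mathbb Z^{d-1}}\sum_{\lambda\in\Lambda}
    \varphi\bigl(p_{\lambda,s}(\widetilde m.z)-t_1,\;m-t'\bigr).
\]
Since $\varphi$ has compact support and $|p_{\lambda,s}-\lambda|<\delta$ uniformly, only finitely many pairs $(m,\lambda)$ contribute, uniformly in $(t,z)$ and in $|s|<s_0$.

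The first step, which I expect to be the main technical point, is $L^2$-differentiability at $s=0$. I would obtain uniform bounds on $\partial_s p_{\lambda,s}(z)$ and $\partial_s^2 p_{\lambda,s}(z)$ over $|s|\le s_0/2$ and all $z,\lambda$. One route is to write
\[
    p_{\lambda,s}(z)=\frac1{2\pi i}\int_{\partial D(\lambda,\delta)} w\,\frac{F_{z,s}'(w)}{F_{z,s}(w)}\,dw
\]
as in the proof of Proposition~\ref{prop:real-zero-family}. Here $|F_{z,s}|\ge\tfrac12|S|$ is bounded below on the circles uniformly, and $H_z,H_z'$ are uniformly bounded there by Lemma~\ref{lem:band-limited-orbit-coding}. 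So $s\mapsto p_{\lambda,s}(z)$ extends holomorphically to a complex disc $|s|<s_0$ with uniform bounds, and Cauchy estimates give the derivative bounds. Taylor's theorem applied to the finitely many smooth summands then shows that the difference quotients $(\Phi_s-\Phi_0)/s$ converge uniformly on $X_{\mathrm{ind}}$, hence in $L^2$, to
\[
    \Phi'_0([t,z])
    =
    -\sum_{m,\lambda}\partial_1\varphi(\lambda-t_1,\,m-t')\,
    \frac{H_{\widetilde m.z}(\lambda)}{S'(\lambda)}.
\]

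The second step is the choice of $\varphi$. Take $\varphi\in C_c^\infty(B_r(0))$ with $r<\min\{d_\Lambda/2,\tfrac12\}$ and $\partial_1\varphi\not\equiv0$. For $t\in[-\tfrac12,\tfrac12)^d$, the point $(\lambda-t_1,m-t')$ lies in $B_r(0)$ only if $m=0$ and $\lambda=0$, because the other lattice points are too far away. The sum therefore collapses, and using $H_z(0)=b(z)$ and $S'(0)=6\pi$ we get
\[
    \Phi'_0([t,z])=-\frac{1}{6\pi}\,\partial_1\varphi(-t)\,b(z).
\]
Applying $I-P_{\mathrm{tor}}$ gives
\[
    (I-P_{\mathrm{tor}})\Phi'_0=-\frac1{6\pi}\,\partial_1\varphi(-t)\Bigl(b(z)-\int_Zb\,d\nu\Bigr),
\]
whose squared $L^2$ norm is a positive multiple of $\|\partial_1\varphi\|_2^2\operatorname{Var}_\nu(b)$. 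Since $(Z,\nu)$ is nontrivial, $\nu$ is not a point mass. An injective $b$ that is $\nu$-a.e.\ constant would force $\nu$ to be concentrated on a single point, so $\operatorname{Var}_\nu(b)>0$ and the derivative is nonzero.

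Finally, $Q_{z,0}=\Lambda\times\mathbb Z^{d-1}$ does not depend on $z$, so $\Phi_0$ is a function of $t$ alone and $(I-P_{\mathrm{tor}})\Phi_0=0$. Hence
\[
    (I-P_{\mathrm{tor}})\Phi_s=s\,(I-P_{\mathrm{tor}})\Phi'_0+o(s)
\]
in $L^2$, and this is nonzero for all sufficiently small $s\neq0$.
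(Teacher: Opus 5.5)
Your strategy is the paper's: differentiate the zero branches via Lemma~\ref{lem:parameter-family}(5), and localize $\varphi$ so that on some region of $t$ only the branch at $(0,0)$ contributes. Then use $H_z(0)=b(z)$ with $\operatorname{Var}_\nu(b)>0$, and $\Phi_0\in\mathcal K_{\mathrm{tor}}$.

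The localization step, however, is false as written. The set $\Lambda\times\mathbb Z^{d-1}$ has four points per unit cell, since $\Lambda\cap[-\tfrac12,\tfrac12)=\{-\tfrac12,-\tfrac13,0,\tfrac13\}$. For instance, at $t=(\tfrac13,0,\ldots,0)$ the pair $(\lambda,m)=(\tfrac13,0)$ gives $(\lambda-t_1,m-t')=0\in B_r(0)$. Your bound $r<\tfrac12$ does force $m=0$, and $r<d_\Lambda/2$ allows at most one $\lambda$ for each $t$, but that $\lambda$ need not be $0$.

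So $\Phi_0'([t,z])=-\frac{1}{6\pi}\partial_1\varphi(-t)\,b(z)$ holds only for $t\in B_r(0)$. Near $(\lambda,0)$ with $\lambda\neq0$ the derivative is $-\partial_1\varphi\bigl((\lambda,0)-t\bigr)H_z(\lambda)/S'(\lambda)$. Your displayed identity for $(I-P_{\mathrm{tor}})\Phi_0'$, and your claim that its squared norm equals a multiple of $\|\partial_1\varphi\|_2^2\operatorname{Var}_\nu(b)$, are therefore incorrect. Changing the fundamental domain does not help: each of the four $\mathbb Z$-classes in $\Lambda$ contributes on a positive-measure part of every fundamental domain.

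The repair is what the paper does with its neighborhood $E$ of $t_0$: settle for an inequality. For $t\in B_r(0)\subset[-\tfrac12,\tfrac12)^d$, every $\lambda\neq0$ satisfies $|\lambda-t_1|>d_\Lambda-r>r$. So on $B_r(0)$ your collapsed formula is correct, and discarding the rest of the fundamental domain gives
\[
    \bigl\|(I-P_{\mathrm{tor}})\Phi_0'\bigr\|_2^2
    \geq
    \frac{1}{36\pi^2}\,\|\partial_1\varphi\|_2^2\,\operatorname{Var}_\nu(b)
    >0 .
\]
With this change the rest of your argument goes through.

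Your differentiability step differs from the paper's and is valid. You extend $s\mapsto p_{\lambda,s}(z)$ holomorphically to a complex disc through the argument-principle integral, using $|sH_z|\leq\frac12|S|$ on the circles. Cauchy estimates then bound $\partial_sp$ and $\partial_s^2p$ uniformly, which gives uniform convergence of the difference quotients. The paper instead bounds $\partial_sp$ by implicit differentiation, which needs $\partial_wF_{z,s}$ bounded away from zero at the zeros, and then combines pointwise convergence with dominated convergence. Your route avoids that extra lower bound and yields a slightly stronger mode of convergence.
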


\begin{proof}
Put $D:=[0,1)^d$. Choose $t_0$ in the interior of $D$. Since
$\Lambda\times\mathbb Z^{d-1}$ is uniformly discrete and contains
the origin, there are a neighborhood $E$ of $t_0$ and a function
$\varphi\in C_c^\infty(\mathbb R^d)$ such that, for every
$t\in E$, the only pair $(\lambda,m)\in
\Lambda\times\mathbb Z^{d-1}$ for which
\[
    (\lambda,m)-t\in\operatorname{supp}\varphi
\]
is $(\lambda,m)=(0,0)$, while
$\partial_1\varphi(-t)$ does not vanish identically on $E$.

The compact support of $\varphi$ and the uniform localization of
the zeros in Lemma~\ref{lem:parameter-family} imply that
$\Phi_s([t,z])$ is a sum over only finitely many zero branches,
uniformly for $t\in D$, $z\in Z$, and sufficiently small $s$.
After decreasing $s_0$, the derivatives $\partial_wF_{z,s}$ are
uniformly bounded away from zero at all relevant zeros. Implicit 
differentiation 
therefore gives uniform bounds on
$\partial_s p_{\lambda,s}(z)$. Together with the uniform bound on
the number of contributing branches and the boundedness of
$\nabla\varphi$, this gives a constant $C<\infty$ such that
\[
    \left|
        \frac{\Phi_s([t,z])-\Phi_0([t,z])}{s}
    \right|
    \leq C
\]
for every $0<|s|<s_0$ and almost every $[t,z]\in X_\textrm{ind}$. The
difference quotients converge pointwise to the finite sum below,
so dominated convergence gives convergence in $L^2(X_\textrm{ind})$. This gives
\[
    \left.
    \frac{d}{ds}\Phi_s([t,z])
    \right|_{s=0}
    =
    -
    \sum_{\substack{m\in\mathbb Z^{d-1}\\
                    \lambda\in\Lambda}}
    \frac{H_{\widetilde m.z}(\lambda)}{S'(\lambda)}
    \partial_1\varphi\bigl((\lambda,m)-t\bigr).
\]
For $t\in E$, only $(\lambda,m)=(0,0)$ contributes. Since
$H_z(0)=b(z)$,
\[
    \left.
    \frac{d}{ds}\Phi_s([t,z])
    \right|_{s=0}
    =
    -
    \frac{\partial_1\varphi(-t)}{S'(0)}\,b(z),
    \qquad
    t\in E.
\]

The projection $P_{\mathrm{tor}}$ integrates out the
$z$-variable. Hence
\[
\begin{aligned}
    \left\|
        (I-P_{\mathrm{tor}})
        \left.
        \frac{d}{ds}\Phi_s
        \right|_{s=0}
    \right\|_2^2
    &\geq
    \frac{\operatorname{Var}_\nu(b)}{|S'(0)|^2}
    \int_E
    |\partial_1\varphi(-t)|^2\,dt\\
    &>0.
\end{aligned}
\]
Here $\operatorname{Var}_\nu(b)>0$ because $b$ is injective and
$(Z,\nu)$ is not essentially a one-point space.

Finally, $Q_{z,0}$ is independent of $z$, so
$\Phi_0\in\mathcal K_{\mathrm{tor}}$. Therefore
\[
    (I-P_{\mathrm{tor}})\Phi_s
    =
    s
    (I-P_{\mathrm{tor}})
    \left.
    \frac{d}{ds}\Phi_s
    \right|_{s=0}
    +
    o(s)
\]
in $L^2(X_{\mathrm{ind}})$. The coefficient of $s$ is nonzero, proving the
last assertion.
\end{proof}

\begin{proof}[Proof of
Theorem~\ref{thm:weakly-mixing-Bartlett-component}]
Suppose first that $\Gamma=\mathbb Z^d$. Choose a sufficiently
small $s\neq0$ for which the conclusion of
Lemma~\ref{lem:nontorus-variation} holds.

Lemma~\ref{lem:parameter-family} and the arguments of
Section~\ref{sec:layered-construction} show that $Y_s$ is a
generating stealthy Delone cross-section. Write
\[
    \sigma_{Y_s}
    =
    \sigma_{Y_s}^{\mathrm{tor}}
    +
    \sigma_{Y_s}^Z
\]
for the decomposition supplied by
Proposition~\ref{prop:Bartlett-torus-base-decomposition}.

Since the constants belong to the torus subspace,
\[
    (I-P_{\mathrm{tor}})\Phi_s^\circ
    =
    (I-P_{\mathrm{tor}})\Phi_s
    \neq0.
\]
The spectral measure of this vector is therefore nonzero.
Moreover,
\[
    d\varsigma_{\Phi_s^\circ}
    =
    |\widehat\varphi|^2\,d\sigma_{Y_s}.
\]
The torus and base subspaces are orthogonal and invariant, and
their maximal spectral types are mutually singular. Projecting
onto the base subspace therefore gives
\[
    d\varsigma_{(I-P_{\mathrm{tor}})\Phi_s^\circ}
    =
    |\widehat\varphi|^2\,d\sigma_{Y_s}^Z.
\]
The measure on the left is nonzero, and hence
\[
    \sigma_{Y_s}^Z\neq0.
\]

Now let $\Gamma$ be arbitrary. Choose a linear isomorphism
$A:\mathbb R^d\to\mathbb R^d$ such that
$A\mathbb Z^d=\Gamma$, and regard $Z$ as a
$\mathbb Z^d$-space by setting
\[
    n.z:=(An).z.
\]
This action is again weakly mixing. Apply the preceding
construction and set
\[
    P_{z,s}:=AQ_{z,s}.
\]
The associated cross-section in
$\operatorname{Ind}_\Gamma^{\mathbb R^d}(Z)$ is generating,
stealthy, and Delone by the proof of
Theorem~\ref{thm:lattice-coding}.

The isomorphism induced by $A$ maps the torus subspace onto the
torus subspace and its orthogonal complement onto the base
subspace. If $\varphi$ is supplied by
Lemma~\ref{lem:nontorus-variation}, put
\[
    \varphi_A(x):=\varphi(A^{-1}x).
\]
The corresponding linear statistic for the transformed family
has a nonzero component in the base subspace. The preceding
spectral argument therefore gives
\[
    0\neq\sigma_Y^Z
    \ll
    \widetilde{\mathfrak m}_{Z,\nu}^0.
\]
Since the lifted maximal spectral type is atomless, $\sigma_Y$
is not pure point.
\end{proof}

\begin{proposition}[Persistence of a torus atom]
\label{prop:persistence-torus-atom}
For the cross-sections $Y_s \subset X_{\textrm{ind}}$ constructed above when
$\Gamma=\mathbb Z^d$,
\[
    \sigma_{Y_s}(\{e_1\})>0
\]
for every sufficiently small $s\neq0$. If
$\Gamma=A\mathbb Z^d$, the corresponding transformed
cross-section has a Bartlett atom at
$A^{-\mathsf T}e_1$.
\end{proposition}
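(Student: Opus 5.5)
The atom at $e_1$ is witnessed by a single eigenfunction. I will compute the projection of a linear statistic onto the torus character $e_{e_1}(t):=e^{2\pi i\langle t,e_1\rangle}$ and show that it is nonzero for small $s$.

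\textbf{Reduction to an inner product.} The character $e_{e_1}$, viewed as a function on $X_{\mathrm{ind}}$ via $[t,z]\mapsto e^{2\pi i t_1}$, is an eigenfunction in $\mathcal K_{\mathrm{tor}}^0$. For $f\in\mathcal S(\mathbb R^d)$, the spectral measure of $(N_f^{Y_s})^\circ$ satisfies
\[
    |\widehat f(e_1)|^2\sigma_{Y_s}(\{e_1\})
    =
    \varsigma_{(N_f^{Y_s})^\circ}(\{e_1\}),
\]
and the right-hand side equals $|\langle N_f^{Y_s},e_{\pm e_1}\rangle|^2$. The sign depends on the convention $U_tF=F((-t).\,\cdot)$ and will be fixed by direct computation. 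It therefore suffices to find $f$ with $\widehat f(e_1)\neq0$ and
\[
    \int_{[0,1)^d}\int_Z N_f^{Y_s}([t,z])\,e^{-2\pi i t_1}\,d\nu(z)\,dt\neq0 .
\]

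\textbf{Computation at $s=0$.} The configuration $Q_{z,0}=\Lambda\times\mathbb Z^{d-1}$ does not depend on $z$, so the integral reduces to a Fourier coefficient of the periodization $t\mapsto\sum_{q\in\Lambda\times\mathbb Z^{d-1}}f(q-t)$. By Poisson summation this coefficient is, up to a unimodular factor,
\[
    \widehat f(\pm e_1)\sum_{\lambda\in\Lambda_0}e^{\pm2\pi i\lambda}
    =
    \widehat f(\pm e_1)\bigl(1+e^{\pm2\pi i/3}+e^{\pm4\pi i/3}-1\bigr)
    =
    -\widehat f(\pm e_1).
\]
This is nonzero for $f$ with $\widehat f$ nowhere vanishing, such as a Gaussian. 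This is where the marker frequency matters: $\tfrac13\mathbb Z$ alone would give zero coefficient at $e_1$, and the points $\tfrac12+\mathbb Z$ break the cancellation.

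\textbf{Continuity in $s$.} The inner product $\langle N_f^{Y_s},e_{e_1}\rangle$ should depend continuously on $s$ at $0$. To see this, use Lemma~\ref{lem:parameter-family}: the zeros satisfy $|p_{\lambda,s}(z)-\lambda|\le C|s|$ uniformly in $\lambda$ and $z$, via the uniform derivative bounds from implicit differentiation used in Lemma~\ref{lem:nontorus-variation}. The sets are uniformly separated and $f$ is Schwartz, so
\[
    |N_f^{Y_s}([t,z])-N_f^{Y_0}([t,z])|
    \le
    C|s|\sum_{q\in Q_{z,0}}\sup_{|h|\le\delta}|\nabla f(q+h-t)|
    \le
    C'|s|,
\]
uniformly in $t$ and $z$. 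Consequently the Fourier coefficient at $s$ differs from $-\widehat f(\pm e_1)\neq0$ by $O(s)$, which proves $\sigma_{Y_s}(\{e_1\})>0$ for small $s$. Stealthiness and the Delone property from Lemma~\ref{lem:parameter-family} ensure that all statistics are in $L^2$.

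\textbf{Arbitrary lattices.} For $\Gamma=A\mathbb Z^d$, transport by $A$ as in the proof of Theorem~\ref{thm:lattice-coding}. The linear statistic $N_{f}$ for $P_{z,s}=AQ_{z,s}$ corresponds to $N_{f\circ A}$ for $Q_{z,s}$, and the character $e^{2\pi i\langle x,\xi\rangle}$ corresponds to $e^{2\pi i\langle y,A^{\mathsf T}\xi\rangle}$. An atom at $e_1$ therefore becomes an atom at $A^{-\mathsf T}e_1$.

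\textbf{Main obstacle.} I expect the only delicate points to be keeping track of the sign and normalization conventions in the Poisson computation, and ensuring the uniform $O(s)$ bound despite the infinitely many zeros. Rapid decay of $f$, together with uniform separation, handles the latter.
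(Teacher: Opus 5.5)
Your proposal is correct and is essentially the paper's argument. You pair the linear statistic with the torus character at $e_1$, which is unaffected by centering. You then get the coefficient $-\widehat f(\pm e_1)$ at $s=0$ from $\sum_{\lambda\in\Lambda_0}e^{\pm2\pi i\lambda}=-1$, perturb in $s$, and transport the atom to $A^{-\mathsf T}e_1$.

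The only real difference is the continuity step. The paper uses equivariance of the zero branches to write the coefficient exactly as $\widehat\varphi(e_1)\overline{c(s)}$, so only the four branches over $\Lambda_0$ need to vary continuously. You instead use a uniform $O(s)$ sup-norm bound on $N_f^{Y_s}-N_f^{Y_0}$, and both routes work.

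One small correction: in general you only have $\varsigma_{(N_f^{Y_s})^\circ}(\{e_1\})\geq|\langle N_f^{Y_s},e\rangle|^2$ for a unit eigenfunction $e$, not equality. This inequality is all the argument needs.
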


\begin{proof}
Recall that
\[
    \Lambda_0
    :=
    \Lambda\cap[0,1)
    =
    \left\{
        0,\frac13,\frac12,\frac23
    \right\}.
\]
Define
\[
    c(s)
    :=
    \int_Z
    \sum_{\lambda\in\Lambda_0}
    e^{-2\pi i p_{\lambda,s}(z)}
    \,d\nu(z).
\]

Let $\varphi\in\mathcal S(\mathbb R^d)$. The equivariance of the
zero branches gives
\[
    p_{\lambda+n,s}(z)
    =
    n+p_{\lambda,s}(T^nz),
    \qquad
    \lambda\in\Lambda_0,\quad n\in\mathbb Z.
\]
Using this identity and the invariance of $\nu$, we obtain, for
$t\in[0,1)^d$,
\[
\begin{aligned}
    (P_{\mathrm{tor}}N_\varphi^{Y_s})(t)
    &=
    \sum_{\lambda\in\Lambda_0}
    \int_Z
    \sum_{(n,m)\in\mathbb Z\times\mathbb Z^{d-1}}
    \varphi\bigl(
        (n+p_{\lambda,s}(z),m)-t
    \bigr)
    \,d\nu(z).
\end{aligned}
\]
For $k\in\mathbb Z^d$, write
\[
    E_k(t):=e^{-2\pi i\langle k,t\rangle}.
\]
A change of variables gives
\[
\begin{aligned}
    \left\langle
        P_{\mathrm{tor}}N_\varphi^{Y_s},
        E_k
    \right\rangle
    &=
    \widehat\varphi(k)
    \int_Z
    \sum_{\lambda\in\Lambda_0}
    e^{2\pi i k_1p_{\lambda,s}(z)}
    \,d\nu(z).
\end{aligned}
\]
In particular,
\[
    \left\langle
        P_{\mathrm{tor}}N_\varphi^{Y_s},
        E_{e_1}
    \right\rangle
    =
    \widehat\varphi(e_1)\,\overline{c(s)}.
\]

At $s=0$,
\[
\begin{aligned}
    c(0)
    &=
    \sum_{\lambda\in\Lambda_0}
    e^{-2\pi i\lambda}\\
    &=-1.
\end{aligned}
\]
The four zero branches depend continuously on $s$, so $c(s)$
remains nonzero for every sufficiently small $s$.

Choose $\varphi$ such that $\widehat\varphi(e_1)\neq0$. Since
$E_{e_1}$ belongs to the torus subspace,
\[
    \left\langle
        N_\varphi^{Y_s},
        E_{e_1}
    \right\rangle
    =
    \left\langle
        P_{\mathrm{tor}}N_\varphi^{Y_s},
        E_{e_1}
    \right\rangle
    \neq0.
\]
Moreover, $E_{e_1}$ is orthogonal to the constants, so centering
does not change this coefficient. Hence the spectral measure of
$(N_\varphi^{Y_s})^\circ$ has a nonzero atom at $e_1$. By the
Bartlett--Koopman identity,
\[
    d\varsigma_{(N_\varphi^{Y_s})^\circ}
    =
    |\widehat\varphi|^2\,d\sigma_{Y_s},
\]
and therefore
\[
    \sigma_{Y_s}(\{e_1\})>0.
\]

Under the linear change of variables
$\Gamma=A\mathbb Z^d$, the frequency $e_1$ is carried to
$A^{-\mathsf T}e_1$, which proves the final assertion.
\end{proof}

\begin{remark}[Finite local complexity]
\label{rem:no-finite-local-complexity}
In dimension one, the nonperiodic realizations produced above do
not have finite local complexity. Indeed, choose a nonzero
$f\in\mathcal S(\mathbb R)$ such that
\[
    \operatorname{supp}\widehat f\subset(-1,1)
    \qquad\text{and}\qquad
    \widehat f(0)\neq0.
\]
The realization-level Fourier identity gives
\[
    f*\delta_{L_{z,s}}
    =
    4\widehat f(0).
\]
Thus $f$ tiles the line at a nonzero level with translation set
$L_{z,s}$. If $L_{z,s}$ had finite local complexity, the
periodicity theorem of Kolountzakis and Lev
\cite{KL16} would imply that $L_{z,s}$ is periodic.

If $L_{z,s}=L_{z,s}-u$ for some $u\neq0$, the orbit-recovery
property gives
\[
    u\in\mathbb Z
    \qquad\text{and}\qquad
    T^uz=z.
\]
For a nontrivial weakly mixing base, this occurs only on a null
set. Hence almost every one-dimensional realization has infinite
local complexity.
\end{remark}

\begin{question}[Cloaking]
\label{que:torus-cloaking}
Let $\Gamma\leq\mathbb R^d$ be a lattice and let $(Z,\nu)$ be a
nontrivial weakly mixing probability-preserving Borel
$\Gamma$-space. Does
$\smash{\operatorname{Ind}_{\Gamma}^{\mathbb R^d}(Z)}$ admit a generating
stealthy Delone cross-section $Y$ such that
\[
    \sigma_Y(\Gamma^*\setminus\{0\})=0?
\]
Equivalently, can the eigenvalues arising from the torus factor
be invisible to all centered linear statistics?
\end{question}

In diffraction terminology, the question asks whether all
nonzero torus eigenvalues can be extinctions for the chosen
cross-section.


\subsection{Bernoulli bases and singular spectral types}
\label{subsec:examples-continuous-spectral-types}

We first compare the preceding construction with two standard
point processes obtained from Bernoulli lattice systems. Let
\[
    \rho_\Gamma
    :=
    \operatorname{covol}(\Gamma)^{-1},
    \qquad
    (\gamma.z)_\alpha:=z_{\alpha+\gamma}
    \quad
    (\alpha,\gamma\in\Gamma).
\]

Consider first the Bernoulli space
$Z=\{0,1\}^{\Gamma}$ with
\[
    \nu\{z:z_0=1\}=p,
    \qquad
    0<p<1,
\]
and define the thinned lattice
\[
    P_z^{\mathrm{thin}}
    :=
    \{\gamma\in\Gamma:z_\gamma=1\}.
\]
After restriction to an invariant conull set, this equivariant
family gives a generating separated cross-section: the set
$P_z^{\mathrm{thin}}$ determines the Bernoulli configuration
$z$. It is almost surely not relatively dense, since the
Bernoulli field contains empty lattice boxes of arbitrarily large
diameter.

The independence of the Bernoulli variables gives the diagonal
covariance term, while Poisson summation gives the contribution
from the mean lattice. Thus
\[
    \sigma_{\mathrm{thin}}
    =
    \rho_\Gamma p(1-p)\lambda_d
    +
    \rho_\Gamma^2p^2
    \sum_{\kappa\in\Gamma^*\setminus\{0\}}
    \delta_\kappa.
\]
In particular, Bernoulli thinning is neither hyperuniform nor
stealthy.

For independent lattice displacements, let $\vartheta$ be a
probability measure supported on a bounded Borel neighborhood
$K\subset\mathbb R^d$ of the origin, chosen sufficiently small
that the translates $\gamma+K$, $\gamma\in\Gamma$, are uniformly
separated. On the Bernoulli space $Z=K^\Gamma$, define
\[
    P_z^{\mathrm{disp}}
    :=
    \{\gamma+z_\gamma:\gamma\in\Gamma\}.
\]
After restriction to an invariant conull set, the associated
cross-section is generating and uniformly Delone. Indeed, each
point can be assigned to its unique lattice cell, and the
resulting displacement field determines $z$.

Let
\[
    \phi(\xi)
    :=
    \int_{\mathbb R^d}
    e^{-2\pi i\langle u,\xi\rangle}\,d\vartheta(u)
\]
be the characteristic function of the displacement distribution.
The Bartlett spectrum is
\[
    \sigma_{\mathrm{disp}}
    =
    \rho_\Gamma
    \bigl(1-|\phi(\xi)|^2\bigr)\,d\lambda_d(\xi)
    +
    \rho_\Gamma^2
    \sum_{\kappa\in\Gamma^*\setminus\{0\}}
    |\phi(\kappa)|^2\delta_\kappa.
\]
Since $\phi(0)=1$ and $\phi$ is continuous,
\[
    1-|\phi(\xi)|^2\longrightarrow0
    \qquad
    (\xi\longrightarrow0),
\]
so the process is hyperuniform. It is not stealthy unless
$\vartheta$ is supported on a single point. Indeed, if
$|\phi|=1$ on a neighborhood of the origin, then
\[
    e^{-2\pi i\langle u-v,\xi\rangle}=1
\]
for $\vartheta\otimes\vartheta$-almost every $(u,v)$ and every
$\xi$ in that neighborhood. Varying $\xi$ then forces $u=v$
almost surely.

Dependent perturbations can behave differently. Recent
stationary-increment perturbations erase Bragg peaks and yield
hyperuniform point processes in dimension one
\cite{TLS26}; the constructions there do not produce an exact
open spectral gap.

These examples indicate what is different about our construction.
The standard lattice cross-section is Delone and stealthy but
forgets the Bernoulli base. Thinning is generating but not
relatively dense, while independent displacement is generating
and Delone but has no open spectral gap. By contrast, already in
dimension one our construction depends on the full orbit through
\[
    H_z(w)
    =
    \sum_{n\in\mathbb Z}
    b(T^nz)\psi(w-n).
\]
This nonlocal dependence allows generation, uniform Delone
geometry, and an exact open Fourier gap to hold simultaneously.

For Bernoulli bases, the continuous component obtained in
Theorem~\ref{thm:weakly-mixing-Bartlett-component} is absolutely
continuous with respect to Lebesgue measure.

\begin{corollary}
\label{cor:Bernoulli-Bartlett-component}
Let $\Gamma\leq\mathbb R^d$ be a lattice and let $(Z,\nu)$ be a
nontrivial Bernoulli $\Gamma$-space. Then the induced space
$\smash{\operatorname{Ind}_\Gamma^{\mathbb R^d}(Z)}$ admits a generating
stealthy Delone cross-section $Y$ whose Bartlett spectrum has the
form
\[
    \sigma_Y
    =
    \sigma_Y^{\mathrm{tor}}
    +
    h\,d\lambda_d,
    \qquad
    \operatorname{supp}\sigma_Y^{\mathrm{tor}}
    \subset\Gamma^*\setminus\{0\},
\]
where $h\in L^1_{\mathrm{loc}}(\mathbb R^d)$ is nonnegative and
not identically zero. Moreover, $h$ vanishes almost everywhere on
a neighborhood of the origin.
\end{corollary}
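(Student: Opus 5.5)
The plan is to apply Theorem~\ref{thm:weakly-mixing-Bartlett-component} directly and then identify the spectral type of the base component for a Bernoulli action. A nontrivial Bernoulli $\Gamma$-space is mixing, hence weakly mixing. So that theorem supplies a generating stealthy Delone cross-section $Y$ with
\[
    \sigma_Y=\sigma_Y^{\mathrm{tor}}+\sigma_Y^Z,
    \qquad
    \operatorname{supp}\sigma_Y^{\mathrm{tor}}\subset\Gamma^*\setminus\{0\},
    \qquad
    0\neq\sigma_Y^Z\ll\widetilde{\mathfrak m}_{Z,\nu}^0 .
\]
It then remains to show that $\widetilde{\mathfrak m}_{Z,\nu}^0$ is absolutely continuous with respect to $\lambda_d$, and that the resulting density vanishes near the origin.

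For the first point, I will use the standard fact that the Koopman representation of a Bernoulli $\Gamma$-action on $L^2_0(Z,\nu)$ has countable Lebesgue spectrum. Concretely, take an orthonormal basis $\{1\}\cup\{e_i\}$ of $L^2$ of the base measure. The products $\prod_{\gamma\in F} e_{i_\gamma}\circ\pi_\gamma$ over finite nonempty $F\subset\Gamma$ form an orthonormal basis of $L^2_0(Z,\nu)$. The group $\Gamma$ permutes these basis vectors freely, because a nonempty finite set has trivial stabilizer under a torsion-free group. Hence the representation is a multiple of the regular representation of $\Gamma$, and its maximal spectral type is Haar measure on $\widehat\Gamma\cong\mathbb R^d/\Gamma^*$. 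The $\Gamma^*$-periodic lift of Haar measure, via the section $\mathfrak s_\Gamma$, is a constant multiple of $\lambda_d$. Thus $\sigma_Y^Z\ll\lambda_d$, and we may write $\sigma_Y^Z=h\,d\lambda_d$ with $h\geq0$. Since $\sigma_Y$ is Radon, $h\in L^1_{\mathrm{loc}}$, and $h\not\equiv0$ because $\sigma_Y^Z\neq0$.

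For the vanishing statement, stealthiness gives an open neighborhood $\Omega$ of $0$ (namely $\Omega_A$) with $\sigma_Y(\Omega)=0$. Both summands are nonnegative, so $\sigma_Y^Z(\Omega)=0$, and therefore $h=0$ almost everywhere on $\Omega$.

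The only genuinely nonformal step is the Lebesgue-spectrum identification, together with the check that the periodic lift of Haar measure on $\widehat\Gamma$ is proportional to Lebesgue measure on $\mathbb R^d$. For the latter, Haar measure pulled back to the fundamental domain $Q$ via $\mathfrak s_\Gamma$ is normalized Lebesgue measure on $Q$, and the $\Gamma^*$-translates of $Q$ tile $\mathbb R^d$. Everything else follows from earlier results.
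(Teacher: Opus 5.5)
Your proposal is correct and follows essentially the same route as the paper. Both apply Theorem~\ref{thm:weakly-mixing-Bartlett-component}, show that the Bernoulli action on $L^2_0(Z,\nu)$ is a multiple of the regular representation via free orbits of finite-support tensors, and note that the periodic lift of Haar measure is equivalent to $\lambda_d$. Both then read off $h$ and its vanishing near the origin from stealthiness.
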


\begin{proof}
Choose an orthonormal basis of the one-coordinate $L^2$-space
which contains the constant function. The tensors involving only
finitely many nonconstant coordinates form an orthonormal basis
of the Bernoulli space. Translation by $\Gamma$ permutes these
tensors.

Every nonconstant finite-support tensor has a free
$\Gamma$-orbit. Indeed, a nonzero element of the torsion-free
group $\Gamma$ cannot preserve a nonempty finite set of
coordinates. Each such orbit therefore spans a copy of the
regular representation of $\Gamma$, and the Koopman
representation on $L^2_0(Z,\nu)$ is a direct sum of copies of the
regular representation.

Its maximal spectral type is Haar measure on
$\widehat\Gamma$. The periodic lift of Haar measure is equivalent
to $\lambda_d$, so
Theorem~\ref{thm:weakly-mixing-Bartlett-component} gives
\[
    0\neq\sigma_Y^Z\ll\lambda_d.
\]
Writing $\sigma_Y^Z=h\,d\lambda_d$ proves the asserted
decomposition. Since the whole Bartlett spectrum vanishes on a
neighborhood of the origin, the same is true of $h$ there.
\end{proof}

The same theorem produces singular-continuous Bartlett components
when the base has singular maximal spectral type.

\begin{corollary}
\label{cor:singular-continuous-Bartlett-component}
Under the assumptions of
Theorem~\ref{thm:weakly-mixing-Bartlett-component}, suppose that
$\smash{\mathfrak m_{Z,\nu}^0}$ is singular with respect to Haar measure
on $\smash{\widehat\Gamma}$. Then the cross-section may be chosen so that
$\smash{\sigma_Y^Z}$ is a nonzero singular-continuous measure on
$\mathbb R^d$.
\end{corollary}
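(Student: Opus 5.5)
Take the generating stealthy Delone cross-section $Y$ supplied by Theorem~\ref{thm:weakly-mixing-Bartlett-component}. Nothing about the construction needs to change; only the type of the base component has to be identified. A singular maximal spectral type is automatically atomless here, since weak mixing excludes eigenvalues. Hence all that remains is to check that $\sigma_Y^Z$ is singular with respect to $\lambda_d$; it is nonzero and atomless by that theorem and by Proposition~\ref{prop:Bartlett-torus-base-decomposition}.

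First, $\sigma_Y^Z\ll\widetilde{\mathfrak m}_{Z,\nu}^0$ by Theorem~\ref{thm:weakly-mixing-Bartlett-component}. Second, I show that the periodic lift of a measure singular with respect to Haar measure is singular with respect to Lebesgue measure.

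Let $\mathfrak m_{Z,\nu}^0$ be concentrated on a Borel set $B\subset\widehat\Gamma$ with Haar measure zero. Set $\widetilde B:=\pi_\Gamma^{-1}(B)$. This set is Borel and $\Gamma^*$-periodic, and the definition of the lift shows that $\widetilde{\mathfrak m}_{Z,\nu}^0$ is concentrated on $\widetilde B$. Indeed, for $f\in C_c$ supported in the complement, every summand $f(\mathfrak s_\Gamma(\omega)+\gamma^*)$ vanishes for $\omega\in B$; one then extends from $C_c$ functions to indicators by monotone class.

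Under $\pi_\Gamma$, Lebesgue measure restricted to the fundamental domain $Q$ pushes forward to a multiple of Haar measure on $\widehat\Gamma$. Hence $\lambda_d(\widetilde B\cap(Q+\gamma^*))=0$ for every $\gamma^*$, and so $\lambda_d(\widetilde B)=0$ by countable additivity. Therefore $\widetilde{\mathfrak m}_{Z,\nu}^0\perp\lambda_d$, and consequently $\sigma_Y^Z\perp\lambda_d$.

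Combined with atomlessness, this shows that $\sigma_Y^Z$ is a nonzero singular-continuous measure. The weakly mixing assumption of Theorem~\ref{thm:weakly-mixing-Bartlett-component} is in force by hypothesis.

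The only step that needs care is the singularity of the lift, namely the identification of Haar measure with the pushforward of normalized Lebesgue measure on $Q$ and the monotone class extension. Everything else follows directly from earlier results. For concreteness one may also note that such bases exist, for example Gaussian $\Gamma$-actions with singular continuous spectral measure. This is not needed for the corollary as stated.
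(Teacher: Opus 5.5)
Your proposal is correct and follows the paper's argument. You take the cross-section from Theorem~\ref{thm:weakly-mixing-Bartlett-component} and use $0\neq\sigma_Y^Z\ll\widetilde{\mathfrak m}_{Z,\nu}^0$ together with the fact that this lift is atomless and $\lambda_d$-singular. You also verify that the periodic lift of a Haar-singular measure is Lebesgue-singular, via $\pi_\Gamma^{-1}(B)$ and the pushforward of $\lambda_d|_Q$, which the paper simply asserts.
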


\begin{proof}
The periodic lift
$\widetilde{\mathfrak m}_{Z,\nu}^0$ is singular with respect to
$\lambda_d$. It is also atomless because the base action is
weakly mixing. Since
\[
    0\neq\sigma_Y^Z
    \ll
    \widetilde{\mathfrak m}_{Z,\nu}^0,
\]
the measure $\sigma_Y^Z$ is nonzero, singular, and atomless.
\end{proof}

\begin{example}[Gaussian bases]
Nontrivial weakly mixing $\Gamma$-actions with singular maximal
spectral type exist for every lattice $\Gamma$. Choose an
identification
\[
    \Gamma\cong\mathbb Z^d,
    \qquad
    \widehat\Gamma\cong\mathbb T^d.
\]
It is enough to construct a symmetric continuous probability
measure $\theta$ on $\mathbb T^d$ such that every convolution
power $\theta^{*n}$ is singular with respect to Haar measure.

Let $E\subset\mathbb T$ be the image of
\[
    \left\{
        \sum_{k\geq1}\varepsilon_k2^{-k!}
        :
        \varepsilon_k\in\{0,1\}
    \right\}.
\]
The rapid separation of the scales $2^{-k!}$ shows that $E$ is
compact and perfect and has upper box dimension zero. Indeed, if
\[
    2^{-(N+1)!}\leq r<2^{-N!},
\]
then fixing the first $N+1$ digits covers $E$ by at most
$2^{N+1}$ intervals of length $2r$. Hence
\[
    \overline{\dim}_{\mathrm B}E
    \leq
    \limsup_{N\to\infty}
    \frac{(N+1)\log 2}{N!\log 2}
    =
    0.
\]
Choose a
continuous probability measure $\nu_0$ supported on $E$, let
\[
    \iota:\mathbb T\longrightarrow\mathbb T^d,
    \qquad
    \iota(x):=(x,0,\ldots,0),
\]
and define
\[
    \theta
    :=
    \frac12\iota_*\nu_0
    +
    \frac12(-\iota)_*\nu_0.
\]
Then $\theta$ is symmetric and atomless.

Upper box dimension is subadditive under finite Minkowski sums.
Consequently, for every $n\geq1$, the support of
$\theta^{*n}$ has upper box dimension zero and hence Haar measure
zero. Thus
\[
    \theta^{*n}\perp m_{\mathbb T^d}
    \qquad
    (n\geq1).
\]
Since convolution with an atomless measure cannot create atoms,
every $\theta^{*n}$ is also atomless.

Let $(Z,\nu)$ be the Gaussian $\Gamma$-system whose first chaos
has spectral measure $\theta$. Its maximal spectral type on
$L^2_0(Z,\nu)$ is represented by
\[
    \sum_{n=1}^{\infty}
    \frac{1}{n!}\theta^{*n},
\]
and the action is weakly mixing because $\theta$ is atomless
\cite[Section~6.4.1]{KT06}. The displayed measure is atomless
and is supported on a countable union of Haar-null sets; it is
therefore singular with respect to Haar measure. Hence
Corollary~\ref{cor:singular-continuous-Bartlett-component}
produces a generating stealthy Delone cross-section with a
nonzero singular-continuous Bartlett component.
\end{example}

\begin{question}
\label{que:Bernoulli-density}
Can the Bernoulli construction be chosen so that its absolutely
continuous density satisfies
\[
    h(\xi)>0
    \qquad
    \text{for almost every $\xi$ outside the spectral gap?}
\]
\end{question}


\section{Density bounds for spectral gaps}
\label{sec:density-bounds}

A spectral gap imposes a lower bound on the intensity of a
uniformly separated point process. The vanishing of the Bartlett
spectrum first turns almost every configuration into an exact
sampling set for suitable Paley--Wiener spaces. Landau's necessary
density condition then gives the lower bound.

For an open neighborhood $\Omega$ of the origin, set
\[
    \mathcal D(\Omega)
    :=
    \sup\left\{
        \lambda_d(K):
        K\subset\mathbb R^d\ \text{compact and }K-K\subset\Omega
    \right\}.
\]

\begin{theorem}[Density bound]
\label{thm:density-bound}
Let $\eta$ be a translation-invariant point process of
intensity $\rho$. Suppose that, for some $r>0$, the process is
supported on $r$-separated configurations. Then
\[
    \sigma_\eta(\Omega)=0
    \qquad\Longrightarrow\qquad
    \rho\geq\mathcal D(\Omega)
\]
for every open neighborhood $\Omega$ of the origin. In particular,
if $C\subset\Omega$ is a symmetric convex body, then
\[
    \rho\geq2^{-d}\lambda_d(C).
\]
\end{theorem}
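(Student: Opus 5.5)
Following the outline in the introduction, the plan has two parts: turn the spectral gap into an exact sampling identity valid for almost every configuration, and then apply Landau's necessary density theorem.

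\emph{Step 1 (realization-level identity).} Fix $f\in\mathcal S(\mathbb R^d)$ with $\operatorname{supp}\widehat f\subset\Omega$. The Bartlett identity gives $\operatorname{Var}_\eta(N_f)=\int|\widehat f|^2\,d\sigma_\eta=0$. Hence
\[
    N_f(\omega)=\rho\int_{\mathbb R^d}f\,d\lambda_d
\]
for $\eta$-almost every $\omega$. I will use local second moments, which follow from $r$-separation. Translating $f$ preserves the support of $\widehat f$. Next take a countable family of such $f$, dense in the relevant Schwartz topology, together with rational translates, and use continuity in $f$; the continuity comes from $r$-separation, which bounds $\sum_p|f(p)|$ by Schwartz seminorms. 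This yields a conull set of configurations $\omega=\delta_P$ on which
\[
    \sum_{p\in P}f(p-x)=\rho\int f
\]
for all $x$ and all $f$ with $\operatorname{supp}\widehat f\subset\Omega$. Equivalently, $\widehat{\delta_P}=\rho\delta_0$ on $\Omega$.

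\emph{Step 2 (sampling identity).} Let $K$ be compact with $K-K\subset\Omega$. For $g,h\in PW_K$ (bounded-type functions whose Fourier transform lies in $L^2(K)$), the product $g\overline h$ has Fourier transform supported in $K-K\subset\Omega$. After a mollification step to reach Schwartz class, Step 1 gives
\[
    \sum_{p\in P}g(p)\overline{h(p)}=\rho\int g\overline h\,d\lambda_d.
\]
For the mollification I would shrink $K$ slightly inside a compact $K'$ with $K'-K'\subset\Omega$, or approximate $g$ by $g\cdot\check\chi$ with $\check\chi$ band-limited; the error tends to zero because $\sum_p|g(p)|^2\lesssim\|g\|_2^2$ by separation (Plancherel–Pólya). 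Taking $h=g$ shows that $\rho^{-1/2}\delta_P$ is an exact (tight) frame, so $P$ is a sampling set for $PW_K$ with frame bounds $A=B=\rho$. Here $\rho>0$ is needed; if $\rho=0$, the identity applied to a nonzero $g$ contradicts $\sum|g(p)|^2\ge0$ only weakly, so instead I handle it directly: $\rho=0$ would force $P=\varnothing$ almost surely, and the identity with $g$ nonzero at a point of a nonempty $P$ is impossible. In that case the theorem must be read with nonempty configurations. I expect this to be a minor point.

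\emph{Step 3 (Landau).} $P$ is uniformly separated and a sampling set for $PW_K$, so Landau's necessary density theorem gives
\[
    D^-(P)\ge\lambda_d(K).
\]
It remains to compare $D^-(P)$ with $\rho$. By the ergodic theorem, in its nonergodic form for the invariant $\sigma$-field, the asymptotic density $\lim\#(P\cap B_R(x))/\lambda_d(B_R)$ along centered balls equals $\mathbb E[\rho\mid\mathcal I]$ almost surely. The lower uniform density is at most this limit. Alternatively, Step 1 with a $\widehat f$ supported near $0$ already forces the density of $P$ to equal exactly $\rho$ almost surely. Either way $\rho\ge\lambda_d(K)$, and taking the supremum over $K$ gives $\rho\ge\mathcal D(\Omega)$. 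For a symmetric convex $C\subset\Omega$, choose $K=\tfrac12\overline{C'}$ with $C'$ a slightly smaller convex body, so that $K-K=\overline{C'}\subset\Omega$ and $\lambda_d(K)\to2^{-d}\lambda_d(C)$.

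The main obstacle is Step 2: justifying the passage from Schwartz test functions to products of Paley–Wiener functions while keeping the Fourier support strictly inside the open set $\Omega$, and confirming that the resulting exact frame identity meets the hypotheses of Landau's theorem as stated for compact spectra.
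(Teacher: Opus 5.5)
Your proposal is correct, but it takes a somewhat different route from the paper.

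\textbf{How the paper argues.} It first reduces to the ergodic case through the ergodic decomposition. The identity $\sigma_\eta=\int\sigma_\alpha\,d\pi+\operatorname{Var}_\pi(\rho_\alpha)\delta_0$ shows that every component has intensity $\rho$ and gap $\Omega$. It then proves exact sampling only for compact $K$ with $\lambda_d(\partial K)=0$. This comes from zero variance of $N_{|f|^2}$ for $\widehat f\in C_c^\infty(\operatorname{int}K)$, a class dense in $PW_K$ because the boundary is null. Wiener's ergodic theorem identifies the density of a typical realization with $\rho$. Only at the end is the boundary hypothesis removed, by passing to $K+\overline{B_r(0)}$.

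\textbf{How your route differs.} You prove the realization-level identity $\widehat{\delta}_P=\rho\delta_0$ on $\Omega$ first; the paper only records this in Remark~\ref{rem:realization-Fourier-identity}. You then get exact sampling on $PW_K$ for arbitrary compact $K$ by approximating inside a slightly larger band $K'$ with $K'-K'\subset\Omega$, and applying Plancherel--P\'olya on $PW_{K'}$.

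For the density comparison, integrating the nonergodic ergodic-theorem limit over $\eta$ makes the ergodic decomposition unnecessary. Your alternative is more elementary still. Take $f=|h|^2\geq0$ with $\widehat h$ supported in a small ball. The identity $\sum_{p\in P}f(p-x)=\rho\int f$ holds for all $x$. Using separation and the decay of $f$ to control the boundary layer, this gives $\#(P\cap B_R(x))=\rho\lambda_d(B_R(0))+o(R^d)$ uniformly in $x$, so $D^-(P)=\rho$.

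\textbf{What each route buys.} Yours yields a deterministic statement with no ergodic theory: every nonempty uniformly discrete $P$ with $\widehat{\delta}_P=\rho\delta_0$ on $\Omega$ satisfies $\rho\geq\mathcal D(\Omega)$. The paper's route stays in the probabilistic setting and shows along the way that the gap and the intensity pass to every ergodic component.

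Two small points remain.

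\emph{Landau's hypothesis.} Landau's theorem as quoted in the paper (Theorem~\ref{thm:Landau}) assumes $\lambda_d(\partial K)=0$. So in Step~3 do not apply it to $K$ itself. Apply it instead to a compact $K'\supset K$ with null boundary and $K'-K'\subset\Omega$, for instance a finite union of small closed cubes each meeting $K$ and together covering it. Your Step~2 already gives exact sampling on $PW_{K'}$, and then $\rho\geq\lambda_d(K')\geq\lambda_d(K)$.

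\emph{The case $\rho=0$.} Your remark here is right. The process concentrated on the empty configuration satisfies all the hypotheses, has $\sigma_\eta=0$, and violates the conclusion. So the statement needs $\rho>0$. The paper's proof uses this tacitly when it calls the exact-sampling configurations sampling sets, since the lower frame bound there is $\rho$.
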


\subsection{From a spectral gap to exact sampling}
\label{subsec:gap-to-sampling}

For a compact set $K\subset\mathbb R^d$, the associated
Paley--Wiener space is
\[
    PW_K
    :=
    \left\{
        f\in L^2(\mathbb R^d):
        \operatorname{supp}\widehat f\subset K
    \right\}.
\]
Since $\widehat f\in L^2(K)\subset L^1(K)$, every $f\in PW_K$
has a continuous representative, so its values on a discrete set
are well defined.

A set $P\subset\mathbb R^d$ is called a \emph{sampling set} for
$PW_K$ if there are constants $0<A\leq B<\infty$ such that
\[
    A\|f\|_2^2
    \leq
    \sum_{p\in P}|f(p)|^2
    \leq
    B\|f\|_2^2
    \qquad
    (f\in PW_K).
\]
Thus the samples $(f(p))_{p\in P}$ determine $f$ and do so
stably: the lower bound prevents two different functions from
having nearly identical samples, while the upper bound ensures
that the sampling map
\[
    f\longmapsto (f(p))_{p\in P}
\]
is bounded from $PW_K$ to $\ell^2(P)$. This is stronger than
merely requiring $P$ to be a uniqueness set, which only says that
$f|_P=0$ implies $f=0$. We refer to
\cite[Sections~2.3, 2.5, and~3.8]{OU16} for background on
Paley--Wiener spaces and stable sampling.

For uniformly separated sets, the upper sampling bound follows
from the Plancherel--P\'olya estimate below. The substantive issue
is therefore the lower bound. In our setting the spectral gap
gives considerably more: for almost every realization,
\[
    \sum_{p\in P}|f(p)|^2
    =
    \rho\|f\|_2^2
    \qquad
    (f\in PW_K).
\]
We call this \emph{exact sampling}. It says that the sampling map,
after multiplication by $\rho^{-1/2}$, is an isometry.

Equivalently, after Fourier transformation, the exponentials
\[
    \left\{
        \xi\longmapsto e^{-2\pi i\langle p,\xi\rangle}
        :
        p\in P
    \right\}
\]
form a tight frame for $L^2(K)$ with frame bound $\rho$.

\begin{lemma}[Plancherel--P\'olya estimate]
\label{lem:Plancherel-Polya}
Let $K\subset\mathbb R^d$ be compact and let $r>0$. There is a
constant $C=C(K,r)$ such that
\[
    \sum_{p\in P}|f(p)|^2
    \leq
    C\|f\|_2^2
\]
for every $r$-separated set $P\subset\mathbb R^d$ and every
$f\in PW_K$.
\end{lemma}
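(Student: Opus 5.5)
The plan is the classical sub-mean-value argument for band-limited functions. Fix $\delta:=r/2$, so that the balls $B_\delta(p)$, $p\in P$, are pairwise disjoint for every $r$-separated set $P$. The goal is a pointwise bound of the form
\[
    |f(p)|^2\leq C_1\int_{B_\delta(p)}\bigl(|f|^2+|\nabla f|^2\bigr)\,d\lambda_d
    \qquad(f\in PW_K),
\]
with $C_1$ depending only on $d$ and $\delta$. Disjointness of the balls then bounds the sum over $P$ by $C_1\bigl(\|f\|_2^2+\|\nabla f\|_2^2\bigr)$.

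The derivative term is controlled by Plancherel. Since $\widehat f$ is supported in $K$, write $M:=\sup_{\xi\in K}\|\xi\|$; then
\[
    \|\nabla f\|_2^2=4\pi^2\int_K\|\xi\|^2|\widehat f(\xi)|^2\,d\xi\leq 4\pi^2M^2\|f\|_2^2.
\]
This gives $C=C_1(1+4\pi^2M^2)$, which depends only on $K$ and $r$.

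The pointwise inequality is a Sobolev-type local estimate. Every $f\in PW_K$ is smooth, indeed entire, with all derivatives in $L^2$, and the same Plancherel bound controls each $\partial^\alpha f$. So I would apply the local Sobolev embedding $H^s(B_\delta)\hookrightarrow C(\overline{B_\delta})$ for some $s>d/2$. This yields
\[
    |f(p)|^2\leq C_1\sum_{|\alpha|\leq s}\int_{B_\delta(p)}|\partial^\alpha f|^2 .
\]
By translation invariance $C_1$ is independent of $p$, and each $\|\partial^\alpha f\|_2^2\leq(2\pi M)^{2|\alpha|}\|f\|_2^2$. In the display above, the single gradient term is therefore replaced by finitely many derivatives, which is harmless.

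An alternative that avoids the Sobolev embedding is a reproducing-kernel argument. Fix $\chi\in\mathcal S(\mathbb R^d)$ with $\widehat\chi=1$ on $K$, so that $f=f*\chi$. Cauchy--Schwarz with the weight $|\chi|$ gives $|f(p)|^2\leq\|\chi\|_1\,(|f|^2*|\chi|)(p)$. For $r$-separated $P$, the quantity $\sum_{p\in P}|\chi(p-x)|$ is bounded uniformly in $x$ and $P$ by a constant $C(\chi,r)$, by rapid decay and a counting bound on points of $P$ in unit cubes. Summing over $p$ then gives $C=\|\chi\|_1C(\chi,r)$.

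I would present the kernel version. Its only nontrivial step, and the main obstacle, is the uniform bound on $\sum_{p}|\chi(p-x)|$. To prove it, I would partition $\mathbb R^d$ into unit cubes, note that each cube contains at most $c_d r^{-d}$ points of an $r$-separated set, and dominate $|\chi|$ by $C_N(1+\|y\|)^{-d-1}$.
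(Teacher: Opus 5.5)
Your proposal is correct. The Sobolev variant you sketch first is exactly the paper's proof: the local Sobolev inequality on the disjoint balls $B_{r/2}(p)$, followed by Plancherel to get $\|\partial^\alpha f\|_2\le(2\pi M)^{|\alpha|}\|f\|_2$. Your correction, replacing the single gradient term by all $\partial^\alpha f$ with $|\alpha|\le s$ and $s>d/2$, is needed, since $H^1$ does not embed into continuous functions for $d\ge 2$.

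The kernel argument you choose to present is a genuinely different route. You use band-limitation through the reproducing identity $f=f*\chi$, rather than through derivative bounds. Taking $\widehat\chi\in C_c^\infty$ equal to $1$ on a neighbourhood of $K$, both sides of this identity are continuous, so it holds pointwise. Weighted Cauchy--Schwarz and Tonelli then reduce everything to the uniform bound $\sup_x\sum_{p\in P}|\chi(p-x)|<\infty$.

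What this buys is a fully elementary proof with no Sobolev embedding. It also uses separation only through a uniform bound on the number of points of $P$ per unit cube, so it applies verbatim to finite unions of separated sets. The paper's version is shorter, given the local Sobolev inequality as a black box.

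One bookkeeping slip: a unit cube can contain a point of $P$ even when $r$ is large. The count should therefore be $c_d(1+r^{-1})^d$ rather than $c_d r^{-d}$. Alternatively, reduce to $r\le 1$, since an $r$-separated set is $r'$-separated for every $r'<r$. This does not affect the argument.
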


\begin{proof}
Choose an integer $m>d/2$. The local Sobolev inequality gives
\[
    |f(p)|^2
    \leq
    C_{d,m,r}
    \sum_{|\alpha|\leq m}
    \int_{B_{r/2}(p)}
    |\partial^\alpha f(x)|^2\,d\lambda_d(x).
\]
Since the balls $B_{r/2}(p)$ are pairwise disjoint,
\[
    \sum_{p\in P}|f(p)|^2
    \leq
    C_{d,m,r}
    \sum_{|\alpha|\leq m}
    \|\partial^\alpha f\|_2^2.
\]
Plancherel's theorem and
$\operatorname{supp}\widehat f\subset K$ give
\[
    \|\partial^\alpha f\|_2
    \leq
    \left(
        2\pi\sup_{\xi\in K}\|\xi\|
    \right)^{|\alpha|}
    \|f\|_2,
\]
and the assertion follows.
\end{proof}

\begin{proposition}
\label{prop:exact-sampling}
Under the assumptions of
Theorem~\ref{thm:density-bound}, let
$K\subset\mathbb R^d$ be compact and satisfy
$K-K\subset\Omega$ and $\lambda_d(\partial K)=0$. Then, for
$\eta$-almost every configuration $P$,
\[
    \sum_{p\in P}|f(p)|^2
    =
    \rho\int_{\mathbb R^d}|f(x)|^2\,d\lambda_d(x)
    \qquad
    (f\in PW_K).
\]
In particular, almost every $P$ is a sampling set for $PW_K$.
\end{proposition}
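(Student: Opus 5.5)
The approach is to apply the Bartlett identity to the modulus squared $|f|^2$ for $f$ in $PW_K$, rather than to $f$ itself. If $f\in PW_K$, then $g:=|f|^2=f\overline f$ has Fourier transform $\widehat f*\widetilde{\widehat f}$, where $\widetilde h(\xi)=\overline{h(-\xi)}$. This transform is supported in $K-K\subset\Omega$. Thus $\widehat g$ lives in the spectral gap.

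First restrict to a nice dense class. Take $f$ with $\widehat f\in C_c^\infty(\operatorname{int}K)$. Then $g\in\mathcal S(\mathbb R^d)$ and $\operatorname{supp}\widehat g\subset\operatorname{int}K-\operatorname{int}K\subset\Omega$. The Bartlett identity gives
\[
\operatorname{Var}_\eta(N_g)=\int|\widehat g|^2\,d\sigma_\eta=0 .
\]
Hence $N_g=\rho\int g=\rho\|f\|_2^2$ almost surely, since the mean of $N_g$ is $\rho\int g$. This identity holds for one fixed $f$ at a time.

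Next pass to a countable set and then to all of $PW_K$.

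1. Choose a countable family $\mathcal F$ of such $f$ whose Fourier transforms are dense in $L^2(K)$. This is possible because $\lambda_d(\partial K)=0$, so $C_c^\infty(\operatorname{int}K)$ is dense in $L^2(K)=L^2(\operatorname{int}K)$. One can take $\mathcal F$ closed under finite rational linear combinations, though this is not needed.
2. Discard a null set so that the identity holds simultaneously for all $f\in\mathcal F$, and also so that $P$ is $r$-separated.
3. For a general $f\in PW_K$, pick $f_n\in\mathcal F$ with $f_n\to f$ in $L^2$. Lemma~\ref{lem:Plancherel-Polya} gives
\[
\|(f_n(p))-(f(p))\|_{\ell^2(P)}\le C^{1/2}\|f_n-f\|_2
\]
uniformly over $r$-separated $P$. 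This applies because $f_n-f\in PW_K$ and its point values are those of the continuous representative.
4. Both sides of the identity $\sum_{p\in P}|f_n(p)|^2=\rho\|f_n\|_2^2$ therefore converge, giving the exact sampling identity for every $f\in PW_K$ on a single full-measure set.

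The sampling property then follows with $A=B=\rho$. Here $\rho>0$ is needed for a genuine lower bound. This holds because $\rho\|f\|_2^2=\sum_p|f(p)|^2$. If $\rho=0$, every configuration would almost surely be empty, since the sum is nonnegative with zero expectation. On the empty configuration the identity holds trivially, so the degenerate case should be noted separately.

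The main obstacle is the boundary issue in step 1. The support of $\widehat g$ must lie in the open set $\Omega$, and test functions must be Schwartz for the Bartlett identity to apply directly. Restricting to $\widehat f$ compactly supported in $\operatorname{int}K$ handles both requirements. The hypothesis $\lambda_d(\partial K)=0$ is exactly what makes this class dense in $PW_K$.
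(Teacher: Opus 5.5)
Your proposal is correct and follows the paper's proof essentially step for step. The steps match: zero variance of $N_{|f|^2}$ for $\widehat f\in C_c^\infty(\operatorname{int}K)$ because $\operatorname{supp}\widehat{|f|^2}\subset K-K\subset\Omega$, then a countable dense family made available by $\lambda_d(\partial K)=0$, then extension to all of $PW_K$ through the Plancherel--P\'olya bound. Your remark that the ``sampling set'' conclusion needs $\rho>0$ is a point the paper leaves implicit; the only degenerate case is the almost surely empty process.
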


\begin{proof}
First let $f\in PW_K$ satisfy
$\widehat f\in C_c^\infty(\operatorname{int}K)$. Then
$f\in\mathcal S(\mathbb R^d)$ and since
\[
    \widehat{|f|^2}(\xi)
    =
    \int_{\mathbb R^d}
    \widehat f(\eta)
    \overline{\widehat f(\eta-\xi)}
    \,d\eta,
\]
one has
\[
    \operatorname{supp}\widehat{|f|^2}
    \subset
    K-K
    \subset
    \Omega.
\]
The Bartlett identity therefore gives
\[
\begin{aligned}
    \operatorname{Var}_\eta
    \left(
        \sum_{p\in P}|f(p)|^2
    \right)
    &=
    \int_{\mathbb R^d}
    \left|
        \widehat{|f|^2}(\xi)
    \right|^2
    \,d\sigma_\eta(\xi)\\
    &=0.
\end{aligned}
\]
The random variable is almost surely equal to its expectation, and
the intensity formula yields
\[
    \sum_{p\in P}|f(p)|^2
    =
    \mathbb E_\eta
    \left[
        \sum_{p\in P}|f(p)|^2
    \right]
    =
    \rho\|f\|_2^2
\]
for almost every $P$.

Since $\lambda_d(\partial K)=0$, the space
$C_c^\infty(\operatorname{int}K)$ is dense in $L^2(K)$. By
Plancherel's theorem, we may choose a countable dense family
$(f_n)_{n\geq1}$ in $PW_K$ such that
$\widehat f_n\in C_c^\infty(\operatorname{int}K)$. After
intersecting the corresponding conull sets, the identity holds
simultaneously for every $f_n$.

Fix a configuration $P$ in this conull set. By
Lemma~\ref{lem:Plancherel-Polya},
\[
    \sum_{p\in P}|f(p)|^2
    \leq
    C(K,r)\|f\|_2^2
    \qquad
    (f\in PW_K).
\]
Thus both sides of the sampling identity depend continuously on
$f\in PW_K$. Density of $(f_n)$ extends the identity to every
$f\in PW_K$.
\end{proof}

\begin{remark}
\label{rem:realization-Fourier-identity}
The same argument shows that, after discarding a null set of
configurations,
\[
    \widehat{\delta}_P
    =
    \rho\delta_0
    \qquad\text{on }\Omega
\]
in the sense of tempered distributions. Indeed, apply the zero-variance argument 
to a countable dense
family in $C_c^\infty(\Omega)$, intersect the resulting conull
sets, and use the uniform temperedness of the counting measures
of $r$-separated configurations. Proposition~\ref{prop:exact-sampling} is the corresponding
quadratic identity, obtained by testing against $|f|^2$.
\end{remark}


\subsection{Landau's density theorem}
\label{subsec:Landau-density}

For a uniformly discrete set $P\subset\mathbb R^d$, its lower
Beurling density is
\[
    D^-(P)
    :=
    \liminf_{R\to\infty}
    \inf_{x\in\mathbb R^d}
    \frac{\#(P\cap B_R(x))}
         {\lambda_d(B_R(0))}.
\]

We use Landau's necessary density theorem
\cite{Lan67}; see also \cite[Theorem~5.28]{OU16}. In our
Fourier-transform normalization, it takes the following form.

\begin{theorem}[Landau]
\label{thm:Landau}
Let $K\subset\mathbb R^d$ be compact with
$\lambda_d(\partial K)=0$. If a uniformly discrete set
$P\subset\mathbb R^d$ is a sampling set for $PW_K$, then
\[
    D^-(P)\geq\lambda_d(K).
\]
\end{theorem}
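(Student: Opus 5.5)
We follow Landau's eigenvalue-counting method. The idea is to compare two quantities on a large ball $B_R(x)$: the number of degrees of freedom of $PW_K$ that are essentially concentrated on the ball, and the number of sampling points of $P$ in a slightly larger ball. Fix $x\in\mathbb R^d$ and $R>0$. Let $Q_R$ denote multiplication by $\mathbf 1_{B_R(x)}$ and $\Pi_K$ the orthogonal projection of $L^2(\mathbb R^d)$ onto $PW_K$, and consider the concentration operator
\[
    A_R:=\Pi_KQ_R\Pi_K
    \qquad\text{on }PW_K.
\]
It is positive, trace class, and satisfies $0\leq A_R\leq I$. Its kernel is $k(s,t)=\int_{B_R(x)}\check{\mathbf 1}_K(s-u)\overline{\check{\mathbf 1}_K(t-u)}\,du$, which gives
\[
    \operatorname{tr}A_R=\lambda_d(K)\lambda_d(B_R).
\]

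The first step is the eigenvalue count. We will show
\[
    \operatorname{tr}A_R-\operatorname{tr}A_R^2=o(R^d)
\]
uniformly in $x$. Writing the difference as $\int_{B_R}\int_{B_R^c}|\check{\mathbf 1}_K(s-t)|^2\,dt\,ds$, we split according to whether $s$ lies within distance $h$ of $\partial B_R$. The boundary layer contributes $O(hR^{d-1})$. The interior contributes at most $\lambda_d(B_R)\int_{\|u\|>h}|\check{\mathbf 1}_K(u)|^2\,du$, and this tends to $0$ relative to $R^d$ as $h\to\infty$, since $\check{\mathbf 1}_K\in L^2$. Letting $h\to\infty$ slowly with $R$ gives the claim. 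Since $\lambda(1-\lambda)\geq\gamma(1-\gamma)$ on $[\gamma,1-\gamma]$, the number $N_R(\gamma)$ of eigenvalues of $A_R$ exceeding $\gamma$ satisfies
\[
    N_R(\gamma)\geq\lambda_d(K)\lambda_d(B_R)-o_\gamma(R^d)
\]
for any fixed $\gamma\in(0,1)$, uniformly in $x$. (The hypothesis $\lambda_d(\partial K)=0$ is not needed for this trace estimate. In the standard treatment it enters only to pass between $K$ and its interior, through the sampling hypothesis.)

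The second step is the injectivity argument. Let $V$ be the span of the eigenfunctions of $A_R$ with eigenvalue greater than $\gamma$. Every $f\in V$ satisfies $\|f\|^2_{L^2(B_R(x)^c)}\leq(1-\gamma)\|f\|_2^2$. We then need a \emph{localized Plancherel--P\'olya estimate}: for $r$-separated $P$, every $\epsilon>0$ admits $h=h(\epsilon,K,r)$ with
\[
    \sum_{p\in P\setminus B_{R+h}(x)}|f(p)|^2
    \leq
    C\|f\|^2_{L^2(B_R(x)^c)}+\epsilon\|f\|_2^2
    \qquad(f\in PW_K).
\]
To prove this, write $f=f*\phi$ with $\phi$ Schwartz and $\widehat\phi=1$ on $K$, and split $f=f\mathbf 1_{B_R}+f\mathbf 1_{B_R^c}$ inside the convolution. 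The second piece is handled as in Lemma~\ref{lem:Plancherel-Polya} (a Sobolev bound on disjoint balls). The first piece is small at points at distance at least $h$ from $B_R$, by the decay of $\phi$ together with a Schur test.

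Combining this with the lower sampling bound $a\|f\|_2^2\leq\sum_{p\in P}|f(p)|^2$ and choosing $\gamma$ close to $1$ and $\epsilon$ small, we get
\[
    \sum_{p\in P\cap B_{R+h}(x)}|f(p)|^2\geq\tfrac a2\|f\|_2^2
    \qquad(f\in V).
\]
Hence the restriction map $V\to\mathbb C^{P\cap B_{R+h}(x)}$ is injective, and
\[
    \#\bigl(P\cap B_{R+h}(x)\bigr)\geq\dim V=N_R(\gamma)\geq\lambda_d(K)\lambda_d(B_R)-o(R^d).
\]
Dividing by $\lambda_d(B_{R+h})$, taking the infimum over $x$, and letting $R\to\infty$ with $h$ fixed gives $D^-(P)\geq\lambda_d(K)$.

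The main obstacle is the second step, specifically the localized tail estimate. It requires making the dependence of $h$ on $\gamma$ and $\epsilon$ explicit while keeping all constants uniform in $x$ and $R$. The first attempt would be the convolution-splitting argument described above. If the Schur-test bound for the first piece proves awkward, the fallback is to enlarge $K$ slightly to $K_\delta$, so that $\phi$ can be taken with $\widehat\phi=1$ on $K$ and $\operatorname{supp}\widehat\phi\subset K_\delta$, and then to let $\delta\to0$ at the end.
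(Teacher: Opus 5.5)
Your proof is correct, but it does not follow the paper, because the paper gives no proof of Theorem~\ref{thm:Landau}. It cites Landau and Olevskii--Ulanovskii (Theorem~5.28) and only checks that their normalization, with phase $e^{-i\langle x,\xi\rangle}$ and spectral set $2\pi K$, gives $(2\pi)^{-d}\lambda_d(2\pi K)=\lambda_d(K)$. You instead reconstruct Landau's concentration-operator eigenvalue count. In place of Landau's original tail estimates you use a now-standard localization lemma: write $f=f*\phi$ with $\widehat\phi=1$ on $K$ and split off $f\mathbf 1_{B_R(x)}$.

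The citation buys brevity. Your route is self-contained and builds in the paper's Fourier convention through $\|\check{\mathbf 1}_K\|_2^2=\lambda_d(K)$. It also never uses $\lambda_d(\partial K)=0$, so it proves the inequality for every compact $K$. Your parenthetical about where that hypothesis enters is therefore moot for your own argument.

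Three small repairs are needed.

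\textbf{Eigenvalue count.} The inequality $\lambda(1-\lambda)\geq\gamma(1-\gamma)$ on $[\gamma,1-\gamma]$ is vacuous for $\gamma>1/2$. That is exactly the regime you need in the second step, where $\gamma$ is close to $1$. Justify the count instead by
\[
    N_R(\gamma)\geq\sum_{\lambda_j>\gamma}\lambda_j
    \geq\operatorname{tr}A_R-\frac{\operatorname{tr}A_R-\operatorname{tr}A_R^2}{1-\gamma},
\]
which holds for every $\gamma\in(0,1)$, since $\lambda_j\leq\lambda_j(1-\lambda_j)/(1-\gamma)$ whenever $\lambda_j\leq\gamma$.

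\textbf{Notation.} The letter $h$ denotes both the boundary-layer width in the trace estimate, which must grow with $R$, and the fixed localization margin in the second step. Keep these distinct.

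\textbf{The fallback is unnecessary.} Take $\widehat\phi\in C_c^\infty$ equal to $1$ on a neighborhood of $K$; then $\phi$ is Schwartz. Young's inequality bounds $\partial^\alpha\bigl((f\mathbf 1_{B_R(x)^c})*\phi\bigr)$ in $L^2$. The Schur test bounds the other piece, summed over $p\in P\setminus B_{R+h}(x)$, by
\[
    \|\phi\,\mathbf 1_{\{\|\cdot\|\geq h\}}\|_1
    \sup_{v\in\mathbb R^d}\sum_{\substack{p\in P\\ \|p-v\|\geq h}}|\phi(p-v)|
    \,\|f\|_2^2 .
\]
This tends to zero as $h\to\infty$, uniformly in $x$, in $R$, and in the $r$-separated set $P$. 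There is no need to let $\delta\to0$.
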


Olevskii and Ulanovskii use the Fourier phase
$e^{-i\langle x,\xi\rangle}$. Thus the spectral set corresponding
to $K$ in their normalization is $2\pi K$, and their bound becomes
\[
    (2\pi)^{-d}\lambda_d(2\pi K)
    =
    \lambda_d(K),
\]
which is the formulation above.

\begin{proof}[Proof of Theorem~\ref{thm:density-bound}]
We first reduce to the ergodic case. Let
\[
    \eta
    =
    \int_A\eta_\alpha\,d\pi(\alpha)
\]
be the ergodic decomposition of $\eta$. After discarding a
$\pi$-null set, each $\eta_\alpha$ has local second moments and
is supported on $r$-separated configurations. Write
$\rho_\alpha$ and $\sigma_\alpha$ for its intensity and
Bartlett spectrum.

For $f\in\mathcal S(\mathbb R^d)$, the variance decomposition
gives
\[
\begin{aligned}
    \operatorname{Var}_\eta(N_f)
    &=
    \int_A
    \operatorname{Var}_{\eta_\alpha}(N_f)
    \,d\pi(\alpha)\\
    &\quad+
    \operatorname{Var}_\pi(\rho_\alpha)
    \left|
        \int_{\mathbb R^d}f\,d\lambda_d
    \right|^2.
\end{aligned}
\]
By uniqueness of the Bartlett spectrum,
\[
    \sigma_\eta
    =
    \int_A\sigma_\alpha\,d\pi(\alpha)
    +
    \operatorname{Var}_\pi(\rho_\alpha)\delta_0.
\]
Since $\Omega$ is a neighborhood of the origin and
$\sigma_\eta(\Omega)=0$, positivity gives
\[
    \operatorname{Var}_\pi(\rho_\alpha)=0,
    \qquad
    \sigma_\alpha(\Omega)=0
\]
for $\pi$-almost every $\alpha$. Hence
\[
    \rho_\alpha=\rho
\]
for $\pi$-almost every $\alpha$.

It therefore suffices to prove the assertion for each ergodic
component. We may assume from now on that $\eta$ is ergodic.

First let $K\subset\mathbb R^d$ be compact, satisfy
$\lambda_d(\partial K)=0$, and have $K-K\subset\Omega$. By
Proposition~\ref{prop:exact-sampling}, almost every realization
$P$ is a sampling set for $PW_K$. Hence
\[
    D^-(P)\geq\lambda_d(K)
\]
by Theorem~\ref{thm:Landau}.

Choose a nonnegative function
$\varphi\in C_c(\mathbb R^d)$ with integral one and
$\operatorname{supp}\varphi\subset B_C(0)$. Wiener's pointwise
ergodic theorem for $\mathbb R^d$-actions \cite{Wie39} gives, for
almost every $P$,
\[
    \frac{1}{\lambda_d(B_R(0))}
    \int_{B_R(0)}
    N_\varphi(t.P)\,d\lambda_d(t)
    \longrightarrow\rho.
\]
For $R>C$, positivity of $\varphi$ and its support condition imply
\[
    \int_{B_{R-C}(0)}N_\varphi(t.P)\,d\lambda_d(t)
    \leq
    \#(P\cap B_R(0))
    \leq
    \int_{B_{R+C}(0)}N_\varphi(t.P)\,d\lambda_d(t).
\]
Dividing by $\lambda_d(B_R(0))$ and applying the ergodic theorem
shows that
\[
    \lim_{R\to\infty}
    \frac{\#(P\cap B_R(0))}
         {\lambda_d(B_R(0))}
    =
    \rho.
\]
Consequently,
\[
    \lambda_d(K)
    \leq
    D^-(P)
    \leq
    \liminf_{R\to\infty}
    \frac{\#(P\cap B_R(0))}
         {\lambda_d(B_R(0))}
    =
    \rho.
\]

We now remove the assumption
$\lambda_d(\partial K)=0$. Let $K\subset\mathbb R^d$ be compact
with $K-K\subset\Omega$. Since $K-K$ is compact and $\Omega$ is
open, there is $r_0>0$ such that
\[
    (K-K)+\overline{B_{2r_0}(0)}
    \subset\Omega.
\]
For $0<r<r_0$, set
\[
    K_r:=K+\overline{B_r(0)}.
\]
The coarea theorem applied to the distance function
$x\mapsto\operatorname{dist}(x,K)$ shows that
$\lambda_d(\partial K_r)=0$ for almost every $r$. Moreover,
\[
    K_r-K_r
    \subset
    (K-K)+\overline{B_{2r}(0)}
    \subset\Omega.
\]
For each such $r$, the preceding argument gives
\[
    \rho\geq\lambda_d(K_r)\geq\lambda_d(K).
\]
Taking the supremum over all compact $K$ with
$K-K\subset\Omega$ proves
\[
    \rho\geq\mathcal D(\Omega).
\]

Finally, if $C\subset\Omega$ is a symmetric convex body, then
$K:=\frac12C$ satisfies
\[
    K-K=C,
    \qquad
    \lambda_d(K)=2^{-d}\lambda_d(C).
\]
Thus
\[
    \rho\geq2^{-d}\lambda_d(C),
\]
as required.
\end{proof}

\begin{corollary}
\label{cor:ball-gap-density}
Under the assumptions of
Theorem~\ref{thm:density-bound}, if
$\sigma_\eta(B_r(0))=0$, then
\[
    \rho
    \geq
    \lambda_d\bigl(B_{r/2}(0)\bigr).
\]
In dimension one,
\[
    \sigma_\eta((-r,r))=0
    \qquad\Longrightarrow\qquad
    \rho\geq r.
\]
\end{corollary}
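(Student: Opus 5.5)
This is a direct specialization of Theorem~\ref{thm:density-bound}, so the plan is to apply that theorem with $\Omega:=B_r(0)$ and evaluate the lower bound. The hypotheses of the corollary are exactly those of the theorem: translation invariance, intensity $\rho$, and support on uniformly separated configurations. The ball $B_r(0)$ is an open neighborhood of the origin with $\sigma_\eta(B_r(0))=0$. Theorem~\ref{thm:density-bound} therefore gives
\[
    \rho\geq\mathcal D\bigl(B_r(0)\bigr).
\]

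It remains to bound $\mathcal D(B_r(0))$ from below. The natural choice is $K:=\overline{B_s(0)}$ for $0<s<r/2$. Then
\[
    K-K=\overline{B_{2s}(0)}\subset B_r(0),
\]
so $K$ is admissible in the definition of $\mathcal D$. Hence $\mathcal D(B_r(0))\geq\lambda_d(B_s(0))$ for every $s<r/2$. Letting $s\uparrow r/2$ and using continuity of the volume of balls yields
\[
    \rho\geq\lambda_d\bigl(B_{r/2}(0)\bigr).
\]

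The convex-body clause of the theorem cannot be used verbatim with $C=B_r(0)$, since that ball is open rather than a compact convex body contained in $\Omega$. This is why I approximate by closed balls of radius $s<r/2$; it is the only point requiring any care, and it is routine. Alternatively, one can apply the clause to $C=\overline{B_{2s}(0)}\subset B_r(0)$ and let $s\uparrow r/2$.

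In dimension one, $B_r(0)=(-r,r)$ and $\lambda_1\bigl((-r/2,r/2)\bigr)=r$, so the general bound reads $\rho\geq r$, which is the second assertion.
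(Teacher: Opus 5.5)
Your proposal is correct and follows the intended argument: the corollary is Theorem~\ref{thm:density-bound} applied with $\Omega=B_r(0)$, and you handle the fact that the ball is open by approximating with the compact sets $\overline{B_s(0)}$, $s<r/2$. This is the same approximation used in the proof of Corollary~\ref{cor:convex-gap-density}. Your caution that the convex-body clause does not apply verbatim to an open ball is justified, and your fix is exactly right.
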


\begin{corollary}
\label{cor:convex-gap-density}
Let $\Omega\subset\mathbb R^d$ be a symmetric open convex set
of finite Lebesgue measure. Then
\[
    D(\Omega)=2^{-d}\lambda_d(\Omega).
\]
Consequently, under the assumptions of
Theorem~\ref{thm:density-bound},
\[
    \sigma_\eta(\Omega)=0
    \quad\Longrightarrow\quad
    \rho_\eta\geq2^{-d}\lambda_d(\Omega).
\]
\end{corollary}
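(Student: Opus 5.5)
The corollary has two parts: the identity $\mathcal D(\Omega)=2^{-d}\lambda_d(\Omega)$ for a symmetric open convex $\Omega$ of finite measure, and the density bound that follows from it via Theorem~\ref{thm:density-bound}.

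\textbf{Lower bound $\mathcal D(\Omega)\geq 2^{-d}\lambda_d(\Omega)$.} I would exhaust $\Omega$ by compact symmetric convex bodies. Finite measure together with convexity forces $\Omega$ to be bounded. Indeed, a convex open set with nonempty interior that is unbounded contains the convex hull of a ball and a ray, which has infinite measure. Hence $\overline\Omega$ is a compact symmetric convex body, and for $0<s<1$ the set $C_s:=s\overline\Omega$ is compact, symmetric, convex and contained in $\Omega$. Setting $K:=\tfrac12 C_s$ gives $K-K=C_s\subset\Omega$ and $\lambda_d(K)=2^{-d}s^d\lambda_d(\Omega)$. Letting $s\to1$ yields the lower bound. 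This is the same computation as in the last step of the proof of Theorem~\ref{thm:density-bound}.

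\textbf{Upper bound $\mathcal D(\Omega)\leq 2^{-d}\lambda_d(\Omega)$.} This is the main step, and I would get it from the Brunn--Minkowski inequality. Let $K$ be compact with $K-K\subset\Omega$. Brunn--Minkowski, applied to the compact sets $K$ and $-K$, gives
\[
\lambda_d(K-K)^{1/d}\geq\lambda_d(K)^{1/d}+\lambda_d(-K)^{1/d}=2\lambda_d(K)^{1/d},
\]
so $\lambda_d(K)\leq2^{-d}\lambda_d(K-K)\leq2^{-d}\lambda_d(\Omega)$. The inequality holds for arbitrary nonempty compact sets, so no convexity of $K$ is needed, and the case $\lambda_d(K)=0$ is trivial. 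Taking the supremum over all such $K$ gives the upper bound.

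\textbf{The consequence.} This follows by citing Theorem~\ref{thm:density-bound} with this $\Omega$, which is an open neighborhood of the origin (symmetric, convex, nonempty). It gives $\rho_\eta\geq\mathcal D(\Omega)=2^{-d}\lambda_d(\Omega)$. Alternatively, one can apply the theorem's final clause directly to the bodies $C_s$ and let $s\to1$.

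The only potential subtlety is the boundedness of $\Omega$, which is needed so that the sets $C_s$ are compact. If one wishes to avoid it, one can instead intersect $\Omega$ with large closed balls and shrink slightly; this gives compact symmetric convex subsets whose measures tend to $\lambda_d(\Omega)$.
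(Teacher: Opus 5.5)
Your proposal is correct and matches the paper's proof: Brunn--Minkowski applied to $K$ and $-K$ gives the upper bound, and the compact sets $\tfrac{s}{2}\overline\Omega$ with $s\uparrow 1$ give the lower bound. You also check explicitly that finite measure forces $\Omega$ to be bounded, so that $s\overline\Omega$ is compact, which the paper leaves implicit.
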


\begin{proof}
Let $K\subset\mathbb R^d$ be compact and satisfy
$K-K\subset\Omega$. By the Brunn--Minkowski inequality,
\[
    \lambda_d(K-K)^{1/d}
    \geq
    \lambda_d(K)^{1/d}
    +
    \lambda_d(-K)^{1/d}
    =
    2\lambda_d(K)^{1/d}.
\]
Hence
\[
    \lambda_d(K)
    \leq
    2^{-d}\lambda_d(K-K)
    \leq
    2^{-d}\lambda_d(\Omega).
\]
Thus
\[
    D(\Omega)\leq2^{-d}\lambda_d(\Omega).
\]

Conversely, for $0<r<1$, the compact set
\[
    K_r:=\frac r2\,\overline{\Omega}
\]
satisfies
\[
    K_r-K_r=r\overline{\Omega}\subset\Omega,
\]
and
\[
    \lambda_d(K_r)
    =
    2^{-d}r^d\lambda_d(\Omega).
\]
Letting $r\uparrow1$ gives the reverse inequality.
\end{proof}

\begin{remark}[Sharpness for lattice processes]
\label{rem:Blichfeldt-sharpness}
Let $\Gamma\leq\mathbb R^d$ be a lattice, and let $\eta_\Gamma$
be the point process obtained by translating $\Gamma$ uniformly
over a fundamental domain. Writing
$\rho_\Gamma:=\operatorname{covol}(\Gamma)^{-1}$ for its
intensity, its Bartlett spectrum is
\[
    \sigma_{\eta_\Gamma}
    =
    \rho_\Gamma^2
    \sum_{\kappa\in\Gamma^*\setminus\{0\}}
    \delta_\kappa.
\]
Hence the largest open set on which its Bartlett spectrum
vanishes is
\[
    \Omega_\Gamma
    :=
    \mathbb R^d\setminus
    \bigl(\Gamma^*\setminus\{0\}\bigr).
\]

Suppose that $K\subset\mathbb R^d$ is compact and
$K-K\subset\Omega_\Gamma$. Then no two distinct points of $K$
differ by an element of $\Gamma^*$. Blichfeldt's theorem
\cite[Chapter~III, \S2, Theorem~I]{Cas97} therefore gives
\[
    \lambda_d(K)
    \leq
    \operatorname{covol}(\Gamma^*)
    =
    \rho_\Gamma.
\]
Conversely, compact sets exhausting the interior of a fundamental
parallelepiped for $\Gamma^*$ have difference sets contained in
$\Omega_\Gamma$ and volumes converging to
$\operatorname{covol}(\Gamma^*)$. Consequently,
\[
    \mathcal D(\Omega_\Gamma)
    =
    \operatorname{covol}(\Gamma^*)
    =
    \rho_\Gamma.
\]
Thus Theorem~\ref{thm:density-bound} is sharp for lattice
processes.
\end{remark}

For the gap
\[
    \Omega_A
    =
    A^{-\mathsf T}(-1,1)^d
\]
in Theorem~\ref{thm:lattice-coding}, the
preceding corollary gives
\[
    D(\Omega_A)
    =
    2^{-d}\lambda_d(\Omega_A)
    =
    \frac{1}{|\det A|}
    =
    \frac{1}{\operatorname{covol}(\Gamma)}.
\]
The intensity of the constructed cross-section is
\[
    \rho_Y
    =
    \frac{4}{\operatorname{covol}(\Gamma)}
    =
    4D(\Omega_A).
\]
Thus the present construction has intensity four times the
optimal lower bound associated with its gap.


\section{Positive stealthy random measures}
\label{sec:positive-random-measures}

The obstruction in
Section~\ref{sec:spectral-lattice-induction} is specific to point
processes. For positive random measures, high-pass convolution
creates spectral gaps of arbitrary size while preserving all the
information contained in the return-time process.

Let $\mathcal M_+(\mathbb R^d)$ be the space of locally finite
positive Radon measures. Let $\zeta$ be a translation-invariant
Borel probability measure on this space with local second moments.
For $f\in\mathcal S(\mathbb R^d)$, set
\[
    N_f(\omega):=\omega(f).
\]
The Bartlett spectrum $\sigma_\zeta$ is determined by
\[
    \operatorname{Var}_\zeta(N_f)
    =
    \int_{\mathbb R^d}
    |\widehat f(\xi)|^2\,d\sigma_\zeta(\xi),
    \qquad
    f\in\mathcal S(\mathbb R^d).
\]
We call $\zeta$ \emph{stealthy} if $\sigma_\zeta$ vanishes on a
neighborhood of the origin.

\begin{theorem}[High-pass positive realization]
\label{thm:positive-random-measure-realization}
Let $(X,\mu,Y)$ be a separated cross-section system. For every
$R>0$, there is an equivariant Borel map
$\Phi_R:X\to\mathcal M_+(\mathbb R^d)$ such that
\[
    \Phi_R(x)=g_x\,\lambda_d,
    \qquad
    0<
    \inf_{\substack{x\in X\\u\in\mathbb R^d}}
    g_x(u),
    \qquad
    \sup_{\substack{x\in X\\u\in\mathbb R^d}}
    |\partial^\alpha g_x(u)|
    <\infty
    \quad
    (\alpha\in\mathbb N_0^d).
\]
The pushforward
$\zeta_R:=(\Phi_R)_*\mu$ has local second moments and satisfies
\[
    \sigma_{\zeta_R}\bigl(B_R(0)\bigr)=0.
\]
Moreover,
\[
    \Phi_R(x)=\Phi_R(x')
    \quad\Longleftrightarrow\quad
    \kappa_Y(x)=\kappa_Y(x').
\]
Consequently, if $Y$ is generating, then $\Phi_R$ is a measurable
$\mathbb R^d$-isomorphism from $(X,\mu)$ to
$\bigl(\mathcal M_+(\mathbb R^d),\zeta_R\bigr)$. The measure
$\zeta_R$ is ergodic whenever $\mu$ is ergodic.
\end{theorem}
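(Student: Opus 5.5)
The map will be a high-pass convolution of the return-time configuration, shifted by a constant so that it becomes positive. Fix $R>0$. Choose $\chi\in\mathcal S(\mathbb R^d)$, real and even, with $\widehat\chi\in C_c^\infty(B_{2R}(0))$ and $\widehat\chi=1$ on $\overline{B_R(0)}$. Put $k:=\delta_0-\chi$, so that formally $\widehat k=1-\widehat\chi$ vanishes on $B_R(0)$.

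A smooth density is needed, so we mollify first. Take $\theta\in\mathcal S$ with $\widehat\theta$ nowhere zero, for instance a Gaussian. For the configuration $\omega=\kappa_Y(x)=\delta_{Y_x}$, set
\[
    g_x:=c+\theta*\omega-\theta*\chi*\omega .
\]
The constant $c$ is fixed below. The equivalent filtered form is $g_x=c+(\theta-\theta*\chi)*\omega$. Define $\Phi_R(x):=g_x\lambda_d$. Equivariance follows from $\kappa_Y(t.x)=t.\kappa_Y(x)$, because convolution commutes with translation. Borel measurability follows because $g_x(u)$ is a countable sum of Borel functions of $\kappa_Y(x)$.

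For the uniform bounds, the $r$-separation of $Y_x$ gives $\sup_u\sum_{p\in Y_x}(1+\|u-p\|)^{-N}\le C(r,d,N)$. Hence $\psi*\omega$ and all its derivatives are bounded uniformly in $x$ for every Schwartz $\psi$. Applying this with $\psi=\theta-\theta*\chi$, choose $c$ larger than $1$ plus the sup norm of $|\psi*\omega|$. Then $\inf g_x\ge1$ and every derivative is bounded. The same bound gives local second moments, since $\Phi_R(x)(K)\le C\lambda_d(K)$.

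For the spectral gap, let $f\in\mathcal S$. Then
\[
    N_f(\Phi_R(x))=c\int f+\omega\bigl(\widetilde\psi*f\bigr),
    \qquad \widetilde\psi(y)=\psi(-y).
\]
The variance is therefore the Bartlett variance of the linear statistic with test function $h:=\widetilde\psi*f$. Here $\widehat h=\overline{\widehat\theta}(1-\widehat\chi)\widehat f$ (up to the reflection convention, using that $\theta,\chi$ are even and real). Hence
\[
    \operatorname{Var}_{\zeta_R}(N_f)=\int|\widehat f|^2\,|\widehat\theta|^2|1-\widehat\chi|^2\,d\sigma_Y .
\]
By uniqueness, $\sigma_{\zeta_R}=|\widehat\theta|^2|1-\widehat\chi|^2\sigma_Y$, which vanishes on $B_R(0)$. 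One should note that $\omega\mapsto\omega(h)$ is well defined for Schwartz $h$ under separation, and that the Bartlett identity extends to such $h$, since $h\in\mathcal S$.

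The main obstacle is injectivity: $\Phi_R(x)=\Phi_R(x')$ iff $\kappa_Y(x)=\kappa_Y(x')$. The direction "$\Leftarrow$" is trivial. For "$\Rightarrow$", equality of the densities gives $\psi*(\omega-\omega')=0$. Taking Fourier transforms in $\mathcal S'$, which is legitimate because $\omega-\omega'$ is translation bounded and hence tempered, gives $\widehat\theta(1-\widehat\chi)\,\widehat{(\omega-\omega')}=0$. Since $\widehat\theta$ is nowhere zero and smooth, this yields $\operatorname{supp}\widehat{(\omega-\omega')}\subset\operatorname{supp}\widehat\chi$, which is compact. Therefore $\omega-\omega'$ is a measure whose Fourier transform has compact support, so it equals an entire function of exponential type times $\lambda_d$. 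A difference of two locally finite sums of Dirac masses can be absolutely continuous only if it is zero. Hence $\omega=\omega'$.

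Given injectivity, the generating case follows. If $\kappa_Y$ is injective on an invariant conull set, then so is $\Phi_R$, and Lusin--Souslin makes $\Phi_R$ a measurable isomorphism onto $(\mathcal M_+,\zeta_R)$. Ergodicity passes to the factor $\zeta_R$.
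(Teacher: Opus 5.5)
Your proposal is correct and takes essentially the same route as the paper. In both, $\Phi_R(x)$ is a large constant plus the convolution of $\delta_{Y_x}$ with a Schwartz kernel whose Fourier transform vanishes on $\overline{B_R(0)}$; the Bartlett identity then gives $\sigma_{\zeta_R}=|\widehat\psi|^2\sigma_Y$, and injectivity follows from Paley--Wiener--Schwartz together with the discrete support of $\delta_{Y_x}-\delta_{Y_{x'}}$.

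The only difference is in the choice of multiplier. The paper uses $m_R(\xi)=e^{-\|\xi\|^2}\vartheta(\|\xi\|^2-R^2)$, whose zero set is exactly $\overline{B_R(0)}$. Your $\widehat\theta(1-\widehat\chi)$ vanishes on the compact set $\{\widehat\chi=1\}\supset\overline{B_R(0)}$, which serves equally well for both the gap and the injectivity argument.
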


\begin{proof}
Define
\[
    \vartheta(t)
    :=
    \begin{cases}
        0, & t\leq 0,\\
        e^{-1/t}, & t>0,
    \end{cases}
\]
and set
\[
    m_R(\xi)
    :=
    e^{-\|\xi\|^2}
    \vartheta\bigl(\|\xi\|^2-R^2\bigr).
\]
Then $m_R\in\mathcal S(\mathbb R^d)$ is real and even, and its
zero set is exactly $\overline{B_R(0)}$. Let $\kappa_R$ be its
inverse Fourier transform. Then $\kappa_R$ is real, even, and
Schwartz, while
\[
    \int_{\mathbb R^d}\kappa_R\,d\lambda_d
    =
    m_R(0)
    =
    0.
\]

Write $P_x:=Y_x$. Since the cross-section is separated, the sets
$P_x$ are uniformly separated. Annular counting and the rapid
decay of $\kappa_R$ give, for every multi-index $\alpha$,
\[
    C_\alpha
    :=
    \sup_{x\in X}
    \sup_{u\in\mathbb R^d}
    \sum_{p\in P_x}
    \bigl|
        \partial^\alpha\kappa_R(u-p)
    \bigr|
    <\infty.
\]
Choose $A_R>C_0$ and define
\[
    g_x(u)
    :=
    A_R+\sum_{p\in P_x}\kappa_R(u-p),
    \qquad
    \Phi_R(x):=g_x\,\lambda_d.
\]
The preceding estimates show that the functions $g_x$ are
strictly positive and smooth, with all derivatives uniformly
bounded. Their uniform boundedness also gives local second moments
for $\zeta_R$.

For $f\in C_c(\mathbb R^d)$,
\[
    \Phi_R(x)(f)
    =
    A_R\int_{\mathbb R^d}f\,d\lambda_d
    +
    \sum_{p\in P_x}(f*\kappa_R)(p).
\]
The last sum converges uniformly over the uniformly separated sets
$P_x$ and is the limit of Borel finite sums. Hence $\Phi_R$ is
Borel. The identity $P_{t.x}=P_x-t$ gives
\[
    g_{t.x}(u)=g_x(u+t),
    \qquad
    \Phi_R(t.x)=t.\Phi_R(x),
\]
so $\Phi_R$ is equivariant. Invariance and ergodicity of the
pushforward follow.

For $f\in\mathcal S(\mathbb R^d)$, evenness of $\kappa_R$ gives
\[
    \Phi_R(x)(f)
    =
    A_R\int_{\mathbb R^d}f\,d\lambda_d
    +
    N_{f*\kappa_R}^Y(x).
\]
Since $\int\kappa_R\,d\lambda_d=0$, the second term has mean zero.
The Bartlett identity for the return-time process therefore yields
\[
\begin{aligned}
    \operatorname{Var}_\mu
    \bigl(\Phi_R(\,\cdot\,)(f)\bigr)
    &=
    \int_{\mathbb R^d}
    |\widehat f(\xi)|^2
    |m_R(\xi)|^2\,d\sigma_Y(\xi).
\end{aligned}
\]
Consequently,
\[
    \sigma_{\zeta_R}=|m_R|^2\sigma_Y,
    \qquad
    \sigma_{\zeta_R}\bigl(B_R(0)\bigr)=0.
\]

It remains to determine the fibers of $\Phi_R$. If
$\Phi_R(x)=\Phi_R(x')$, equality of the corresponding densities
gives
\[
    \kappa_R*
    \bigl(
        \delta_{P_x}-\delta_{P_{x'}}
    \bigr)
    =
    0.
\]
The distribution
$\tau:=\delta_{P_x}-\delta_{P_{x'}}$ is tempered, and taking
Fourier transforms gives
\[
    m_R\widehat\tau=0,
    \qquad
    \operatorname{supp}\widehat\tau
    \subset\overline{B_R(0)}.
\]
By the Paley--Wiener--Schwartz theorem,
$\tau$ is represented on $\mathbb R^d$ by the restriction of an
entire function
\cite[Theorem~7.3.1]{Hor90}. On the other hand, its support is
contained in the locally finite set $P_x\cup P_{x'}$, so it
vanishes on a nonempty open ball. Its analytic representative
therefore vanishes identically, and
\[
    \delta_{P_x}=\delta_{P_{x'}},
    \qquad
    \kappa_Y(x)=\kappa_Y(x').
\]
The reverse implication follows immediately from the definition
of $\Phi_R$.

If $Y$ is generating, then $\kappa_Y$ is injective on an invariant
conull Borel set, and the fiber identity shows that $\Phi_R$ is
injective there as well. The Lusin--Souslin theorem provides a
Borel inverse on its image, completing the proof.
\end{proof}

\begin{remark}[Mixing positive random measures]
\label{rem:mixing-positive-random-measures}
Stealthiness does not obstruct mixing for positive random
measures. If the original $\mathbb R^d$-action is mixing, then
the factor $\zeta_R$ constructed above is mixing. If the chosen
cross-section is generating, the construction is measurably
isomorphic to the original action and therefore preserves mixing.

For a concrete example, start from a Mat\'ern type-I hard-core
process: retain precisely those points of a homogeneous Poisson
process which have no other Poisson point within distance
$r>0$. Restrictions of the retained process to sets at distance
greater than $2r$ depend on disjoint restrictions of the
underlying Poisson process and are therefore independent. Thus
the process is finite-range dependent, and in particular
mixing. Its canonical cross-section is separated and
generating. Theorem~\ref{thm:positive-random-measure-realization}
then gives, for every $R>0$, a mixing random measure whose
Bartlett spectrum vanishes on $B_R(0)$.
\end{remark}



\section{Compact open subgroups and stealthy cross-sections}
\label{sec:compact-open-groups}

We briefly leave the Euclidean setting. Let $G$ be a
second-countable locally compact abelian group with Haar measure
$m_G$ and dual group $\widehat G$. For a probability-preserving
Borel $G$-space $(X,\mu)$, write
\[
    U_gF(x):=F((-g).x),
    \qquad
    \langle U_gF,F\rangle
    =
    \int_{\widehat G}\chi(g)\,d\varsigma_F(\chi).
\]
Thus $\varsigma_F$ denotes the spectral measure of
$F\in L^2(X,\mu)$.

Let $Y\subset X$ be a cross-section for which
\[
    N_f^Y(x):=\sum_{t\in Y_x}f(t)
\]
belongs to $L^2(X,\mu)$ for every $f\in C_c(G)$. We call $Y$
\emph{stealthy} if there is an open neighborhood $U$ of the
trivial character such that
\[
    \varsigma_{(N_f^Y)^\circ}(U)=0
    \qquad
    (f\in C_c(G)).
\]
For $G=\mathbb R^d$, this agrees with the definition in
Section~\ref{sec:cross-sections} by the Bartlett--Koopman identity.

\begin{theorem}[Compact open subgroups]
\label{thm:compact-open-cross-sections}
Let $(X,\mu)$ be an essentially free probability-preserving Borel
$G$-space.
\begin{enumerate}
\item
If $G$ has a compact open subgroup, then $(X,\mu)$ admits a
stealthy cross-section.

\item
If $G=\mathbb Q_p$, then $(X,\mu)$ admits a generating stealthy
cross-section.
\end{enumerate}
\end{theorem}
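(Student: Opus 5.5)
Let $K\leq G$ be a compact open subgroup and let $K^\perp\subset\widehat G$ be its annihilator, an open subgroup of $\widehat G$ containing the trivial character. For stealthiness we take $U:=K^\perp$. The key observation is that if every return-time set $Y_x$ is a union of full $K$-cosets, then $\delta_{Y_x}$ is $K$-periodic. Each linear statistic $N_f^Y$ with $f\in C_c(G)$ then equals $N_{f*m_K}^Y$ up to normalization, where $m_K$ is normalized Haar measure on $K$. Since
\[
\widehat{f*m_K}=\widehat f\,\mathbf 1_{K^\perp},
\]
the spectral content of $(N_f^Y)^\circ$ is confined to the \emph{complement} of $K^\perp$ only after a further averaging step. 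So the plan must be reversed: we want $Y_x$ to be invariant, in its counting measure, under a \emph{large} open subgroup, so that no nonconstant low-frequency information survives.

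\textbf{Correct mechanism.} Suppose the counting measure of each $Y_x$ satisfies
\[
\delta_{Y_x}*m_K=c\,m_G
\]
on each $K$-coset, with $c$ a fixed constant; that is, every $K$-coset contains exactly $n$ points of $Y_x$. Then for every $f$ that is $K$-invariant ($f=f*m_K$) the statistic $N_f^Y$ is the constant $n\int f\,dm_G/m_G(K)$. Characters in $K^\perp$ are exactly those that are $K$-invariant. By the Koopman spectral identity, $\varsigma_{(N_f^Y)^\circ}(K^\perp)$ is the mass seen by the $K$-averaged statistic $N^Y_{f*m_K}$, which is constant, so this mass vanishes. This gives stealthiness with $U=K^\perp$.

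\textbf{Part (1).} For discrete $G$ one takes $K=\{0\}$ and $Y=X$. In general, essential freeness gives a Borel transversal for the $K$-orbits, since $K$ is compact, hence a smooth equivalence relation. Pick a Borel selector $s$ for the $K$-orbit equivalence relation and let $Y$ be its image. Then each $K$-coset of $G$ meets $Y_x$ in exactly one point. Hence $Y_x$ is uniformly discrete, because $K$ is open, and relatively dense, and the above mechanism applies. Local second moments are automatic since counts on compact sets are uniformly bounded.

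\textbf{Part (2), $G=\mathbb Q_p$.} Here we must also encode $x$. Use the filtration $K_j=p^{-j}\mathbb Z_p$. By a Kechris-type marker argument we choose a Borel set of ``$K_j$-orbit markers'' at each scale, or more simply fix $K=\mathbb Z_p$ and allow the position within a coset to vary. Each $K$-coset $a+\mathbb Z_p$ contains $p\mathbb Z_p$-subcosets, which are finitely many. We place one point in each $\mathbb Z_p$-coset, choosing which $p\mathbb Z_p$-subcoset and further sub-subcosets according to a finite cyclic code. The code reads off digits of a Borel injection $b:X\to\{0,\dots,p-1\}^{\mathbb N}$ evaluated at the $K$-orbit representative, so that the digits over a sparse family of cosets determine $b$. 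This is the ``finite cyclic coding inside compact-open orbits''. Stealthiness still holds on $\mathbb Z_p^\perp$, because each $\mathbb Z_p$-coset contains exactly one point. For generation, the return-time set determines the sequence of digits, and hence the point $x$ up to the $K$-orbit ambiguity. A distinguished marker digit pattern then pins down the base point, mimicking the two-frequency marker of the real case.

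\textbf{Main obstacle.} The main difficulty is recovering $x$ from $Y_x$ rather than merely its orbit. One must ensure that the translation $u$ with $Y_{x'}=Y_x-u$ is forced to be $0$ on a conull set. I would handle this using essential freeness together with a Borel choice of coding that breaks all symmetries: marker cosets at increasing scales, whose positions are a Borel function of $x$ that is equivariant by construction.
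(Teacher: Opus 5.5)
Your Part (1) is correct and is the paper's argument. Drop the false start in your first paragraph: a locally finite $Y_x$ cannot be a union of cosets of an infinite $K$. Take a Borel transversal $Y$ for the $K$-orbits. This transversal exists because $K$ is compact; freeness is what gives exactly one return time per $K$-coset, since $Y_x\cap(t+K)=t+(\mathrm{Stab}(x)\cap K)$ for $t\in Y_x$. The vector $\int_KU_k(N_f^Y)^\circ\,dm_K(k)$ has spectral measure $\mathbf 1_{K^\perp}\varsigma_{(N_f^Y)^\circ}$. It equals $N^Y_{f*m_K}-\mathbb E[N^Y_{f*m_K}]=0$, so $\varsigma_{(N_f^Y)^\circ}(K^\perp)=0$. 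The paper obtains the same $Y$ from Kechris's lacunary sections with $U=K$.

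The gap is in Part (2). Suppose the point of $Y$ in the $K$-orbit with representative $z$ is $\phi(z).z$, with $\phi(z)\in\mathbb Z_p$ built from the digits of $b(z)$. Then $Y_x$ does not display $\phi$. Put $a=p^{-1}$, write $a.z=A(z).Tz$ with $Tz$ in the transversal and $A(z)\in\mathbb Z_p$, and let $x=u.z$ with $u\in\mathbb Z_p$. The return time of $x$ in $\mathbb Z_p$ is $\phi(z)-u$, and the one in $a+\mathbb Z_p$ is $a+\phi(Tz)-A(z)-u$. After removing the unknown $u$, what $Y_x$ reveals is $\phi(Tz)-\phi(z)-A(z)$, and similarly around the cycle. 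The transfer cocycle $A$ comes from the arbitrary choice of transversal, takes arbitrary values in $\mathbb Z_p$, and scrambles every digit. So your claim that the return-time set determines the sequence of digits is unjustified. The ``$K$-orbit ambiguity'' you mention is not the problem: once $z$ is known, the single return time in $\mathbb Z_p$ gives $u$.

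The missing idea is to choose the positions so that they absorb $A$. Since $pa=1\in\mathbb Z_p$, the transversal property and freeness give $T^pz=z$, $\sum_{j<p}A(T^jz)=1$, and $T$-orbits of length exactly $p$. The paper proceeds as follows.
\begin{enumerate}
\item Take a Borel injection $B:Z\to V:=\{v\in\mathbb Z_p^p:\sum_jv_j=1\}$ intertwining $T$ with the cyclic shift. The shift is free on $V$ because $p$ is prime and $pv_0=1$ has no solution in $\mathbb Z_p$.
\item Write $B(z)=(\beta(T^jz))_{j<p}$ and solve $\beta=A+\phi-\phi\circ T$ on each $p$-cycle. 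This is possible because $\beta-A$ sums to zero there.
\item Set $Y:=\{\phi(z).z:z\in Z\}$. Then $a.y(z)=\beta(z).y(Tz)$.
\end{enumerate}
The consecutive differences of the return times in the $p$ cosets $ja+\mathbb Z_p$ then determine $B(z)$, hence $z$, hence $x$. Equivalently, you could use a transversal for the $p^{-1}\mathbb Z_p$-orbits, so that the offsets are the known values $ja$. You would then make the code injective modulo the cyclic shift twisted by the wrap-around $pa=1$. One continuum-valued code on $p$ cosets suffices; sparse families of cosets and infinitely many scales are unnecessary.

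Your stated remedy does not supply this. Generation means $Y_x=Y_{x'}\Rightarrow x=x'$. Since $Y_{u.x}=Y_x-u$, the relation $Y_{x'}=Y_x-u$ must force $x'=u.x$, not $u=0$. If your markers are meant to single out one coset of $p^{-j}\mathbb Z_p$ in each orbit, chosen as an equivariant Borel function of $x$, they cannot exist. Such a choice would push $\mu$ to a translation-invariant probability measure on the countably infinite group $\mathbb Q_p/p^{-j}\mathbb Z_p$. Markers can only be chosen inside orbits of compact open subgroups, which is exactly where the finite cyclic coding operates.
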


\begin{proof}
After restriction to an invariant conull Borel set, we may assume
that the action is free.

Recall that a Borel section $Y$ is \emph{$U$-lacunary}, for a
neighborhood $U$ of the identity, if
\[
    Y_y\cap U=\{0\}
    \qquad
    (y\in Y).
\]
Kechris proved that free Borel actions of locally compact
second-countable groups admit complete $U$-lacunary Borel
sections \cite{Kec92}.

Let $K<G$ be compact and open. Apply the complete
lacunary-section theorem of Kechris \cite{Kec92} to the
restricted free Borel action of $K$ on $X$, taking the
lacunarity neighborhood to be all of $K$. Let $Y\subset X$ be
the resulting complete $K$-lacunary Borel section. Completeness
means that $Y$ meets every $K$-orbit, while $K$-lacunarity
prevents it from containing two distinct points in the same
$K$-orbit. Hence $Y$ meets every $K$-orbit exactly once.

Since $G/K$ is discrete, $Y_x$ is locally finite, and $N_f^Y$ is
uniformly bounded for each fixed $f\in C_c(G)$. For every $x\in X$,
the return-time set $Y_x$ therefore meets each coset of $K$ in
exactly one point. Since $G/K$ is discrete, $Y_x$ is locally
finite, and $N_f^Y$ is uniformly bounded for each fixed
$f\in C_c(G)$.

For such an $f$, integration over $K$ gives
\[
\begin{aligned}
    \int_K U_kN_f^Y(x)\,dm_G(k)
    &=
    \sum_{t\in Y_x}\int_K f(t+k)\,dm_G(k)\\
    &=
    \int_G f\,dm_G,
\end{aligned}
\]
because the translates $t+K$, $t\in Y_x$, partition $G$. Taking
expectations and using invariance of $\mu$ gives
\[
    m_G(K)
    \int_XN_f^Y\,d\mu
    =
    \int_Gf\,dm_G.
\]
Therefore
\[
\begin{aligned}
    \int_K
    U_k(N_f^Y)^\circ(x)\,dm_G(k)
    &=
    \int_Gf\,dm_G
    -
    m_G(K)
    \int_XN_f^Y\,d\mu\\
    &=0.
\end{aligned}
\]
The spectral theorem gives
\[
    d\varsigma_{\int_KU_k(N_f^Y)^\circ\,dm_G(k)}(\chi)
    =
    \left|
        \int_K\chi(k)\,dm_G(k)
    \right|^2
    d\varsigma_{(N_f^Y)^\circ}(\chi).
\]
If
\[
    K^\perp
    :=
    \{\chi\in\widehat G:\chi|_K=1\},
\]
then
\[
    \int_K\chi(k)\,dm_G(k)
    =
    m_G(K)\mathbf 1_{K^\perp}(\chi).
\]
It follows that
\[
    \varsigma_{(N_f^Y)^\circ}(K^\perp)=0
    \qquad
    (f\in C_c(G)).
\]
Since $K^\perp$ is an open neighborhood of the trivial character,
$Y$ is stealthy. This proves the first assertion.

Now let $G=\mathbb Q_p$, put $K=\mathbb Z_p$ and
$a=p^{-1}$, and choose a Borel transversal $Z\subset X$ for the
$K$-orbits. Freeness makes
\[
    K\times Z\longrightarrow X,
    \qquad
    (k,z)\longmapsto k.z,
\]
a Borel bijection. Define Borel maps $T:Z\to Z$ and $A:Z\to K$ by
\[
    a.z=A(z).Tz.
\]
Iterating this identity \(p\) times and using \(pa=1\in K\) gives
\[
    T^pz=z,
    \qquad
    \sum_{j=0}^{p-1}A(T^jz)=1.
\]
Moreover, $T$ has no orbit of length strictly between \(1\) and
\(p\). Indeed, if $T^jz=z$ for some $1\leq j<p$, then iteration
and freeness would imply $j/p\in\mathbb Z_p$, which is impossible.
Thus every $T$-orbit has exactly \(p\) elements.

Consider
\[
    V
    :=
    \left\{
        (v_0,\ldots,v_{p-1})\in K^p:
        \sum_{j=0}^{p-1}v_j=1
    \right\},
    \qquad
    \theta(v_0,\ldots,v_{p-1})
    :=
    (v_1,\ldots,v_{p-1},v_0).
\]
The action generated by $\theta$ is free. Since \(p\) is prime,
a vector fixed by a nontrivial power of $\theta$ would be constant,
and its coordinates would satisfy \(pv_0=1\), which has no
solution in \(\mathbb Z_p\).

Choose Borel transversals $C\subset Z$ and $W\subset V$ for the
actions generated by $T$ and $\theta$. The space $W$ is an
uncountable standard Borel space, so there is a Borel injection
$\iota:C\to W$. Extend it equivariantly by
\[
    B(T^jc):=\theta^j\iota(c),
    \qquad
    c\in C,\quad 0\leq j<p.
\]
Then $B:Z\to V$ is Borel and injective, and
$B(Tz)=\theta B(z)$. Writing
\[
    B(z)
    =
    \bigl(
        \beta(z),\beta(Tz),\ldots,\beta(T^{p-1}z)
    \bigr),
\]
we have
\[
    \sum_{j=0}^{p-1}\beta(T^jz)
    =
    \sum_{j=0}^{p-1}A(T^jz)
    =
    1.
\]
Hence $\beta-A$ has zero sum on every \(T\)-orbit and is a Borel
coboundary. Explicitly, set $\phi(c)=0$ on \(C\) and
\[
    \phi(T^jc)
    :=
    -\sum_{i=0}^{j-1}
    (\beta-A)(T^ic),
    \qquad
    1\leq j<p.
\]
Then
\[
    \beta=A+\phi-\phi\circ T.
\]

Define
\[
    y(z):=\phi(z).z,
    \qquad
    Y:=y(Z).
\]
The map $y$ is Borel and injective, since two points with the same
image must lie in the same \(K\)-orbit and \(Z\) is a transversal.
Thus $Y$ is Borel by the Lusin--Souslin theorem. It again meets
every \(K\)-orbit exactly once, and the preceding coboundary
identity gives
\[
    a.y(z)=\beta(z).y(Tz).
\]
Consequently, \(Y\) is stealthy by the first part of the proof.

It remains to prove that it is generating. Write
$x=u_0.y(z)$ with \(u_0\in K\). For \(0\leq j<p\), let
$r_j(x)$ be the unique point of \(Y_x\cap(ja+K)\) and put
\(u_j:=ja-r_j(x)\). Iterating the identity for \(a.y(z)\) gives
\[
\begin{aligned}
    u_{j+1}-u_j
    &=
    \beta(T^jz),
    &&0\leq j<p-1,\\
    u_0+1-u_{p-1}
    &=
    \beta(T^{p-1}z).
\end{aligned}
\]
The return-time set therefore determines
\[
    B(z)
    =
    \bigl(
        u_1-u_0,\ldots,
        u_{p-1}-u_{p-2},
        u_0+1-u_{p-1}
    \bigr).
\]
Only the $p$ cosets
\[
    ja+\mathbb Z_p,
    \qquad
    0\leq j<p,
\]
enter this reconstruction. The argument does not require
$a+\mathbb Z_p$ to generate the quotient
$\mathbb Q_p/\mathbb Z_p$.

Since $B$ is injective, it determines \(z\). It also determines
\(u_0=-r_0(x)\), and hence \(x=u_0.y(z)\). The return-time map is
therefore injective, so \(Y\) is generating.
\end{proof}


\section*{Statements and Declarations}

\paragraph{\textbf{Funding.}}
This work was supported by the Swedish Research Council under grant
VR 11253322.

\paragraph{\textbf{Competing interests.}}
The author has no relevant financial or non-financial interests to disclose.

\paragraph{\textbf{Data availability.}}
No datasets were generated or analysed during the current study.


\end{document}